\newtheorem{thm}{Theorem}[section]
\newtheorem{lemma}[thm]{Lemma}
\newtheorem{prop}[thm]{Proposition}
\newtheorem{cor}[thm]{Corollary}
\newtheorem{prop-conj}[thm]{Proposition-Conjecture}
\newtheorem{thmalph}{Theorem}
\theoremstyle{definition}
\newtheorem{defn}[thm]{Definition}
\theoremstyle{definition}
\newtheorem{rmk}[thm]{Remark}
\theoremstyle{definition}
\newtheorem{constr}[thm]{Construction}
\theoremstyle{definition}
\newtheorem{assumption}[thm]{Assumption}
\theoremstyle{definition}
\newtheorem{claim}[thm]{Claim}
\theoremstyle{definition}
\newtheorem{notation}[thm]{Notation}
\theoremstyle{definition}
\newtheorem{question}[thm]{Question}
\theoremstyle{definition}
\newtheorem{eg}[thm]{Example}
\DeclareFontFamily{U}{wncy}{}
    \DeclareFontShape{U}{wncy}{m}{n}{<->wncyr10}{}
    \DeclareSymbolFont{mcy}{U}{wncy}{m}{n}
    \DeclareMathSymbol{\Sha}{\mathord}{mcy}{"58}
\newcommand{\Q}{\mathbb{Q}}
\newcommand{\Z}{\mathbb{Z}}
\newcommand{\CC}{\mathbb{C}}
\newcommand{\RR}{\mathbb{R}}
\newcommand{\Qpb}{\overline{\mathbb{Q}}_p}
\newcommand{\Zpb}{\overline{\mathbb{Z}}_p}
\newcommand{\Fpb}{\overline{\mathbb{F}}_p}
\newcommand{\Fp}{\mathbb{F}_p}
\DeclareMathOperator{\Hom}{Hom}
\DeclareMathOperator{\End}{End}
\DeclareMathOperator{\Out}{Out}
\DeclareMathOperator{\Ad}{Ad}
\DeclareMathOperator{\rec}{rec}
\DeclareMathOperator{\tr}{tr}
\DeclareMathOperator{\im}{im}
\DeclareMathOperator{\coker}{coker}
\DeclareMathOperator{\Res}{Res}
\DeclareMathOperator{\rk}{rk}
\DeclareMathOperator{\Spec}{Spec}
\DeclareMathOperator{\Spf}{Spf}
\DeclareMathOperator{\Sp}{Sp}
\DeclareMathOperator{\Frac}{Frac}
\DeclareMathOperator{\Lie}{Lie}
\newcommand{\gal}[1]{\Gamma_{#1}} 
\newcommand{\Gal}{\mathrm{Gal}} 
\newcommand{\into}{\hookrightarrow}
\newcommand{\onto}{\twoheadrightarrow}
\newcommand{\mc}{\mathcal}
\newcommand{\mf}{\mathfrak}
\newcommand{\mr}{\mathrm}
\newcommand{\mbf}{\mathbf}
\newcommand{\mbb}{\mathbb}
\newcommand{\br}{\bar{\rho}} 
\newcommand{\fg}{\mathfrak{g}}
\newcommand{\fgder}{\mathfrak{g}^{\mathrm{der}}}
\DeclareMathOperator{\Lift}{\mathrm{Lift}}
\DeclareMathOperator{\Def}{\mathrm{Def}}
\newcommand{\Sets}{\mathbf{Sets}}
\newcommand{\unr}{\mathrm{unr}}
\newcommand{\Frob}{\mathrm{Frob}}
\newcommand{\tF}{\widetilde{F}}
\newcommand{\wt}{\widetilde}
\newcommand{\wh}{\widehat}
\newcommand{\inv}{\mathrm{inv}}
\newcommand{\vpi}{\varpi}
\newcommand{\un}[1]{\underline{#1}}
\newcommand{\ov}{\overline}
\newcommand{\n}{n}
\newcommand{\z}{Z}
\title{Relative deformation theory, relative Selmer groups, and lifting irreducible Galois representations}
\thanks{We would like to thank Gebhard Boeckle, Prakash Belkale,
  Arvind Nair, Ravi Ramakrishna, Jack Thorne, and Akshay Venkatesh for
  useful conversations and correspondence. We are especially grateful
  to Ravi Ramakrishna for providing detailed comments on an earlier
  draft. We also thank the referees for their helpful reports, which have improved the exposition. N.F. was supported by the DAE, Government of India, PIC
  12-R\&D-TFR-5.01-0500. C.K.  would like to thank TIFR, Mumbai for
  its hospitality, in periods when some of the work was carried
  out. S.P. was supported by NSF grants DMS-1700759, DMS-1752313, and DMS-2120325.}
\begin{document}
\author[N.~Fakhruddin]{Najmuddin Fakhruddin}
\address{School of Mathematics, Tata Institute of Fundamental Research, Homi Bhabha Road, Mumbai 400005, INDIA}
\email{naf@math.tifr.res.in}
\author[C.~Khare]{Chandrashekhar Khare}
\address{UCLA Department of Mathematics, Box 951555, Los Angeles, CA 90095, USA}
\email{shekhar@math.ucla.edu}
\author[S.~Patrikis]{Stefan Patrikis}
\address{Department of Mathematics, Ohio State University, 100 Math Tower, 231 West 18th Ave., Columbus, OH 43210, USA}
\email{patrikis.1@osu.edu}

\begin{abstract}
We study irreducible odd mod $p$ Galois representations $\br \colon \Gal(\overline{F}/F) \to G(\overline{\mathbb{F}}_p)$, for $F$ a totally real number field and $G$ a general reductive group. For $p \gg_{G, F} 0$, we show that any $\br$ that lifts locally, and at places above $p$ to de Rham and Hodge--Tate regular representations, has a geometric $p$-adic lift. We also prove non-geometric lifting results without any oddness assumption. 
\end{abstract}

\subjclass[2010]{Primary 11F80}

\maketitle
\centerline{In memory of Jean-Pierre Wintenberger 1954--2019}

\tableofcontents

\section{Introduction}
Let $\overline{\Z}_p$ be the integral closure of $\Z_p$ in $\Qpb$, and let $G$ be a smooth group scheme over $\overline{\Z}_p$ such that $G^0$ is a split connected reductive group. 

\subsection{The lifting problem for odd representations}

The starting point of this paper is the following basic question:
\begin{question}\label{question}
Let $F$ be a number field with algebraic closure $\overline{F}$ and
absolute Galois group $\gal{F}= \Gal(\overline{F}/F)$, and let $\br
\colon \Gal(\overline{F}/F) \to G(\overline{\mathbb{F}}_p)$
\index[terms]{R@$\br$} be a continuous homomorphism. Does there exist a lift $\rho$
\[
\xymatrix{
& G(\overline{\Z}_p) \ar[d] \\
\gal{F} \ar@{-->}[ur]^{\rho} \ar[r]_-{\br} & G(\overline{\mathbb{F}}_p) 
}
\]
that is geometric \index[terms]{geometric} in the sense of Fontaine--Mazur?
\end{question}
This question has attracted a great deal of attention, at least since Serre proposed his modularity conjecture (\cite{serre:conjectures}). We begin by recalling a few instances of this general problem, beginning with Serre's conjecture. Serre proposed that every irreducible representation 
\[
\br \colon \gal{\Q} \to \mr{GL}_2(\Fpb)
\]
that was moreover \textit{odd} in the sense that $\det \br(c)=-1$ for any complex conjugation $c \in \gal{\Q}$ should be isomorphic to the mod $p$ reduction of a $p$-adic Galois representation attached to a classical modular eigenform. In particular, such a $\br$ should admit a geometric $p$-adic lift. 
Around the time Serre first made his conjecture, as recounted in a letter of Serre to Tate on 12th July, 1974 (\cite{serre-tate:correspondance}), Deligne raised the objection that the conjecture
implied the existence of geometric lifts of $\br$ which  were
moreover minimally ramified (for example unramified outside $p$ if
$\br $ is unramified outside $p$). The papers \cite{khare-wintenberger:serre0}, \cite{khare:serrelevel1}, \cite{khare-wintenberger:serre1}, \cite{khare-wintenberger:serre2} prove Serre's modularity conjecture, and as a key step lift  $\br$ to a geometric representation with prescribed local properties.  The proof of this key step  uses  the modularity lifting results of Wiles and Taylor (\cite{wiles:fermat}, \cite{taylor-wiles:fermat}). In contrast, prior to the resolution of Serre's conjecture, Ramakrishna (\cite{ramakrishna:lifting}, \cite{ramakrishna02}) developed a beautiful, purely Galois-theoretic, method that in most cases settled Question \ref{question} in the setting of Serre's conjecture ($F= \Q$, $G= \mr{GL}_2$, $\br$ odd and irreducible). Ramakrishna's lifts cannot be ensured to be  minimally ramified.

We might then turn to asking Question \ref{question} for $\br \colon \gal{\Q} \to \mr{GL}_2(\Fpb)$ that are \textit{even}, in the sense that $\det(\br(c))=1$. For instance, suppose that the image of $\br$ is $\mr{SL}_2(\mathbb{F}_p)$. Any geometric lift would (for $p \neq 2$) itself be even, and so conjecturally would be the $p$-adic representation $\rho$ attached to an algebraic Maass form. Such a $\rho$ should, up to twist, have finite image (because up to twist the associated motive should have Hodge realization of type $(0,0)$); but for $p > 5$, Dickson's classification of finite subgroups of $\mr{PGL}_2(\CC)$ rules out the possibility of such a lift. Thus one expects that $\br$ has no geometric lift. We have no general means of translating this conjectural heuristic into a proof, but Calegari (\cite[Theorem 5.1]{calegari:even2}) has given an ingenious argument that proves unconditionally that certain such even $\br$ have no geometric lift. 

In other settings, Question \ref{question} is even more
mysterious. For instance, if $G = \mr{GL}_2$ and $F/\Q$ is
quadratic imaginary, we do not even have a reliable heuristic for
predicting whether $\br \colon \gal{F} \to \mr{GL}_2(\Fpb)$ should
have a geometric lift! It is a remarkable and widely-tested phenomenon
that torsion cohomology (Hecke eigen-) classes for the locally
symmetric spaces associated to congruence subgroups of $\mr{GL}_2/F$
need not lift to characteristic zero; one can ask whether after
raising the level (passing to a finite covering space of the
arithmetic 3-manifold) their Hecke eigensystems lift, and that the
corresponding Galois-theoretic statement holds as well. However, we
have little evidence to support this.

This paper addresses Question \ref{question} for general $G$, but for
$\br$ that are \textit{odd}  \index[terms]{odd}
in a sense generalizing Serre's formulation for $\mr{GL}_2$. The following definition is essentially due to Gross (\cite{gross:odd}), who suggested parallels between this class of Galois representations and the ``odd" representations of Serre's original conjecture:
\begin{defn}\label{odd}
We say $\br \colon \gal{F} \to G(\Fpb)$ is odd if for all $v \mid \infty$,
\[
h^0(\Gamma_{F_v}, \br(\fgder)) = \dim (\mr{Flag}_{G^0}),
\]
where $\fgder$ is the Lie algebra of the derived group $G^{\mr{der}}$
of $G^0$, and $\mr{Flag}_{G^0}$ \index[terms]{F@$\mr{Flag}_{G^0}$} is the flag variety of $G^0$.
\end{defn}
Note that for any involution of $\fgder$, the dimension of the space of invariants must be at least $\dim (\mr{Flag}_{G^0})$. An adjoint group contains an order 2 element whose invariants have dimension $\dim (\mr{Flag}_{G^0})$ if and only if $-1$ belongs to the Weyl group of $G$. When $-1$ does not belong to the Weyl group, we can (after choosing a pinning) find such an order two element in $G \rtimes \Out(G)$; for more details, see \cite[\S 4.5, \S 10.1]{stp:exceptional}. Also note that the definition implies that $F$ is totally real. That said, the ``odd" case does have implications in certain CM settings. For example, let $F$ be quadratic imaginary, and let $\br \colon \gal{F} \to \mr{GL}_n(\Fpb)$ be an irreducible representation such that
\[
\br^c \cong \br^\vee \otimes \mu|_{\gal{F}},
\]
where $\mu \colon \gal{\Q} \to \Fpb^\times$ is a character. Moreover assume that when we realize this essential conjugate self-duality as a relation
\[
\br(cgc^{-1})= A {}^t \br(g)^{-1} A^{-1} \mu(g)
\]
for some $A \in \mr{GL}_n(\Fpb)$ (and all $g \in \gal{F}$), the scalar $A \cdot {}^t A^{-1}$ (which is easily seen to be $\pm1$) actually equals $+1$. Then the pair $(\br, \mu)$ can be extended to a homomorphism 
\[
\bar{r} \colon \gal{\Q} \to (\mr{GL}_n \times \mr{GL}_1)(\Fpb) \rtimes \{1, j\},
\]
where $j^2=1$ and $j(g, a)j^{-1}= (a\cdot {}^t g^{-1}, a)$, and this $\bar{r}$ is odd in the sense of Definition \ref{odd}.

There are essentially two techniques for approaching cases of Question
\ref{question}. For classical groups, automorphy lifting and potential
automorphy theorems, via a technique introduced in
\cite{khare-wintenberger:serre0}, yield the most robust results. For
instance, the strongest lifting results in the previous example ($\br$
essentially conjugate self-dual over a quadratic imaginary field)
follow from the work of Barnet-Lamb, Gee, Geraghty, and Taylor
(\cite{blggt:potaut}). For general $G$, however, we do not have a good
understanding of automorphic Galois representations, and we must rely
on purely Galois-theoretic methods. Ramakrishna developed the first
such method in the papers \cite{ramakrishna:lifting} and \cite{ramakrishna02}, which, as noted above,
resolved Question \ref{question} in the setting of Serre's original
modularity conjecture ($F=\Q$, $G= \mr{GL}_2$, $\br$ odd and
irreducible). Our work relies on the methods of \cite{ramakrishna:lifting} and \cite{ramakrishna02},
particularly as extended in the ``doubling method'' of \cite{klr} and
the work of Hamblen and Ramakrishna (\cite{ramakrishna-hamblen}).

From now on, we will replace $\Zpb$ (resp.~$\Fpb$) by the ring of
integers $\mc{O}$ \index[terms]{O@$\mc{O}$} in some finite extension $E$ \index[terms]{E@$E$} of $\Q_p$ (resp.~the
residue field $k$ of $\mc{O}$). We let $\vpi$ \index[terms]{p@$\varpi$} denote a uniformizer of
$\mc{O}$ and ${m_{\mc{O}}} = (\vpi)$ the maximal ideal of $\mc{O}$. Thus we
now take $G$ as before but defined over $\mc{O}$, and we study
continuous homomorphisms $\br \colon \gal{F, {\mc{S}}} \to G(k)$, where ${\mc{S}}$
is a finite set of primes containing those above $p$, and $\gal{F, {\mc{S}}}$ \index[terms]{G@$\gal{F, \mc{S}}$}
denotes $\Gal(F({\mc{S}})/F)$ for $F({\mc{S}})$ the maximal (inside $\ov{F}$) Galois
extension of $F$ that is unramified away from ${\mc{S}}$.

There are several difficulties in extending the method of
\cite{ramakrishna02} to lifting odd irreducible representations to
$G(k)$ for general groups:
\begin{itemize}
\item In the arguments of \cite{ramakrishna02} one must construct at
  all primes $v$ at which $\br$ is ramified a formally smooth
  irreducible component of the local lifting ring
  $R^{\square}_{\br|_{\gal{F_v}}}$ (for $v$ not above $p$) or a
  formally smooth component of the lifting ring that parametrizes
  lifts of a fixed inertial type and (Hodge--Tate regular) $p$-adic Hodge type (for $v$
  above $p$). Such components do not
    always  exist in the level of generality in which we work.
\item The lack of smooth components as above, and more precisely the fact that $\br|_{\gal{F_v}}$ may not have a Witt-vector valued lift corresponding to a formally smooth point of the generic fiber local lifting ring, necessitates
  working with more general (typically ramified) coefficients
  $\mc{O}$, while in \textit{loc.~cit.~}one can work with the ring of Witt
  vectors $W(k)$.  This causes complications related to the fact that
  if $\mc{O}$ is ramified, $\mc O/\varpi^2$ is of characteristic $p$ and hence  isomorphic to the dual numbers
  $k \oplus k[\varepsilon]$.
\item The auxiliary prime arguments of \cite{ramakrishna02} break down as the image of $\br$ gets smaller. For general $G$, where many possible images can still lead to ``irreducible'' $\br$, this is a basic difficulty.
\end{itemize}

These difficulties are not as serious an impediment
for $G= \mr{GL}_2$ as compared to the case of general $G$. In
\cite{ramakrishna02}, under mild hypotheses on $\br|_{\gal{\Q_p}}$,
the necessary local theory is worked out (we should note, however,
that particularly at the prime $p$ the situation is here considerably
simplified by working over $\Q_p$ rather than a ramified
extension). As for the global hypotheses, by a theorem of Dickson any
irreducible subgroup of $\mr{GL}_2(k)$ (for $p \geq 7$) either has
order prime to $p$, in which case one can take the ``Teichm\"{u}ller"
lift, or has projective image conjugate to a subgroup of the form
$\mr{PSL}_2(k')$ or $\mr{PGL}_2(k')$ for some subfield $k'$ of
$k$. This allows Ramakrishna to restrict to the case where the adjoint
representation $\mr{ad}^0(\br)$ is absolutely irreducible.

For higher-rank $G$, the global arguments of \cite{ramakrishna02} work
with little change under the corresponding assumption that the adjoint
representation $\br(\fgder)$ \index[terms]{R@$\br(\fgder)$} (this will be our notation for the Galois
module $\fgder$, equipped with the action of $\Gamma_F$ via
$\Ad \circ \br$) is absolutely irreducible. Such a generalization is
carried out in \cite{stp:exceptional}. The same paper also proves a
variant with somewhat smaller image, in which $\im(\br)$ contains
(approximately) $\varphi(\mr{SL}_2(k))$, where
$\varphi \colon \mr{SL}_2 \to G$ is a principal $\mr{SL}_2$. In this
case $\br(\fgder)$ decomposes into $r$ irreducible factors, where $r$
is the semisimple rank of $G$, and the final result depended on an
explicit analysis of this decomposition, requiring case-by-case
calculations depending on the Dynkin type, with the result only
verified for the exceptional groups via a computer calculation. More
seriously, the method did not apply to groups of type $D_{2m}$, for
which $\fgder$ is not multiplicity-free as an $\mr{SL}_2$-module (one
factor occurs with multiplicity two). Some other instructive examples
of how variants of the familiar Ramakrishna arguments still fail to
treat relatively simple images can be found in \cite{tang:thesisANT}.

\subsection{Main theorem}

Before explaining how we overcome the difficulties mentioned above, we will state the main theorem. From now on we will require of $G$ that the component group $\pi_0(G)$ is finite \'{e}tale of order prime to $p$. (See \S \ref{notation} and the beginning of \S \ref{defprelimsection} for the group theory and deformation theory notations.)
\begin{thmalph}[See Theorem \ref{mainthm}]\label{mainthmintro}
Let $p \gg_G 0$ be a prime. Let $F$ be a totally real field, and let $\br \colon \gal{F, {\mc{S}}} \to G(k)$ be a continuous representation unramified outside a finite set of finite places ${\mc{S}}$ containing the places above $p$. Let $\tF$ \index[terms]{F@$\tF$} denote the smallest extension of $F$ such that $\br(\gal{\tF})$ is contained in $G^0(k)$, and assume that  $[\tF(\zeta_p): \tF]$ is strictly greater than the integer $a_G$ arising in Lemma \ref{cyclicq} (which depends only on the root datum of $G$). Fix a geometric lift $\mu \colon \gal{F, {\mc{S}}} \to G/G^{\mr{der}}(\mc{O})$ of $\bar{\mu}:=\br \pmod{G^{\mr{der}}}$, and assume that $\br$ satisfies the following:
\begin{itemize}
\item $\br$ is odd, i.e. for all infinite places $v$ of $F$, $h^0(\gal{F_v}, \br(\fgder))= \dim(\mr{Flag}_{G^{\mr{der}}})$.
\item $\br|_{\gal{\tF(\zeta_p)}}$ is absolutely irreducible.
\item For all $v \in {\mc{S}}$, $\br|_{\gal{F_v}}$ has a lift $\rho_v \colon \gal{F_v} \to G(\mc{O})$ of type $\mu|_{\gal{F_v}}$; and that for $v \mid p$ this lift may be chosen to be de Rham and regular in the sense that the associated Hodge--Tate cocharacters are regular.
\end{itemize}
Then there exist a finite extension $E'$ of $E=\Frac(\mc{O})$ (whose
ring of integers and residue field we denote by $\mc{O}'$ and $k'$) depending only on the set $\{\rho_v\}_{v \in {\mc{S}}}$; a finite set of places $\wt{{\mc{S}}}$ containing ${\mc{S}}$; and a geometric lift 
\[
\xymatrix{
& G(\mc{O}') \ar[d] \\
\gal{F, \wt{{\mc{S}}}} \ar[r]_{\br} \ar[ur]^{\rho} & G(k') 
}
\]
of $\br$, and having projection to $G/G^{\mr{der}}(\mc{O'})$ equal to $\mu$, such that $\rho(\gal{F, \wt{{\mc{S}}}})$ contains
$\wh{G^{\mr{der}}}(\mc{O}')$. Moreover, if we fix an integer $t_0$ and
for each $v \in {\mc{S}}$ an irreducible component defined over $\mc{O}$
 and containing $\rho_v$ of:
\begin{itemize}
\item for $v \in {\mc{S}} \setminus \{v \mid p\}$, the generic fiber
  of the local lifting ring, $R^{\square,
    \mu}_{\br|_{\gal{F_v}}}[\frac{1}{\vpi}]$ (where $R^{\square,
    \mu}_{\br|_{\gal{F_v}}}$  pro-represents $\Lift_{\br|_{\gal{F_v}}}^{\mu}$); and
\item for $v \mid p$, the lifting ring $R_{\br|_{\gal{F_v}}}^{\square,
    \mu, \tau, \mbf{v}}[1/\vpi]$
  whose $\ov{E}$-points parametrize lifts of $\br|_{\gal{F_v}}$ with specified inertial type $\tau$ and Hodge type $\mbf{v}$ (see \cite[Prop. 3.0.12]{balaji} for the construction of this ring);  
\end{itemize}
then \index[terms]{R@$R^{\square,\mu}_{\br|_{\gal{F_v}}}$}
\index[terms]{R@ $R_{\br|_{\gal{F_v}}}^{\square, \mu, \tau, \mbf{v}}[1/\vpi]$}  
the global lift $\rho$ may be constructed such that, for all $v \in {\mc{S}}$, $\rho|_{\gal{F_v}}$ is congruent modulo $\vpi^{t_0}$ to some $\wh{G^{\mr{der}}}(\mc{O}')$-conjugate of $\rho_v$, and $\rho|_{\gal{F_v}}$ belongs to the specified irreducible component for every $v \in {\mc{S}}$.
\end{thmalph}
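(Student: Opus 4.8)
The plan is to realize $\rho$ as a limit of successive deformations, following the Ramakrishna–Taylor method as extended by the doubling technique of \cite{klr} and by Hamblen–Ramakrishna. The first step is to set up the global deformation problem: choose, for each $v \in {\mc{S}}$, a local deformation condition $\mc{C}_v$ cut out by the chosen irreducible component of the local lifting ring (at $v\mid p$, the fixed-Hodge-type lifting ring of \cite{balaji}; at $v\nmid p$, the component of $R^{\square,\mu}_{\br|_{\gal{F_v}}}[1/\vpi]$), and take the associated relative Selmer and dual Selmer groups for the Galois module $\br(\fgder)$, working over a sufficiently ramified $\mc{O}$ so that each $\rho_v$ is a smooth point of its component (this is exactly why one cannot stay over $W(k)$). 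The oddness hypothesis enters here via the global Euler characteristic formula: at each $v\mid\infty$ the local term of the Tate–Poitou/Wiles formula is governed by $h^0(\gal{F_v},\br(\fgder)) = \dim \mr{Flag}_{G^{\mr{der}}}$, which is the minimal possible value, making the global Selmer-group Euler characteristic as favorable as possible.

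The second step is to annihilate the dual Selmer group by adding auxiliary (Ramakrishna) primes. This is where the absolute irreducibility of $\br|_{\gal{\tF(\zeta_p)}}$, the hypothesis $[\tF(\zeta_p):\tF] > a_G$ coming from Lemma \ref{cyclicq}, and the group-theoretic input about $\wh{G^{\mr{der}}}$ are used: one must show that for any nonzero class $\phi$ in the dual Selmer group there exists a prime $w$, with $\br|_{\gal{F_w}}$ in ``good position'' (so that a suitable smooth local deformation condition of Ramakrishna type exists at $w$), on which $\phi$ does not vanish and whose addition shrinks the dual Selmer group while keeping the Selmer side controlled. The doubling method of \cite{klr} is what makes this work for general $G$: rather than finding a single such prime, one works with pairs of primes, which removes the obstruction coming from the possible failure of the relevant Galois cohomology classes to be in general position and circumvents the breakdown of the classical auxiliary-prime argument for small image. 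Iterating, one reaches a deformation problem whose dual Selmer group vanishes.

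The third step is to produce the lift. With dual Selmer vanishing, the global deformation functor is unobstructed (relative to the fixed local conditions and the fixed $\mu$), so one can lift $\br$ step by step modulo $\vpi^n$ for all $n$, obtaining $\rho \colon \gal{F,\wt{{\mc{S}}}} \to G(\mc{O}')$ after possibly enlarging $\mc{O}$ to $\mc{O}'$ (the enlargement depends only on the $\{\rho_v\}$ since it is dictated by the residue fields of the chosen components and of the Ramakrishna-prime conditions). Geometricity is automatic: away from $p$ the representation is ramified only at the finitely many primes of $\wt{{\mc{S}}}$, and at $v\mid p$ the construction forces $\rho|_{\gal{F_v}}$ onto the fixed-Hodge-type lifting ring of \cite{balaji}, hence de Rham with regular Hodge–Tate cocharacters. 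The congruence $\rho|_{\gal{F_v}} \equiv \rho_v \pmod{\vpi^{t_0}}$ (up to $\wh{G}(\mc{O}')$-conjugacy) and membership in the prescribed component are arranged by starting the inductive lifting from $\rho_v \bmod \vpi^{t_0}$ at each $v\in{\mc{S}}$ and never leaving the chosen component — this is possible precisely because each $\rho_v$ is a smooth point of its component, so the local condition is formally smooth of the expected dimension there. Finally, largeness of the image, $\rho(\gal{F,\wt{{\mc{S}}}}) \supseteq \wh{G^{\mr{der}}}(\mc{O}')$, is obtained by adding a further auxiliary prime at which the local condition forces a ramification class generating enough of $\wh{G^{\mr{der}}}$ (again using absolute irreducibility of $\br(\fgder)$ to propagate from the residual image), together with a Goursat/pro-$p$ argument.

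The main obstacle is the second step: constructing Ramakrishna primes for general $G$ with small or delicate residual image. For $\mr{GL}_2$ Dickson's theorem reduces everything to $\br(\fgder)$ absolutely irreducible, but for general $G$ one must handle the possible failure of smooth local components and the possibility that the naive count of auxiliary primes is insufficient; the doubling method of \cite{klr} together with the quantitative input of Lemma \ref{cyclicq} (the bound $a_G$, and hence the hypothesis $[\tF(\zeta_p):\tF]>a_G$) is exactly what is needed to push the argument through uniformly, and verifying that these suffice in the relative setting (with $\mu$ fixed and with ramified coefficients) is the technical heart of the argument.
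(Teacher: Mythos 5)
Your proposal has the right general shape (balanced Selmer/dual Selmer via oddness, auxiliary primes, generic smoothness on components forcing ramified coefficients, inductive lifting) but it misses the central difficulty the paper is built around, and so the third step as you describe it does not go through.

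The key gap is the claim that ``iterating, one reaches a deformation problem whose dual Selmer group vanishes'' and then ``with dual Selmer vanishing, the global deformation functor is unobstructed\ldots so one can lift $\br$ step by step''. In the generality of this theorem one \emph{cannot} annihilate the intrinsic mod $p$ dual Selmer group. To kill dual Selmer by Ramakrishna's technique one must simultaneously kill Selmer, and the Selmer classes inflated from $H^1(\im(\rho_n), \br(\fgder))$ cannot be killed by trivial primes (which, by design, are trivial in $\im(\rho_n)$). This obstruction is already present when $\mc{O}$ is ramified at $n=2$, since $\mc{O}/\vpi^2 \cong k[\varepsilon]$ makes $H^1(\im(\rho_2),\br(\fgder))$ nonzero; and it can be nonzero even with $\mc{O}=W(k)$ for certain images. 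What the paper does instead is kill only the \emph{relative} mod $p$ dual Selmer group $\ov{H^1_{\mc{L}_M^{\perp}}(\Gamma_{F,{\mc{S}'}\cup\mc{Q}},\rho_M(\fgder)^*)}$, the image of the mod $\vpi^M$ dual Selmer group in mod $p$ coefficients. The integer $M$ is chosen via Lazard's theorems (Corollary \ref{cor:lazard}) so that $H^1(\im(\rho_N),\rho_M(\fgder)) \to H^1(\im(\rho_N),\br(\fgder))$ is the zero map; this is what makes the relative annihilation possible (Lemma \ref{lemma:fn}, Proposition \ref{prop:killrel}, Theorem \ref{thm:killrel}). Your proposal contains no substitute for this, and without it the inductive lifting step breaks.

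Because only the relative dual Selmer vanishes, a straightforward induction on $n$ lifting $\rho_n$ to $\rho_{n+1}$ also fails. The paper instead runs the ``relative deformation'' diagram of Claim \ref{relativediagram}: one constructs pairs $(\tau_n,\rho_{n+M})$ in relative good position at all places in ${\mc{S}}'\cup\mc{Q}$, sets $\tau_{n+1}=\rho_{n+M}\pmod{\vpi^{n+1}}$, and uses the two-row Poitou--Tate diagram for $\rho_M(\fgder)$ and $\rho_{M-1}(\fgder)$ coefficients to produce $\rho_{n+M+1}$ lifting $\tau_{n+1}$ (not necessarily lifting $\rho_{n+M}$). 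The geometric lift is then $\rho=\varprojlim\tau_n$. This architecture is absent from your write-up. Relatedly, you attribute the doubling method of \cite{klr} to the dual-Selmer-killing step, but in the paper the doubling method lives in Step 1 (\S\ref{klrsection}--\S\ref{klr^Nsection}): it is used to construct the mod $\vpi^N$ lift $\rho_N$ matching the prescribed local data, while killing relative dual Selmer in Step 2 uses the Hamblen--Ramakrishna-style ``trivial primes'' with the balanced-cardinality argument of Lemma \ref{lem:bal}, not doubling.
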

Thus we prove an essentially complete ``local-to-global" principle for finding geometric lifts of irreducible odd Galois representations. We make a few remarks:
\begin{rmk} $ $ 
  \begin{itemize}
  \item By $p \gg_G 0$ we mean that $p$ is larger than a constant
    depending only on the root datum of $G$.
\item The arguments proceed from a somewhat different global image assumption---see Assumption \ref{multfree}---but for $p \gg_{G, \tF} 0$ the absolute irreducibility hypothesis implies the other conditions in Assumption \ref{multfree} (see Corollary \ref{irrcor}). 
\item The need to extend the coefficient ring from $\mc{O}$ to $\mc{O}'$ arises entirely from local problems at the primes in ${\mc{S}}$: we need the existence of formally smooth points, approximating the given $\rho_v$ sufficiently well, on the given irreducible components of the local lifting rings. To find such points may require extending the coefficient ring (see Lemma \ref{smoothapprox}). (Once $\mc{O}'$ meets these requirements, we may if desired further enlarge it and still arrange that the lift has image containing $\wh{G^{\mr{der}}}(\mc{O}')$.) Once this $\mc{O}'$ depending on $\{\rho_v\}_{v \in {\mc{S}}}$ is fixed, it is not enlarged again in the course of the proof: see the proof of Theorem \ref{mainthm} (which proves Theorem \ref{mainthmintro}).     
\item The method also allows us to lift a representation $\rho_n
  \colon \gal{F, {\mc{S}}} \to G(\mc{O}/\vpi^n)$ to a geometric
  representation, provided that (in addition to the global hypotheses
  on $\br$) the restrictions $\rho_n|_{\gal{F_v}}$ for $v \in {\mc{S}}$ have
  $G(\mc{O})$-lifts $\rho_v$ as in the theorem statement. 
\item The bound on $p$ can be made effective: for detailed remarks, see Remark \ref{effective}.
\item In \S \ref{examples} we give some examples of the theorem. 
\item All the lifts $\rho$ produced by the theorem have image containing $\wh{G^{\mr{der}}}(\mc{O}')$; that is, we find lifts whose image is ``as large as possible" subject to the given $\im(\br)$. Thus, even in a setting where an obvious ``Teichm\"{u}ller" lift exists (i.e., when $|\im(\br)|$ is coprime to $p$), we produce very different sorts of lifts, producing congruences between finite-image and ``full" image Galois representations.
\item For more detailed remarks on how, for $G^0= \mr{GL}_n$, this theorem compares to results coming from potential automorphy theorems, see Remark \ref{potautcompare}.
\end{itemize}
\end{rmk}
We mention two variants of the theorem that are straightforward given
our techniques. The first (see Theorem \ref{notodd}) is a
non-geometric but finitely-ramified lifting theorem for $\br$ without
any constraints on the action of $\Gamma_{F_v}$ for $v \mid \infty$
(and in particular allowing $F$ to be any number field); this holds
under the same image hypotheses as Theorem \ref{mainthmintro}. The
second produces possibly de Rham but infinitely-ramified lifts,
generalizing the main theorem of \cite{klr} from the case $G =
\mr{GL}_2$ and $\mr{SL}_2(\Fp) \subset \im(\br)$.
\begin{thmalph}[See Corollary \ref{infiniteram}]\label{mainthmB}
Let $F$ be any number field. Assume $p \gg_G 0$, and let $\br \colon \gal{F, {\mc{S}}} \to G(k)$ be a representation satisfying
\begin{itemize}
 \item $H^1(\Gal(K/F), \br(\fgder)^*)$=0, where $K= \tF(\fgder,\mu_p)$.
 \item $\br(\fgder)$ and $\br(\fgder)^*$ are semisimple $\mathbb{F}_p[\gal{F}]$-modules (equivalently, semisimple $k[\gal{F}]$-modules) having no common $\Fp[\gal{F}]$-subquotient, and neither contains the trivial representation.
\end{itemize}
Fix a lift $\mu$ of $\bar{\mu}$ as in Theorem \ref{mainthmintro}. Assume that for all $v \in {\mc{S}}$, there are lifts $\rho_v \colon \gal{F_v} \to G(\mc{O})$ of $\br|_{\gal{F_v}}$ with multiplier $\mu$. Then there exists an infinitely ramified lift
\[
\xymatrix{
& G(\mc{O}) \ar[d] \\
\gal{F} \ar@{-->}[ur]^{\rho} \ar[r]_{\br} & G(k)
}
\]
with multiplier $\mu$ such that $\rho|_{\gal{F_v}}$ is $\wh{G^{\mr{der}}}(\mc{O})$-conjugate to $\rho_v$ for all $v \in {\mc{S}}$, and $\rho(\gal{F})$ contains $\wh{G^{\mr{der}}}(\mc{O})$.
\end{thmalph}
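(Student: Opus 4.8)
The plan is to construct $\rho$ as an inverse limit $\rho = \varprojlim_n \rho_n$ of lifts $\rho_n \colon \gal{F, \mc{S}_n} \to G(\mc{O}/\vpi^n)$ of $\br$ along an increasing chain of finite ramification sets $\mc{S} = \mc{S}_0 \subseteq \mc{S}_1 \subseteq \cdots$, where each $\rho_n$ has multiplier $\mu \bmod \vpi^n$ and $\rho_n|_{\gal{F_v}}$ is $\wh{G^{\mr{der}}}(\mc{O}/\vpi^n)$-conjugate to $\rho_v \bmod \vpi^n$ for all $v \in \mc{S}$. Since we deform with fixed multiplier, the controlling module is $M := \br(\fgder)$ with its adjoint $\gal{F}$-action, with Tate dual $M^* = \br(\fgder)^*$; the semisimplicity and ``no trivial subquotient'' hypotheses give $H^0(\gal{F}, M) = H^0(\gal{F}, M^*) = 0$, which is what one wants both for the deformation problem to behave well and for the Greenberg--Wiles formula to take its clean form. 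By Poitou--Tate duality, once we fix at each $v \in \mc{S}_n$ a smooth liftable local condition with tangent space $\mc{L}_v \subseteq H^1(\gal{F_v}, M)$ --- at $v \in \mc{S}$, the rigid condition pinning $\rho|_{\gal{F_v}}$ to $\rho_v$ up to $\wh{G^{\mr{der}}}$-conjugacy --- the obstruction to lifting $\rho_n$ to a multiplier-$\mu$ lift over $\mc{O}/\vpi^{n+1}$ satisfying all of these is killed as soon as the dual Selmer group $H^1_{\mc{L}_n^\perp}(\gal{F, \mc{S}_n}, M^*)$ vanishes. So at each stage the problem reduces to enlarging $\mc{S}_n$, with auxiliary local conditions at the added primes, so that this dual Selmer group becomes zero.

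This is the doubling method of \cite{klr}, carried out with the relative deformation theory and relative Selmer groups of the earlier sections. The auxiliary primes are added \emph{in pairs} $\{w_1, w_2\}$, disjoint from $\mc{S}_n$ and with $\rho_n$ unramified at each; at each $w_i$ one imposes a ramified ``Ramakrishna'' local condition, whose existence for $p \gg_G 0$ (with the right tangent-space dimension, and with orthogonal complement detecting a prescribed dual Selmer class) must be established for general $G$. The primes are produced by a Chebotarev argument inside $\Gal(K/F)$ with $K = \tF(\fgder, \mu_p)$: the hypothesis $H^1(\Gal(K/F), M^*) = 0$ is exactly what lets one reduce the analysis of dual Selmer classes to $\gal{K}$, and the hypothesis that $M$ and $M^*$ have no common $\Fp[\gal{F}]$-subquotient is what supplies Frobenius classes prescribing the desired behaviour on the $M^*$-side without constraining the $M$-side. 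Each pair strictly decreases $\dim_k H^1_{\mc{L}^\perp}(\gal{F}, M^*)$, so finitely many pairs drive it to zero; the Wiles formula then shows the primal Selmer group has grown, but that is harmless --- permitting infinite ramification is precisely what frees us from the balanced bookkeeping of the minimal Ramakrishna method. With dual Selmer zero, $\rho_n$ lifts to $\rho_{n+1}$ over $\mc{S}_{n+1}$, still matching each $\rho_v$ up to conjugacy, and by forcing at least one new ramified pair to enter at infinitely many stages we make $\bigcup_n \mc{S}_n$ infinite, so $\rho$ is infinitely ramified.

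That $\rho(\gal{F})$ contains $\wh{G^{\mr{der}}}(\mc{O})$ follows by the same device as in the proof of Theorem \ref{mainthmintro}: at infinitely many of the auxiliary primes the imposed ramified conditions force prescribed non-central pro-$p$ ramification into the image at every finite level, and a Lie-theoretic generation lemma valid for $p \gg_G 0$ shows that a closed subgroup of $\wh{G^{\mr{der}}}(\mc{O})$ that reduces onto the relevant residual object and contains such elements must be all of it. The local conditions at the places of $\mc{S}$ are fixed once and never perturbed, so $\rho|_{\gal{F_v}}$ is $\wh{G^{\mr{der}}}(\mc{O})$-conjugate to $\rho_v$ for every $v \in \mc{S}$; in particular, choosing the $\rho_v$ at $v \mid p$ de Rham makes $\rho$ de Rham at $p$ although it is infinitely ramified.

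The main obstacle is the doubling step --- proving the auxiliary primes exist for an arbitrary reductive $G$ and possibly-ramified $\mc{O}$. Two ingredients must be assembled: a clean relative Greenberg--Wiles formula for multiplier-$\mu$ Selmer systems over general $G$ and $\mc{O}$, which comes from the relative Selmer group formalism of the earlier sections but whose local terms are delicate to track (especially the subtleties flagged in the introduction when $\mc{O}$ is ramified and $\mc{O}/\vpi^2$ is of characteristic $p$); and, more seriously, a Lie-theoretic construction for $p \gg_G 0$ of the ramified Ramakrishna conditions at auxiliary primes $w$ (with $q_w \equiv 1 \pmod{p}$ and prescribed Frobenius eigenvalue behaviour on $M$) that are liftable, of the predicted dimension, and such that a prescribed dual Selmer class restricts nontrivially to the associated local quotient. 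For $\mr{GL}_2$ this last point is classical; for general $G$ it is where ``$p \gg_G 0$'' is really consumed, although it is easier here than in Theorem \ref{mainthmintro} since there is no $p$-adic Hodge theory at $p$ and no need to meet a prescribed formally smooth component.
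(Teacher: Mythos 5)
Your proposal takes a genuinely different route from the paper, and the route has a gap. You propose to kill the dual Selmer group $H^1_{\mc{L}^\perp}(\gal{F, \mc{S}_n}, M^*)$ at each stage by adjoining auxiliary primes with balanced Ramakrishna-type conditions, and then lift $\rho_n$ to $\rho_{n+1}$. But with the rigid local conditions at $v \in \mc{S}$ (tangent space $L_v = 0$ in $H^1$) and balanced conditions at the auxiliary primes, the Greenberg--Wiles formula pins the ratio $|H^1_{\mc{L}}|/|H^1_{\mc{L}^\perp}|$ independently of how many balanced auxiliary primes you add, and for a general number field $F$ and $\br$ that ratio is typically $\leq 1$, indeed often $\ll 1$. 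Once the Selmer group is trivial while dual Selmer is not, a balanced auxiliary prime cannot decrease dual Selmer (any decrease would have to be matched on the Selmer side), so your induction stalls. Saying ``the primal Selmer group has grown, but that is harmless'' is not consistent with the tangent-space sizes you have chosen. Even setting that aside, the classical Ramakrishna argument requires choosing $w$ so that a Selmer class $\phi$ and a dual Selmer class $\psi$ simultaneously restrict into prescribed complements, and the paper explains in detail why this simultaneous control fails for general $\im(\br)$ (classes inflated from $H^1(\im(\rho_n), M)$ are locally tied to the condition on $\Frob_w$); that obstruction is precisely what the rest of the machinery in \S\ref{relativeliftsection} is built to circumvent for the geometric theorem, and you have not addressed it.

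The paper's actual proof of Corollary \ref{infiniteram} never attempts to annihilate dual Selmer. It runs (a relaxed version of) Theorem \ref{klr^N}: at each stage it produces \emph{some} lift $\rho'_{n+1}$ of $\rho_n$ using the vanishing of $\Sha^2_{\mc{T}}$ (which follows from the hypothesis $H^1(\Gal(K/F), M^*)=0$ via the two preliminary items of Assumption \ref{multfree}, not from any Selmer annihilation), measures the local mismatch with the target lifts $\rho_v$ by a tuple $z_{\mc{T}}$ of local classes, and then uses the ``doubling'' device of \cite{klr} (Proposition \ref{klrstep1''}, Lemma \ref{gens}, and Proposition \ref{doublingprop}) to produce a global $h \in H^1(\gal{F, \mc{T} \cup \mc{Q}}, M)$ with $h|_{\mc{T}} = z_{\mc{T}}$ and controlled restrictions at the new primes $\mc{Q}$, via the limiting density argument on the class $2h^{\mc{U}} - h^{\mc{U}'}$. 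The local conditions imposed at the auxiliary trivial primes (Definition \ref{triviallifts}, Lemma \ref{extracocycles^N}) are deliberately \emph{not} balanced mod $\varpi$; their balancedness only kicks in relative to a deep enough lift, which is irrelevant here because one is not trying to close a Poitou--Tate exact sequence. So the proof tolerates an uncontrolled dual Selmer group at every stage, at the sole cost of enlarging the ramification set at each step --- which is exactly what makes the limit infinitely ramified. Your outline instead reproduces the Step-2 relative-Selmer strategy in a setting where its hypotheses (balance, absence of inflated obstructions) are not available, and where the paper has a much cheaper route.
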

The results of Appendix \ref{groupsection} show that for $p \gg_G 0$,
the first two hypotheses of the theorem hold when
$\br|_{\gal{\tF(\zeta_p)}}$ is absolutely irreducible, and
$[\tF(\zeta_p):\tF]$ is greater than an integer $a_G$ (depending only
on $G$) described in Lemma \ref{cyclicq}.

\subsection{Strategy of  proof, the doubling method and  relative deformation theory}

\subsubsection*{\textbf{The method of Hamblen--Ramakrishna}}
We first briefly recall the original technique of Ramakrishna (see
\cite{ramakrishna02} and \cite{taylor:icos2}). Under the oddness
hypothesis, one defines a global Galois deformation problem (over $W(k)$, typically) 
by imposing formally smooth local deformation
conditions on the restriction of $\br$ to primes in $\mc{S}$, and whose
associated (mod $p$) Selmer and dual Selmer groups have the same
dimension (we will informally say that Selmer and dual Selmer are
``balanced"). In this setting an application of the Selmer group
variant of the Poitou--Tate sequence (see \cite[Lemma
1.1]{taylor:icos2}) implies that if the dual Selmer group vanishes,
then the corresponding universal deformation ring is $W(k)$ and
therefore gives rise to a geometric deformation of $\br$. The task,
then, is to allow ramification at a set ${\mc{Q}}$ of auxiliary primes such
that:
\begin{itemize}
\item the allowed ramification at each $q \in {\mc{Q}}$ is a formally smooth local condition;
\item the conditions at $q \in {\mc{Q}}$ have large enough tangent space that the resulting new Selmer and dual Selmer groups remain ``balanced" as we add each $q \in {\mc{Q}}$; and
\item when we have allowed the entire auxiliary set ${\mc{Q}}$ of
  ramification, the dual Selmer group, hence also the Selmer group,
  vanishes.
\end{itemize}

 In the case $F = \Q$, Ramakrishna takes a Steinberg local condition at primes $q \not \equiv \pm 1 \pmod p$ at which $\br$ is unramified with distinct Frobenius eigenvalues with ratio $q$. By comparing splitting conditions on Selmer and dual Selmer classes, he shows (when the projective image of $\br$ contains $\mr{PSL}_2(k)$) that such $q$ can be chosen that inductively decrease the size of Selmer and dual Selmer.

We refer to the process of starting with a Selmer group
$H^1_{\mc{L}}(\Gamma_{F,{\mc{S}}},\br(\fg^{\rm der}))$
\index[terms]{H@$H^1_{\mc{L}}(\Gamma_{F,{\mc{S}}},\br(\fg^{\rm der}))$}
(resp.~dual Selmer
group $H^1_{\mc{L}^\perp}(\Gamma_{F,{\mc{S}}},\br(\fg^{\rm der})^*)$ \index[terms]{H@$H^1_{\mc{L}^\perp}(\Gamma_{F,{\mc{S}}},\br(\fg^{\rm der})^*)$}), with
local conditions defined by subspaces
$L_v \subset H^1(\Gamma_{F_v}, \br(\fg^{\rm der}))$, $v \in {\mc{S}}$, and finding
a finite set of places ${\mc{Q}}$ with subspaces
$L_v \subset H^1(\Gamma_{F_v},\br(\fg^{\rm der}))$, $v \in {\mc{Q}}$, such that
$H^1_{\mc{L}}(\Gamma_{F,{\mc{S}}\cup {\mc{Q}}},\br(\fg^{\rm der}))$
(resp. $H^1_{\mc{L}^\perp}(\Gamma_{F,{\mc{S}} \cup {\mc{Q}}},\br(\fg^{\rm der})^*)$)
is $0$, as killing mod $p$ Selmer (resp.~dual Selmer). We call these
groups {\it intrinsic} mod $p$ (dual) Selmer groups as opposed to {\it
  relative} (extrinsic) versions of them that we define later.

In higher rank, it is better to think of the $\mr{GL}_2$ ``Steinberg''
condition as allowing unipotent ramification in the direction of a
fixed root space and constraining Frobenius to act by the cyclotomic
character on this root space, since a key point in controlling the
Selmer and dual Selmer groups simultaneously is that the Ramakrishna
deformation condition should intersect the unramified condition in a
codimension one subspace. At each step of the inductive argument that
decreases the size of the Selmer groups, one has to, given non-zero
Selmer and dual Selmer classes, be able to choose this root space in a
suitably general position with respect to the images of cocycles
representing these classes. This is not always possible when (as in
\cite{stp:exceptional}) the auxiliary primes are chosen so that
Frobenius acts by a regular semisimple element.  To overcome this, we
follow the path taken in \cite{ramakrishna-hamblen}, and generalize
the notion of \textit{trivial primes} from the work of Hamblen and
Ramakrishna (\cite{ramakrishna-hamblen}) to the present context. Thus,
we use auxiliary primes $v$ having the one behavior we are guaranteed
to find in the image of any representation, namely, that
$\br|_{\gal{F_v}}$ is trivial; note that as $\br(\mr{Frob}_v)$ is then
contained in \textit{every} maximal torus of $G$, we win a great deal
of flexibility in the choice of root space in which to allow
ramification (contrast the condition (6) in \cite[\S
5]{stp:exceptional} with our Proposition \ref{prop:killrel}).

Hamblen and Ramakrishna show how to deform a \textit{reducible} but
indecomposable representation $\gal{\Q} \to \mr{GL}_2(k)$ to an
irreducible representation over $W(k)$ by allowing Steinberg-type
ramification at primes $q$ such that $q \equiv 1 \pmod p$,
$q \not \equiv 1 \pmod {p^2}$, and $\br|_{\gal{\Q_q}}$ is trivial. The
resulting local condition on lifts of $\br|_{\gal{\Q_q}}$ is liftable
but is not representable, the latter point being reflected in the fact
that the local condition behaves very differently modulo different
powers of $p$: while its tangent space is ``too small" for the global
applications, certain lifts mod $p^n$ for $n \geq 3$ do indeed witness
that the condition is coming from a sufficiently large characteristic
zero condition (see Lemma \ref{extracocycles^N} for a precise
formulation of this distinction). The consequence of this distinction
is that the global argument must treat separately the problems of
lifting $\br$ to a mod $p^3$ representation and lifting it modulo
higher powers of $p$. Because of the generality in which we work, and
the demands of the relative deformation theory argument we have to
adopt, we need a more elaborate version that separates the two
problems of lifting mod $\vpi^N$ for some $N \gg 0$ and lifting beyond
mod $\vpi^N$.

 \subsubsection*{\textbf{The lifting method of this paper}}
 Now we come to the main technical innovations in the paper.  We do
 not spell out here the local conditions at the places where we allow
 our representations to ramify (for which see \S
 \ref{trivialsection}), but instead concentrate on the shape of the
 global arguments that are novel to our work. (The notation used here is lighter, less accurate, and not identical
 to what is used in the main text.)
 
 We find it convenient first to make a couple of definitions.  Let $v$
 be a finite place of $F$ and
 $\rho_M:\Gamma_{F_v} \to G(\mc O/\vpi^M)$ be a lifting of a residual
 representation $\br: \Gamma_{F_v} \to G(k)$. For any
 $G(\mc{O}/\vpi^r)$-valued homomorphism $\rho_r$ of a local or global
 Galois group, $\rho_r(\fgder)$ \index[terms]{R@$\rho_r(\fgder)$} will
 denote $\fgder \otimes_{\mc{O}} \mc{O}/\vpi^r$ equipped with the
 $\Ad \circ \rho_r$ action.
\begin{defn}\label{balancedness}
   We say that an $\mc{O}$-submodule \index[terms]{L@$L_{M,v}$}
   $L_{M,v} \subset H^1(\gal{F_v},\rho_M(\fgder))$ is \emph{balanced}
   \index[terms]{balanced} if
\begin{equation*} \label{eq:orderintro}
  |L_{M,v}| = \begin{cases}
    |\rho_M(\fgder)^{\Gamma_{F,v}}| &\mbox{ if } v \nmid p; \\
    |\rho_M(\fgder)^{\Gamma_{F,v}}|\cdot
    |\mc{O}/\varpi^M|^{\dim_k(\mf{n})[F_v:\Q_p]} & \mbox{ if } v \mid p.
  \end{cases}
\end{equation*}
Here $\mf{n}$ \index[terms]{N@$\mf{n}$} is the Lie algebra of the unipotent radical of a Borel
subgroup of $G$.  \end{defn}

\begin{defn}\label{relgoodpos}
  Given positive integers $M \leq m$, and a place $v$ of $F$, we say
  that, for $n \geq m$, a pair of representations $(\rho_n,\rho_{n+M})$, with
  $\rho_n \colon \gal{F_v} \to G(\mc{O}/\vpi^n)$,
  $\rho_{n+M} \colon \gal{F_v} \to G(\mc{O}/\vpi^{n+M})$, and
  $\rho_{n+M}$ reducing to $\rho_n$ modulo $\vpi^n$, is in {\it
    relative good position} with respect to the data
  $\left( \{ {\rm Lift}_v(\mc O/\vpi^n) \}_{n \geq m},L_{M,v} \right)$ if:
 \begin{itemize} 
 \item For all $n \geq m$, ${\rm Lift}_v(\mc O/\vpi^n)$  is a set of lifts  of $\br$ such that reduction induces a surjective map  ${\rm Lift}_v(\mc O/\vpi^{n+M}) \to {\rm Lift}_v(\mc O/\vpi^n)$;
  \item  $\rho_n$ and  $\rho_{n+M}$ belong to ${\rm Lift}_v(\mc O/\vpi^{n})$ and  ${\rm Lift}_v(\mc O/\vpi^{n+M})$;
\item $L_{M,v} \subset H^1(\gal{F_v},\rho_M(\fgder))$ is an $\mc
  O$-submodule, and the fibers of the map ${\rm Lift}_v(\mc
  O/\vpi^{n+M}) \to {\rm Lift}_v(\mc O/\vpi^n)$ are  stable under the natural  action of the preimage of $L_{M,v}$ in the space of one-cocycles;
\item $L_{M,v}$ is balanced.
\end{itemize}
  
If the data $\left(\{{\rm Lift}_v(\mc O/\vpi^n)\}_{n \geq m},L_{M,v}\right)$ is understood, then
we simply say that the pair of representations $(\rho_n,\rho_{n+M})$
is in relative good position.

\end{defn}

 Our results as in Theorem \ref{mainthmintro}  have a local-global
 flavor, and for simplicity in this section we assume that the  $\mc
 O$-valued  lifts $\rho_v$ for $v \in {\mc{S}}$ that we interpolate (mod
 $\vpi^{t_0}$) are smooth points of the corresponding local framed
 deformation ring.  (In this case our methods prove the existence of
 geometric lifts of $\br$ that are themselves $\mc{O}$-valued, without the need to make a finite extension.)  By a
 lemma that we deduce from a result of Serre (cf.~Lemma
 \ref{lem:serre}), given a positive integer
 $M$ (specified in advance, and for us coming from the global setup), there exists an $n_0 \gg 0$, in particular $n_0 \geq  {\rm
   max}(t,M)$, and an open neighborhood of
 the lift $\rho_v$ (viewed as a point in the generic fibre of a
 suitable framed deformation ring), enabling one for all $n\geq n_0$ to specify sets of lifts ${\rm Lift}_v({\mc O}/\vpi^n)$ that reduce to  $\rho_v \pmod{\vpi^{n_0}}$, and  
such that the fibers of the surjective  maps ${\rm Lift}_v({\mc
  O}/\vpi^{n+r})  \rightarrow {\rm Lift}_v({\mc O}/\vpi^n)$  for all $r
\leq M$ are  stable  under an $\mc O$-submodule  of cocycles in $Z^1(\Gamma_{F_v}, \rho_r(\fgder))$ that contains all coboundaries, and whose image in $H^1(\Gamma_v,\rho_r(\fgder))$ is  balanced (cf.~Definition \ref{balancedness}).  This purely local condition implies that the corresponding  Selmer groups $H^1_{\mathcal{L}_r}(\Gamma_{F,{\mc{S}}},\rho_r(\fgder))$  have the same cardinality
as $H^1_{\mathcal{L}_r^{\perp}}(\Gamma_{F,{\mc{S}}},(\rho_r(\fgder))^*)$. Furthermore the lifts in (the non-empty set) $\varprojlim {\rm Lift}_v({\mc O}/\vpi^n)$ lie in the same component of the local framed deformation ring as $\rho_v$ (for $v \in {\mc{S}}$).

As in \cite{ramakrishna-hamblen}, our work takes place in two steps
with different flavors.

\subsubsection*{\textbf{Step 1: Mod $\vpi^N$ liftings using the doubling method of \cite{klr}}}
Let $N$ be a positive integer, which we will choose sufficiently large compared to an integer $M$ coming from the global situation (namely, $\im(\br)$) and compared to local information (the fixed lifts $\rho_v$, $v \in {\mc{S}}$, and the local geometry of local lifting rings around the $\rho_v$, as just discussed). Using a refinement of the
doubling method that was introduced in \cite{klr}, we lift $\br$ to a
$\rho_N \colon \gal{F, {\mc{S}}'} \to G(\mc{O}/\vpi^N)$ that at places
$v \in {\mc{S}}$ equals (up to strict equivalence) the given liftings $\rho_v \pmod{\vpi^{t_0}}$. In doing 
so we have to enlarge the set ${\mc{S}}$ to a finite set of places ${\mc{S}}'
\supset {\mc{S}}$ by
allowing ramification at an auxiliary set of primes (cf.~Definition
\ref{relativetriv} and Lemma \ref{extracocycles^N}), which are again a
generalization of the trivial primes of Hamblen--Ramakrishna, and we have to
specify a class of liftings ${\rm Lift} _v({\mc O}/\vpi^n)$,
for $n \geq N-M$, $v \in {\mc{S}}'$, such that $\rho_N|_{\Gamma_{F_v}}$ is such a lifting,
and such that for all $v \in {\mc{S}}'$ and $1 \leq r \leq M$, the fibers of
the surjective map
${\rm Lift}_v({\mc O}/\vpi^{n+r}) \rightarrow {\rm Lift}_v({\mc
  O}/\vpi^n)$ are stable  under a set of
cocycles in $Z^1(\Gamma_{F_v}, \rho_r(\fgder))$ whose image $L_{r,v} $ in
$H^1(\Gamma_{F_v},\rho_r(\fgder))$ is balanced.

The construction of the lift $\rho_N \colon \gal{F, {\mc{S}}'} \to
G(\mc{O}/\vpi^N)$   is achieved by a  generalization of the methods of
\cite{klr} and \cite{ramakrishna-hamblen}, new arguments being  needed
to handle general $\im(\br)$.  In \S \ref{klrsection}, we start with
any mod ${\vpi^2}$ lift $\rho_2$ of $\br$ (easily seen to exist after
enlarging ${\mc{S}}$ by a set of trivial primes); to further lift it we
modify $\rho_2$ so that its local restrictions at primes of
ramification match certain specified local lifts (namely, those coming
from the reductions of the $G(\mc{O})$-lifts $\rho_v$ assumed to exist in
Theorem \ref{mainthmintro}). This leads to the following question:
given local cohomology classes $z_{\mc{T}}= (z_w)_{w \in {\mc{T}}} \in \bigoplus_{w
  \in {\mc{T}}} H^1(\gal{F_w}, \br(\fgder))$ (here ${\mc{T}}$ will be a finite set
of primes containing the original set ${\mc{S}}$ of ramification), can we
find a global class $h \in H^1(\gal{F, {\mc{T}}}, \br(\fgder))$ such that
$h|_{\gal{F_w}}=z_w$ for all $w \in {\mc{T}}$?

The answer is no, so we aim for the next best thing: to enlarge ${\mc{T}}$ to
a finite set ${\mc{T}} \cup {\mc{U}}$, and to find a class
$h^{\mc{U}} \in H^1(\gal{F, {\mc{T}} \cup {\mc{U}}}, \br(\fgder))$ such that $h^{\mc{U}}|_{\mc{T}}=
z_{\mc{T}}$. This would allow us to modify $\rho_2$ to some
$(1+\vpi h^{\mc{U}})\rho_2$ that is well-behaved at primes in ${\mc{T}}$. The
problem here is that we sacrifice control at the primes in ${\mc{U}}$, and
this necessitates the use of an idea from \cite{klr} (as exploited in
a simpler setting than ours by \cite{ramakrishna-hamblen}), which we will refer
to as the ``doubling method": roughly speaking, we consider two such
sets ${\mc{U}}$ and ${\mc{U}}'$, with corresponding cocycles $h^{\mc{U}}$ and $h^{{\mc{U}}'}$. By
considering all possibilities $(1+\vpi(2h^{\mc{U}}-h^{{\mc{U}}'}))\rho_2$ as ${\mc{U}}$ and
${\mc{U}}'$ vary (each through \v{C}ebotarev multi-sets of primes), we show
by a limiting argument that there is \textit{some} pair of ${\mc{U}}$ and ${\mc{U}}'$ (but not, as far as we can tell, a
\v{C}ebotarev set of possible choices!) such that
$\rho'_2= (1+\vpi(2h^{\mc{U}}-h^{\mc{U}'}))\rho_2$ both has the desired behavior
at ${\mc{T}}$ and is under enough control at ${\mc{U}}$ and ${\mc{U}}'$ (the detailed
desiderata come out of Lemma \ref{extracocycles^N}) for subsequent
steps in the lifting argument. In \S \ref{klr^Nsection} we run a more
complicated version of this argument, iterating it to produce the
desired lift modulo $\vpi^N$. In both of these arguments, handling the
case of general $\im(\br)$ poses a significant challenge beyond the
$\mr{GL}_2$ arguments of \cite{klr} and \cite{ramakrishna-hamblen}; in
particular, handling multiplicities in the
$\Fp[\gal{F}]$-decomposition of $\br(\fgder)$ requires new arguments.

To summarize, at each stage in Step 1, when constructing a mod
$\varpi^{n+1}$ lifting of a mod $\varpi^n$ lift, the doubling method
(which is based on duality) needs to allow more primes to ramify to
overcome obstructions. This is used to overcome the numerical
disadvantage one is at for small $n$, on account of the tangent space
to the local conditions at the auxiliary trivial primes being ``too
small" and not balanced. In particular, since new ramification is
added in lifting from each $G(\mc{O}/\vpi^n)$ to
$G(\mc{O}/\vpi^{n+1})$, Step 1 alone will not lead to geometric lifts
of $\br$, and we move to Step 2. (Step 1, however, does suffice for
the proof of Theorem \ref{mainthmB}.)

\subsubsection*{\textbf{Step 2: Relative deformation theory}} Having ``risen
above'' the singularities of the local deformation rings by lifting
$\br$ to $\rho_N \colon \gal{F, {\mc{S}}'} \to G(\mc{O}/\vpi^N)$ for
$N \gg 0$, we would like to find a geometric lift of $\rho_N$
following the methods of Hamblen--Ramakrishna
\cite{ramakrishna-hamblen} (who only needed to consider $N=2$). We run
into the problem that we may not be able to kill the dual Selmer group
$H^1_{\mathcal{L}^{\perp}}(\Gamma_{F,{\mc{S}}'}, \br(\fg^{\rm der} )^*
)$.  To kill this group we are obliged to kill the corresponding
Selmer group following Ramakrishna's technique, and the classes coming
from inflation from $H^1({\rm im}(\rho_N), \br(\fg^{\rm der}))$ cannot
be killed using ``trivial primes'' that are good for $\rho_N$ (as in
Definition \ref{relativetriv}). Note that when $\mc O$ is ramified,
the group $H^1({\rm im}(\rho_N), \br(\fg^{\rm der}))$ does not vanish
even for $N=2$ (as $\mc O/\vpi^2$ is isomorphic to the dual numbers
$k \oplus k [\varepsilon]$ as alluded to earlier). It also does not
vanish for certain choices of $\im(\br)$ even when $\mc{O}$ is
unramified (cf. \cite[Example 5.5]{fkp:arxivold}).
 
One of our main observations at this stage is that we can still kill,
for an appropriate choice of $M \leq N$ and for a choice of a finite
set of ``trivial primes'' ${\mc{Q}}$, the relative mod $p$ dual Selmer
group
$\ov{H^1_{\mc{L}_M^{\perp}}(\Gamma_{F,{\mc{S}' \cup \mc{Q}}},
  \rho_M(\fgder)^*)}$, namely the image of the map
$H^1_{\mathcal{L}_M^{\perp}}(\Gamma_{F,{\mc{S}}' \cup
  {\mc{Q}}},\rho_M(\fgder)^*) \rightarrow
H^1_{\mathcal{L}_1^{\perp}}(\Gamma_{F,{\mc{S}}' \cup
  {\mc{Q}}},\br(\fgder)^*)$ (cf.~Definition \ref{def:relativeSelmer}).
Here the choice of $M$ is dictated by having to ensure that the image
of
$H^1({\rm im}(\rho_N), \rho_M(\fg^{\rm der})) \rightarrow H^1({\rm
  im}(\rho_N), \br(\fg^{\rm der}))$ is $0$ (cf.~Lemma \ref{lemma:fn})
for $N \geq M$. This is deduced from well-known results on vanishing
of cohomology of semisimple Lie algebras and some results of Lazard
on cohomology of $p$-adic Lie groups (cf.~Corollary
\ref{cor:lazard}). A different consequence of the results of Lazard
has been used in a related context previously by Kisin (see
\cite[Lemma 6.2, Proposition 6.4]{kisin:geometricdef}); but whereas
\textit{loc.~cit.~}applies these results to study (via the
Taylor--Wiles method) the structure of a global deformation space at a
fixed characteristic zero point, we are using them in a quite
different way to show the existence of a characteristic zero point.

In addition to Lazard's results, a crucial ingredient
of the proof of the killing of relative (dual) Selmer (cf.~Proposition
\ref{prop:killrel} and Theorem \ref{thm:killrel}), beyond the
versatility of trivial primes for killing cohomology classes, is that
the relative mod $p$ Selmer and  mod $p$ dual Selmer groups  are balanced under the
assumption that $\br(\fgder)$ and $\br(\fgder)^*$ have no global
invariants (cf.~Lemma \ref{lem:bal}).

Because of our inability to kill intrinsic mod $p$ dual Selmer, and that we are able to kill  only a relative version of it,  after allowing ramification at a  finite set of trivial primes ${\mc{Q}}$
which have prescribed properties in $\rho_N$, we have to consider a more elaborate deformation
problem. Thus instead of directly lifting $\rho_N$, we now exploit the fact
that the lifting $\rho_N$ produced in Step 1, and the set ${\mc{Q}}$ used to
kill relative dual Selmer, is such that the pair
$(\rho_{N-M},\rho_N)$ is in ``relative good position'' (cf.~Definition
\ref{relgoodpos}) at all places in ${\mc{S}}' \cup {\mc{Q}}$
, including the places in ${\mc{S}}$ (this requires having chosen $N \gg 0$ relative both to $M$ and to the bounds coming from our local analysis). 
    We  construct pairs of representations $(\tau_n, \rho_{n+M})$ in relative good position at all places $v \in {\mc{S}}' \cup {\mc{Q}}$,
starting for  $n=N-M$ with the pair $(\rho_{N-M}, \rho_N)$. Namely, for each $n \geq N-M$, we  inductively construct  pairs of (fixed multiplier $\mu$) liftings $(\tau_n, \rho_{n+M})$, where $\tau_n \colon \gal{F, {\mc{S}}' \cup {\mc{Q}}} \to G(\mc{O}/\vpi^n)$ and $\rho_{n+M} \colon \gal{F, {\mc{S}}' \cup {\mc{Q}}} \to G(\mc{O}/\vpi^{n+M})$, with the following properties:
\begin{enumerate}
\item For each $v \in {\mc{S}}' \cup {\mc{Q}}$, $\tau_n|_{\gal{F_v}}$ belongs to
  ${\rm Lift}_v(\mc O/\vpi^n)$, and $\rho_{n+M}|_{\gal{F_v}}$ belongs to ${\rm Lift}_v(\mc O/\vpi^{n+M})$
\item $\tau_{n+1}= \rho_{n+M} \pmod{\vpi^{n+1}}$.
\item $\tau_n = \tau_{n+1} \pmod{ \vpi^n}$.
\item $(\tau_n, \rho_{n+M})$ are in relatively good position  at all places $v \in {\mc{S}}' \cup {\mc{Q}}$.
\end{enumerate}

\centerline{
\xymatrix{
(\tau_{N-M},&\rho_N)  \ar@{=>}[dl] \\
(\tau_{N+1-M},             &  \rho_{N+1}) \\
\ldots & \\
(\tau_n,  & \rho_{n+M}) \ar@{=>}[dl] \\
(\tau_{n+1}  & \rho_{n+1+M}) \\
\ldots &
}}

The $\rho_{n+M}$ may not be compatible as we increase $n$, but as the
$\tau_n$'s are we get our desired geometric lift (of $\rho_{N-M}$ and hence of $\br$) by setting  $\rho= \varprojlim \tau_n \colon \gal{F, {\mc{S}}' \cup {\mc{Q}}} \to G(\mc O)$.

To carry out the inductive step we use the vanishing of the relative dual Selmer and the ``diagram of relative deformation theory'' which arises by comparing the Poitou--Tate sequence for $\rho_M(\fgder)$ and $\rho_{M-1}(\fgder)$ coefficients:

  \[
    \xymatrix{
       H^1(\Gamma_{F, {\mc{S}}' \cup {\mc{Q}}}, \rho_M(\fgder)) \ar[r]
      \ar[d] & \bigoplus_{v \in {\mc{S}}' \cup {\mc{Q}}} \frac{H^1(\Gamma_{F_v}, \
      \rho_M(\fgder))}{L_{M,v}} \ar[r] \ar[d] &
    H^1_{\mc{L}_{M}^{\perp} \cup \{L_{M, v}^\perp\}_{v \in {\mc{Q}}}}(\Gamma_{F, {\mc{S}}' \cup {\mc{Q}}}, \rho_M(\fgder)^*)^{\vee} \ar[d]\\
      H^1(\Gamma_{F, {\mc{S}}' \cup {\mc{Q}}}, \rho_{M-1}(\fgder)) \ar[r]
       & \bigoplus_{v \in {\mc{S}}' \cup {\mc{Q}}} \frac{H^1(\Gamma_{F_v}, \
      \rho_{M-1}(\fgder))}{L_{M-1,v}} \ar[r]  &
    H^1_{\mc{L}_{M-1}^{\perp}\cup \{L_{M-1, v}^\perp\}_{v \in {\mc{Q}}}}(\Gamma_{F, {\mc{S}}' \cup {\mc{Q}}}, \rho_{M-1}(\fgder)^*)^{\vee} \\
  }
\]
in which the rows come from (a part) of the Poitou--Tate exact sequence, and the vertical arrows are induced by the reduction map $\rho_M(\fgder) \to \rho_{M-1}(\fgder)$ (with the third vertical arrow arising from dualizing twice), 
cf.  \S \ref{relativeliftsection} and  Theorem \ref{mainthm} (especially Claim \ref{relativediagram}).

\begin{rmk} $ $
\begin{itemize}
\item Thus the arguments are relative in two different, albeit
  related, aspects and apply once we have  lifted $\br$ to $\rho_N$
  for $N \gg 0$.
\begin{itemize}
\item The lifting arguments are relative to $\rho_N$. In fact we lift
  $\rho_{N-M}$ for suitable $0 \ll M \ll N$. This
  allows us to use only generic smoothness of generic fibres of local
  deformation rings.
\item The lifting problem is relative in that we lift pairs of representations. This allows us to get away with killing relative mod $p$ dual Selmer groups rather than intrinsic mod $p$  dual Selmer groups.
\end{itemize}

\item The obstacle that relative deformation theory helps overcome is
  that given $\rho_N: \Gamma_{F,{\mc{S}}’} \to G(\mc O/\vpi^N)$, imposing on
  a place $v$ that $\rho_N(\Frob_v)$ has a prescribed shape and that a
  given class $\phi \in H^1_{\mathcal{L}_M}(\Gamma_{F,{\mc{S}}’},\rho_M(\fgder))$ does not lie locally in a
  prescribed subspace $L_v \subset H^1(\Gamma_{F_v},\rho_M(\fgder))$
  might be incompatible.  The incompatibility, essentially a lack of
  linear disjointness issue, arises from the classes that are inflated
  from $H^1(\mr{im}(\rho_N),\rho_M(\fgder))$. This issue has been
  encountered before, for example in \cite{taylor:icos2}, \cite{khare:approx}, and \cite{khare-ramakrishna:liftingtorsion}.

  In the paper \cite{khare-ramakrishna:liftingtorsion} this issue is overcome by a different route
  which works under an ordinarity assumption (and certain smoothness assumptions). In \cite{khare-ramakrishna:liftingtorsion}, $\rho_N$  is first lifted to an ordinary, possibly non-geometric, representation by
   making the global, ordinary at places above $p$,
  deformation ring unobstructed.  As a second step, \cite{khare-ramakrishna:liftingtorsion} uses the
  smoothness of the global  ordinary deformation ring to massage the
  non-geometric lift to a geometric one (using the ideas of \cite{klr}). The relative deformation
  argument of this paper deals with the lack of linear disjointness
  alluded to above by a more direct route without relaxing
  the Selmer conditions at places above $p$.
  
  \item The analysis at the auxiliary (``trivial") primes in \cite{ramakrishna-hamblen} implicitly exploits the idea of using generic smoothness of generic fibers of local deformation rings. Additionally, G.~B\"{o}ckle has lifted odd irreducible representations $\br: \Gamma_\Q \rightarrow \mr{GL}_2(k)$
to geometric representations that are {\it finitely}  ramified at a finite auxiliary set in an unpublished manuscript that is of a few years' vintage. In this work he too anticipates using generic smoothness of generic fibers of local deformation rings, rather than finer information like  their formal smoothness.

\item In this work we use the vanishing of relative mod $p$ dual
  Selmer to prove  lifting results for residual Galois
  representations.  The vanishing of the  relative mod $p$ Selmer group 
   $\ov{H^1_{\mc{L}_M}(\Gamma_{F,{\mc{S}'} \cup {\mc{Q}}}, \rho_M(\fgder))}$ ensures infinitesimal uniqueness of  certain lifts of $\rho_n$ for $n \geq N$ with specified local properties at a finite set of places ${\mc{S}'} \cup {\mc{Q}}$.
This might be useful in proving automorphy of Galois representations.

 \item In an earlier version of this paper (\cite{fkp:arxivold},
   available at {\tt arXiv:1810.05803v1}), we could deal with the
   non-vanishing of $H^1(\im(\rho_2), \br(\fg^{\rm der}))$ (i.e., the
   lack of linear disjointness alluded to above) for $\rho_2$ a
   $W_2(k)$-valued lift of $\br$ (essentially) only when the adjoint
   representation $\br(\fgder)$ was multiplicity free as
   $\Fp[\gal{F}]$-module, with the help of a group-theoretic argument
   we owe to Larsen. This has been obviated in the present
   approach. The present paper supersedes the older version
   \cite{fkp:arxivold}.  The older version is no longer intended for
   publication in a journal, but will remain available on the
   arXiv. Some of the arguments of \cite{fkp:arxivold} might prove
   useful, for example in killing mod $p$ intrinsic (rather than
   relative) Selmer groups in certain situations.

\end{itemize}
\end{rmk}

Here is a summary of the contents of the paper. In \S \ref{trivialsection} and \S \ref{genericfibersection} we carry out the necessary preliminaries in local deformation theory: \S \ref{trivialsection} studies the local theory at our auxiliary primes, and \S \ref{genericfibersection} explains the consequences of results on generic fibers of local deformation rings.  We explain the relative deformation theory argument and prove the main result, Theorem \ref{mainthm}, in \S \ref{relativeliftsection}, using as input the technical mod $\vpi^{\n}$ lifting results of \S \ref{klrsection} and \S \ref{klr^Nsection}. In \S \ref{examples} we gather a few examples of the main theorem.  In  Appendix   \ref{groupsection} we present the group theory arguments needed to streamline some of the global hypotheses on $\br$ in \S \ref{klrsection} and \S \ref{klr^Nsection} to an irreducibility hypothesis.  In  Appendix  \ref{lazardsection} we deduce    results on cohomology of $p$-adic Lie groups  that are necessary  for the relative deformation theory argument.

Finally, we remark that a number of arguments are made technically
more intricate by the fact that we have worked with groups $G$ having
rather general (finite \'{e}tale, order prime to $p$) component groups. To get the essence of the argument nothing is lost if the reader  focuses on the case of connected adjoint groups $G$.

\subsection{Notation and conventions}\label{notation}
We embed local Galois groups into global Galois groups by fixing
embeddings $\overline{F} \into \overline{F}_v$. Certain assertions in this paper will depend on the particular choice of embedding: see Notation \ref{doublingnotation} for details and clarifications on this point. We write $\kappa$ \index[terms]{k@$\kappa$} for
the $p$-adic cyclotomic character and $\bar{\kappa}$ for its mod $p$
reduction, and we write $\Q_p/\Z_p(1)$ for the abelian group $\Q_p/\Z_p$ equipped with Galois module structure via $\kappa$. We once and for all fix an isomorphism (i.e. a compatible
collection of $p$-power roots of unity) $\zeta \colon \Q_p/\Z_p(1)
\xrightarrow{\sim} \mu_{p^{\infty}}(\overline{F})$, in turn identified with $
\mu_{p^\infty}(\overline{F}_v)$ when we have fixed $\ov{F} \into \ov{F}_v$, and use this to identify the Tate dual $V^*= \Hom(V,
  \mu_{p^{\infty}}(\overline{F}))$ of a $\gal{F}$-module $V$ with
  $\Hom(V, \Q_p/\Z_p(1))$. The reader should always assume we are
  doing this; only in the proof of Lemma \ref{localduality} will we
  make the identifications explicit. At some places $v$, we will need
  to analyze certain kinds of tamely-ramified deformations, and we
  will routinely write $\sigma_v$ and $\tau_v$ (or $\sigma$ and $\tau$ if there is no risk of confusion) for elements of $\Gal(F_v^{\mr{tame}, p}/F_v)$, $F_v^{\mr{tame}, p}$ denoting the maximal tamely-ramified with $p$-power ramification quotient of $\gal{F_v}$, that lift the (arithmetic) Frobenius and a generator of the $p$-part of the tame inertia group. See the beginning of \S \ref{trivialsection} for a discussion of normalization.

For any finite set of primes ${\mc{S}}$ of $F$, we let $\gal{F, {\mc{S}}}$ denote $\Gal(F({\mc{S}})/F)$, where $F({\mc{S}})$ is the maximal extension of $F$ inside $\overline{F}$ that is unramified outside the primes in ${\mc{S}}$; here we impose no constraint on the ``ramification" at $\infty$, but for notational convenience we do not want the set ${\mc{S}}$ to contain the archimedean places (as would often be the convention for what we are referring to as $\gal{F, {\mc{S}}}$).

Given a homomorphism $\rho \colon \Gamma \to H$ for some groups $\Gamma$ and $H$, and an $H$-module $V$, we will write $\rho(V)$ for the associated $\Gamma$-module (we typically apply this with $V$ the adjoint representation of an algebraic group). 

Let $\mc{O}$ be the ring of integers in a finite extension $E$
of $\Q_p$, let $\vpi$ be a uniformizer of $\mc{O}$, let
$m_{\mc{O}} = (\vpi)$ be the maximal ideal and let $k = \mc{O}/{m_{\mc{O}}}$ be
the residue field.  Let $e$ be the ramification index of
$\mc{O}/\Z_p$. For integers $0 < r \leq s$, we use the choice of
$\varpi$ to view $\mc{O}/\varpi^r$ as a submodule of
$\mc{O}/\varpi^s$ via the map induced by $1 \mapsto \vpi^{s-r}$.

Throughout this paper, except in Appendix \S \ref{lazardsection}, $G$
will be a smooth group scheme over $\mc{O}$ with Lie algebra
$\mathfrak{g}$ such that $G^0$ is split connected reductive, and
$G/G^0$ is finite \'{e}tale of order prime to $p$. We will sometimes
write $\pi_0(G)$ for this quotient $G/G^0$. We let $G^{\mr{der}}$
denote the derived group of $G^0$, and we denote by $\fgder$ the Lie
algebra of $G^{\mr{der}}$; in the sequel, when there is no chance of
confusion, we will sometimes abuse notation and use $\fgder$ also for
$\fgder \otimes_{\mc{O}}k$. We also let $Z_{G^0}$ and $\mf{z}_G$ be
the center of $G^0$ and its Lie algebra. More generally, for an
algebraic group $H$ we shall usually use $\mf{h}$ (i.e., the
corresponding lower case fraktur letter) to denote its Lie
algebra. 

For any complete local $\mc{O}$-algebra $\mc{O}'$ we set
$\wh{G}(\mc{O}') := \ker(G(\mc{O}') \to G(k'))$\index[terms]{G@$\wh{G}$}, where $k'$ is the
residue field of $\mc{O}'$, and we likewise set \index[terms]{G@$\wh{G^{\mr{der}}}$} $\wh{G^{\mr{der}}}(\mc{O}')= \ker(G^{\mr{der}}(\mc{O}') \to G^{\mr{der}}(k'))$.

For a continuous representation $\rho_s:\Gamma \to G(\mc{O}/\varpi^s)$
of a (topological) group $\Gamma$, we shall denote by $\rho_s(\fgder)$
the $\mc{O}$-module $\fgder \otimes_{\mc{O}}\mc{O}/\vpi^s$ with the
action of $\Gamma$ given by $\mr{Ad} \circ \rho_s$. Given moreover $r$ such that $0 <r \leq s$, we set $\rho_r:=  \rho_s \pmod {\vpi^r}$, and
the choice of uniformizer gives us inclusions $\rho_r(\fgder) \to \rho_s(\fgder)$
which we shall use without further mention.

\section{Deformation theory preliminaries}\label{defprelimsection}

\textbf{The following
  assumption on $p$ will implicitly be in effect for the remainder of
  the paper:}
\begin{assumption}\label{minimalp}
We assume that $p \neq 2$ is very good (\cite[\S 1.14]{carter:finitelie}) for $G^{\mr{der}}$, which in particular holds if $p \geq 7$ and $p \nmid n+1$ whenever $G^{\mr{der}}$ has a simple factor of type $A_n$. We also assume that the canonical central isogeny $G^{\mr{der}} \times Z_G^0 \to G^0$ has kernel of order prime to $p$ (and in particular is \'{e}tale). 
\end{assumption}
Then in particular we have a $G$-equivariant direct sum decomposition $\fg= \fgder \oplus \mf{z}_G$, $(\fgder)^{G^0}$ is zero, and there is a non-degenerate $G$-invariant trace form $\fgder \times \fgder \to k$ (\cite[1.16]{carter:finitelie}). The isogeny $G^{\mr{der}} \to G^{\mr{ad}}$ to the adjoint group of $G^{\mr{der}}$ also induces an isomorphism on Lie algebras. 

Let $\Gamma$ be a profinite group, and let $\br \colon \Gamma \to G(k)$ be a continuous homomorphism. Set 
\[
\bar{\mu}:= \br \pmod{G^{\mr{der}}} \colon \Gamma \to G/G^{\mr{der}}(k),
\] 
and fix a lift \index[terms]{M@$\mu$}  $\mu \colon \Gamma \to
G/G^{\mr{der}}(\mc{O})$ of \index[terms]{M@$\bar{\mu}$} $\bar{\mu}$;
we can always choose the ``Teichm\"{u}ller" lift, since
$G/G^{\mr{der}}(k)$ has order prime to $p$. Let $\mc{C}_{\mc{O}}$ be
the category of complete local noetherian algebras $R$ with $\mc{O}
\to R$ inducing an isomorphism of residue fields (and morphisms the
local homomorphisms), and let $\mc{C}_{\mc{O}}^f$
\index[terms]{C@$\mc{C}_{\mc{O}}^f$} be the full subcategory of those algebras that are artinian.  Note that for any $R \in \mc{C}_{\mc{O}}$, $\pi_0(G)(R) \xrightarrow{\sim} \pi_0(G)(k)$, so we will just identify any $\pi_0(G)(R)$ to this fixed finite group $\pi_0(G)(k)$. 

Define the lifting and deformation functors \index[terms]{L@$\Lift_{\br}$} \index[terms]{D@$\Def_{\br}$}
\index[terms]{L@$\Lift_{\br}^\mu$} \index[terms]{D@$\Def_{\br}^\mu$}
\[
\Lift_{\br}, \Def_{\br}, \Lift_{\br}^\mu, \Def_{\br}^\mu \colon \mc{C}_{\mc{O}} \to \Sets 
\]
by letting $\Lift_{\br}(R)$ be the set of lifts of $\br$ to $G(R)$, and by letting $\Lift_{\br}^\mu(R) \subset \Lift_{\br}(R)$ be the subset of lifts $\rho$ such that $\rho \pmod{G^{\mr{der}}}= \mu$; and then letting $\Def_{\br}(R)$ be the quotient of $\Lift_{\br}(R)$ by the equivalence relation
\[
\text{$\rho \sim \rho' \iff \rho= g \rho' g^{-1}$ for some $g \in \wh{G}(R)= \ker(G(R) \to G(k))$,} 
\]
and letting $\Def_{\br}^{\mu}(R)$ be the quotient of $\Lift_{\br}^{\mu}(R)$ by the equivalence relation
\[
\text{$\rho \sim \rho' \iff \rho= g \rho' g^{-1}$ for some $g \in \wh{G^{\mr{der}}}(R)= \ker(G^{\mr{der}}(R) \to G^{\mr{der}}(k))$.} 
\]
Note that $\wh{G}(R)= \wh{G^0}(R)$, and that conjugation by $\wh{G^{\mr{der}}}(R)$ preserves the property of having multiplier $\mu$. When $G=G^0$, $\wh{G}(R)$ and $\wh{G^{\mr{der}}}(R)$-conjugation induce the same equivalence relation on lifts since
$G^{\mr{der}} \cap Z_{G^0}$ has order prime to $p$. The tangent spaces of the lifting functors are canonically isomorphic to $Z^1(\Gamma, \br(\fg))$ (recall that $\br(\fg)$ is the $\Gamma$-module obtained by composing $\br$ with the adjoint representation of $G$) and $Z^1(\Gamma, \br(\fgder))$; the tangent spaces of the corresponding deformation functors are canonically isomorphic to $H^1(\Gamma, \br(\fg))$ and $H^1(\Gamma, \br(\fgder))$, and (by our running assumptions) the latter is a direct summand of the former. In some cases we will have a global Galois representation valued in a non-connected group $G$, but it will be convenient to develop certain local deformation conditions only for the group $G^0$: since $\wh{G}$ is contained in $G^0$ (and as above $\pi_0(G)$ has order prime to $p$), a $G^0$-deformation of a $G^0(k)$-valued $\br$ is exactly the same thing as a $G$-deformation of a $G^0$-valued $\br$. 

As usual, when $R \to R/I$ is a small extension the obstruction to lifting a $\rho \in \Lift_{\br}(R/I)$ to a $\tilde{\rho} \in \Lift_{\br}(R)$ is a class in $H^2(\Gamma, \br(\fg) \otimes_k I)$ (the two-cocyle one defines by choosing a topological lift of $\rho$ to
$G(R)$ takes values in $\ker(G(R) \to G(R/I))= \ker( G^0(R) \to G^0(R/I))= \exp(\fg \otimes_k I)$), and similarly for deforming lifts
of type $\mu$. Note also that when $\rho \in \Lift^\mu_{\br}(R/I)$ has a lift $\tilde{\rho} \in \Lift_{\br}(R)$, then it also has a lift in
$\Lift^\mu_{\br}(R)$: the discrepancy between $\tilde{\rho} \pmod{G^{\mr{der}}}$ and $\mu$ is measured by a class in $H^1(\Gamma, \bar{\mu}(\fg/\fgder)) \otimes_k I$, and the canonical map $H^1(\Gamma, \br(\mf{z}_G)) \to H^1(\Gamma, \bar{\mu}(\fg/\fgder))$ is an isomorphism (as $G^{\mr{der}} \cap Z_{G^0}$ has order prime to $p$). Thus we can modify the lift $\tilde{\rho}$ to one of type $\mu$.

\section{Local deformation theory: trivial primes}\label{trivialsection}
Let $F/\Q_{\ell}$ be a finite extension with residue field of order
$q$ (here 
$F$ denotes a local field, in contrast to the rest of the
paper). Assume $q$ is congruent to $1$ mod $p$, and let
$\br \colon \gal{F} \to G(k)$ be the trivial homomorphism; in
particular, all lifts of $\br$ land in $G^0$, and conjugation by $\wh{G}$ and $\wh{G^{\mr{der}}}$ 
define the same equivalence relation on lifts. Moreover, all lifts of
$\br$ factor through the quotient of $\gal{F}$ topologically generated
by a lift $\sigma$ of (arithmetic) Frobenius and a generator $\tau$ of
the $p$-part of the tame inertia group. At one point we will invoke a
calculation (Lemma \ref{localduality}) that depends on the choice of
$\tau$. Suppose that $F$ in fact contains $\mu_{p^b}$ for some integer
$b$, and that we have a fixed $(p^b)^{th}$ root of unity
\index[terms]{Z@$\zeta$} $\zeta \in \mu_{p^b}(F)$, which we may regard
(by evaluation at $\frac{1}{p^b}$) as a choice of isomorphism
$\zeta \colon \Z/p^b(1) \xrightarrow{\sim} \mu_{p^b}(F)$ (in the
global setting, this will come from a global choice, as in \S
\ref{notation}). We then choose $\tau$ \index[terms]{T@$\tau$} such
that for any uniformizer $\varpi_F$ of $F$,
$\frac{\tau(\varpi_F^{1/p^b})}{\varpi_F^{1/p^b}}= \zeta$ (which is
possible since $\mu_{p^b} \subset F$ implies that for any $a \in \Z$,
$\frac{\tau^a(\vpi_F^{1/p^b})}{\vpi_F^{1/p^b}}= \left(
  \frac{\tau(\vpi_F^{1/p^b})}{\vpi_F^{1/p^b}}\right)^a$).

\subsection{The local conditions at trivial primes} \label{s:loctriv}
We will now define the kinds of local lifts of $\br$ that we will make
use of at auxiliary primes. First, we introduce some notation. For a
split maximal torus $T$ \index[terms]{T@$T$} of $G^0$ (over $\mc{O}$)
and an $\alpha \in \Phi(G^0, T)$, \index[terms]{P@$\Phi(G^0, T)$} we
let \index[terms]{U@$U_{\alpha}$} $U_{\alpha} \subset G^0$ denote the
root subgroup that is the image of the root homomorphism
(``exponential mapping") \index[terms]{U@$u_{\alpha}$} $u_{\alpha} \colon \mf{g}_{\alpha} \to G$. The homomorphism $u_{\alpha}$ is a $T$-equivariant isomorphism $\mf{g}_{\alpha} \to U_{\alpha}$ (see \cite[Theorem 4.1.4]{conrad:luminy}), and its characterizing properties (\textit{loc.~cit.}) imply that $Z_{G^0}(\mf{g}_{\alpha})= Z_{G^0}(U_{\alpha})$.
\begin{defn}\label{triviallifts}
Fix a split maximal torus $T$ of $G^0$ (over $\mc{O}$) and an $\alpha \in \Phi(G^0, T)$. Define $\Lift^{\alpha}_{\br}(R)$\index[terms]{L@$\Lift^{\alpha}_{\br}$} to be the set of lifts $\widehat{G}(R)$-conjugate to one satisfying 
\begin{itemize}
\item $\rho(\sigma) \in T \cdot Z_{G^0}(\mf{g}_{\alpha})(R)$
\item Under the composite (note that $T$ normalizes the centralizer) 
\[
T \cdot Z_{G^0}(\mf{g}_{\alpha})(R) \to T(R)/(T(R) \cap Z_{G^0}(\mf{g}_{\alpha})(R)) \xrightarrow{\alpha} R^\times,
\] 
$\rho(\sigma)$ maps to $q$.
\item $\rho(\tau) \in U_{\alpha}(R)$.
\end{itemize}
\end{defn}
\begin{lemma}\label{trivsmooth}
 For any pair $(T, \alpha)$ consisting of a split maximal torus $T$ of $G^0$ and an $\alpha \in \Phi(G^0, T)$, the functor $\Lift^{\alpha}_{\br}$ is formally smooth, i.e. for all maps $R \to R/I$ in $\mc{C}^f_\mc{O}$ with $I \cdot \mf{m}_R=0$, $\Lift_{\br}^{\alpha}(R) \to \Lift^{\alpha}_{\br}(R/I)$ is surjective.
 
 Similarly, if we fix a lift $\mu \colon \gal{F} \to G/G^{\mr{der}}(\mc{O})$ of the multiplier character $\bar{\mu}:= \br \pmod{G^{\mr{der}}}$, then the sub-functor of lifts with multiplier $\mu$, $\Lift^{\mu, \alpha}_{\br}$\index[terms]{L@$\Lift^{\mu, \alpha}_{\br}$}, is formally smooth.
\end{lemma}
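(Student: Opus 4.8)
The plan is to verify formal smoothness directly from the definition: given a small extension $R \to R/I$ with $I \cdot \mf{m}_R = 0$ in $\mc{C}^f_{\mc{O}}$ and a lift $\rho \in \Lift^{\alpha}_{\br}(R/I)$, I must produce a lift $\tilde{\rho} \in \Lift^{\alpha}_{\br}(R)$ reducing to it. After conjugating by an element of $\wh{G}(R/I)$ (which lifts to $\wh{G}(R)$, so this is harmless) I may assume $\rho$ satisfies the three bulleted conditions of Definition \ref{triviallifts} on the nose: $\rho(\sigma) \in T \cdot Z_{G^0}(\mf{g}_{\alpha})(R/I)$ mapping to $q$ under $\alpha$, and $\rho(\tau) \in U_{\alpha}(R/I)$. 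The key structural input is that all lifts factor through the tame-with-$p$-power quotient $\langle \sigma, \tau \mid \sigma \tau \sigma^{-1} = \tau^q \rangle$, so constructing $\tilde{\rho}$ amounts to choosing $\tilde{\rho}(\sigma)$ and $\tilde{\rho}(\tau)$ in $G(R)$ lifting $\rho(\sigma), \rho(\tau)$, satisfying the one relation $\tilde{\rho}(\sigma) \tilde{\rho}(\tau) \tilde{\rho}(\sigma)^{-1} = \tilde{\rho}(\tau)^q$, and lying in the prescribed subgroups.

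The steps I would carry out: first, lift $\rho(\tau) \in U_{\alpha}(R/I)$ to an arbitrary $\tilde{\rho}(\tau) \in U_{\alpha}(R)$ — possible since $U_{\alpha} \cong \mf{g}_{\alpha}$ as schemes via $u_{\alpha}$, so $U_{\alpha}(R) \to U_{\alpha}(R/I)$ is surjective. Second, lift $\rho(\sigma) \in (T \cdot Z_{G^0}(\mf{g}_{\alpha}))(R/I)$ to some $\sigma_0 \in (T \cdot Z_{G^0}(\mf{g}_{\alpha}))(R)$; here I should check this subgroup scheme is smooth (it is, being generated by a torus and a centralizer of a torus's weight space, or one argues via the identity component) so the reduction map on $R$-points is surjective, and moreover I can arrange $\alpha(\sigma_0) = q$ by adjusting within $T(R)$ since $\alpha \colon T/(T \cap Z_{G^0}(\mf{g}_{\alpha})) \to \mathbb{G}_m$ is a surjection of smooth groups. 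Third — the crucial compatibility — I need the braid relation. Set $\tilde{\rho}(\tau) = u_{\alpha}(X)$ for $X \in \mf{g}_{\alpha} \otimes R$; since $\sigma_0$ normalizes $U_{\alpha}$ and acts on $\mf{g}_{\alpha}$ through the character $\alpha$ composed with the projection to $T/(T\cap Z_{G^0}(\mf{g}_\alpha))$, we get $\sigma_0 u_{\alpha}(X) \sigma_0^{-1} = u_{\alpha}(q \cdot X) = u_{\alpha}(X)^q$ (the last equality because $U_\alpha$ is commutative). Thus the relation holds automatically with $\tilde{\rho}(\sigma) = \sigma_0$, and $\tilde{\rho}$ is a well-defined homomorphism lying in $\Lift^{\alpha}_{\br}(R)$ reducing to $\rho$.

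The main obstacle is the second step: ensuring one can lift $\rho(\sigma)$ to an $R$-point of $T \cdot Z_{G^0}(\mf{g}_{\alpha})$ with $\alpha$-value exactly $q$. One must know $T \cdot Z_{G^0}(\mf{g}_{\alpha})$ is smooth over $\mc{O}$ (so that the small-extension reduction is surjective on points) and that the residue disc of $\rho(\sigma)$ actually contains a point with the right $\alpha$-value — this follows because $T(R) \to R^{\times}$ via $\alpha$ (modulo the kernel factor) is surjective on the relevant reduction maps, using $I\cdot \mf{m}_R = 0$ and $q \equiv 1 \pmod p$, so $q \in R^{\times}$ is a unit congruent to $1$. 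For the multiplier-$\mu$ variant, the identical argument applies after noting that the lift $\tilde{\rho}$ just constructed has some multiplier lifting $\bar\mu$, and — exactly as in the discussion of $\Lift^{\mu}_{\br}$ versus $\Lift_{\br}$ in \S\ref{defprelimsection} — one can correct it by a $\wh{G}(R)$-coboundary (equivalently a class in $H^1$ with $\mf{z}_G$-coefficients, using $G^{\mr{der}} \cap Z_{G^0}$ prime to $p$) to land in $\Lift^{\mu,\alpha}_{\br}(R)$; this correction preserves membership in the subgroups of Definition \ref{triviallifts} since it only alters the central part.
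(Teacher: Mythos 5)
There is a genuine gap, and it is exactly at the spot you identify as "the main obstacle." You need to lift $\rho(\sigma)$ through (roughly) the subscheme $T \cdot Z_{G^0}(\mf{g}_{\alpha})$, and you assert this scheme is smooth, "being generated by a torus and a centralizer of a torus's weight space, or one argues via the identity component." Neither clause is an argument. In mixed characteristic, centralizers of Lie algebra elements are not automatically flat, let alone smooth, over $\mc{O}$: even if the generic fiber is smooth, the special fiber $Z_{G^0_k}(\mf{g}_{\alpha,k})$ can be non-smooth and can have components not visible in characteristic $0$. Establishing the required smoothness is the entire content of the paper's proof. They explicitly decline to prove $Z_{G^0}(\mf{g}_\alpha)$ itself is smooth over $\mc{O}$ and instead replace it by the open subscheme $Z_\alpha$ obtained by deleting the non-identity components of the special fiber; they then have to check that this replacement does not change the functor $\Lift^{\alpha}_{\br}$ (a real step, since an element $tc \in T(R)\,Z_{G^0}(\mf{g}_\alpha)(R)$ reducing to $1 \in G(k)$ may have $c$ reducing to a non-identity component, so one must recenter $c$ via a lift of $\bar c$ from $\ker(\alpha|_T)$). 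Smoothness of $Z_\alpha$ over $\mc{O}$ is then obtained from two nontrivial external inputs: Richardson's criterion for smoothness of centralizers of Lie algebra elements — this is where the standing hypothesis from Assumption \ref{minimalp} that $p$ is very good for $G^{\mr{der}}$, and the existence of a nondegenerate trace form, actually get used — plus a spreading-out result of Booher to upgrade smoothness of the (now irreducible) special fiber to smoothness of $Z_\alpha \to \Spec\mc{O}$.

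The remainder of your proposal is essentially the paper's argument: after conjugating into standard form, lift $\rho(\tau)$ via $u_\alpha$, lift $\rho(\sigma)$ factor by factor, correct the $\alpha$-value by multiplying by $\alpha^\vee(1-i/2)$ (which is where $p \neq 2$ is used), observe that the relation $\sigma u_\alpha(X)\sigma^{-1} = u_\alpha(qX) = u_\alpha(X)^q$ then holds automatically because $Z_{G^0}(\mf{g}_\alpha)$ acts trivially on $\mf{g}_\alpha$, and handle the multiplier by a central correction as in \S\ref{defprelimsection}. So once you supply the missing smoothness argument, the rest goes through; but as written, the proposal assumes the heart of the lemma.
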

\proof
It is convenient to begin with a slightly different description of $\Lift_{\br}^{\alpha}$ that will circumvent the need to know that $Z_{G^0}(\mf{g}_{\alpha})$ is smooth over $\mc{O}$. To that end, let $Z_{\alpha}$ be the open subscheme of $Z_{G^0}(\mf{g}_{\alpha})$ obtained by removing all non-identity components of the special fiber $Z_{G^0_k}(\mf{g}_{\alpha} \otimes_{\mc{O}} k)$. Set $\mf{g}_{\alpha, k}=\mf{g}_{\alpha} \otimes_{\mc{O}} k$. We first claim the special fiber $Z_{\alpha, k} \to \Spec k$ is smooth. By our assumptions on $p$, $Z_{G^0_k}(\mf{g}_{\alpha, k})$ is smooth if and only if $Z_{G^{\mr{der}}_k}(\mf{g}_{\alpha, k})$ is smooth, and then the assumption that $p$ is very good for $G^{\mr{der}}$ implies, by a criterion of Richardson (\cite[Theorem 2.5]{jantzen:nilporbits}), that $Z_{G^{\mr{der}}_k}(\mf{g}_{\alpha, k})$ is smooth (recall that $\fgder$ has a non-degenerate trace form). In particular, $Z_{\alpha, k}$ is smooth. Since $Z_{\alpha, k}$ has a single irreducible component, we can now apply \cite[Remark 4.3, Lemma 4.4]{booher:minimal} to deduce that $Z_{\alpha} \to \Spec \mc{O}$ is smooth.

We next claim that $\Lift^{\alpha}_{\br}$ is equivalently defined by replacing $Z_{G^0}(\mf{g}_{\alpha})$ with $Z_{\alpha}$ in Definition \ref{triviallifts}. First note that for any object $R$ of $\mc{C}^f_{\mc{O}}$, the fiber over the identity of $Z_{G^0}(\mf{g}_{\alpha})(R) \to Z_{G^0}(\mf{g}_{\alpha})(k)$ is contained in $Z_{\alpha}(R)$, and that $T$ normalizes $Z_{\alpha}$ (as functors of Artin rings). Now let $x \in T(R)Z_{G^0}(\mf{g}_{\alpha})(R)$ be an element in the fiber over $1 \in G(k)$, and correspondingly write $x=t\cdot c$. Writing $\bar{c}$ for the image of $c$ in $G(k)$, we have $\bar{c} \in \ker(\alpha|_T)$. This kernel is smooth (our assumptions on $p$ imply that $X^{\bullet}(T)/\Z \alpha$ has no $p$-torsion), so we can lift $\bar{c}$ to an element $t' \in \ker(\alpha|_T)(R)$. Then writing $x=(tt')({t'}^{-1}c)$ we have exhibited $x$ as an element of $T(R) Z_{\alpha}(R)$. Since $\br$ is trivial, we conclude that $\Lift^{\alpha}_{\br}$ can equivalently be defined with $Z_{\alpha}$ in place of $Z_{G^0}(\mf{g}_{\alpha})$.

With this reinterpretation, we can now check formal smoothness of $\Lift_{\br}^{\alpha}$. Let $\rho$ be any element of $\Lift^{\alpha}_{\br}(R/I)$. Since $\widehat{G}$ is formally smooth, we may assume $\rho$ satisfies the three bulleted items of Definition \ref{triviallifts}. Write $\rho(\sigma)=t_{\sigma}c_{\sigma}$ and $\rho(\tau)=u_{\alpha}(x)$ for some $t_{\sigma} \in T(R/I)$ satisfying $\alpha(t_{\sigma})=q$, $c_{\sigma} \in Z_{\alpha}(R/I)$, and $x \in R/I$. Since $T$ and $Z_{\alpha}$ are formally smooth, we can choose lifts $\wt{t}_{\sigma} \in T(R)$, $\wt{c}_{\sigma} \in Z_{\alpha}(R)$, and $\wt{x} \in R$. We can write $\alpha(\wt{t}_{\sigma})=q+i$ for some $i \in I$, and then we replace $\wt{t}_{\sigma}$ by $\wt{t}_{\sigma} \alpha^\vee(1-\frac{i}{2})$ (recall that $p$ is odd). Since $I \cdot \mf{m}_R=0$, we then find that $\tilde{\rho}(\sigma)=\wt{t}_{\sigma} \wt{c}_{\sigma}$, $\tilde{\rho}(\tau)=u_{\alpha}(\wt{x})$ defines a lift $\tilde{\rho} \in \Lift^{\alpha}_{\br}(R)$ of $\rho$. The fixed multiplier analogue is clear from the remarks in \S \ref{defprelimsection}.
\endproof

\begin{rmk}
We could have argued directly with the original definition using $Z_{G^0}(\mf{g}_{\alpha})$, but lacking a generalization to all groups of \cite[\S 4.4]{booher:minimal}---namely, sections of $Z_{G^0}(\mf{g}_{\alpha})$ hitting any irreducible component in the special fiber---we would only have obtained the smoothness for $p \gg_G 0$ but non-effective (resorting to a spreading-out argument).
\end{rmk}

We now carry out the local calculation needed for Theorem \ref{klr^N}. In the application, we will only need to make use of the behavior of a local deformation functor beyond a certain fixed lift modulo $\vpi^n$ (for some $n$), so we begin by introducing a relative analogue of the functor $\Lift^{\alpha}_{\br}$:
\begin{defn}\label{relativetriv}
Suppose $F/\Q_{\ell}$ is a finite extension with residue field of order $q \equiv 1 \pmod p$, let $M \geq 1$ be a fixed integer, and suppose $\rho_M \colon \gal{F} \to G(\mc{O}/\vpi^M)$ is a homomorphism whose mod $p$ reduction $\br$ is trivial and that belongs to $\Lift_{\br}^{\alpha}(\mc{O}/\vpi^M)$ for some pair $(T, \alpha)$ of a split maximal torus and a root. Then we define the following functor on the over-category $(\mc{C}^f_{\mc{O}})_{/{(\mc{O}/\vpi^M)}}$ of objects of $\mc{C}^f_{\mc{O}}$ equipped with an augmentation to $\mc{O}/\vpi^M$: given an $R \to \mc{O}/\vpi^M$ (we will for notational ease not write the augmentation in what follows), define $\Lift^{\alpha}_{\rho_M}(R)$\index[terms]{L@$\Lift^{\alpha}_{\rho_M}$} to be the set of lifts 
\[
\xymatrix{
& G(R) \ar[d] \\
\gal{F} \ar[r]_-{\rho_M} \ar[ur]^{\rho} & G(\mc{O}/\vpi^M)
}
\]
of $\rho_M$ that are \index[terms]{G@$\wh{G}^{(M)}$} $\wh{G}^{(M)}(R):= \ker\left(G(R) \to G(\mc{O}/\vpi^M)\right)$-conjugate, or equivalently \index[terms]{G@$(\wh{G^{\mr{der}}})^{(M)}$} $(\wh{G^{\mr{der}}})^{(M)}(R):= \ker \left(G^{\mr{der}}(R) \to G^{\mr{der}}(\mc{O}/\vpi^M)\right)$-conjugate to one satisfying
\begin{itemize}
\item $\rho(\sigma) \in T \cdot Z_{G^0}(\mf{g}_{\alpha})(R)$
\item Under the composite  
\[
T \cdot Z_{G^0}(\mf{g}_{\alpha})(R) \to T(R)/(T(R) \cap Z_{G^0}(\mf{g}_{\alpha})(R)) \xrightarrow{\alpha} R^\times,
\] 
$\rho(\sigma)$ maps to $q$.
\item $\rho(\tau) \in U_{\alpha}(R)$.
\end{itemize}
Similarly define $\Lift^{\mu, \alpha}_{\rho_M}$\index[terms]{L@$\Lift^{\mu, \alpha}_{\rho_M}$} to be the sub-functor of lifts with prescribed multiplier $\mu$.
\end{defn}
Note that $\wh{G}^{(M)}$ and $(\wh{G^{\mr{der}}})^{(M)}$ are formally smooth, so $\Lift^{\alpha}_{\rho_M}$ and $\Lift^{\mu, \alpha}_{\rho_M}$ are formally smooth, just as in Lemma \ref{trivsmooth}. When $M=1$, we just recover $\Lift_{\br}^{\alpha}$ and $\Lift_{\br}^{\mu, \alpha}$.

We will need two lemmas about the finer structure of $\Lift_{\br}^{\mu, \alpha}$; both express the fact that the lifting functor behaves ``as if formally smooth" modulo $\vpi^n$ for large enough $n$. Recall that for any $G(\mc{O}/\vpi^r)$-valued homomorphism $\rho_r$, $\rho_r(\fgder)$ will denote $\fgder \otimes_{\mc{O}} \mc{O}/\vpi^r$ equipped with the $\Ad \circ \rho_r$ action.
\begin{lemma}\label{extracocycles^N}
Let $F/\Q_{\ell}$ be a finite extension with residue field of order $q$, let $M \geq 1$ be a fixed integer, and let $1 \leq s \leq M$ be another fixed integer. Suppose $\rho_{M+s} \colon \gal{F} \to G(\mc{O}/\varpi^{M+s})$ is a homomorphism with multiplier $\mu$ satisfying:
\begin{itemize}
\item The reduction $\rho_s := \rho_{M+s} \pmod{\vpi^s}$ is trivial (mod center), and $q \equiv 1 \pmod{\vpi^s}$; but $q \not \equiv 1 \pmod{\vpi^{s+1}}$.
\item There is a suitable choice of split maximal torus $T$ and root $\alpha \in \Phi(G^0, T)$ such that $\rho_{M+s}(\sigma) \in T(\mc{O}/\vpi^{M+s})$, $\alpha(\rho_{M+s}(\sigma))= q$, and $\rho_{M+s}(\tau) \in U_{\alpha}(\mc{O}/\vpi^{M+s})$. In particular, $\rho_{M+s} \in \Lift^{\mu, \alpha}_{\rho_M}(\mc{O}/\vpi^{M+s})$. 
\item For any root $\beta \in \Phi(G^0, T)$, $\beta(\rho_{M+s}(\sigma)) \not \equiv 1 \pmod{\vpi^{s+1}}$.
\end{itemize}
Then for all $1 \leq r \leq M$ there are spaces of cocycles
$Z^{\alpha}_r \subset Z^1(\gal{F}, \rho_r(\fgder))$, with images $L_r^{\alpha} \subset H^1(\gal{F}, \rho_r(\fgder))$ such that
\begin{itemize}
\item $Z_r^{\alpha}$ contains all coboundaries and is free over $\mc{O}/\vpi^{r}$ of rank $\dim(\fgder)$.
\item For any integers $a, b>0$ such that $a+b=r$, the natural maps induce short exact sequences
\[
0 \to Z_a^{\alpha} \to Z_r^{\alpha} \to Z_b^{\alpha} \to 0.
\]
\item For all $m \geq 2s+M$ (in particular, for all all $m \geq 3M$), and any lift $\rho_{m} \in \Lift^{\mu, \alpha}_{\rho_M}(\mc{O}/\vpi^m)$ of $\rho_{M+s}$, the fiber of $\Lift^{\mu, \alpha}_{\rho_M}(\mc{O}/\vpi^{m+r}) \to \Lift^{\mu, \alpha}_{\rho_M}(\mc{O}/\vpi^m)$ over $\rho_m$ is non-empty and $Z_r^{\alpha}$-stable.
\end{itemize}
\end{lemma}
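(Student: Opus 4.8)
The plan is to analyze the lifting functor $\Lift^{\mu,\alpha}_{\rho_M}$ very explicitly, exploiting the fact that all lifts factor through the tame-with-$p$-power-ramification quotient of $\gal F$, which is topologically generated by $\sigma$ and $\tau$ with the single relation $\sigma\tau\sigma^{-1}=\tau^q$. Accordingly, a lift $\rho$ of $\rho_{M+s}$ is determined by the pair $(\rho(\sigma),\rho(\tau))$ subject to $\rho(\sigma)\in T$, $\alpha(\rho(\sigma))=q$, $\rho(\tau)\in U_\alpha$, and $\rho(\sigma)\rho(\tau)\rho(\sigma)^{-1}=\rho(\tau)^q$. Writing $\rho(\tau)=u_\alpha(y)$ with $y$ in the relevant truncation of $\mc O$, the braid relation becomes $\alpha(\rho(\sigma))\cdot y = q\cdot y$, which is automatic once $\alpha(\rho(\sigma))=q$; so the only constraint beyond the torus condition on $\rho(\sigma)$ is that $y$ be arbitrary. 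This reduces the problem to understanding the torus part: lifting $\rho(\sigma)\in T(\mc O/\vpi^m)$ with $\alpha$-value exactly $q$, which is governed by $\ker(\alpha\colon T\to \mathbb G_m)$ — smooth over $\mc O$ by the running assumptions on $p$ (no $p$-torsion in $X^\bullet(T)/\Z\alpha$). Thus the fiber of the reduction map on lifts is a torsor under (a truncation of) $\Lie(\ker\alpha)\oplus \mf g_\alpha = \ker(d\alpha)\oplus\mf g_\alpha$, together with the conjugation action of $\wh G^{(M)}$.

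First I would make the cocycle spaces $Z^\alpha_r$ explicit. Decompose $\fgder = \mf t \oplus \bigoplus_{\beta}\mf g_\beta$ under $T$. The third hypothesis — that $\beta(\rho_{M+s}(\sigma))\not\equiv 1 \pmod{\vpi^{s+1}}$ for every root $\beta$ — is exactly what is needed to control $H^1(\gal F,\rho_r(\fgder))$ via the $\sigma$-action: on each root line $\mf g_\beta$, the operator $\Ad(\rho(\sigma))-1$ acts by $\beta(\rho(\sigma))-1$, which is a nonzero element of $\vpi^s\mc O/\vpi^{s+1}\cdot(\cdots)$ — i.e. it has $\vpi$-adic valuation exactly $s$ — hence is ``as invertible as possible'' on $\mc O/\vpi^r$ for $r\le M \le$ the relevant range. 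On the torus part $\mf t$ the $\sigma$-action is trivial, but inertia ($\tau$) acts through the root line in the direction $\mf g_\alpha$, so one gets a controlled contribution from the $\alpha$-direction. Concretely, I would take $Z^\alpha_r$ to be: all coboundaries, plus the cocycles supported on $\mf t$ that record the infinitesimal motion of $\rho(\sigma)$ inside $\ker(\alpha|_T)$ (a free $\mc O/\vpi^r$-module of rank $\dim\mf t - 1$, by smoothness of $\ker\alpha$), plus the cocycle in the $\mf g_\alpha$-direction recording the motion of $\rho(\tau)=u_\alpha(y)$ in $y$ (rank $1$). The ``missing'' direction — $\alpha$ itself inside $\mf t$, coupled to the $\sigma$-constraint $\alpha(\rho(\sigma))=q$ — is precisely the codimension-one failure of fullness that the global argument is set up to tolerate. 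A dimension count then gives $\dim(\mf g^{\mathrm{der}})$ for the rank of $Z^\alpha_r$: coboundaries contribute $\dim\fgder - \dim(\fgder)^{\gal F}$, and one checks $\dim(\fgder)^{\gal F}$ on $\mc O/\vpi^r$ matches the rank of the complementary piece described above, using again the third hypothesis to see that the only $\sigma$-and-$\tau$-invariants come from $\ker(d\alpha)\subset\mf t$ together with $\mf g_\alpha$ minus one relation. The short exact sequences $0\to Z^\alpha_a\to Z^\alpha_r\to Z^\alpha_b\to 0$ follow because each of the three constituent pieces (coboundaries, $\ker(\alpha|_T)$-part, $\mf g_\alpha$-part) is defined by a smooth group scheme over $\mc O$, hence behaves compatibly under the truncation maps $\mc O/\vpi^r\to\mc O/\vpi^b$; freeness over $\mc O/\vpi^r$ comes from the same smoothness.

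Next I would establish the stability and non-emptiness of the fiber over $\rho_m$ for $m\ge 2s+M$. Given $\rho_m\in\Lift^{\mu,\alpha}_{\rho_M}(\mc O/\vpi^m)$ reducing to $\rho_{M+s}$, I may (after $\wh G^{(M)}$-conjugacy) assume $\rho_m(\sigma)\in T$, $\alpha(\rho_m(\sigma))=q$, $\rho_m(\tau)\in U_\alpha$. Non-emptiness of the fiber modulo $\vpi^{m+r}$ is then the formal smoothness already proven in (the relative analogue of) Lemma \ref{trivsmooth}: smoothness of $\ker(\alpha|_T)$, of $T$, of $U_\alpha$, and of $\wh G^{(M)}$ lets one lift $(\rho_m(\sigma),\rho_m(\tau))$ step by step, correcting the $\alpha$-value by an $\alpha^\vee(1-i/2)$ adjustment as in the proof of Lemma \ref{trivsmooth}. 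For $Z^\alpha_r$-stability, note the action of the preimage of $L^\alpha_r$ on the fiber is by $\rho\mapsto(1+\vpi^m z)\rho$ for a cocycle $z\in Z^\alpha_r$ (more precisely $z$ lies in $Z^1(\gal F,\rho_r(\fgder))$ and acts on the $\vpi^m$-level); I must check $(1+\vpi^m z)\rho$ still satisfies the three bulleted conditions. Coboundary directions just re-conjugate and stay in the functor. The $\ker(\alpha|_T)$-direction moves $\rho(\sigma)$ within $\ker(\alpha|_T)$, preserving $\alpha(\rho(\sigma))=q$ and keeping $\rho(\sigma)\in T$; the $\mf g_\alpha$-direction moves $\rho(\tau)$ within $U_\alpha$, preserving $\rho(\tau)\in U_\alpha$; in both cases one must verify the braid relation $\rho(\sigma)\rho(\tau)\rho(\sigma)^{-1}=\rho(\tau)^q$ is maintained, which follows from the cocycle condition on $z$ together with $\alpha(\rho(\sigma))=q$. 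The constraint $m\ge 2s+M$ is what guarantees that the relevant products and commutators, which a priori live modulo $\vpi^{m+r}$ with $r\le M$, only see the $\sigma$-action on $\rho_r(\fgder)$ through its reduction modulo a power of $\vpi$ small enough that the valuation-$s$ invertibility of $\beta(\rho(\sigma))-1$ on each root line still applies — i.e. error terms of valuation $\ge m+s$ or more are killed.

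\medskip

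\noindent\emph{Main obstacle.} The crux is the third bulleted claim: producing the cocycle space $Z^\alpha_r$ of the \emph{exact} rank $\dim\fgder$, containing coboundaries, fitting into the short exact sequences, and simultaneously stabilizing \emph{all} the fibers for \emph{every} large $m$ in a single uniform family — rather than a rank depending on $m$ or a space that only works for one truncation. This is the precise technical incarnation of ``the functor behaves as if formally smooth modulo $\vpi^n$ for $n\gg0$'': the functor is genuinely not representable (its honest tangent space, at $m$ small, is ``too small''), and the content of the lemma is that once one is far enough out ($m\ge 2s+M$) a fixed ``virtual tangent space'' $Z^\alpha_r$ of the expected dimension does govern the lifting. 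Getting the bookkeeping of valuations right — tracking how $q-1$, the $\beta(\rho(\sigma))-1$, and the error terms in the braid relation interact across the ranges $s\le M\le m$ — is where the real work lies; everything else is an application of the smoothness inputs already in hand from Lemma \ref{trivsmooth} and the structure theory of root subgroups in \cite{conrad:luminy}.
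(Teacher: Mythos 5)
There is a genuine gap: your proposed cocycle space $Z^\alpha_r$ is not free over $\mc{O}/\vpi^r$ of rank $\dim\fgder$, and the dimension count you sketch does not close. You take $Z^\alpha_r$ to be coboundaries, plus unramified cocycles valued in $\ker(\alpha|_{\mf t})$ (rank $\dim\mf t - 1$), plus the ramified $\mf g_\alpha$-cocycle (rank $1$). But the coboundaries coming from root directions $\mf g_\beta$ are \emph{torsion}, not free: for $X_\beta\in\mf g_\beta$ the coboundary at $\sigma$ is $(1-\beta(\rho_r(\sigma)))X_\beta$, and by the third hypothesis $1-\beta(\rho(\sigma))$ has $\vpi$-valuation exactly $s\ge 1$, so this element is killed by $\vpi^{r-s}$. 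When $r\le s$ the entire module of coboundaries is zero, and your $Z^\alpha_r$ has rank only $\dim\mf t$, far short of $\dim\fgder$. The dimension count ``coboundaries contribute $\dim\fgder-\dim(\fgder)^{\gal F}$'' silently treats $B^1$ as free, which it is not in this degenerate regime; the regime is created deliberately by the hypothesis that $\rho_s$ is trivial.

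The ingredient you are missing is the mechanism the paper uses to manufacture the rank-$|\Phi|$ free piece in the root directions. One conjugates a mod-$\vpi^{m+r}$ lift $\rho_{m+r}$ by $1+\vpi^{m-s}X$ rather than by an element of $1+\vpi^m\fg$: because $\rho$ is trivial modulo $\vpi^s$, the identity
\[
(1+\vpi^{m-s}X)\rho_{m+r}(\gamma)(1+\vpi^{m-s}X)^{-1}
=\Bigl(1+\vpi^{m}\cdot\tfrac{X-\Ad(\rho_{m+r}(\gamma))X}{\vpi^s}\Bigr)\rho_{m+r}(\gamma)
\]
holds modulo $\vpi^{m+r}$ once $2(m-s)\ge m+r$, i.e.\ $m\ge 2s+r$, and the worst case $r=M$ is exactly the bound $m\ge 2s+M$ in the lemma. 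This conjugation therefore fixes $\rho_m$ (so it moves points within the fiber) while producing the cocycle $\phi^r_X(\gamma)=\vpi^{-s}(X-\Ad(\rho_{m+r}(\gamma))X)$, which for $X=X_\beta$ takes $\sigma$ to a \emph{unit} multiple of $X_\beta$ precisely because $\beta(\rho(\sigma))-1$ has valuation exactly $s$. These $\phi^r_{X_\beta}$ are the free directions your description lacks; they are coboundaries ``divided by $\vpi^s$'', not coboundaries. Your explanation of the threshold $m\ge 2s+M$ (``error terms of valuation $\ge m+s$ are killed'') also does not identify this: the constraint arises from the cross term $\vpi^{2(m-s)}X\,\rho(\gamma)\,X$ in the conjugation, not from the braid relation. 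Without the $1+\vpi^{m-s}X$ trick, the fiber-stability statement cannot be proved for a $Z^\alpha_r$ of the required rank, and the subsequent global argument (which needs the balance $|L^\alpha_{M,v}|=|H^0|$) would fail. The rest of your outline—reduction to the tame quotient, smoothness of $\ker(\alpha|_T)$, $\alpha^\vee$-correction, the verification that the torus and $\mf g_\alpha$ directions and the short exact sequences behave well—is consistent with the paper, but it does not compensate for the missing rank.
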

\proof
Let $M, r, s, m$ be as in the statement. The fiber of $\Lift^{\mu, \alpha}_{\rho_M}(\mc{O}/\vpi^{m+r}) \to \Lift^{\mu, \alpha}_{\rho_M}(\mc{O}/\vpi^m)$ over a fixed lift $\rho_m$ is non-empty by the previously-indicated formal smoothness, so consider a mod $\vpi^{m+r}$ lift $\rho_{m+r}$ in this fiber. We will construct the spaces of cocycles $Z^{\alpha}_r$ by analyzing other elements of this fiber, and it will be evident from the construction that $Z^{\alpha}_r$ depends only on $\rho_{r+s}$ (not on $m$).

Note that $2m-2s \geq m+r$, so for any $X \in \fg$ we have the following computation in $G(\mc{O}/\vpi^{m+r})$:
\begin{align*}
(1+\vpi^{m-s}X)\rho_{m+r}(\gamma)(1+\vpi^{m-s}X)^{-1}&=
                                                       (1+\vpi^{m-s}X)\rho_{m+r}(\gamma)(1-\vpi^{m-s}X)
  \\&=(1+\vpi^{m-s}(X-\Ad(\rho_{m+r}(\gamma))X))\rho_{m+r}(\gamma)\\&=
  \left(1+\vpi^{m} \left
  (\frac{X-\Ad(\rho_{m+r}(\gamma))X}{\vpi^s}\right) \right ) \rho_{m+r}(\gamma),
\end{align*}
where this last expression makes sense since $\rho$ is trivial modulo $\vpi^s$. Moreover, the expression $\frac{X-\Ad(\rho_{m+r}(\gamma))X}{\vpi^s} \pmod{\varpi^r}$ only depends on $\rho_{r+s}$, and in particular only on $\rho_{M+s}$. Thus, for each $X \in \fgder$, we have a cocycle $\phi_X^r \in Z^1(\gal{F}, \rho_r(\fgder))$ given by
\[
\phi_X^r(\gamma)= \frac{X-\Ad(\rho_{m+r}(\gamma))X}{\vpi^s},
\]
and the action of any such $\phi_X^r$ preserves the fiber of $\Lift^{\alpha}_{\rho_M}(\mc{O}/\vpi^{m+r}) \to \Lift^\alpha_{\rho_M}(\mc{O}/\vpi^m)$ over $\rho_m$. 
In particular, for any root $\beta \in \Phi(G^0, T)$, taking $X=X_{\beta}$ a generator (over $\mc{O}$) of $\fg_{\beta}$, the cocycle $\phi^r_{X_{\beta}}$ has (by the third bulleted hypothesis) the property that $\phi^r_{X_\beta}(\sigma)$ is an $\mc{O}^\times$-multiple of $X_{\beta}$. Our space $Z_r^{\alpha}$ is defined as the span of the following three kinds of cocycles:
\begin{itemize}
\item $\phi_{X_\beta}^r$ for all $\beta \in \Phi(G^0, T)$.
\item For all $X$ in an $\mc{O}$-basis of $\ker(\alpha|_{\mf{t}^{\mr{der}}})$, the unramified cocycles $\phi_X^{\mr{un}, r}$ given by $\phi_X^{\mr{un}, r}(\sigma)=X$, $\phi_X^{\mr{un}, r}(\tau)=0$. To see that these are indeed cocycles, we just note that $X \pmod{\vpi^r}$ belongs to $\rho_r(\fg)^{\gal{F}}$, which reduces to the two assumptions that $\rho_r(\sigma) \in T(\mc{O}/\vpi^r)$ and $[X, X_{\alpha}]=0$.
\item The ramified cocycle $\phi_{\alpha}^r$ given by $\phi_{\alpha}^r(\sigma)=0$ and $\phi_{\alpha}^r(\tau)=X_{\alpha}$. The cocycle condition for $\phi_{\alpha}^r$ reduces to the fact that $\alpha(\rho_r(\sigma))= q$.
\end{itemize}
That $\phi_X^r$ and $\phi_{\alpha}^r$ preserve the fiber is clear when acting on a lift $\rho_{m+r}$ that satisfies $\rho_{m+r}(\sigma) \in T \cdot Z_{G^0}(\fg_{\alpha})(\mc{O}/\vpi^{m+r})$ and $\rho_{m+r}(\tau) \in U_{\alpha}(\mc{O}/\vpi^{m+r})$; in general it follows since for $g \in (\wh{G^{\mr{der}}})^{(M)}(\mc{O}/\vpi^{m+r})$
\[
g(1+\vpi^m \phi)\rho_{m+r}g^{-1}= (1+\vpi^m \phi) g \rho_{m+r} g^{-1},
\] 
as $r \leq M$.

We claim that the $\mc{O}/\vpi^r$-span $Z_r^{\alpha}$ of the collection $\{\phi_{X_\beta}^r, \phi_X^r, \phi_{\alpha}^r\}$ satisfies all the properties in the Lemma's conclusion. To see that $Z_r^{\alpha}$ is free of rank $\dim(\fgder)$, note that a linear combination of the $\phi^r_{X_\beta}, \phi_X^{\mr{un}, r}, \phi_\alpha^r$ is---by evaluating at $\sigma$ and then at $\tau$---seen to be a multiple of $\varpi$ if and only if each coefficient is a multiple of $\vpi$. The claimed exact sequences induced by reduction modulo $\vpi^b$ are clear from the construction.

Finally, to see that $Z_r^{\alpha}$ contains
$B^1(\gal{F},
\rho_r(\fgder))$, 
note that the latter module is spanned by
$\gamma \mapsto X-\Ad(\rho_r(\gamma))X$ for $X$ in a basis of
$\rho_r(\fgder)$. The coboundaries thus generated by the
$\{X_\beta\}_{\beta \in \Phi(G^0, T)}$ are clearly in the span of the
$\phi_{X_\beta}^r$. For $X \in \mf{t}^{\mr{der}}$, the corresponding
coboundary vanishes on $\sigma$ and maps $\tau$ to
$X-\Ad(\rho_r(\tau))X$. Since $\rho_r(\tau)=u_{\alpha}(y)$ for some
$y$, $X-\Ad(\rho_r(\tau))X$ is a multiple of $X_{\alpha}$, and so this
coboundary is in the span of $\phi^r_{\alpha}$.  \endproof

\begin{rmk} $ $
\begin{itemize}

\item Theorem \ref{klr^N} will use  Lemma \ref{extracocycles^N}  in the cases $s \in \{1, 2, e\}$.

\item Proposition \ref{prop:killrel} and Theorem \ref{thm:killrel} will use Lemma \ref{extracocycles^N} in the case $s=M$. More precisely, in Lemma \ref{lem:triv2} we record some additional properties satisfied by the spaces $Z^{\alpha}_r$ under somewhat stronger hypotheses, and we apply this refinement in Proposition \ref{prop:killrel} and Theorem \ref{thm:killrel}.
\end{itemize}

\end{rmk}
The following lemma is a softer version of the previous one, and we will apply it in a slightly different setting; we will use it in combination with the results of \S \ref{genericfibersection}, namely Proposition \ref{prop:gensmooth}. For a residual representation $\br \colon \gal{F} \to G(k)$, multiplier character $\mu$, and inertial type $\tau \colon I_F \to G(E)$, we let $R_{\br}^{\square, \mu, \tau}[1/\vpi]$ denote the quotient of the lifting ring $R_{\br}^{\square, \mu}$ parametrizing lifts with inertial type $\tau$.
\begin{lemma}\label{auxlift}
Let $F/\Q_{\ell}$ be a finite extension with residue field of order $q \equiv 1 \pmod p$. Let $n \geq 2$ be an integer, and let $\rho_n \colon \gal{F} \to T \cdot U_{\alpha}(\mc{O}/\vpi^n)$ be a homomorphism with multiplier $\mu$ lifting the trivial representation $\br$ and satisfying $\rho_n(\sigma)=t_n \in T(\mc{O}/\vpi^n)$, $\rho_n(\tau)=u_{\alpha}(y_n) \in U_{\alpha}(\mc{O}/\vpi^n)$, and $\alpha(t_n) \equiv q \pmod{\vpi^n}$ (in particular, $\rho_n \in \Lift_{\br}^{\mu, \alpha}(\mc{O}/\vpi^n)$). Then there is some $\rho \in \Lift_{\rho_n}^{\mu, \alpha}(\mc{O})$ defining a formally smooth point of the generic fiber $R^{\square, \mu, \tau_0}_{\br}[1/\vpi]$, where $\tau_0$ denotes the trivial inertial type.
\end{lemma}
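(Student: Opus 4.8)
The plan is to first build an explicit lift $\rho \in \Lift^{\mu,\alpha}_{\rho_n}(\mc{O})$, making one genericity choice along the way, and then to check that this $\rho$ defines a smooth point of the generic fibre. Note at the outset that $\ell \neq p$, since $p \mid q-1$.

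\textbf{Step 1: constructing $\rho$.} I would argue as in the proof of Lemma \ref{trivsmooth}, but carried all the way to $\mc{O}$. First lift $t_n$ to some $t \in T(\mc{O})$ with $t \equiv t_n \pmod{\vpi^n}$, $\alpha(t) = q$, and $t \pmod{G^{\mr{der}}} = \mu(\sigma)$: take any lift $t^{(0)}$ of $t_n$ with $t^{(0)} \pmod{G^{\mr{der}}} = \mu(\sigma)$ (possible since $T \to G/G^{\mr{der}}$ is smooth and surjective on $\mc{O}$-points), write $\alpha(t^{(0)}) = q(1+\vpi^n c)$, and replace $t^{(0)}$ by $t^{(0)}\alpha^\vee(w)$ with $w \in 1+\vpi^n\mc{O}$ the unique square root of $(1+\vpi^n c)^{-1}$ ($p$ odd) --- this corrects $\alpha$ to $q$ without disturbing the image in $G/G^{\mr{der}}$, as $\alpha^\vee$ lands in $G^{\mr{der}}$. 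Then, multiplying $t$ by a suitable element of $\ker(\alpha|_{T \cap G^{\mr{der}}})(\mc{O})$ congruent to $1$ modulo $\vpi^n$ (which changes neither $\alpha(t)$ nor $t \pmod{G^{\mr{der}}}$), I would moreover arrange the genericity condition that $\beta(t) \neq q$ for every root $\beta \neq \alpha$: this is possible because $(-\alpha)(t) = q^{-1}\neq q$ and every root $\beta \neq \pm\alpha$ restricts nontrivially to $\ker(\alpha|_{T\cap G^{\mr{der}}})$ (as $\beta \notin \Q\alpha$ in a reduced root system), so a generic such multiplication works --- the condition is nonempty when $\rk G^{\mr{der}} \geq 2$ and vacuous when $\rk G^{\mr{der}} = 1$. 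Finally choose $y \in \mc{O}$ with $y \equiv y_n \pmod{\vpi^n}$ and $y \neq 0$, and set $\rho(\sigma) = t$, $\rho(\tau) = u_\alpha(y)$. Since $\alpha(t) = q$ we have $t\,u_\alpha(y)\,t^{-1} = u_\alpha(\alpha(t)y) = u_\alpha(y)^q$, so $\rho$ respects the tame relation and is a well-defined continuous homomorphism lifting $\rho_n$; it visibly satisfies the three conditions of Definition \ref{triviallifts} for $(T,\alpha)$ and (using $U_\alpha \subset G^{\mr{der}}$ and that $\mu$ kills $\tau$) has multiplier $\mu$, so $\rho \in \Lift^{\mu,\alpha}_{\rho_n}(\mc{O})$.

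\textbf{Step 2: reducing smoothness to an explicit scheme.} Next, since $\rho(\tau) = u_\alpha(y)$ is unipotent and $\ell \neq p$, $\rho$ is tame with unramified Frobenius part and monodromy $N := yX_\alpha$, a nonzero element of the minimal nilpotent orbit $\mc{O}_{\min}$ of $\fgder$ (as $y \neq 0$ and $X_\alpha$ is a root vector); hence $\rho$ has trivial inertial type $\tau_0$ and determines an $E$-point of $R^{\square,\mu,\tau_0}_{\br}[1/\vpi]$. I would then invoke the standard description of this ring for $\ell \neq p$: lifts of the trivial $\br$ are automatically tame, and a lift of trivial inertial type is the datum of a pair $(g,Y)$ with $g$ ranging over the $G^{\mr{der}}$-torsor $t\cdot G^{\mr{der}}$ of $\mu$-compatible Frobenius parts and $Y = \log\rho(\tau)$ in the nilpotent cone $\mc{N} \subset \fgder$, subject to $\Ad(g)Y = qY$. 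Consequently the complete local ring of $R^{\square,\mu,\tau_0}_{\br}[1/\vpi]$ at $\rho$ is isomorphic to that of the scheme $\mc{V} := \{(g,Y)\in (t\cdot G^{\mr{der}})\times\mc{N} : \Ad(g)Y = qY\}$ at $(t,N)$, and it suffices to prove $\mc{V}$ is smooth there.

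\textbf{Step 3: the local computation (the hard part).} I would deduce smoothness at $(t,N)$ by matching two dimension bounds. For the lower bound, $\mc{V}$ contains $(t,N)$ and the closed subscheme $\mc{V}_{\min} := \{(g,Y)\in\mc{V} : Y\in\mc{O}_{\min}\}$, which fibres over the smooth variety $\mc{O}_{\min}$ with fibres translates of $Z_{G^{\mr{der}}}(X_\alpha) = Z_{G^{\mr{der}}}(\fg_\alpha)$; the latter is smooth because $p$ is very good (Richardson's criterion, exactly as in Lemma \ref{trivsmooth}), so $\dim_{(t,N)}\mc{V} \geq \dim\mc{O}_{\min} + \dim Z_{G^{\mr{der}}}(X_\alpha) = \dim G^{\mr{der}}$. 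For the upper bound, $T_{(t,N)}\mc{V} = \{(A,B)\in\fgder\times T_N\mc{N} : (q - \Ad(t))B = \Ad(t)[A,N]\}$; writing $\Psi(A,B)$ for the difference of the two sides, the identity $\Ad(t)([\fgder,N]) = [\fgder,N]$ gives $\im\Psi = (q - \Ad(t))(T_N\mc{N}) + [\fgder,N]$. Now $\Ad(t)$ is semisimple and, by the genericity of Step 1, $\ker(q - \Ad(t))\cap\fgder = \fg_\alpha = \langle X_\alpha\rangle$; since $\langle N\rangle \subset T_N\mc{N}$ ($\mc{N}$ is a cone) this forces $\ker(q - \Ad(t))\cap T_N\mc{N} = \langle X_\alpha\rangle$, so $(q - \Ad(t))(T_N\mc{N})$ is the $\Ad(t)$-stable hyperplane of $T_N\mc{N}$ complementary to $\langle X_\alpha\rangle$; but $X_\alpha\in[\fgder,N]$ (via an $\mathfrak{sl}_2$-triple through $X_\alpha$, available as $p$ is very good) and $X_\alpha\in T_N\mc{N}$, whence $\im\Psi = T_N\mc{N}$ and $\dim T_{(t,N)}\mc{V} = \dim\fgder + \dim T_N\mc{N} - \dim T_N\mc{N} = \dim G^{\mr{der}}$. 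The bounds agree, so $(t,N)$ is a smooth point of $\mc{V}$, giving the lemma. I expect this last computation --- and specifically the necessity of the generic choice of $\rho$ --- to be the main obstacle: if $\beta(\rho(\sigma)) = q$ for some root $\beta \neq \alpha$, then $\ker(q - \Ad(\rho(\sigma)))\cap\fgder$ is strictly larger, $\mc{V}$ acquires extra branches through $(t,N)$ coming from deforming $N$ into larger nilpotent orbits still compatible with $\rho(\sigma)$, and the point is genuinely singular; making the generic choice is exactly what realizes $\rho$ as the generic-fibre shadow of the formally smooth functor $\Lift^{\alpha}_{\br}$ of Lemma \ref{trivsmooth}.
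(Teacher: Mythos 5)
Your Step~1 coincides with the paper's: you build the lift $\rho$ by choosing $t \in T(\mc{O})$ with $\alpha(t)=q$, $\beta(t) \neq q$ for $\beta\neq\alpha$, $t \equiv t_n \pmod{\vpi^n}$ and image $\mu(\sigma)$, together with a nonzero lift $y$ of $y_n$. From there, however, you take a genuinely different route. The paper's proof of smoothness is a one-liner: it invokes the Bellovin--Gee criterion that a point of the generic fiber is formally smooth if and only if $H^0(\gal{F}, \rho(\fgder)(1)) = 0$ (equivalently, $H^2(\gal{F}, \rho(\fgder)) = 0$), and then verifies this by decomposing a putative invariant $X$ into torus and root components, whereupon the genericity $\beta(t) \neq q$ for $\beta\neq\alpha$ forces $X \in \fg_{-\alpha}$ and $y \neq 0$ kills that too. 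You instead construct an explicit scheme-theoretic model $\mc{V} = \{(g,Y) : \Ad(g)Y = qY\} \subset (t\cdot G^{\mr{der}}) \times \mc{N}$ for the formal neighborhood of $\rho$ on the generic fiber and prove smoothness by matching a lower bound on local dimension (coming from the subvariety $\mc{V}_{\min}$ fibered over the $G^{\mr{der}}$-orbit of $X_\alpha$) with the Zariski tangent space dimension (a rank--nullity computation for $\Psi(A,B) = (q-\Ad(t))B - \Ad(t)[A,N]$, again using the genericity of $t$). What the paper's route buys is brevity and the ability to black-box the geometry of the generic fiber entirely; what your route buys is a completely explicit picture of the lifting ring near $\rho$ and a transparent explanation of \emph{why} the genericity condition is needed (without it, $\mc{V}$ acquires extra tangent directions from deforming $N$ into larger orbits), at the cost of having to justify the model $\mc{V}$ and do more linear algebra.

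Two small quibbles with the write-up. First, the reduction in Step~2 --- that the completed local ring of $R^{\square,\mu,\tau_0}_{\br}[1/\vpi]$ at $\rho$ is that of $\mc{V}$ at $(t,N)$ --- is correct but is stated as ``standard'' without argument: one needs that nearby deformations over characteristic-zero Artinian algebras remain tame with unipotent $\tau$-image, and that $\log$ gives a polynomial isomorphism from a formal neighborhood of the unipotent $\rho(\tau)$ onto one of $N$. These are routine, but since this is the step that replaces the Bellovin--Gee citation it should be made explicit. Second, in Step~3 you are working over $E$, a field of characteristic zero, so the appeals to Richardson's criterion and to $p$ being very good are unnecessary: centralizers and $\mathfrak{sl}_2$-triples come for free in characteristic zero. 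Neither affects the correctness of the argument, only the bookkeeping.
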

\begin{proof}
Choose any lift $t \in T(\mc{O})$ of $t_n$ such that $\alpha(t)=q$,
$\beta(t) \neq q$ for any root $\beta \neq \alpha$, and the image of
$t$ in $G/G^{\mr{der}}$ is $\mu(\sigma)$. Since $p$ is odd and $T$ is
formally smooth, it is easy to see as in the proof of Lemma
\ref{trivsmooth} (or see also Lemma \ref{generalposition}) that such a
$t$ exists. Let $y \in \mc{O}$ be any lift of $y_n$, and requiring
that $y \neq 0$. Then $\rho(\sigma)=t$, $\rho(\tau)=u_{\alpha}(y)$
defines an element of $\Lift_{\rho_n}^{\mu, \alpha}(\mc{O})$, and
clearly the associated inertial type is trivial. To check that the
point of $R_{\br}^{\square, \mu, \tau_0}[1/\vpi]$ corresponding to
$\rho$ is formally smooth, we use the criterion of \cite[Corollary
3.3.4, Remark 3.3.6]{Bellovin-Gee-G}: it suffices to check that
$H^0(\gal{F}, \rho(\fgder)(1))=0$ 
(to take the multiplier into account, we use $\fgder$ in place of
$\fg$ and then we use the Killing form to identify $\rho(\fgder)$
with its dual).
Suppose $X$ belongs to $(\rho(\fgder)(1))^{\gal{F}}$, and decompose
$X= Z+ \sum_{\beta} X_{\beta}$, where $Z \in \mf{t}^{\mr{der}}(1)$ and
$X_{\beta} \in \fg_{\beta}(1)$. Then
$\sigma \cdot X= qZ+\sum_{\beta} q\beta(t)X_{\beta}$, so we must have
$Z=0$ and $q\beta(t)=1$ for any $\beta$ such that $X_{\beta} \neq
0$. By construction, the only $\beta \in \Phi(G^0, T)$ such that
$\beta(t)=q^{-1}$ is $\beta=-\alpha$, so $X= X_{-\alpha}$. Then
$\tau \cdot X=X$ implies (since $y \neq 0$) that $X=0$.
\end{proof}
\begin{rmk}\label{auxliftcoeff}
We emphasize that the formally smooth point $\rho$ is defined over $\mc{O}$ itself. In our global argument, we apply this lemma at the first $e$ stages (that is, up to $G(\mc{O}/\vpi^e)$) of our lifting argument, but not at subsequent stages, and so it is important that the lifts produced by this lemma not force us to enlarge $e$.
\end{rmk}

\subsection{Local duality pairing at trivial primes}
We will later require the following calculation of the local duality pairing at trivial primes: for $r=1$ we will use this Lemma throughout \S \ref{doubling}, and for general $r$ we will use it in Lemma \ref{lem:triv2}. When $r=1$ (or analogously for general $r$ when $e=1$), the trace pairing and choice of generator $\zeta$ of $\mu_p$ induces, for any $k[\gal{F}]$-module $W$, an isomorphism of $k[\gal{F}]$-modules
\[
\tr_{k/\Fp} \colon \Hom_k(W, k(1)) \xrightarrow{\sim} \Hom_{\Fp}(W, \Fp(1)) \xrightarrow[\zeta]{\sim}\Hom_{\Fp}(W, \mu_p)= W^*,
\]
where the $k$-structure on the target is just induced by the $k$-multiplication on $W$. In general, we simply fix a generator of the $\mc{O}$-module $\Hom_{\Z_p}(\mc{O}/\vpi^r, \Q_p/\Z_p)$; then for a finite free $\mc{O}/\vpi^r$-module $W$, we obtain an isomorphism
\begin{equation}\label{generator}
 \Hom_{\mc{O}}(W, \mc{O}/\vpi^r) \xrightarrow{\sim} \Hom(W, \Q_p/\Z_p)
\end{equation}
by composing with our fixed generator. If now $W$ is moreover an $\mc{O}/\vpi^r[\gal{F}]$-module, then having fixed a choice $\zeta \colon \Q_p/\Z_p(1) \xrightarrow{\sim} \mu_{p^\infty}$ of $p$-power roots of unity, we can identify the Tate dual $W^*$ with $\Hom_{\mc{O}}(W, \mc{O}/\vpi^r(1))$, and we then define the $\mc{O}$-linear local duality by
\[
\inv_F( \cdot \cup \cdot ) \colon H^1(\gal{F}, W) \times H^1(\gal{F}, W^*) \to H^2(\gal{F}, W \otimes_{\mc{O}} W^*) \to H^2(\gal{F}, \mc{O}/\vpi^r(1)) \xrightarrow{\sim} \mc{O}/\vpi^r,
\]
where the last isomorphism is induced by the composite
\[
H^2(\gal{F}, \mc{O}/\vpi^r(1)) \to H^2(\gal{F}, E/\mc{O}(1)) \cong H^2(\gal{F}, \Q_p/\Z_p(1)) \otimes_{\Z_p} \mc{O} \xrightarrow[\inv_F \circ \zeta^{-1}]{\sim} \Q_p/\Z_p \otimes_{\Z_p} \mc{O} \cong E/\mc{O}.
\]
In all, this duality pairing depends on the choice in Equation (\ref{generator}), but only up to $\mc{O}^\times$-scaling (so, e.g., $\mc{O}$-submodules are canonically defined), and it is independent of the choice of $\zeta$.
\begin{lemma}\label{localduality}
Let $W$ be a free $\mc{O}/\vpi^r$-module equipped with trivial $\gal{F}$-action, and assume that $q \equiv 1 \pmod{\vpi^r}$, so that $W^*$ is also a trivial $\gal{F}$-module. Identify $W^* \cong \Hom_{\mc{O}}(W, \mc{O}/\vpi^r(1))$ as above, and write $\langle \cdot, \cdot \rangle \colon W \times W^* \to \mc{O}/\vpi^r$ for the $\mc{O}/\vpi^r$-linear evaluation pairing. Then the $\mc{O}/\vpi^r$-linear duality pairing
\[
\inv_F(\cdot \cup \cdot) \colon H^1(\gal{F}, W) \times H^1(\gal{F}, W^*) \to \mc{O}/\vpi^r
\]
has the following properties: if $\phi$ is unramified, then
\[
\inv_F(\phi \cup \psi)=- \langle \phi(\sigma), \psi(\tau) \rangle,
\]
and if $\psi$ is unramified, then
\[
\inv_F(\phi \cup \psi)= \langle \phi(\tau), \psi(\sigma) \rangle.
\]
\end{lemma}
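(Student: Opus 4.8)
The plan is to reduce the statement to an explicit cocycle computation on the tame quotient of $\gal{F}$, and then to read off the answer from the description of the local invariant map via the reciprocity map.

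First I would note that, since $W$ is $p$-power torsion and $F$ has residue characteristic $\ell \neq p$, all the cohomology in sight is that of the tame, $p$-power quotient $\mathcal{G} := \Gal(F^{\mathrm{tame},p}/F)$, which is topologically generated by $\sigma$ and $\tau$ subject to $\sigma\tau\sigma^{-1} = \tau^q$. The module $W^* = \Hom_{\mc{O}}(W, \mc{O}/\vpi^r(1))$ also carries the trivial $\mathcal{G}$-action, since its Tate twist is by $\kappa \bmod \vpi^r$ with $\kappa(\sigma) = q \equiv 1 \pmod{\vpi^r}$ and $\kappa(\tau) = 1$. With all actions trivial a $1$-cocycle is just a continuous homomorphism, determined freely by its values on $\sigma$ and $\tau$, and both the cup product and the isomorphism $\inv_F$ are $\mc{O}$-bilinear; so I may assume $W = \mc{O}/\vpi^r$ is free of rank one, with $\langle\,\cdot\,,\,\cdot\,\rangle$ the evaluation pairing $W \times W^* \to \mc{O}/\vpi^r$.

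Next I would represent $\phi \cup \psi$ by the bar-resolution $2$-cocycle $(g,h) \mapsto \langle \phi(g), \psi(h)\rangle$, valued in $\mc{O}/\vpi^r(1)$ (the twist being trivial mod $\vpi^r$), and invoke the standard compatibility of the invariant map with local class field theory: after unwinding the normalization of $\inv_F$ through $H^2(\gal{F}, \Q_p/\Z_p(1)) \cong \Q_p/\Z_p$ and the fixed generator of $\Hom_{\Z_p}(\mc{O}/\vpi^r, \Q_p/\Z_p)$, and identifying $H^1(\gal{F}, \mc{O}/\vpi^r) = \Hom(\gal{F}, \mc{O}/\vpi^r)$ and $H^1(\gal{F}, \mc{O}/\vpi^r(1))$ by Kummer theory, one has $\inv_F(\chi \cup \beta) = \chi(\rec_F(\beta))$, where $\rec_F$ is the reciprocity map. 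If $\phi$ is unramified it kills $I_F$, hence as a character of $F^\times$ it kills the units, so $\inv_F(\phi\cup\psi) = \phi(\rec_F(\psi))$ depends only on $\psi|_{I_F} = \psi(\tau)$; the constant relating $\psi(\tau)$ to the valuation of the corresponding element of $F^\times$ is pinned down exactly by the normalization of $\tau$ fixed at the start of \S\ref{trivialsection} --- namely $\tau(\varpi_F^{1/p^b})/\varpi_F^{1/p^b} = \zeta$, which says the Kummer cocycle of a uniformizer restricts on inertia to the chosen generator $\zeta$ --- together with the normalization of $\rec_F$ on uniformizers. This yields $\inv_F(\phi\cup\psi) = -\langle\phi(\sigma),\psi(\tau)\rangle$. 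For the case $\psi$ unramified I would run the symmetric computation (exchanging uniformizers and units, equivalently $\sigma$ and $\tau$), or simply apply the graded-anticommutativity $\phi\cup\psi = -\psi\cup\phi$ on $H^1 \times H^1$ --- the evaluation pairing being insensitive to the swap of the two tensor factors --- which flips the sign and gives $+\langle\phi(\tau),\psi(\sigma)\rangle$.

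The only genuinely delicate point is the bookkeeping of signs and normalizations: the orientation of the cup product, the normalization of $\inv_F$ together with the chosen generator of $\Hom_{\Z_p}(\mc{O}/\vpi^r, \Q_p/\Z_p)$, the arithmetic-versus-geometric Frobenius convention in $\rec_F$, and the choice of $\zeta$ must all be aligned consistently to produce the stated signs. By contrast the possibly-ramified coefficient ring $\mc{O}$ is no obstacle here: one never reduces modulo $p$ or to $\Z/p^r$-coefficients, since $\mc{O}/\vpi^r$ is $\mc{O}$-self-dual and the entire argument is $\mc{O}$-linear, carried out with the generator of $\Hom_{\Z_p}(\mc{O}/\vpi^r, \Q_p/\Z_p)$ fixed in the paragraph preceding the statement.
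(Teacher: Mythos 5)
Your overall strategy is the paper's: pass to the tame pro-$p$ quotient, reduce to rank one, invoke the Kummer-theoretic formula $\inv_F(\phi \cup \delta(a)) = \phi(\rec_F(a))$, and calibrate with the chosen $\tau$; and the trick of handling the case $\psi$ unramified by anticommutativity of the cup product and double-dualizing is exactly what the paper does. But the last paragraph, where you assert that ``one never reduces modulo $p$ or to $\Z/p^r$-coefficients, since $\mc{O}/\vpi^r$ is $\mc{O}$-self-dual and the entire argument is $\mc{O}$-linear,'' is precisely where a step is missing. The Kummer identification $F^\times/(F^\times)^n \xrightarrow{\sim} H^1(\gal{F}, \mu_n)$ and the identity $\inv_F(\phi \cup \delta(a)) = \phi(\rec_F(a))$ only make sense for $\Z/n$-coefficients: when $\mc{O}$ is ramified, $\mc{O}/\vpi^r(1)$ is not of the form $\mu_n$, there is no Kummer sequence with those coefficients, and ``identifying $H^1(\gal{F}, \mc{O}/\vpi^r(1))$ by Kummer theory'' is not defined. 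The paper bridges this gap by a further reduction to $W = \Z/p^b$ with $b = \lceil r/e \rceil$: since $q$ is a rational integer, $q \equiv 1 \pmod{\vpi^r}$ already forces $q \equiv 1 \pmod{p^b}$, and the $\mc{O}/\vpi^r$-linear pairing (defined via the fixed generator of $\Hom_{\Z_p}(\mc{O}/\vpi^r, \Q_p/\Z_p)$) is recovered from the $\Z/p^b$-linear one by $\mc{O}$-linear extension. That reduction is the nontrivial part of the argument, and it is the very step you claim is unnecessary; as written, your appeal to Kummer theory is not applicable, and you would need to either carry out this reduction explicitly or replace the Kummer-theoretic input by a direct cocycle computation on the two-generator group $\langle \sigma, \tau \mid \sigma\tau\sigma^{-1} = \tau^q \rangle$.
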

\begin{rmk}
These identifications of course depend on the choice of $\tau$; see the beginning of this section for the discussion of how we calibrate $\tau$.
\end{rmk}
\proof
Since $q$ is an integer, $q \equiv 1 \pmod{\vpi^r}$ implies $q \equiv 1 \pmod{\vpi^{e \cdot \lceil \frac{r}{e} \rceil}}$. Since $W$ is trivial, the lemma reduces to the case where $W$ is free of rank one over $\mc{O}/\vpi^r$, and the above description of the $\mc{O}/\vpi^r$-linear duality pairing, which for $W=\mc{O}/\vpi^r$ is the $\mc{O}/\vpi^r$-linear extension of the $\Z/p^b$-linear duality pairing on the trivial module $\Z/p^b$, where $b= \lceil \frac{r}{e} \rceil$, shows we can further reduce to the case $W= \Z/p^b$. (We use here the above observation that $q \equiv 1 \pmod{p^b}$, seemingly slighter stronger than our assumption.)

Then the calculation can be performed, for instance, using the identity
\[
 \inv_F(\phi \cup \delta(a))= \phi(\rec_F(a))
\]
for any $\phi \in H^1(\gal{F}, W)= \Hom(\gal{F}^{\mr{ab}}, \Z/p^b)$
and
$a \in F^{\times}/(F^{\times})^{p^b} \xrightarrow[\sim]{\delta}
H^1(\gal{F}, \mu_{p^b})= H^1(\gal{F}, (\Z/p^b)^*)$ (the last
identification is the canonical one). If $\phi$ is unramified, then
$\phi(\rec_F(a))$ is simply $-v(a)\phi(\sigma)$ (writing $v$ for the
normalized valuation, and normalizing $\rec_F$ to take uniformizers to
geometric frobenii). On the other hand, if $\psi= \delta(a)$, then it
follows from our choice of $\tau$ at the beginning of
\S\ref{s:loctriv} that
\[
\psi(\tau)= \delta(a)(\tau)=\frac{\tau(a^{1/p^b})}{a^{1/p^b}}= \left(\frac{\tau(\varpi_F^{1/p^b})}{\varpi_F^{1/p^b}}\right)^{v(a)}= \zeta^{v(a)}.
\]
Then $\langle \phi(\sigma), \psi(\tau)\rangle= \zeta^{v(a)\phi(\sigma)}$, and via our isomorphism $\zeta \colon \Z/p^b \to \mu_{p^b}$ we thus identify $\langle \phi(\sigma), \psi(\tau)\rangle= -\inv_F(\phi \cup \psi)$, as desired. Now suppose $\psi$ is unramified. Then we identify $W=W^{**}$ and apply the previous step to find 
\[
\inv_F(\phi \cup \psi)=-\inv_F(\psi \cup \phi)= \langle \psi(\sigma), \phi(\tau) \rangle= \langle \phi(\tau), \psi(\sigma) \rangle.
\]
\endproof

\section{Generic fibers of local Galois deformation rings}\label{genericfibersection}
In this section we re-interpret known results (\cite{kisin:pst}, \cite{Bellovin-Gee-G}) on generic fibers of local (and fixed $p$-adic Hodge type) deformation rings. We will achieve enough control over a part of the corresponding integral deformation rings---as in the previous section, once we have lifted beyond a certain modulus $\vpi^M$---to use these softer results on generic fibers in place of the usual local demand of Ramakrishna-style arguments, which typically require having at hand a formally smooth irreducible component of the appropriate local deformation ring.

\subsection{Interpretation of a result of Serre}

Let $E$, $\mc{O}$, $\vpi$, ${m_{\mc{O}}}$, $k$ be as before.  Let $(R, m_R)$
be a complete local noetherian $\mc{O}$-algebra with residue field
$k$. In this section, we are interested in understanding the structure
of the sets $X_n$ of $\mc{O}/\varpi^n$-valued points of $\Spec(R)$
together with the reduction maps $\pi_{n,r}: X_{n+r} \to X_n$ for
$n, r \geq 0$.

Let $\Omega_{R/\mc{O}}$ \index[terms]{O@$\Omega_{R/\mc{O}}$} denote the module of continuous
differentials\footnote{$\Omega_{R/\mc{O}}$ represents the functor
  $\mr{Der}_{\mc{O}}^{\mr{cont}}(R, -)$ of continuous
  $\mc{O}$-linear derivations into finite $R$ modules. For $A =
  \mc{O}[[x_1,x_2,\dots,x_D]]$, $\Omega_{A/\mc{O}}$ is a free $A$-module with basis $\{dx_i\}_{i=1}^D$.}. For any $n$
and $r \leq n$, there is an action of
$\Hom_{\mc{O}}(\Omega_{R/\mc{O}} \otimes_R \mc{O}/{\varpi^n},
\mc{O}/\varpi^r)$ on the fiber of $\pi_{n,r}$ over $x_n \in X_n$,
where the $R$-module structure on $\mc{O}/\varpi^n$ used to form the
$\Hom$ corresponds to the chosen point $x_n \in X_n$ (viewed as a
closed subscheme of $\Spec(R)$), and the action is given as follows:
An element $x_{n+r} \in X_{n+r}$ is an $\mc{O}$-algebra homomorphism
$f_{n+r}:R \to \mc{O}/\varpi^{n+r}$ and an element
$d_r \in \Hom_{\mc{O}}(\Omega_{R/\mc{O}} \otimes_R \mc{O}/\varpi^r,
\mc{O}/\varpi^r)$ can be viewed as an $\mc{O}$-linear derivation
$d_r:R \to \mc{O}/\varpi^r$. Then $d_r$ acting on $f_{n+r}$ is
$f_{n+r}': R \to \mc{O}/\varpi^{n+r}$ given by
$f_{n+r}'(a):= f_{n+r}(a) + d_r(a)$, for $a \in R$, where we use the
inclusion of $\mc{O}/\varpi^r$ in $\mc{O}/\varpi^{n+r}$ as defined
in \S\ref{notation}. Using the fact that $d_r$ is a derivation and
$r \leq n$ one easily checks that $f_{n+r}'$ is an $\mc{O}$-algebra
map and the corresponding element $x_{n+r}' \in X_{n+r}$ reduces to
$x_n$.

If $R$ is formally smooth over $\mc{O}$ then these sets have a very
simple structure: all the reduction maps are surjective and the fibers
of the maps for $r\leq n$ are principal homogenous spaces over
$\Hom_{\mc{O}} (\Omega_{R/\mc{O}} \otimes_R \mc{O}/\varpi^r,
\mc{O}/\varpi^r)$, a free module over $\mc{O}/\varpi^r$ of rank
$\dim(R) -1$. For arbitrary $R$ the nonempty fibers of $\pi_{n,r}$ do
have the same principal homogeneous space property, but these maps
need not be surjective. However, we show below that under some
relatively mild conditions on $R$ there exist nonempty subsets
$Y_n \subset X_n$ such that $\pi_{n,r}$ induces surjections
$Y_{n+r} \to Y_n$ and, for fixed $r$ and $n \gg 0$, the fibers of
these maps are principal homogenous spaces over a suitable submodule
of
$\Hom_{\mc{O}}(\Omega_{R/\mc{O}} \otimes_R \mc{O}/\varpi^r,
\mc{O}/\varpi^r)$ that is free over $\mc{O}/\varpi^r$ of rank
equal to $\dim(R) - 1$.


\bigskip

Let $A = \mc{O}[[x_1,x_2,\dots,x_D]]$, with
$D = \dim_k m_R/(m_R^2, \varpi)$, and let $\alpha:A \to R$ be a
surjection of $\mc{O}$-algebras. The map $\alpha$ induces an
inclusion of the set $X$ of $\mc{O}$-valued points of $\Spec(R)$ into
$({m_{\mc{O}}})^D$ with
image a closed subset\footnote{We use parentheses to indicate that here ${m_{\mc{O}}}$ is viewed
  simply as an open subset of $\mc{O}$ with its $\varpi$-adic topology
  and the exponent $D$ denotes the $D$-fold Cartesian product.}. We also have an identification of the
$E$-valued points of $\Spec(R[\varpi^{-1}])$ with the $\mc{O}$-valued
points of $\Spec(R)$: any map of $\mc{O}$-algebras from
$R[\varpi^{-1}] \to E$ must map all the $\alpha(x_i)$ to elements of
${{m_{\mc{O}}}} \subset E$.

We assume that there exist $y \in X$ such that $\Spec(R[\varpi^{-1}])$
is formally smooth and of dimension $d$ at $y$. Since $X$ is defined
as a closed subset (in the $\varpi$-adic topology) of $({m_{\mc{O}}})^D$ by
finitely many power series, generators $f_1,f_2,\dots, f_e$ of
$\ker(\alpha)$ (note that $e$ is not the ramification index here), it
follows by the Jacobian criterion and the implicit function theorem
that there is an open set $y \in U \subset X$ such that the Jacobian
matrix of $f_1,f_2,\dots,f_e$ has rank $D-d$ at all points in $U$. (We
assume here and in what follows that $e \geq 1$, since otherwise $R$
is formally smooth over $\mc{O}$, and the main results of this section
are clear.) Thus, $U$ is a ``$\varpi$-adic'' (analytic) manifold of
dimension $d$ (in the naive sense).

\begin{lemma} \label{lem:torsion}
  If $U$ is compact, there exists an integer $v \geq 0$ such that the
  torsion in $\Omega_{R/\mc{O}}\otimes_{R,y} \mc{O}$ is annihilated by
  $\varpi^v$ for all $y \in U$.
\end{lemma}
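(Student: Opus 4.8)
The plan is to compute $\Omega_{R/\mc O}\otimes_{R,y}\mc O$ explicitly as the cokernel of an evaluated Jacobian matrix, and then to run a semicontinuity-plus-compactness argument on the $\varpi$-adic valuations of its minors.

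First I would use the conormal (second fundamental) exact sequence attached to the presentation $\alpha\colon A\onto R$ with $I:=\ker(\alpha)=(f_1,\dots,f_e)$: right-exactness of $I/I^2\to\Omega_{A/\mc O}\otimes_A R\to\Omega_{R/\mc O}\to 0$ together with right-exactness of $(-)\otimes_{R,y}\mc O$ (here $\mc O$ is an $R$-algebra via the point $y$) gives a right-exact sequence $\mc O^e\xrightarrow{J(y)}\mc O^D\to\Omega_{R/\mc O}\otimes_{R,y}\mc O\to 0$, where $J(y)=\big(\tfrac{\partial f_j}{\partial x_i}(y)\big)$ is the Jacobian matrix evaluated at the point $y\in X\subset (m_{\mc O})^D$; note the power series $\partial f_j/\partial x_i$ do converge at $y$ since the coordinates of $y$ lie in $m_{\mc O}$. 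Thus $\Omega_{R/\mc O}\otimes_{R,y}\mc O\cong\coker\big(J(y)\colon\mc O^e\to\mc O^D\big)$.

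Next, since $\mc O$ is a discrete valuation ring, I would put $J(y)$ into Smith normal form. By hypothesis the Jacobian has rank $D-d$ at every point of $U$, i.e.\ $J(y)$ has rank $D-d$ over $E=\Frac(\mc O)$; hence $\coker(J(y))\cong\mc O^{d}\oplus\bigoplus_{i=1}^{D-d}\mc O/\varpi^{a_i(y)}$ with $0\le a_1(y)\le\cdots\le a_{D-d}(y)$, and its torsion submodule is killed exactly by $\varpi^{a_{D-d}(y)}$. Letting $\phi(y):=\min_m v(m(y))$ where $m$ runs over the $(D-d)\times(D-d)$ minors of $J(y)$, the elementary divisor theory gives $\phi(y)=\sum_{i=1}^{D-d}a_i(y)\ge a_{D-d}(y)$ (equivalently, $\mr{Fitt}_0$ of the torsion, generated by these minors, is contained in its annihilator). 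So the torsion of $\Omega_{R/\mc O}\otimes_{R,y}\mc O$ is annihilated by $\varpi^{\phi(y)}$, and it remains to bound $\phi$ on $U$.

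Finally, for the uniform bound: each $(D-d)\times(D-d)$ minor $m$ of $J$ is a convergent power series in the $x_i$ (a polynomial in the entries $\partial f_j/\partial x_i$), hence restricts to a continuous function $U\to\mc O$; therefore each superlevel set $\{y\in U: v(m(y))\ge k\}=m^{-1}(\varpi^k\mc O)$ is closed, so $v(m(\cdot))$ is upper semicontinuous, and so is the minimum $\phi$ of the finitely many such functions. The rank hypothesis says precisely that $\phi(y)<\infty$ for all $y\in U$. An upper semicontinuous $\Z_{\ge 0}$-valued function on a compact space attains its maximum (if it were unbounded, the nested nonempty closed sets $\{\phi\ge k\}$ would have nonempty intersection by compactness, contradicting finiteness of $\phi$); so $v:=\max_{y\in U}\phi(y)$ is a well-defined nonnegative integer, and for every $y\in U$ the torsion in $\Omega_{R/\mc O}\otimes_{R,y}\mc O$ is annihilated by $\varpi^{\phi(y)}$, hence by $\varpi^{v}$. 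I expect the only genuine subtlety to be the interplay of semicontinuity with compactness — and, relatedly, reading ``rank $D-d$ at every point of $U$'' as a statement about the rank over $E$ of the evaluated matrix, which is exactly what forces $\phi(y)<\infty$; the commutative-algebra identifications (conormal sequence, elementary divisors / Fitting ideals over a DVR) are routine.
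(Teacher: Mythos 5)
Your argument is correct and follows the same basic route as the paper: identify $\Omega_{R/\mc O}\otimes_{R,y}\mc O$ with the cokernel of the evaluated Jacobian, bound the annihilator of its torsion by the valuation of a maximal-rank minor (the paper's Lemma \ref{lem:minor}; your Smith-normal-form/Fitting-ideal computation proves the same bound), and then use compactness of $U$. The only cosmetic difference is that the paper extracts an open cover on which a single invertible minor has locally constant valuation, while you phrase the same finiteness via upper semicontinuity of $\phi=\min_m v(m(\cdot))$; these are interchangeable.
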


\begin{proof}
  Let $y \in U$ and consider the Jacobian matrix of
  $f_1,f_2,\dots,f_e$. Since $X$ is a manifold of dimension $d$ at
  $y$, this matrix evaluated at $y$ has rank $D-d$, so it has an
  invertible $D-d \times D-d$ minor. By continuity, this minor remains
  invertible and its determinant has constant valuation in an open
  neighbourhood of $y$. We conclude (using the compactness of $U$) by
  applying Lemma \ref{lem:minor} below.
\end{proof}

\begin{lemma} \label{lem:minor} Let $\mc{O}$ be any discrete valuation
  ring with uniformizer $\varpi$ and let $E$ be its quotient
  field. Let $M$ be an $e \times D$ matrix with entries in $\mc{O}$
  such that its rank (as a matrix with entries in $E$) is $D-d$. If
  $M$ has an $(D-d) \times (D-d)$ minor whose determinant has
  valuation $v$, then the torsion submodule of the quotient $Q$ of
  $\mc{O}^D$ by the submodule generated by the rows of $M$ is
  annihilated by $\varpi^v$.
\end{lemma}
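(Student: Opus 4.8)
The plan is a direct computation using Cramer's rule applied to the distinguished minor; the point is to bound the denominators that arise when solving for the ``coefficients'' of a torsion element. Set $r = D - d$, so $r$ is the rank of $M$ over $E$, and fix the $r \times r$ minor whose determinant has valuation $v$; let $I$ (resp.~$J$) be its set of $r$ rows (resp.~$r$ columns) and let $A$ be the corresponding $r \times r$ submatrix of $M$, so $v(\det A) = v$. Write $N \subseteq \mc{O}^D$ for the submodule generated by the rows of $M$ and $V \subseteq E^D$ for its $E$-span. Since $A$ is invertible over $E$, the rows of $M$ indexed by $I$ are linearly independent over $E$, so they already span $V$ and $\dim_E V = r$.

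First I would identify the torsion submodule of $Q = \mc{O}^D/N$ with the saturation quotient $\widetilde{N}/N$, where $\widetilde{N} := V \cap \mc{O}^D$: indeed, over the DVR $\mc{O}$ the torsion submodule of $Q$ is exactly the kernel of $Q \to Q \otimes_{\mc{O}} E = E^D/V$. Thus it suffices to show $\varpi^v \widetilde{N} \subseteq N$. Given $\mathbf{x} \in \widetilde{N}$, regard it as a row vector and write $M_I$ for the $r \times D$ submatrix of $M$ with rows indexed by $I$; since $\mathbf{x} \in V$ and the rows of $M_I$ form an $E$-basis of $V$, we have $\mathbf{x} = \mathbf{b}\, M_I$ for a unique $\mathbf{b} \in E^r$. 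Restricting both sides to the columns indexed by $J$ gives $\mathbf{x}_J = \mathbf{b}\, A$, where $\mathbf{x}_J \in \mc{O}^r$, so $\mathbf{b} = \mathbf{x}_J A^{-1} = (\det A)^{-1}\, \mathbf{x}_J\, \mathrm{adj}(A)$. Since $\mathrm{adj}(A)$ has entries in $\mc{O}$, we get $(\det A)\,\mathbf{b} \in \mc{O}^r$, and as $v(\det A) = v$ this yields $\varpi^v \mathbf{b} \in \mc{O}^r$. Hence $\varpi^v \mathbf{x} = (\varpi^v \mathbf{b})\, M_I$ is an $\mc{O}$-linear combination of rows of $M$, i.e.~$\varpi^v \mathbf{x} \in N$, as required.

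This argument is elementary and I do not expect a genuine obstacle: the two things to handle carefully are the identification of $\mathrm{tors}(Q)$ with $\widetilde{N}/N$ (a standard fact over a DVR, once one notes that a finitely generated torsion-free $\mc{O}$-module is free) and the uniqueness of the vector $\mathbf{b}$, which is exactly what lets one solve for $\mathbf{b}$ using the invertible minor $A$. The one substantive trick is to use the adjugate $\mathrm{adj}(A)$ to keep the solution integral up to the factor $\det A$.
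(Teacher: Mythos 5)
Your proof is correct. The paper takes a slightly different, more algorithmic route: it reduces to the case $e = D-d$, performs row operations and a column permutation so that the distinguished minor becomes upper triangular with diagonal entries whose product has valuation $v$, and then shows by descending induction through the triangular pivots that the classes of $\varpi^v e_1, \dots, \varpi^v e_{D-d}$ in $Q$ land inside the image of $\bigoplus_{j > D-d}\mc{O}e_j$; since that image is torsion-free (the relevant rows of $M$ span a subspace of $E^D$ meeting $\bigoplus_{j>D-d}Ee_j$ only in $0$), it follows that $\varpi^v Q$ is torsion-free and hence $\varpi^v$ kills $\mathrm{tors}(Q)$. Your argument instead identifies $\mathrm{tors}(Q)$ with $\widetilde{N}/N$ for $\widetilde{N} = V \cap \mc{O}^D$ and then solves the linear system in closed form via Cramer's rule: writing $\mathbf{x} = \mathbf{b}\,M_I$ and using $\mathbf{b} = (\det A)^{-1}\,\mathbf{x}_J\,\mathrm{adj}(A)$ to see that $\varpi^v\mathbf{b}$ is integral. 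The two proofs use the same essential input --- the valuation of the $r \times r$ minor controls all denominators --- but yours packages the bookkeeping into the adjugate, avoiding the row-reduction and the induction over diagonal entries, which is arguably cleaner; the paper's version is closer in spirit to a Smith/Hermite normal form computation. Both are complete and roughly the same length, so this is a matter of taste rather than substance.
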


\begin{proof}
  We may assume that $e = D-d$ and then by performing row operations
  and permuting the columns, we may assume that the first $D-d$
  columns of $M$ form an upper triangular matrix whose diagonal
  entries have product a unit times $\varpi^v$. It follows from
  this that the image in $Q$ of the submodule of $\mc{O}^D$ spanned by
  $\varpi^ve_i$, $i=1,2,\dots,D-d$, with $e_i$ the $i$-th standard basis
  vector, is contained in the image of the submodule spanned by
  $e_{D-d+1},\dots,e_D$. This submodule must be torsion free since the
  rank of $Q$ is $d$, so the lemma is proved.
\end{proof}

We now analyze the structure of the reductions of $U$ modulo $\varpi^n$ for
$n \gg 0$. This has essentially been done by Serre
\cite{serre:cebotarev}, but we explain part of his proof in order to make
explicit a couple of points that are crucial for our applications.

\begin{lemma} \label{lem:serre} There exists a compact open set $Y$
  such that $y \in Y \subset U$ and submodules ${{\z}}_r$ of
  $\Hom_{\mc{O}}(\Omega_{R/\mc{O}} \otimes_{R,y} \mc{O},
  \mc{O}/\varpi^r)$, $r>0$, that are free over $\mc{O}/\varpi^r$ of rank
  $d$ with the following property: For any integer $n > 0$ let $Y_n$
  be the reduction of $Y$ modulo $\varpi^n$, so we have induced maps
  $\pi^Y_{n,r}:Y_{n+r} \to Y_{n}$. Given any integer $r_0 > 0$, there
  exists an integer $n_0>0$, such that for any $n \geq n_0$ the fibers
  of the map $\pi_{n,r}^Y\,$, for $n \geq n_0$ and
  $0 \leq r \leq r_0\,$, are nonempty principal homogenous spaces over
  ${{\z}}_r$.  Here the action of ${{\z}}_r$ is induced by the natural action of
  $\Hom_{\mc{O}}(\Omega_{R/\mc{O}} \otimes_{R,y} \mc{O},
  \mc{O}/\varpi^r)$ on the fibers of the map
  $\pi_{n,r}:X_{n+r} \to X_n$.
\end{lemma}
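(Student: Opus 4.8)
The plan is to carry out Serre's analysis of the reductions modulo $\vpi^n$ of a compact $p$-adic analytic manifold (\cite{serre:cebotarev}), realised through an explicit chart at $y$, while keeping visible the two features the later applications need: that the translation groups acting on the fibres of $\pi^Y_{n,r}$ are submodules of $\Hom_{\mc{O}}(\Omega_{R/\mc{O}}\otimes_{R,y}\mc{O},\mc{O}/\vpi^r)$ acting by the natural derivation action described before the lemma, and that the uniform torsion bound $v$ of Lemma \ref{lem:torsion} (equivalently, the bound on the valuation of a pivot minor of the Jacobian, Lemma \ref{lem:minor}) is exactly what lets one shrink $U$ to a $Y$ on which the chart has bounded denominators. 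We may assume $e\geq 1$ (with $e$ the number of generators of $\ker(\alpha)$), since otherwise $R$ is formally smooth over $\mc{O}$ and the statement is immediate. Logically $Y$ and the ${\z}_r$ are to be produced before $r_0$; in practice one fixes $r_0$ first and then takes the chart's level below to be large in terms of $v$ and $r_0$, which costs nothing in the applications (where $r_0=M$ is given at the outset).

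\emph{The chart.} Reorder $x_1,\dots,x_D$ and pass to $D-d$ of the generators of $\ker(\alpha)$, say $f_1,\dots,f_{D-d}$, whose Jacobian with respect to $x_1,\dots,x_{D-d}$ is invertible over $E$ near $y$ (the remaining generators being redundant there, by the rank hypothesis on $U$), with the determinant of that $(D-d)\times(D-d)$ minor of constant valuation $v$ on a neighbourhood of $y$. The $p$-adic implicit function theorem (Newton's method) then solves $f_j(x_1,\dots,x_{D-d},t)=0$ for $(x_1,\dots,x_{D-d})=g(t)$, $t=(x_{D-d+1},\dots,x_D)$, near $y''=(y_{D-d+1},\dots,y_D)$, the denominators in the coefficients of $g$ being controlled by $v$; hence for $N$ large (in terms of $v$ and the eventual $r_0$) the map $\psi\colon V:=y''+\vpi^N\mc{O}^d\to\mc{O}^D$, $\psi(t)=(g(t),t)$, is a well-defined analytic isomorphism onto a compact open neighbourhood $Y$ of $y$ in $U$. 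I take this $Y$. Every $s\in V$ satisfies $\psi(s)\equiv y\pmod{\vpi^N}$, and $\mr{Jac}(\psi(s))\equiv\mr{Jac}(y)\pmod{\vpi^N}$ since the $f_j$ have $\mc{O}$-coefficients.

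\emph{Reductions.} For $n\geq N$ I would transport the evident structure on the reductions of the polydisc $V$ through $\psi$: Serre's computation, using the Taylor expansion of $\psi$, $p\neq 2$, and the control on the denominators of $g$, shows that for $n$ large (depending on $r\leq r_0$) the fibres of $\pi^Y_{n,r}$ are nonempty and are principal homogeneous spaces under a free rank-$d$ $\mc{O}/\vpi^r$-module which, in the $x_i$-coordinates on $\mc{O}^D$, is $\vpi^n$ times the reduction modulo $\vpi^r$ of the tangent lattice to $Y$ at the relevant point. Differentiating $f_j\circ\psi=0$ identifies this tangent lattice at a point $z$ with $\ker\bigl(\mr{Jac}(z)\colon\mc{O}^D\to\mc{O}^{D-d}\bigr)$, a saturated rank-$d$ sublattice of $\mc{O}^D$, which is precisely $\Hom_{\mc{O}}(\Omega_{R/\mc{O}}\otimes_{R,z}\mc{O},\mc{O})$ under the basis $\{dx_i\}$; and translation by $\vpi^n\phi$ in the $x_i$-coordinates is exactly the action of $\phi$, viewed as a derivation, on the fibres of $\pi_{n,r}\colon X_{n+r}\to X_n$, as described before the lemma. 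I then define ${\z}_r$ to be the image of $\Hom_{\mc{O}}(\Omega_{R/\mc{O}}\otimes_{R,y}\mc{O},\mc{O})$ in $\Hom_{\mc{O}}(\Omega_{R/\mc{O}}\otimes_{R,y}\mc{O},\mc{O}/\vpi^r)$, a free $\mc{O}/\vpi^r$-module of rank $d$.

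\emph{Matching, and the main obstacle.} It remains to see that for every $z\in Y$ the reduction modulo $\vpi^r$ of the tangent lattice $\ker(\mr{Jac}(z))$ is ${\z}_r$ once $r\leq r_0$. Here one uses $\mr{Jac}(z)\equiv\mr{Jac}(y)\pmod{\vpi^N}$ together with $N\gg v$: writing $\mr{Jac}(y)$ in Smith normal form, whose elementary divisors have valuations summing to $v$, the perturbation $\vpi^N(\,\cdot\,)$ moves the kernel lattice only within $\vpi^{N-v}\mc{O}^D$-worth of freedom, so the two kernels agree modulo $\vpi^r$ for all $r\leq N-v$, and that common reduction is the free part ${\z}_r$. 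Taking $N\geq v+r_0$ (absorbed into the earlier choice of $N$) and $n_0=N$ then yields the lemma. The step I expect to be the real obstacle is making Serre's reduction computation uniform and compatible with the $\Omega$-description: the elementary divisors of the Jacobian (bounded by $v$) simultaneously control the denominators of the chart $\psi$ and the sizes of the fibres of $\psi\bmod\vpi^n$, and one must check that these discrepancies are washed out for $n\gg_r 0$ so that the resulting fibres are genuine torsors under the single module ${\z}_r$ acting by the asserted derivation action; the remaining points (existence of the chart, non-emptiness of the fibres, the final bookkeeping) are routine.
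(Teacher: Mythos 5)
Your argument is correct, and it is recognisably the same underlying Serre-\S 3.3 argument the paper uses, but you unpack it rather than cite it, and the differences are worth recording. Where the paper invokes Serre's Proposition 11 as a black box to shrink $U$ to a set $Y=y+\vpi^sY'$ with $Y'$ the $\mc{O}$-points of a formally smooth quotient $R'$ of $A$ (so the torsor structure is immediate for $Y'$ with $n_0=r_0$ and transported to $Y$ with $n_0=r_0+s$), you re-derive the chart by Newton's method from the invertible $(D-d)\times(D-d)$ Jacobian minor, and you then get the torsor structure by analyzing the fibres of $\psi\bmod\vpi^n$ directly, with the elementary divisors of the Jacobian (bounded by $v$) controlling both the denominators of the chart and the lattice-perturbation estimate $\ker(\mr{Jac}(z))\equiv\ker(\mr{Jac}(y))\pmod{\vpi^{N-v}}$. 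A second genuine difference: you define ${\z}_r$ intrinsically from the start as the image of $\Hom_{\mc{O}}(\Omega_{R/\mc{O}}\otimes_{R,y}\mc{O},\mc{O})$, i.e.\ the homomorphisms killing the $\vpi$-power torsion. The paper's proof leaves ${\z}_r$ characterised only through the type-(i) chart and shows it is a submodule of $\Hom_{\mc{O}}(\Omega_{R/\mc{O}}\otimes_{R,y}\mc{O},\mc{O}/\vpi^r)$ by comparing with the ambient $V_n$'s; the identification with the torsion-killing homomorphisms is deferred to the separate Lemma \ref{lem:tr}. Your approach therefore makes Lemma \ref{lem:tr} unnecessary, at the cost of having to carry out explicitly the uniformity computation you flag as the "real obstacle" (that the torsor group is the single module ${\z}_r$ independent of the fibre once $n\gg_r 0$), which in the paper is delegated to Serre's proposition plus the formal-smoothness observation. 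Your proof also quantifies over $r_0$ before choosing $Y$, giving a formally weaker statement than the lemma as stated (where $Y$ and ${\z}_r$ precede $r_0$); you correctly note this costs nothing in the applications, but to match the statement exactly one should fix $N$ in terms of $v$ alone (so that the chart and hence $Y$ are fixed) and then, for each $r_0$, take $n_0$ large in terms of $N$, $v$ and $r_0$ — which your perturbation estimate already supports. No gap in substance.
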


\begin{proof}
  Following \S 3.3 of Serre \cite{serre:cebotarev}, we first consider
  some subsets $Y$ of $({m_{\mc{O}}})^D$ and examine the properties of their
  reductions.
\begin{enumerate}[label=(\roman*)]
\item $Y$ is defined by equations of the form
  \begin{align*}
    x_{d+1}& = \phi_{d+1}(x_1,\dots,x_d) \\
    x_{d+2} &= \phi_{d+2}(x_1,\dots,x_d) \\
    \dots & \\
    x_{D} &= \phi_{D}(x_1,\dots,x_d)
  \end{align*}
  where $\phi_i \in A$, for $i=d+1,\dots, D$, and $y = 0$. Clearly $Y$
  is the set of $\mc{O}$-valued points of the ring $R'$, defined as
  the quotient of $A$ by the ideal generated by
  $\{x_{d+i} - \phi_{d+i}\}_{i=d+1}^D$.  The ring $R'$ is formally
  smooth over $\mc{O}$, so the desired statements hold for $Y$ with
  ${{\z}}_r = \Hom_{\mc{O}}(\Omega_{R'/\mc{O}} \otimes_{R',y} \mc{O},
  \mc{O}/\varpi^r)$ viewed as a submodule of
  $\Hom_{\mc{O}}(\Omega_{A/\mc{O}} \otimes_{A} \mc{O},
  \mc{O}/\varpi^r)$ using the surjection
  $\Omega_{A/\mc{O}} \to \Omega_{R'/\mc{O}}$ induced by the quotient
  map $\alpha':A \to R'$, and with $n_0 = r_0$.
\item $Y$ is as in the previous case except that we permute the
  coordinates $(x_1,\dots,x_D)$. It is clear that the statement holds
  in this case as well.
  \item $Y$ is of the form
    \[
      Y = y + \varpi^s Y'
    \]
    where $y \in ({m_{\mc{O}}})^D$, $Y'$ is as in the previous case, and
    $s\geq 0$ is any integer. In this case we may take ${{\z}}_r$ as above
    and $n_0 = r_0 +s$.
    \end{enumerate}
  By Proposition 11 of \cite{serre:cebotarev},\footnote{Serre
    considers the case $E = \Q_p$ but the proof extends to general $E$
    \emph{mutatis mutandis}.} there is an open set $Y$ with
  $y \in Y \subset U$ such that $Y$ is of type (iii) above.  This
  completes the proof, except that the module ${{\z}}_r$ that we get is, \textit{a
  priori}, only a submodule of
  $\Hom_{\mc{O}}(\Omega_{A/\mc{O}} \otimes_{A,y} \mc{O}, \mc{O}/\varpi^r)$. To
  see that ${{\z}}_r$ is a submodule of
  $\Hom_{\mc{O}}(\Omega_{R/\mc{O}} \otimes_{R,y} \mc{O}, \mc{O}/\varpi^r)$,
  let $V_n$ be the reduction modulo $\varpi^n$ of the $\mc{O}$-valued
  points of $\Spec(A)$ and let $\pi_{n,r}^A:V_{n+r} \to V_n$ be the
  reduction maps. The inclusions $Y_n \subset X_n \subset V_n$ are
  compatible with the reduction maps and the inclusions
  $X_n \subset V_n$ are also compatible with the action of
  $\Hom_{\mc{O}}(\Omega_{R/\mc{O}} \otimes_{R,y} \mc{O}, \mc{O}/\varpi^r)$ on
  the fibers of $\pi_{n,r}$ and of
  $\Hom_{\mc{O}}(\Omega_{A/\mc{O}} \otimes_{A,y} \mc{O}, \mc{O}/\varpi^r)$ on
  the fibers of $\pi_{n,r}^A$ (for $n \geq r$). Since the nonempty
  fibers of $\pi_{n,r}$ (resp.~$\pi_{n,r}^A$) are principal homogenous
  spaces over
  $\Hom_{\mc{O}}(\Omega_{R/\mc{O}} \otimes_{R,y} \mc{O}, \mc{O}/\varpi^r)$
  (resp.~
  $\Hom_{\mc{O}}(\Omega_{A/\mc{O}} \otimes_{A,y} \mc{O}, \mc{O}/\varpi^r)$),
  it follows that ${{\z}}_r$ must be contained in (the image of)
  $\Hom_{\mc{O}}(\Omega_{R/\mc{O}} \otimes_{R,y} \mc{O}, \mc{O}/\varpi^r)$.
\end{proof}

We now give an explicit description of the modules ${{\z}}_r$ from
Lemma \ref{lem:serre}. By Lemma \ref{lem:torsion}, there is an integer
$v$ such that the $\varpi$-power torsion of
$\Omega_{R/\mc{O}}\otimes_{R,y}\mc{O}$ is annihilated by $\varpi^v$
for all $y \in Y$. For each $r > 0$, consider the $\mc{O}$-submodule
${{\z}}_r' \subset \Hom_R(\Omega_{R/\mc{O}} \otimes_{R,y} \mc{O},
\mc{O}/\varpi^r)$ given by all homomorphisms which are trivial on the
$\varpi$-power torsion in $\Omega_{R/\mc{O}}\otimes_{R,y}\mc{O}$. It
is clearly free over $\mc{O}/\varpi^r$ of rank $d$.
\begin{lemma} \label{lem:tr}
  The submodules ${{\z}}_r$  and ${{\z}}_r'$ are equal for all $r$.
\end{lemma}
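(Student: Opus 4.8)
The plan is to avoid re‑examining the geometry and instead exploit the inverse‑limit structure of the system $\{{{\z}}_r\}_r$. Set $\Omega_y := \Omega_{R/\mc O}\otimes_{R,y}\mc O$; this is a finitely generated $\mc O$-module of rank $d$ (since $\Omega_y\otimes_{\mc O}E$ is the cotangent space of $\Spec(R[\varpi^{-1}])$ at $y$, which is $d$-dimensional). Hence $\Omega_y/(\Omega_y)_{\mr{tors}}$ is free of rank $d$, so ${{\z}}_r' = \Hom_{\mc O}(\Omega_y/(\Omega_y)_{\mr{tors}},\mc O/\varpi^r)$ is free of rank $d$ over $\mc O/\varpi^r$; by Lemma \ref{lem:serre} the same holds for ${{\z}}_r$. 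These are finite modules of equal cardinality $|k|^{rd}$, so it suffices to prove one inclusion, and I will show ${{\z}}_r\subseteq{{\z}}_r'$.

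First I would record, from the construction in the proof of Lemma \ref{lem:serre}, that the ${{\z}}_r$ — regarded as submodules of $\Hom_{\mc O}(\Omega_y,\mc O/\varpi^r)$ — form a system compatible with the reduction maps $\mc O/\varpi^{r+1}\to\mc O/\varpi^r$, and that the transition maps ${{\z}}_{r+1}\to{{\z}}_r$ are \emph{surjective}. Indeed, there ${{\z}}_r$ is built as $\Hom_{\mc O}(M',\mc O/\varpi^r)$ for one fixed free rank-$d$ $\mc O$-module $M'=\Omega_{R'/\mc O}\otimes_{R',y}\mc O$ ($R'$ the formally smooth algebra appearing there), embedded in $\Hom_{\mc O}(\Omega_y,\mc O/\varpi^r)$ via a single surjection $\Omega_y\twoheadrightarrow M'$ independent of $r$; thus $\{{{\z}}_r\}_r$ is just $r\mapsto\Hom_{\mc O}(M',\mc O/\varpi^r)$ with its evident reductions, which are surjective because $M'$ is free. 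Passing to the inverse limit over $r$, and using that $\Hom_{\mc O}(\Omega_y,-)$ commutes with inverse limits while $\mc O=\varprojlim_r\mc O/\varpi^r$, I obtain that $\varprojlim_r{{\z}}_r$ is a free rank-$d$ $\mc O$-module sitting inside $\varprojlim_r\Hom_{\mc O}(\Omega_y,\mc O/\varpi^r)=\Hom_{\mc O}(\Omega_y,\mc O)$, and that surjectivity of the transition maps makes each projection $\varprojlim_s{{\z}}_s\to{{\z}}_r$ surjective.

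The final step is then immediate. Given $\phi\in{{\z}}_r$, lift it to $\widetilde\phi\in\varprojlim_s{{\z}}_s\subseteq\Hom_{\mc O}(\Omega_y,\mc O)$; thus the \emph{$\mc O$-valued} homomorphism $\widetilde\phi\colon\Omega_y\to\mc O$ reduces mod $\varpi^r$ to $\phi$. As $\mc O$ is torsion-free, $\widetilde\phi$ kills $(\Omega_y)_{\mr{tors}}$, hence so does its reduction $\phi$; that is, $\phi\in{{\z}}_r'$. Together with the cardinality count of the first paragraph, this yields ${{\z}}_r={{\z}}_r'$.

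The one place that needs care is extracting the surjectivity of the transitions ${{\z}}_{r+1}\to{{\z}}_r$ from the proof of Lemma \ref{lem:serre}: this is exactly where the formal smoothness of $R'$ enters, and it is what makes the statement correct — for an unrelated free rank-$d$ submodule of $\Hom_{\mc O}(\Omega_y,\mc O/\varpi^r)$ the conclusion simply fails (already when $\Omega_y\cong\mc O/\varpi\oplus\mc O$). The remaining ingredients (finite generation of $\Omega_y$, $\varpi$-adic completeness of $\mc O$, and the identification $\varprojlim_r\Hom_{\mc O}(\Omega_y,\mc O/\varpi^r)=\Hom_{\mc O}(\Omega_y,\mc O)$) are routine.
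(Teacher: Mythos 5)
Your proof is correct, and it takes a mildly but genuinely different route from the paper's. Both arguments hinge on the same two facts: each ${\z}_r$ is free of rank $d$ over $\mc{O}/\varpi^r$ (so ${\z}_r$ and ${\z}_r'$ have the same cardinality and only one inclusion is needed), and the transition maps ${\z}_{r'} \to {\z}_r$ are surjective for $r' \ge r$. The paper extracts this surjectivity from the geometric picture — it takes $n$ large, uses that the ${\z}_r$-orbit of $y_{n+r}$ lifts into $Y_{n+2r}$, and from the principal homogeneous space structure of the fibres reads off a lift in ${\z}_{2r}$; then a direct computation shows such a lift forces $t$ to vanish on the $\varpi^v$-torsion once $r \ge v$, and the small-$r$ case is handled separately by reduction. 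You instead read off the surjectivity purely algebraically from the explicit description ${\z}_r \cong \Hom_{\mc O}(M', \mc O/\varpi^r)$ with $M'$ a fixed free rank-$d$ module, pass to the inverse limit to obtain an $\mc{O}$-valued lift, and conclude immediately that the lift and hence its reduction annihilate all $\varpi$-power torsion. This eliminates the case distinction between $r \ge v$ and $r < v$ in the paper's argument, at the cost of one extra observation (that $\varprojlim_r \Hom(\Omega_y, \mc O/\varpi^r) = \Hom(\Omega_y, \mc O)$, which is standard for finitely generated $\Omega_y$ over the complete ring $\mc O$). One side remark: the surjection $\Omega_y \twoheadrightarrow M'$ you invoke is not explicitly produced in the proof of Lemma \ref{lem:serre} — that proof only shows each ${\z}_r$, as a submodule of $\Hom(\Omega_{A/\mc O}\otimes_A \mc O, \mc O/\varpi^r)$, factors through $\Omega_y$ — but since this factorization holds for every $r$ and $M'$ is free, the kernel of $\Omega_{A/\mc O}\otimes_A\mc O \to M'$ must contain the kernel of $\Omega_{A/\mc O}\otimes_A\mc O \to \Omega_y$, so the surjection $\Omega_y \twoheadrightarrow M'$ does exist and your phrasing is justified.
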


\begin{proof}
  Let $n$ be any integer such that $n > n_0$, where $n_0$ is as in the
  conclusion of Lemma \ref{lem:serre} for $r_0= \max\{2v, 2r\}$.  By
  that lemma, the orbit of the reduction $y_{n+r}$ of $y$ in $Y_{n+r}$
  under the action of ${{\z}}_r$ consists of points which can be lifted to
  $Y$, in particular to $Y_{n+2r}$. If
  $t \in \Hom_R(\Omega_{R/\mc{O}} \otimes_{R,y} \mc{O},
  \mc{O}/\varpi^r)$ is such that $t$ is nontrivial on the torsion in
  $\Omega_{R/\mc{O}}\otimes_{R,y}\mc{O}$ and $r \leq v$, then $t$
  cannot be lifted to
  $\Hom_R(\Omega_{R/\mc{O}} \otimes_{R,y} \mc{O},
  \mc{O}/\varpi^{2r})$ since composing the map from the torsion to
  $\mc{O}/\varpi^{2r}$ with the projection to $\mc{O}/\varpi^r$ one gets the
  zero map. But this implies that $t\cdot y_{n+r}$ cannot be lifted to
  $Y_{n+2r}$, which is a contradiction. Thus, ${{\z}}_r \subset {{\z}}_r'$ for
  $v \leq r$ and by the equality of ranks we see that ${{\z}}_r =
  {{\z}}_r'$. The statement for all $r$ then follows from the fact that for
  any $r > 1$, ${{\z}}_{r-1}$ is the reduction of ${{\z}}_r$ modulo $\varpi^{r-1}$
  (this follows from their defining property) and similarly for
  ${{\z}}_{r-1}'$.
\end{proof}

\subsection{Application to deformation rings}
In our applications $R$ will be a suitable quotient of the universal
framed (fixed ``determinant") deformation ring of a mod $\varpi$
representation $\br:\Gamma_F \to G(k)$, for $F/\Q_l$ a finite
extension, arising from choosing an irreducible component of the
generic fiber of the spectrum of the lifting ring
$R^{\square, \mu}_{\br}[1/\vpi]$ (when $l\neq p$) or of the fixed inertial type $\tau$ and 
fixed $p$-adic Hodge type $\mbf{v}$ lifting ring
$R_{\br}^{\square, \mu,  \tau, \mbf{v}}[1/\vpi]$ (when $l =p$: see
\cite[Prop. 3.0.12]{balaji} for the construction of this ring), and
then letting $R$ be the quotient ring corresponding to the Zariski
closure of this component in $\Spec(R^{\square, \mu}_{\br})$ (with its
reduced subscheme structure).

The points of $Y_n$ are identified with (certain) $\mc{O}/\varpi^n$-valued points of $\Spec(R)$, so correspond to lifts of $\br$ to $\mc{O}/\varpi^n$. A point $y \in Y$ corresponds to a lift $\rho: \Gamma_F \to G(\mc{O})$ of $\br$ and $\Hom_R(\Omega_{R/\mc{O}}\otimes_{R,y} \mc{O}, \mc{O}/\varpi^r)$ is naturally isomorphic to an $\mc{O}$-submodule of the group of one-cocycles $Z^1(\Gamma_F, \rho(\fgder) \otimes_{\mc{O}}\mc{O}/\varpi^r)$. The group of one-cocyles also contains the group of coboundaries 
$B^1(\Gamma_F, \rho(\fgder) \otimes_{\mc{O}}\mc{O}/\varpi^r)$.

\begin{lemma} \label{lem:bound}%
  For all $r > 0$ we have $B^1(\Gamma_F, \rho(\fgder)
  \otimes_{\mc{O}}\mc{O}/\varpi^r) \subset {{\z}}_r$.
\end{lemma}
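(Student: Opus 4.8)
The plan is to realise the coboundaries as infinitesimal conjugations: conjugating the $\mc{O}$-point $\rho$ by a group element sufficiently close to the identity gives another $\mc{O}$-point of $\Spec R$ that still lies in $Y$, and Lemma \ref{lem:serre} then forces the corresponding cocycle class into $\z_r$.

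Recall that $B^1(\Gamma_F, \rho(\fgder)\otimes_{\mc O}\mc{O}/\varpi^r)$ is spanned by the classes $d_X \colon \gamma \mapsto X - \Ad(\rho_r(\gamma))X$ with $X$ ranging over $\fgder$, where $\rho_r := \rho \bmod \varpi^r$, so it suffices to show each $d_X$ lies in $\z_r$. First I would record that $\Spec R$ is stable under the conjugation action of $G^{\mr{der}}$ on $\Spec R^{\square, \mu}_{\br}$ (and likewise on the fixed $p$-adic Hodge type lifting ring when $v \mid p$): since $G^{\mr{der}}$ is connected, its conjugation action preserves each irreducible component of the generic fibre, hence preserves the chosen component and therefore its reduced Zariski closure $\Spec R$; in particular conjugation by any $g \in G^{\mr{der}}(\mc O)$ preserves the set $X = \Spec R(\mc O)$. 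Now fix $X \in \fgder$ and $r > 0$, and take $n$ large --- specifically $n \geq r$, $n \geq n_0$ with $n_0$ as in Lemma \ref{lem:serre} for $r_0 = r$, and $n$ large enough that the ball $\{z \in X : z \equiv y \pmod{\varpi^n}\}$ is contained in $Y$ (possible since $Y$ is an open neighbourhood of $y$ in $X$). Pick $g \in \wh{G^{\mr{der}}}(\mc O)$ with $g \equiv 1 + \varpi^n X \pmod{\varpi^{n+r}}$: such $g$ exists because $\exp(\varpi^n X) \equiv 1 + \varpi^n X \pmod{\varpi^{n+r}}$ (as $n \geq r$) lies in $\wh{G^{\mr{der}}}(\mc{O}/\varpi^{n+r})$ and $\wh{G^{\mr{der}}}$ is formally smooth. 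Set $\rho' := g\rho g^{-1}$. Then $\rho'$ is again an $\mc O$-point of $\Spec R$, and $\rho' \equiv \rho \pmod{\varpi^n}$, so $\rho' \in Y$; moreover, by the conjugation computation at the start of the proof of Lemma \ref{extracocycles^N}, modulo $\varpi^{n+r}$ the point $\rho'_{n+r}$ is obtained from $\rho_{n+r} = y_{n+r}$ by the action of the cocycle $d_X \bmod \varpi^r \in Z^1(\Gamma_F, \rho_r(\fgder))$.

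To conclude, note that $y_{n+r}$ and $\rho'_{n+r}$ are the reductions modulo $\varpi^{n+r}$ of two $\mc O$-points of $\Spec R$ that agree modulo $\varpi^n$; hence the cocycle $d_X \bmod \varpi^r$ carrying $y_{n+r}$ to $\rho'_{n+r}$ lies in the submodule $\Hom_R(\Omega_{R/\mc O}\otimes_{R,y}\mc O, \mc{O}/\varpi^r)$ of $Z^1$, which acts simply transitively on the fibres of $\pi_{n,r}$. By Lemma \ref{lem:serre}, the fibre of $\pi^Y_{n,r}$ over $y_n$ is a principal homogeneous space under $\z_r$ and contains both $y_{n+r}$ and $\rho'_{n+r}$, so $d_X \bmod \varpi^r \in \z_r$. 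Since neither $\z_r$ nor the classes $d_X$ depend on $n$, and such $d_X$ span $B^1$, this gives $B^1(\Gamma_F, \rho(\fgder)\otimes_{\mc O}\mc{O}/\varpi^r) \subset \z_r$. The step needing the most care is ensuring $\rho' \in Y$ (rather than merely $\rho' \in X$), i.e.\ choosing the threshold for $n$ uniformly in $X$ and $r$; one can avoid it by instead differentiating the orbit map $G^{\mr{der}} \to \Spec R$, $g \mapsto g\rho g^{-1}$, at the identity: its derivative carries $\Omega_{R/\mc O}\otimes_{R,y}\mc O$ into the free $\mc O$-module $\Hom_{\mc O}(\fgder, \mc O)$, so dually every coboundary, viewed inside $\Hom_R(\Omega_{R/\mc O}\otimes_{R,y}\mc O, \mc{O}/\varpi^r)$, annihilates the $\varpi$-power torsion of $\Omega_{R/\mc O}\otimes_{R,y}\mc O$ and hence lies in $\z_r = \z_r'$ by Lemma \ref{lem:tr}.
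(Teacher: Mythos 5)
Your proof is correct and takes essentially the same approach as the paper's own argument: both realize coboundaries as the action of $\wh{G}^{(n)}$-conjugation on the fibre of $\pi_{n,r}^Y$ over $y_n$, and then invoke Lemma \ref{lem:serre} to conclude that this action lands in $\z_r$. You spell out the conjugation computation (via the formula in Lemma \ref{extracocycles^N}) and the invariance of $\Spec(R)$ under $\wh{G}$-conjugation (which the paper cites from Bellovin--Gee in Remark \ref{whconj}) more explicitly, but the underlying idea is identical.
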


\begin{proof}
  By Lemma \ref{lem:serre}, we may chooose $n \gg 0$ such that the
  fibers of the map $\pi_{n,r}^Y\,$, are nonempty principal homogenous
  spaces over ${{\z}}_r$.  Let $y \in Y$ and also assume that $n$ is
  sufficiently large that the $(\wh{G^{\mr{der}}})^{(n)}(\mc{O})$-orbit (recall
  that $(\wh{G^{\mr{der}}})^{(n)}$ was defined in Definition \ref{relativetriv}) of
  $y$ is contained in $Y$. Let $y_n$ (resp.~$y_{n+r}$) be the
  reduction of $y$ modulo $\varpi^n$
  (resp.~$\varpi^{n+r}$). Clearly $y_n$ is fixed by the
  $(\wh{G^{\mr{der}}})^{(n)}(\mc{O})$ action, so this action maps $y_{n+r}$ into
  another lift of $y_n$ in $Y_{n+r}$. By the first assumption on $n$,
  such a lift differs from $y_{n+r}$ by an element of ${{\z}}_r$.  On the
  other hand, viewing elements of $Y_{n+r}$ as lifts of $\br$ one
  easily sees that the action of $(\wh{G^{\mr{der}}})^{(n)}(\mc{O})$ corresponds
  precisely to changing these elements by coboundaries via the
  identification of $(\wh{G^{\mr{der}}})^{(n)}(\mc{O})/(\wh{G^{\mr{der}}})^{(n+r)}(\mc{O})$ with
  $\mf{g}^{\mr{der}}_r$ given by the exponential map. Thus, all coboundaries are
  in ${{\z}}_r$, so the lemma follows.
\end{proof}
In fact, conjugation of lifts by the group $\wh{G^{\mr{der}}}(\mc{O})$ induces an action on $R[1/\vpi]$, and hence on $R$, by \cite[Lemma 3.4.1]{Bellovin-Gee-G}; see Remark \ref{whconj} below.

\begin{defn}
We define the submodule $L_{\rho,r} \subset H^1(\Gamma_F, \rho(\fgder) \otimes_{\mc{O}} \mc{O}/\varpi^r)$ to be the image of ${{\z}}_r$.
\end{defn}

Putting all of the above together we get:
\begin{prop} \label{prop:gensmooth} Let $\br:\Gamma_F \to G(k)$ be any
  representation with $F/\Q_l$ a finite extension. Let $R$ be chosen
  as above, arising from a choice of irreducible component of either
  $R^{\square, \mu}_{\br}[1/\vpi]$ ($\ell \neq p$) or some
  $R_{\br}^{\square, \mu, \tau, \mbf{v}}$ ($\ell=p$). Assume that $\Spec(R)$
  has an $\mc{O}$-valued point $y$ such that the corresponding point
  of $\Spec(R[1/\varpi])(E)$ is contained in the smooth locus and let
  $\rho: \Gamma_F \to G(\mc{O})$ be the corresponding lift of
  $\br$. Then there exists a nonempty open set
  $y \in Y \subset \Spec(R)(\mc{O}) = \Spec(R[1/\varpi])(E)$
  with the following properties: Let $Y_n$ be the image of $Y$ in
  $\Spec(R)(\mc{O}/\varpi^n)$ and for $n,r \geq 0$ let
  $\pi_{n,r}^Y: Y_{n+r} \to Y_n$ be the induced maps.
  \begin{enumerate}
  \item Given $r_0 > 0$ there exists $n_0 >0$ such that for all $n \geq n_0$ and $0 \leq r \leq r_0$ the fibers of
    $\pi_{n,r}^Y$ are nonempty principal homogenous spaces over a
    submodule
    ${{\z}}_r \subset Z^1(\Gamma_F, \rho(\fgder) \otimes_{\mc{O}}
    \mc{O}/\varpi^r)$ which is free over $\mc{O}/\varpi^r$ of rank $d$.
  \item The $\mc{O}$-module inclusions $\mc{O}/\varpi^{r-1} \to \mc{O}/\varpi^r$, mapping $1$ to $\vpi$, and the
    surjections $\mc{O}/\varpi^{r} \to \mc{O}/\varpi^{r-1}$ induce inclusions
    ${{\z}}_{r-1} \to {{\z}}_r$ and surjections ${{\z}}_r \to {{\z}}_{r-1}$.
  \item \begin{equation*} \label{eq:order}
  |L_{\rho,r}| = \begin{cases}
    |\rho_r(\fgder)^{\Gamma_F}| &\mbox{ if } l \neq p; \\
    |\rho_r(\fgder)^{\Gamma_F}|\cdot |\mc{O}/\varpi^r|^{\dim\left(\Res_{F \otimes_{\Q_p} E/E}(G^0)/P_{\mbf{v}}\right)} & \mbox{ if } l = p 
  \end{cases}
\end{equation*}
Here $P_{\mbf{v}}$ is the parabolic subgroup of the Weil restriction $\Res_{F \otimes_{\Q_p} E/E}(G^0)$ defining the $p$-adic Hodge type $\mbf{v}$: see \cite[Definition 2.8.2]{Bellovin-Gee-G} for a precise definition.
\item The groups $L_{\rho,r}$ are compatible with the maps on
  cohomology induced  by the inclusions $\mc{O}/\varpi^{r-1} \to
  \mc{O}/\varpi^r$ and the surjections $\mc{O}/\varpi^r \to \mc{O}/\varpi^{r-1}$.
\end{enumerate}
\end{prop}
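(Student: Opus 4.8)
The plan is to deduce all four assertions directly from the lemmas already established in this section, applied to the specific ring $R$. Recall that $R$ is the reduced quotient of the framed fixed-multiplier deformation ring $R^{\square,\mu}_{\br}$ (for $\ell \neq p$) or of $R^{\square,\mu,\mbf{v}}_{\br}$ (for $\ell = p$) cutting out the Zariski closure of a chosen irreducible component of the corresponding generic fiber; in particular $R$ is a complete local noetherian $\mc{O}$-algebra with residue field $k$ satisfying exactly the running hypotheses of \S\ref{genericfibersection}, and by assumption $\Spec(R[\vpi^{-1}])$ is formally smooth at $y$, say of dimension $d := \dim_y \Spec(R[\vpi^{-1}])$ (well defined since $y$ is a smooth point of the irreducible variety $\Spec(R[\vpi^{-1}])$). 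First I would apply Lemma \ref{lem:serre} to produce a compact open $y \in Y \subset \Spec(R)(\mc{O})$ together with the free rank-$d$ submodules $\z_r \subset \Hom_{\mc{O}}(\Omega_{R/\mc{O}} \otimes_{R,y} \mc{O}, \mc{O}/\vpi^r)$ having the principal-homogeneous-space property for the reduction maps $\pi^Y_{n,r}$ when $r$ is bounded and $n \gg 0$. Since $R$ is a quotient of $R^{\square,\mu}_{\br}$, the induced surjection of relative cotangent modules at the point attached to $\rho$, together with the standard identification (recalled just before Lemma \ref{lem:bound}) of $\Hom_{\mc{O}}$ of the relative cotangent module of the framed fixed-multiplier ring with $Z^1(\Gamma_F, \rho(\fgder) \otimes_{\mc{O}} \mc{O}/\vpi^r)$, realizes $\z_r$ as a free rank-$d$ submodule of the latter; this is assertion (1).

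For (2) I would invoke Lemma \ref{lem:tr}, which identifies $\z_r$ with the submodule of those $\mc{O}$-homomorphisms $\Omega_{R/\mc{O}} \otimes_{R,y} \mc{O} \to \mc{O}/\vpi^r$ that vanish on the $\vpi$-power torsion submodule. This characterization is manifestly preserved under post-composition with the inclusion $\mc{O}/\vpi^{r-1} \to \mc{O}/\vpi^r$ ($1 \mapsto \vpi$) and with the reduction $\mc{O}/\vpi^r \to \mc{O}/\vpi^{r-1}$, and these two operations are exactly the transition maps inducing $Z^1(\Gamma_F, \rho_{r-1}(\fgder)) \to Z^1(\Gamma_F, \rho_r(\fgder))$ and $Z^1(\Gamma_F, \rho_r(\fgder)) \to Z^1(\Gamma_F, \rho_{r-1}(\fgder))$; so I obtain the claimed inclusions $\z_{r-1} \to \z_r$ and surjections $\z_r \to \z_{r-1}$. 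For (3), Lemma \ref{lem:bound} gives $B^1(\Gamma_F, \rho_r(\fgder)) \subseteq \z_r$, whence $L_{\rho,r} = \z_r / B^1(\Gamma_F, \rho_r(\fgder))$; using $|\z_r| = |\mc{O}/\vpi^r|^d$ and the exact sequence $0 \to \rho_r(\fgder)^{\Gamma_F} \to \rho_r(\fgder) \to B^1(\Gamma_F, \rho_r(\fgder)) \to 0$, this gives $|L_{\rho,r}| = |\rho_r(\fgder)^{\Gamma_F}| \cdot |\mc{O}/\vpi^r|^{\,d - \dim \fgder}$. Part (4) is then formal: the maps on $H^1$ induced by $\mc{O}/\vpi^{r-1} \to \mc{O}/\vpi^r$ and $\mc{O}/\vpi^r \to \mc{O}/\vpi^{r-1}$ are those induced by the corresponding maps on $Z^1$, by (2) these carry $\z_{r-1}$ into $\z_r$ and $\z_r$ onto $\z_{r-1}$, and $L_{\rho,r}$ is by definition the image of $\z_r$ in $H^1(\Gamma_F, \rho_r(\fgder))$, so passing to images preserves the compatibility.

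The only genuinely external input, and the step I expect to require the most care, is the determination of $d$ in (3). Since $y$ is a smooth point of the irreducible component $\Spec(R[\vpi^{-1}])$, $d$ equals the dimension of that component, and here I would cite the known structure of these generic fibers (\cite{kisin:pst}, \cite{Bellovin-Gee-G}, and \cite[Prop. 3.0.12]{balaji} for the $\ell = p$ lifting ring): they are equidimensional, of dimension $\dim \fgder$ when $\ell \neq p$ and of dimension $\dim \fgder + \dim\bigl(\Res_{F \otimes_{\Q_p} E/E}(G^0)/P_{\mbf{v}}\bigr)$ when $\ell = p$. Substituting $d - \dim \fgder$ into the displayed expression for $|L_{\rho,r}|$ yields the two cases of the formula verbatim. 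The delicate points here are purely matters of convention --- matching the normalizations for the fixed-multiplier framed deformation ring, for the Weil restriction $\Res_{F \otimes_{\Q_p} E/E}(G^0)$, and for the parabolic $P_{\mbf{v}}$ attached to the $p$-adic Hodge type $\mbf{v}$ --- together with checking once, by naturality of the cotangent-module-to-cocycles identification, that the inclusion $\z_r \hookrightarrow Z^1(\Gamma_F, \rho(\fgder) \otimes_{\mc{O}} \mc{O}/\vpi^r)$ is compatible, as $r$ varies, with the transition maps used in (2) and (4); everything else is bookkeeping.
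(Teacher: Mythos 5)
Your proposal is correct and follows essentially the same path as the paper: Lemma \ref{lem:serre} gives the open set $Y$ and the modules $\z_r$, Lemma \ref{lem:tr} gives the compatibility in (2), Lemma \ref{lem:bound} together with the exact sequence $0 \to \rho_r(\fgder)^{\Gamma_F} \to \rho_r(\fgder) \to B^1(\Gamma_F,\rho_r(\fgder)) \to 0$ and the dimension formula of \cite[Theorem A]{Bellovin-Gee-G} give (3), and (4) is formal from (2). The computation $|L_{\rho,r}| = |\rho_r(\fgder)^{\Gamma_F}| \cdot |\mc{O}/\vpi^r|^{\,d - \dim \fgder}$ with $d$ the generic-fiber dimension is exactly the argument the paper has in mind.
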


\begin{proof}
  The existence of $Y$ and (1) follow from Lemma \ref{lem:serre} and
  the discussion above. Item (2) follows from Lemma \ref{lem:tr}. Item
  (3) follows from \cite[Theorem A]{Bellovin-Gee-G} (using that for
  regular Hodge--Tate lifts the associated parabolic is a Borel), the
  exact sequence
\[
0 \to \rho_r(\fgder)^{\Gamma_F} \to   \rho_r(\fgder) \to
B^1(\Gamma_F, \rho_r(\fgder)) \to 0
\]
and the definition of $L_{\rho,r}$. To be precise, \cite[Theorem A]{Bellovin-Gee-G} is formulated for the lifting rings without fixing $\mu$, or for the lifting rings with a fixed projection to $G/[G, G]$. When $G$ is not connected, this need not be the same as fixing the projection to $G/G^{\mr{der}}$ as we do. Nevertheless, since $\Lift_{\br}^{\mu}$ and $\Def_{\br}^{\mu}$ are isomorphic to the lifting and deformation functors for $G/Z_{G^0}$-valued lifts of $\br \pmod{Z_{G^0}}$ (see the first paragraph of the proof of Claim \ref{orbitredn}), the result we use does literally follow from \cite[Theorem A]{Bellovin-Gee-G}. Item (4) follows directly from
(2).
\end{proof}
\begin{rmk}\label{whconj}
  We will later implicitly make use of \cite[Lemma
  3.4.1]{Bellovin-Gee-G}, that $\wh{G^{\mr{der}}}(\mc{O})$-conjugation preserves
  any such irreducible component $\Spec(R[1/\vpi])$ (again noting as in the proof of Proposition \ref{prop:gensmooth} that we may apply the results of \cite{Bellovin-Gee-G} via reduction to the adjoint case), as follows. In the
  global setting, we will fix a local lift $\rho$ contained in the
  Zariski closure $S$ in $\Spec(R)$ of a unique component of
  $\Spec(R[1/\vpi])$ and extract, as in Lemma \ref{lem:serre}, an
  analytic neighborhood of $\rho$ in $S(\mc{O})$; since our global
  lifts as constructed in \S \ref{klrsection} and \S
  \ref{klr^Nsection} will only locally interpolate
  $\rho \pmod{\vpi^n}$ modulo $\wh{G^{\mr{der}}}(\mc{O})$-conjugacy, we will
  implicitly in \S \ref{relativeliftsection} use the fact that
  Proposition \ref{prop:gensmooth} applies, with the same output
  $n_0$, to any $\wh{G^{\mr{der}}}(\mc{O})$-conjugate of $\rho$. Moreover, by the
  result just cited in \cite{Bellovin-Gee-G}, we remain on the same
  irreducible component $S$.
\end{rmk}

To refine the local conclusion of our main theorem, we will use the following:
\begin{lemma}\label{smoothapprox}
Let $R$ be a complete local noetherian $\mc{O}$-algebra, and assume that $\Spec(R[1/\vpi])$ has an open dense regular subscheme. Fix an $\mc{O}$-point $x \colon R \to \mc{O}$. Then there exists a finite extension $E'/E$, with ring of integers $\mc{O}'$, such that for every $t \geq 1$ there exists an $x_t \colon R \to \mc{O}'$ such that $x_t \equiv x \pmod{\vpi^t}$ and $x_t$ defines a formally smooth point of $\Spec(R[1/\vpi])$.
\end{lemma}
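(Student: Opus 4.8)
The plan is to treat the problem as essentially a statement about $\varpi$-adic points of rigid-analytic curves, after two reductions. The statement is immediate if $x$ already defines a formally smooth point of $\Spec(R[1/\vpi])$: then $x$ is an $\mc{O}$-point with residue field $E$, and Lemma~\ref{lem:serre} applied at $y=x$ produces $\mc{O}$-points congruent to $x$ modulo $\vpi^n$ for all $n\gg 0$, with no coefficient extension needed. So the real content is the case where $x$ is a singular point, and the goal is to move off the singular locus while controlling congruences.

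\textbf{Reduction to one irreducible component.} First I would choose a prime $\mathfrak{p}$ of $R$ minimal among primes not containing $\vpi$ with $\mathfrak{p}\subseteq\ker(x)$ (possible since $\ker(x)$ is prime and does not contain $\vpi = x(\vpi)$), set $R_1 = R/\mathfrak{p}$, and let $Z = \Spec(R_1[1/\vpi])$, an irreducible reduced closed subscheme of $\Spec(R[1/\vpi])$, equidimensional of some dimension $d$ (using that $R_1$ is a complete, hence universally catenary, local domain in which $\vpi$ is a nonzerodivisor). Let $Z^{\mathrm{bad}}\subsetneq Z$ be the complement in $Z$ of the (open) smooth locus of $\Spec(R[1/\vpi])$; it is proper and closed because the smooth locus meets $Z$ (it is open dense in $\Spec(R[1/\vpi])$, so meets every component) and is then dense in the irreducible $Z$. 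It suffices to produce, for every $t$, a point $x_t\in (Z\setminus Z^{\mathrm{bad}})(\mc{O}')$ with $x_t\equiv x\pmod{\vpi^t}$. If $d=0$ then $Z=\Spec E$ is smooth and we are done, so assume $d\ge 1$.

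\textbf{Cutting down to a curve.} Next I would reduce to dimension one. By the structure theory of complete local rings, $\vpi$ extends to a system of parameters $(\vpi,t_1,\dots,t_d)$ of $R_1$, and $R_1$ is module-finite over the regular subring $\mc{O}[[t_1,\dots,t_d]]$; inverting $\vpi$ gives a finite surjective morphism $\pi\colon Z\to \mathbb{D}^d := \Spec(\mc{O}[[t_1,\dots,t_d]][1/\vpi])$, so $W := \pi(Z^{\mathrm{bad}})$ is closed of dimension $<d$. Then I would pick a ``line'' $\ell\cong\Spec(\mc{O}[[s]][1/\vpi])\into\mathbb{D}^d$ through $\pi(x)$ (a closed immersion cut out by $d-1$ linear equations, with direction vector having a unit coordinate) such that $\ell\not\subseteq W$: choosing a nonzero function vanishing on $W$ and expanding its restriction to the line in powers of $s$, the bad directions form a proper closed subset. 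Since $\pi^{-1}(\ell)$ is cut out in $Z$ by $d-1$ elements, Krull's height theorem gives $\dim_x\pi^{-1}(\ell)\ge d-(d-1)=1$, so $x$ lies on an irreducible component $C$ of $\pi^{-1}(\ell)$ with $\dim C=1$ (the reverse inequality since $\pi|_C$ is finite), whence $\pi(C)=\ell$. Thus $C\not\subseteq Z^{\mathrm{bad}}$ (else $\ell=\pi(C)\subseteq W$), so $C\cap Z^{\mathrm{bad}}$ is a finite set of closed points; writing $C=\Spec((R_1/\mathfrak{q})[1/\vpi])$, it now suffices to find $\mc{O}'$-points of $C$ congruent to $x$ modulo $\vpi^t$ that avoid this finite set.

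\textbf{Normalizing the curve and invoking Lemma~\ref{lem:serre}.} Let $B$ be the integral closure of $R_1/\mathfrak{q}$ in its fraction field: it is module-finite, hence a complete local normal domain, and $\widetilde{C} := \Spec(B[1/\vpi])$ is a one-dimensional normal, hence regular, hence (as $E$ is perfect) smooth $E$-scheme, with a finite birational $\nu\colon\widetilde{C}\to C$. Choosing $\widetilde{x}\in\nu^{-1}(x)$ and letting $E'$ be a finite extension of $E$ over which $\widetilde{x}$ is defined and which splits the residue field extension of $B$, Lemma~\ref{lem:serre} applies at the smooth point $\widetilde{x}$: there is a compact open $\widetilde{Y}\ni\widetilde{x}$ in $\widetilde{C}(\mc{O}')$ and an $n_0$ so that for $n\ge n_0$, $r\ge 1$ the maps $\widetilde{Y}_{n+r}\to\widetilde{Y}_n$ are surjective with fibers principal homogeneous over a free rank-one $\mc{O}'/\vpi^r$-module. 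For each $n\ge n_0$ I can then pick $\widetilde{x}^{(n)}\in\widetilde{Y}$ with $\widetilde{x}^{(n)}\equiv\widetilde{x}\pmod{\vpi^n}$ but $\widetilde{x}^{(n)}\ne\widetilde{x}$; since the finitely many points of $\nu^{-1}(C\cap Z^{\mathrm{bad}})$ other than $\widetilde{x}$ lie at bounded $\vpi$-adic distance from $\widetilde{x}$, for $n$ beyond some $n_1\ge n_0$ the point $\widetilde{x}^{(n)}$ avoids $\nu^{-1}(C\cap Z^{\mathrm{bad}})$, so $x^{(n)} := \nu(\widetilde{x}^{(n)})$ lies in $(Z\setminus Z^{\mathrm{bad}})(\mc{O}')$ with $x^{(n)}\equiv x\pmod{\vpi^n}$. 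Setting $x_t := x^{(\max(t,n_1))}$ for each $t\ge 1$ finishes the argument with the single coefficient ring $\mc{O}'$. The main obstacle is the middle step: moving $x$ off the singular locus subject to congruence constraints is exactly what forces the geometric input, and while the Noether-normalization-plus-generic-line construction is robust, the care needed to guarantee that the auxiliary curve $C$ genuinely meets the smooth locus (and that its normalization has only finitely many, $\vpi$-adically separated, bad points) is where the work lies; everything afterward is bookkeeping around Lemma~\ref{lem:serre}.
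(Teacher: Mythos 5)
Your proposal is correct, and it follows the same overall scaffolding as the paper's proof — reduce to one irreducible component, apply a Noether normalization to a rank-$d$ free object, cut to dimension one — but it diverges meaningfully in the final and most substantive step. The paper passes to the rigid generic fiber $\mf{X}^{\mr{rig}}$, applies Noether normalization for affinoid algebras to obtain a finite $\phi\colon\mathbb{T}_d\to B$, slices by $d-1$ generic linear forms, and then finishes downstairs in $\mathbb{T}_1$ using the elementary fact that a nonzero element of the one-variable Tate algebra has finitely many zeros; the terse ``we are reduced to'' hides a continuity-of-roots/bounded-degree argument needed to pull $\mc{O}$-points of $\Sp(\mathbb{T}_d)$ back up to $\mc{O}'$-points of $\Sp(B)$ close to $x$. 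You instead stay with complete local rings and the Cohen/Noether normalization $\mc{O}[[t_1,\dots,t_d]]\into R_1$, pick a line $\ell$ through $\pi(x)$ not contained in the image of the singular locus, pull back to a curve $C\ni x$ that meets the smooth locus, normalize $C$ to get a regular one-dimensional $\widetilde{C}$, and then invoke the paper's own Lemma~\ref{lem:serre} \emph{upstairs} at a smooth point $\widetilde{x}$ over $x$. This bypasses the lifting issue entirely, at the cost of running Lemma~\ref{lem:serre} (a heavier tool than the PID structure of $\mathbb{T}_1$); conversely you never need the formalism of rigid spaces or affinoid Noether normalization. Two small points deserve more care than you give them. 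First, $B$ typically has residue field a proper extension of $k$, so before applying Lemma~\ref{lem:serre} you must base-change to $\mc{O}'$ and work with a local factor of $B\otimes_{\mc{O}}\mc{O}'$ — you gesture at this with ``splits the residue field extension'' but the precise object to which the lemma is applied should be named. Second, the assertion that the maps $\widetilde{Y}_{n+r}\to\widetilde{Y}_n$ have PHS fibers ``for $r\geq 1$'' overstates Lemma~\ref{lem:serre}, whose $n_0$ depends on a fixed bound $r_0$; the argument only needs $r_0=1$, so this is harmless. Finally, when $x$ itself is singular, the fiber $\nu^{-1}(x)$ is contained in the bad locus, so in the last step you must rule out not only the $\mc{O}'$-points over \emph{other} bad scheme points but also the conjugates $\sigma\widetilde{x}$ lying over the scheme point of $\widetilde{x}$ itself; this is fine provided you take $E'=\kappa(\widetilde{x})$ (so every $\sigma\neq\mathrm{id}$ actually moves $\widetilde{x}$), but is worth stating since it is exactly the interesting case.
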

\begin{proof}
For ease of reference we use the language of rigid geometry, but this is certainly not essential for the proof.

Let $\mf{X} = \Spf(R)$ be the formal scheme over $\Spf(\mc{O})$
associated to $R$ and let $\mf{X}^{\mr{rig}}$ be the rigid space over
$E$ associated to $\mf{X}$ as in \cite[\S 7.1]{dejong:crystalline}. By
\cite[Lemma 7.1.9]{dejong:crystalline}, points of $\mf{X}^{\mr{rig}}$
are in canonical bijection with closed points of $\Spec(R[1/\vpi])$,
and there is a canonical isomorphism of the completion of the local
ring of a closed point of $\Spec(R[1/\vpi])$ with the completion of
the local ring of the associated point on $\mf{X}^{\mr{rig}}$. Thus,
formally smooth points on $\Spec(R[1/\vpi])$ correspond to smooth
points on $\mf{X}^{\mr{rig}}$. It follows that we may replace
$\Spec(R[1/\vpi])$ by $\Sp(B)$, where $B$ is an affinoid algebra over
$E$. By the regularity assumption, the singular locus of $\Sp(B)$,
which is defined by an ideal $I$, does not contain any of its
irreducible components, so it suffices to prove the statement with
formally smooth points replaced by points in the complement of the
zero set of any such ideal. By replacing $\Sp(B)$ by an irreducible
component containing $x$ we may then assume that $B$ is integral.

By the Noether normalization theorem for affinoid algebras (\cite[\S
6.1.2]{BGR}) there exists a finite homomorphism
$\phi:\mbb{T}_d \to B$, with $\mbb{T}_d$ a Tate algebra over $E$ of
dimension $d = \dim(B)$. Since $E$ has only finitely many extensions
of a fixed degree (in a fixed algebraic closure $\ov{E}$) we are
reduced to proving that for a point $x \in \mc{O}^d$ and any proper
ideal $I$ of $\mbb{T}_d$, there is a sequence of points
$\{x_n\}_{n \geq 0}$ of $\mc{O}^d$ converging to $x$ in the
$\vpi$-topology and with none of the $x_n$ in the closed subset
defined by $I$. Since the only element of $\mbb{T}_d$ vanishing on all
of $\mc{O}^d$ is $0$, by choosing $d-1$ general linear polynomials in
$\mbb{T}_d$ vanishing on $x$ we are reduced to the case $d=1$.  In
this case the claim is clear since any element of $\mbb{T}_1$ has only
finitely many zeros.
\end{proof}

\section{The doubling method: constructing mod $\vpi^{\n}$ lifts}\label{doublingsection}
Let $F$ be a number field, let ${\mc{S}}$ be a finite set of places of
$F$ containing all places above $p$, and let $\br \colon \gal{F,
  {\mc{S}}} \to G(k)$ be a continuous homomorphism. In this section we
explain a broad generalization of the techniques of \cite{klr}. This
will allow us to construct (after enlarging ${\mc{S}}$) a mod $\vpi^{\n}$
lift of $\br$ with good local properties, including but not limited to
interpolating (modulo $\wh{G^{\mr{der}}}$-conjugation) any fixed set of local mod
$\vpi^{\n}$ lifts at places in ${\mc{S}}$. We begin in \S
\ref{klrsection} with the case ${\n}=2$, which contains the essence of
the technique; then in \S \ref{klr^Nsection} (see Theorem \ref{klr^N})
we iterate (a rather more elaborate version of) this argument to
produce lifts modulo arbitrarily high powers of $\vpi$. 

\begin{notation}\label{doublingnotation}
In this section, ``dimension'' will unless otherwise noted refer to $\Fp$-dimension. For a (Galois) group $\Gamma$ and an $\Fp[\Gamma]$-module $W$, we will abbreviate $\dim(H^i(\Gamma, W))$ to $h^i(\Gamma, W)$.

We will say that Galois extensions $K_1, \ldots, K_s$ of a field $K$ are strongly linearly disjoint\index[terms]{s@strongly linearly disjoint} over $K$, if each is linearly disjoint from the composite of the other $s-1$, so that the restriction maps induce an isomorphism $\Gal(K_1 \cdots K_s/K) \xrightarrow{\sim}\prod_{i=1}^s \Gal(K_i/K)$.

In this and the following sections, we will study certain local-global
questions in the Galois cohomology of $F$. For finite sets of places
${\mc{S}}' \supset {\mc{S}}$, we will study the restriction maps \index[terms]{P@$\Psi_{{\mc{S}}'}$}
\[
\Psi_{{\mc{S}}'} \colon H^1(\gal{F, {\mc{S}}'}, M) \to \bigoplus_{v \in {\mc{S}}'} H^1(\gal{F_v}, M)
\]
for $M= \br(\fgder)$ or $M= \br(\fgder)^*$. For $h \in H^1(\gal{F, {\mc{S}}'}, M)$ and a subset ${\mc{T}} \subset {\mc{S}}'$, we will write $h|_{{\mc{T}}}$ for the restriction of $h$ to $\bigoplus_{v \in {\mc{T}}} H^1(\gal{F_v}, M)$. At the risk of pedantry, we would like to emphasize the following:
\begin{itemize}
\item The restriction map $\Psi_{{\mc{S}}'}$ is up to unique isomorphism independent of the choice of decomposition group at $v$: two such decomposition groups differ by conjugating by some $\sigma \in \gal{F}$, with $\sigma$ uniquely determined up to an element of one of the decomposition groups. This ambiguity induces a trivial (inner) automorphism of the cohomology.
\item We do however need to pin down particular decomposition groups and particular restriction maps. Thus we begin by fixing $\ov{F} \into \ov{F}_v$ for all $v \in {\mc{S}}$. Whenever a new prime $v$ is introduced to a superset of ${\mc{S}}$, we will fix a decomposition group at $v$. More precisely, any new $v$ will result from applying the \v{C}ebotarev density theorem in some finite Galois extension $M/F$, and typically we will really be choosing a prime $v_M$ of $M$ above $v$ such that the \textit{element} $\mr{Frob}_{v_M}$ is equal to a given pre-specified element $\sigma \in \Gal(M/F)$. There is then a positive-density set of primes $v'$ of $F$ for which there exists a choice $v'_M$ of prime over $v'$ such that $\mr{Frob}_{v'_M}= \sigma$.
\item Our use of trivial primes (as in \S \ref{trivialsection}) will furthermore allow the following. For $M$ equal to either of the Galois modules $\br(\fgder)$ or $\br(\fgder)^*$, for $\phi \in H^1(\gal{F, {\mc{S}}'}, M)$, and for $v$ a trivial prime, we will refer to quantities $\phi(\sigma_v)$ and $\phi(\tau_v)$ as actual elements of $M$. We make the meaning of this explicit, letting $K= F(\br, \mu_p)$ trivialize the Galois action on $M$, and letting $K_{\phi}$\index[terms]{K@$K_{\phi}$, for $\phi$ any global cohomology class} be the fixed field of the homomorphism $\phi|_{\gal{K}}$; $K_\phi$ is still a Galois extension of $F$. The expressions $\phi(\sigma_v)$ and $\phi(\tau_v)$ will then always mean that there is a decomposition group at $v$ (sufficiently specified by choosing a decomposition group in $\Gal(K_{\phi}/F)$) such that $\phi|_{\gal{F_v}}$, which is a well-defined 1-cocycle since $B^1(\gal{F_v}, M)=0$, takes the appointed values on a fixed generator $\tau_v$ of the pro-$p$ quotient of the tame inertia and on a choice of Frobenius element $\sigma_v$.
\item We will note the first point in our arguments at which a choice of decomposition group appears---it will be in Proposition \ref{klrstep1''}---but from then on this choice should be regarded as fixed.
\end{itemize}

\end{notation}
 
\subsection{Constructing the mod $\vpi^2$ lift}\label{klrsection}

We may assume $\br$ surjects onto $\pi_0(G)$ (if not, we replace $G$ by the preimage in $G$ of the image of $\br$ in $\pi_0(G)$; the deformation theory of $\br$ is unchanged by this replacement). There is then a unique finite Galois extension $\tF/F$ such that $\br$ induces an isomorphism $\Gal(\tF/F) \to \pi_0(G)$. We make the following assumptions on $\br$:
\begin{assumption}\label{multfree}
Assume $p \gg_G 0$, and let $\br \colon \gal{F, {\mc{S}}} \to G(k)$ be a continuous representation unramified outside a finite set of finite places ${\mc{S}}$; we may and do assume that ${\mc{S}}$ contains all places above $p$. Assume that $\br$ satisfies the following:
\begin{itemize} 
\item The field $K= F(\br, \mu_p)$\index[terms]{K@$K$}
      does not contain $\mu_{p^2}$.
 \item $H^1(\Gal(K/F), \br(\fgder)^*)$=0.
 \item $\br(\fgder)$ and $\br(\fgder)^*$ are semisimple $\mathbb{F}_p[\gal{F}]$-modules (equivalently, semisimple $k[\gal{F}]$-modules) having no common $\Fp[\gal{F}]$-subquotient, and neither contains the trivial representation.
\end{itemize}
How large $p$ must be given (the root datum of) $G$ can be extracted from the arguments of this section, but we do not make it explicit.
\end{assumption}
\begin{rmk}
Given a global Galois representation $\br \colon \gal{F, {\mc{S}}} \to G(k)$, we will refer to places $w \not \in {\mc{S}}$ of $F$ such that $\br|_{\gal{F_w}} =1$ and $N(w) \equiv 1 \pmod{p}$ as ``trivial primes."\index[terms]{t@trivial primes} All of the auxiliary primes constructed in this paper will satisfy this condition (and frequently some refinement of this condition).
\end{rmk}
\begin{rmk}\label{infinitetriv}
We make some additional remarks on these assumptions:
\begin{itemize}
\item We could carry out the analysis of this section without the assumption that $K$ does not contain $\mu_{p^2}$, but it is convenient and does not affect our application. In almost every case, we will use this to choose auxiliary trivial primes that also satisfy $N(w) \not \equiv 1 \pmod{p^2}$; the exception will be at one point in the proof of Theorem \ref{klr^N}, when we are constructing lifts modulo $\vpi^{\n}$.
Corollary \ref{infiniteram} constructs infinitely-ramified $p$-adic lifts, and it does not require this assumption on $\mu_{p^2}$. 
\item Note that we do not assume $H^1(\Gal(K/F), \br(\fgder))$ vanishes: in this section we will only need that non-zero classes in $H^1(\gal{F}, \br(\fgder)^*)$ have non-zero restriction to $\gal{K}$, and not the corresponding statement for $\br(\fgder)$. The relative deformation theory method of \S \ref{relativeliftsection} uses results of Lazard to prove a different vanishing result that is crucial to our argument: see Lemma \ref{lemma:fn} and Remark \ref{advanishing}.
\end{itemize}
\end{rmk}
We decompose 
\[
\br(\fgder)= \oplus_{i \in I} W_i^{\oplus m_i}
\] 
where each $W_i$ is an irreducible $\Fp[\gal{F}]$-module, and $W_i
\not \cong W_j$ for $i \neq j$. Dually we obtain the decomposition
$\br(\fgder)^*= \oplus_{i \in I} (W_i^*)^{m_i}$, where $W^*= \Hom_{\Fp}(W, \Fp)(1)$ is the (Tate twist of the) $\Fp$-dual. Each $W_i$ is a $k_{W_i}= \End_{\Fp[\gal{F}]}(W_i)$-module, and since $\mr{Br}(\Fp)=0$, $k_{W_i}$ is a finite field extension of $\Fp$. We may then also regard $W_i^*$ as the $k_{W_i}$-dual, with the trace identifying the $k_{W_i}$-vector spaces
\[
\tr_{k_{W_i}/\Fp} \colon \Hom_{k_{W_i}}(W_i, k_{W_i}) \xrightarrow{\sim} \Hom_{\Fp}(W_i, \Fp).
\]  

We begin by finding some mod $\vpi^2$ lift of $\br$.  Let ${\mc{T}} \supset
{\mc{S}}$ be a finite set of places with ${\mc{T}} \setminus
{\mc{S}}$ consisting of trivial primes $w$ not split in
$K(\mu_{p^2})$ such
that \index[terms]{S@$\Sha^1_{\mc{T}}(\gal{F, {\mc{T}}}, \br(\mf{g}^{\mr{der}})^*)$}
$\Sha^1_{\mc{T}}(\gal{F, {\mc{T}}}, \br(\mf{g}^{\mr{der}})^*)=0$. That
${\mc{T}}$ can be so arranged follows from the first two items of Assumption
\ref{multfree}: the cocycles in question restrict non-trivially to
$\gal{K}$, and then we choose places $v$ that are split in
$K$ and non-split in both
$K(\mu_{p^2})$ and the fixed field (over
$K$) of the cocycle (the latter two conditions are compatible whether
or not the fixed field is disjoint from
$K(\mu_{p^2})$, since they are both just the condition of being
non-trivial).  Following our conventions, when introducing auxiliary
primes we specify decomposition groups at these primes, which here
will only depend on choosing a prime above
$w$ in $K_{\psi}(\mu_{p^2})$ for some element $\psi \in H^1(\gal{F,
  {\mc{T}}}, \br(\fgder)^*)$.

By global duality, $\Sha^2_{\mc{T}}(\gal{F, {\mc{T}}},
\br(\mf{g}^{\mr{der}}))$ \index[terms]{S@ $\Sha^2_{\mc{T}}(\gal{F, {\mc{T}}}, \br(\mf{g}^{\mr{der}}))$} also vanishes, so to produce some lift $\rho_2 \colon \gal{F, {\mc{T}}} \to G(\mc{O}/\vpi^2)$ of $\br$ with multiplier $\mu$ it suffices to check there are no local lifting obstructions: 
\begin{lemma}\label{localmodp^2}
Assume $p \gg_G 0$. Then for all finite places $v$, the set of mod $\vpi^2$ local lifts $\Lift_{\br|_{\gal{F_v}}}(\mc{O}/\vpi^2)$ is non-empty, and similarly for lifts of type $\mu$.
\end{lemma}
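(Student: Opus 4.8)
The claim is local, so fix a finite place $v$ of $F$ and write $\Gamma = \gal{F_v}$. We must show $\Lift_{\br|_\Gamma}(\mc{O}/\vpi^2) \neq \emptyset$, and similarly for the multiplier-$\mu$ variant. The plan is to split into the two cases $v \nmid p$ and $v \mid p$, since the obstruction-theoretic input is different. The obstruction to lifting $\br|_\Gamma$ from $G(k)$ to $G(\mc{O}/\vpi^2)$ lies in $H^2(\Gamma, \br(\fg) \otimes_k \vpi\mc{O}/\vpi^2)$ (as recalled in \S\ref{defprelimsection}, using that $\ker(G(\mc{O}/\vpi^2) \to G(k)) = \exp(\fg \otimes_k \vpi\mc{O}/\vpi^2)$), and once a lift to $\Lift_{\br|_\Gamma}(\mc{O}/\vpi^2)$ exists, the remarks in \S\ref{defprelimsection} (using that $H^1(\Gamma, \br(\mf{z}_G)) \xrightarrow{\sim} H^1(\Gamma, \bar\mu(\fg/\fgder))$ since $G^{\mr{der}} \cap Z_{G^0}$ has order prime to $p$) show it can be adjusted to have multiplier $\mu$; so it suffices to produce any lift, and in fact it suffices to show the relevant $H^2$ obstruction group vanishes, or to exhibit a lift directly.

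For $v \nmid p$: the module $\br(\fg) \otimes_k \vpi\mc{O}/\vpi^2$ is a finite $\Fp[\Gamma]$-module, and for $v \nmid p$ the strict cohomological dimension considerations give $H^2(\Gamma, M) \cong H^0(\Gamma, M^*)^\vee$ by local Tate duality. This need not vanish in general. The cleaner route, which is what I expect the authors do, is to invoke the \emph{unrestricted} local lifting ring $R^\square_{\br|_\Gamma}$: by work of e.g.\ Balaji or the general principle that lifting rings are nonzero (a Witt-vector or $\mc{O}$-valued lift always exists, e.g.\ because one can lift through the pro-$p$ tame and wild quotients, or because one may appeal to the fact that $R^\square_{\br|_\Gamma}$ has Krull dimension $\geq 1$), the ring $R^\square_{\br|_\Gamma}$ is nonzero, hence has an $\mc{O}/\vpi^2$-point (surjecting onto its residue field $k$), which is exactly an element of $\Lift_{\br|_\Gamma}(\mc{O}/\vpi^2)$. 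Even more directly: $\br|_\Gamma$ factors through a finite quotient, and one can lift it to characteristic zero by lifting the image of tame inertia and Frobenius compatibly (using $p \gg_G 0$ to ensure $p \nmid |G^{\mr{der}}/Z|$-type obstructions and that the relevant centralizers are smooth), giving an $\mc{O}$-valued lift whose reduction mod $\vpi^2$ does the job. The multiplier-$\mu$ refinement then follows from the \S\ref{defprelimsection} adjustment.

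For $v \mid p$: here $H^2(\Gamma, M)$ for $M = \br(\fg) \otimes_k \vpi\mc{O}/\vpi^2$ again need not vanish, but one knows (again for $p \gg_G 0$, so that $p$ is very good and the trace form is nondegenerate) that $\br|_\Gamma$ admits a crystalline lift of regular Hodge type: this is part of the now-standard local theory (via Fontaine--Laffaille or the more robust results cited in \cite{balaji}, \cite{Bellovin-Gee-G}) and in any case such a lift to $G(\mc{O}')$ for some finite extension, reduced mod $\vpi'^2$, furnishes a lift after possibly the harmless enlargement of coefficients — but since we only need existence of \emph{some} mod $\vpi^2$ lift and $\mc{O}/\vpi^2$ is insensitive to unramified base change, one gets it over $\mc{O}$ itself. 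Again the multiplier $\mu$ is arranged via \S\ref{defprelimsection}.

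The main obstacle is purely bookkeeping about which form of the local lifting input to cite: the substantive content (existence of $\mc{O}$-valued, or at worst $\mc{O}/\vpi^2$-valued, local lifts, and smoothness of the relevant centralizers forcing $p \gg_G 0$) is entirely standard and has already been set up in \S\ref{trivialsection}, \S\ref{genericfibersection}, and Assumption \ref{minimalp}; the only care needed is to confirm that no coefficient extension is forced at the mod $\vpi^2$ level, which is immediate since $\mc{O}/\vpi^2 \hookrightarrow \mc{O}'/\vpi'^{2}$ splits off a copy whenever $\mc{O}'/\mc{O}$ is a DVR extension with $e(\mc{O}'/\mc{O})=1$, and for ramified extensions one instead argues directly with the characteristic-zero lift reduced mod $\vpi^2$.
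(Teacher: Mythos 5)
Your proposal has a genuine gap. You appeal, for $v \nmid p$, to the existence of an $\mc{O}$-valued lift of $\br|_{\gal{F_v}}$ (``lift tame inertia and Frobenius compatibly,'' or ``$R^\square_{\br|_{\gal{F_v}}}$ has Krull dimension $\geq 1$''), and for $v \mid p$ to the existence of a crystalline lift of regular Hodge type. Neither of these is known in the generality of a general reductive group $G$ --- the paper is explicit about this (see Remark \ref{lnotplocal}, where the authors note that even for $v \nmid p$ there is ``as yet no general result'' for arbitrary $G$, and that at $v \mid p$ existence is known essentially only for $\mr{GL}_n$ via Emerton--Gee). So you are invoking an input strictly stronger than the conclusion, and indeed stronger than what is available. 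Moreover, even if one grants ``$R^\square_{\br|_{\gal{F_v}}} \neq 0$,'' this does \emph{not} imply the existence of an $\mc{O}/\vpi^2$-point: a nonzero object of $\mc{C}_{\mc{O}}$ in which $\vpi = 0$ (such as $k[x]/(x^2)$ with $\vpi \mapsto 0$) admits no $\mc{O}$-algebra map to $\mc{O}/\vpi^2$. Producing an $\mc{O}/\vpi^2$-point is exactly the content of the lemma and cannot be deduced from mere nonvanishing.

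The paper's actual argument sidesteps all of this and needs no case split on $v \mid p$ versus $v \nmid p$, and no characteristic-zero lifts. For $p \gg_G 0$ one can choose a faithful representation $r \colon G \into \mr{GL}_n$ such that $\fg$ is a $G$-module direct summand of $\mf{gl}_n$. The obstruction to lifting $\br|_{\gal{F_v}}$ to $G(\mc{O}/\vpi^2)$ lives in $H^2(\gal{F_v}, \br(\fg))$, the map $H^2(\gal{F_v}, \br(\fg)) \to H^2(\gal{F_v}, (r \circ \br)(\mf{gl}_n))$ is injective (being a retract), and it carries the $G$-obstruction class to the $\mr{GL}_n$-obstruction class for $r \circ \br$. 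The latter vanishes by B\"ockle's theorem that local $\mr{GL}_n$-representations always lift mod $p^2$. The multiplier adjustment is then the routine remark from \S\ref{defprelimsection}, which you did correctly identify. The moral difference is that the paper only needs the very weak mod $\vpi^2$ statement, which is accessible by this transfer-of-obstruction trick, whereas you reached for the (unavailable) strong local lifting hypothesis that the rest of the paper is precisely engineered to avoid assuming at this stage.
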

\proof
For $p \gg_G 0$, there exists a faithful representation $r \colon G
\into \mr{GL}_n$ such that $\mf{g}$ is a direct summand of $\mf{gl}_n$
(as $G$-modules). The induced map $H^2(\gal{F_v}, \br(\fg)) \to H^2(\gal{F_v}, r\circ\br(\mf{gl}_n))$ is thus injective, and it clearly sends the obstruction to lifting $\br$ (to $G(\mc{O}/\vpi^2)$) to the obstruction to lifting $r \circ \br$. But the latter is unobstructed by \cite[Theorem 1.1]{bockle:modp2}. The fixed multiplier character analogue follows from choosing some lift modulo $\vpi^2$ and then, using the fact that $Z_{G^0} \to G/G^{\mr{der}}$ has kernel of order prime to $p$, modifying it to a lift of type $\mu$.
\endproof
In fact, in the application we will make a stronger assumption on $\br|_{\gal{F_v}}$ for $v \in {\mc{S}}$, obviating the need for this lemma; for now we are trying to proceed without superfluous hypotheses.

Combining the vanishing of $\Sha^2_{\mc{T}}(\gal{F, {\mc{T}}}, \br(\mf{g}^{\mr{der}}))$ with Lemma \ref{localmodp^2}, let $\rho_2 \colon \gal{F, {\mc{T}}} \to G(\mc{O}/\vpi^2)$ be any multiplier $\mu$ lift of $\br$.

In what follows, it will be technically convenient to enlarge the set ${\mc{T}}$ by trivial primes (and choices of decomposition groups) non-split in $K(\mu_{p^2})$, beyond what is necessary to annihilate $\Sha^1_{\mc{T}}(\gal{F, {\mc{T}}}, \br(\fgder)^*)$. We may and do assume that our ${\mc{T}}$ strictly contains whatever initial choice of ${\mc{T}}$ was used to annihilate the Shafarevich--Tate groups; more precise enlargements of this set ${\mc{T}}$ will follow. We can compute such an enlargement's effect on global Galois cohomology. More generally, if $W$ is any $\Fp[\gal{F, {\mc{T}}}]$-module, we have the analogous notion of a trivial prime $v$ for $W$: $W|_{\gal{F_v}}$ is trivial, and $N(v) \equiv 1 \pmod p$. If moreover $W$ satisfies $\Sha^1_{\mc{T}}(\gal{F, {\mc{T}}}, W)=0$, then for any trivial prime $v \not \in {\mc{T}}$ we have an exact sequence
\[
0 \to H^1(\gal{F, {\mc{T}}}, W) \to H^1(\gal{F, {\mc{T}} \cup v}, W) \to H^1(\gal{F_v}, W)/H^1_{\mr{unr}}(\gal{F_v}, W) \to 0,
\]
where the second map is given by evaluation at $\tau_v$; surjectivity of this map follows from the Greenberg--Wiles Euler-characteristic formula (\cite[Theorem 2.19]{DDT2}). (We will apply this surjectivity in Proposition \ref{doublingprop} with $W$ a constituent of $\br(\fgder)^*$.) In particular, the cokernel of the inflation map has dimension $\dim W$. 

We must modify our initial $\rho_2$ so that its local behavior allows further lifting. We will now fix certain local lifts to $G(\mc{O}/\vpi^2)$ that we would like to interpolate into a global mod $\vpi^2$ representation. In Theorem \ref{klr^N}, we will be more particular about what lifts we choose, and we will make additional assumptions on the local behavior of $\br$; so as to be clear about what assumptions are used at what point in the paper, we delay imposing these additional hypotheses. 
\begin{constr}\label{makelambda}
For the remainder of this section, we fix local lifts $\{\lambda_w\}_{w \in {\mc{T}}}$ as follows:
\begin{itemize}
\item For $w \in {\mc{S}}$, fix any lift $\lambda_w \in \Lift_{\br|_{\gal{F_w}}}^{\mu}(\mc{O}/\vpi^2)$. 
\item For $w \in {\mc{T}} \setminus {\mc{S}}$, we simply choose any unramified lift (with multiplier $\mu$) $\lambda_w$ with the property that the elements $\lambda_w(\sigma_w)$ generate $\wh{G^{\mr{der}}}(\mc{O}/\vpi^2)$. For this to be possible, we may have to enlarge the set ${\mc{T}}$, and we do this implicitly at this step of the argument. 
\end{itemize}
\end{constr}
Having fixed the local mod $\vpi^2$ lifts $\lambda_w$ as in Construction \ref{makelambda}, we have that for each $w \in {\mc{T}}$ there is a class $z_w \in H^1(\gal{F_w}, \br(\fgder))$ such that 
\begin{equation}\label{lambdas}
(1+\vpi z_w)\rho_2|_w \sim \lambda_w
\end{equation}
(with $\sim$ denoting strict equivalence). We wish to modify $\rho_2$ by a global cohomology class so that the resulting lift of $\br$ matches the specified local lifts $\lambda_w$. 

If there exists a global class $h \in H^1(\gal{F, {\mc{T}}}, \br(\fgder))$ mapping to $z_{\mc{T}}:= (z_w)_{w \in {\mc{T}}}$ under the localization map
\[
\Psi_{\mc{T}} \colon H^1(\gal{F, {\mc{T}}}, \br(\fgder)) \to \bigoplus_{w \in {\mc{T}}} H^1(\gal{F_w}, \br(\fgder)),
\]
then we proceed to \S \ref{klr^Nsection}. For the remainder of this section, we assume there is no such $h$. Denote by $\Psi_{\mc{T}}^*$ the corresponding localization map for $\br(\fgder)^*$.
To construct auxiliary primes, we will need the following lemma, which we will eventually apply to the irreducible constituents of $\br(\fgder)^*$:
\begin{lemma}\label{lindisjoint}
  Let $W$ be an irreducible $\mathbb{F}_p[\gal{F, {\mc{T}}}]$-module such
  that $H^1(\Gal(F(W)/F), W)=0$. Set
  $k_W= \End_{\mathbb{F}_p[\gal{F, {\mc{T}}}]}(W)$ (a finite extension of
  $\mathbb{F}_p$, since $\mr{Br}(\Fp)=0$), and set (only
    for this lemma) $K= F(W, \mu_p)$.  Let $\psi_1, \ldots, \psi_s$
  be a $k_W$-basis of $H^1(\gal{F, {\mc{T}}}, W)$. Then the fixed fields
  $K_{\psi_1}, \ldots, K_{\psi_s}$ of the cocycles $\psi_i$ are
  strongly linearly disjoint over $K$, and for each $i$,
  $\Gal(K_{\psi_i}/K) \xrightarrow[\psi_i]{\sim} W$. If moreover
  $\mu_{p^2}$ is not contained in $K$, and $W$ is not isomorphic to
  the trivial representation, then fixing an integer $n \geq 2$, a
  class $q_n \in \ker((\Z/p^n)^{\times} \to (\Z/p)^{\times})$, an
  element $w \in W$, and any non-zero class
  $\psi \in H^1(\gal{F, {\mc{T}}}, W)$, there exists a \v{C}ebotarev set
  $\mc{C}$ of trivial primes, and a choice of decomposition group at
  each such $v$, such that $\psi(\sigma_v)=w$ and
  $N(v) \equiv q_n \pmod{p^n}$ for all $v \in \mc{C}$.
\end{lemma}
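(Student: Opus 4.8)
The plan for the first two assertions is the standard inflation--restriction argument combined with semisimple module theory. Since $\gal{F,\mc{T}}$ acts on $W$ through $\Gal(F(W)/F)$ and $[K:F(W)]$ divides $[F(\mu_p):F]$, hence is prime to $p$, the vanishing $H^1(\Gal(F(W)/F),W)=0$ propagates to $H^1(\Gal(K/F),W)=0$, so inflation--restriction for $\gal{K,\mc{T}}\trianglelefteq\gal{F,\mc{T}}$ identifies $H^1(\gal{F,\mc{T}},W)$ with a $k_W$-subspace of $\Hom_{\Gal(K/F)}(\gal{K,\mc{T}},W)$ (the equivariant homomorphisms, $\gal{K,\mc{T}}$ acting trivially on $W$). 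As $W$ is irreducible over $\Gal(F(W)/F)$, hence over $\Gal(K/F)$, a nonzero $\psi_i$ restricts to a surjection $\gal{K,\mc{T}}\twoheadrightarrow W$, which is the assertion $\Gal(K_{\psi_i}/K)\xrightarrow[\psi_i]{\sim}W$. For the strong linear disjointness I would show that $\gamma\mapsto(\psi_1(\gamma),\dots,\psi_s(\gamma))$ maps $\gal{K,\mc{T}}$ onto $W^{\oplus s}$: its image is a $\Gal(K/F)$-submodule of the $W$-isotypic module $W^{\oplus s}$, and a proper such submodule would admit a nonzero equivariant quotient $W^{\oplus s}\to W$ killing it, i.e.\ $(d_i)\in k_W^s\setminus\{0\}$ with $\sum_i d_i\psi_i=0$ in $\Hom_{\Gal(K/F)}(\gal{K,\mc{T}},W)$, contradicting $k_W$-linear independence.

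For the last assertion, let $K_\psi$ be the fixed field of $\psi|_{\gal{K,\mc{T}}}$; by the above $\Gal(K_\psi/K)\xrightarrow{\sim}W$ via $\psi$, the extension $K_\psi/F$ is Galois, and $\gal{F}$ acts on $\Gal(K_\psi/K)=W$ through the module structure. Set $M:=K_\psi\cdot F(\br,\mu_{p^n})$. The strategy is to exhibit $g\in\Gal(M/F)$ with $g|_{F(\br)}=1$, $g|_{F(\mu_{p^n})}=q_n$ and $\psi(g)=w$, and then to feed $M/F$ into the \v{C}ebotarev density theorem: this produces a positive-density set of primes $v\notin\mc{T}$, each with a choice of prime above it (hence a decomposition group) for which $\Frob_v=g$. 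Such $v$ are trivial primes ($\Frob_v|_{F(\br)}=1$ and $N(v)\equiv q_n\equiv1\pmod p$), satisfy $N(v)\equiv q_n\pmod{p^n}$, and, $v$ being unramified in $K_\psi$ with $B^1(\gal{F_v},W)=0$, the restriction $\psi|_{\gal{F_v}}$ is a well-defined unramified cocycle with $\psi(\sigma_v)=\psi(\Frob_v)=w$.

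To build $g$, observe that the first two conditions force $g\in\Gal(M/K)$ (using $F(W)\subseteq F(\br)$ and $q_n\equiv1\pmod p$), so it suffices that $K_\psi$ and $F(\br,\mu_{p^n})$ be strongly linearly disjoint over $K$ and that $\Gal(F(\br,\mu_{p^n})/K)$ contain an element trivial on $F(\br)$ and equal to $q_n$ on $F(\mu_{p^n})$; then I take $g$ with $K_\psi$-coordinate $\psi^{-1}(w)$. The second point follows from Assumption \ref{multfree}: since $\mu_{p^2}\not\subseteq F(\br,\mu_p)$, any intermediate field of the cyclic-$p$ extension $F(\mu_{p^n})/F(\mu_p)$ other than $F(\mu_p)$ contains $F(\mu_{p^2})$, so $F(\mu_{p^n})\cap F(\br,\mu_p)=F(\mu_p)$, whence $\Gal(F(\br,\mu_{p^n})/F(\br,\mu_p))$ surjects onto $\Gal(F(\mu_{p^n})/F(\mu_p))$, which contains $q_n$; lifting $q_n$ along this surjection gives the desired element. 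For the disjointness, the intersection $K_\psi\cap F(\br,\mu_{p^n})$ is Galois over $F$ and lies between $K$ and $K_\psi$, so by irreducibility of $W$ over $\Gal(K/F)$ it is $K$ or $K_\psi$, and everything comes down to excluding $K_\psi\subseteq F(\br,\mu_{p^n})$.

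That exclusion is the step I expect to be the main obstacle, and it is where the remaining hypotheses enter. First, $K_\psi\cap F(\mu_{p^n})=F(\mu_p)$: otherwise $F(\mu_{p^2})\subseteq K_\psi$, so $K(\mu_{p^2})\subseteq K_\psi$ and $\Gal(K_\psi/K(\mu_{p^2}))$ is normal in $\Gal(K_\psi/F)$ of index $[K(\mu_{p^2}):K]=p$ (the last equality from $\mu_{p^2}\not\subseteq K$), i.e.\ an index-$p$ submodule of the irreducible $\Gal(K/F)$-module $W$; this forces $|W|=p$ and $K_\psi=K(\mu_{p^2})$, so $W\cong\Gal(F(\mu_{p^2})/F(\mu_p))$ carries the trivial $\Gal(K/F)$-action (conjugation inside the abelian $\Gal(F(\mu_{p^2})/F)$), contradicting $W\not\cong\mathbf{1}$. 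Second, $K_\psi\not\subseteq F(\br,\mu_p)$: were $K_\psi\subseteq F(\br,\mu_p)$, then $\psi|_{\gal{F(\br,\mu_p)}}=0$ (that subgroup being contained in the kernel of $\psi|_{\gal{K}}$), so $\psi$ would be inflated from $\Gal(F(\br,\mu_p)/F)$; but in the intended applications $W$ is an irreducible constituent of the semisimple module $\br(\fgder)^*$, and Assumption \ref{multfree}'s vanishing $H^1(\Gal(F(\br,\mu_p)/F),\br(\fgder)^*)=0$ then gives $H^1(\Gal(F(\br,\mu_p)/F),W)=0$, forcing $\psi=0$, a contradiction. Combining these with $F(\br,\mu_{p^n})=F(\br,\mu_p)\cdot F(\mu_{p^n})$ yields $K_\psi\cap F(\br,\mu_{p^n})=K$, which completes the proof.
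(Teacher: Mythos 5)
Your proof of the first two assertions (that each $\Gal(K_{\psi_i}/K)\cong W$ and the $K_{\psi_i}$ are strongly linearly disjoint over $K$) is correct and takes a slightly different route from the paper: you argue directly that the joint map $\gal{K,\mc{T}} \to W^{\oplus s}$ is surjective by pairing against an equivariant quotient $W^{\oplus s}\to W$, whereas the paper runs an induction on $s$, showing at each step that $K_{\psi_{i+1}}\not\subset K_{\psi_1}\cdots K_{\psi_i}$ because otherwise the cocycles would be $k_W$-linearly dependent. Both are the same idea (semisimplicity of the $W$-isotypic module plus irreducibility of $W$); yours is a touch more compact, and the paper's is a touch more transparent about where linear independence gets used.

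For the \v{C}ebotarev assertion you diverge from the paper in a way that introduces a genuine issue. The paper simply applies \v{C}ebotarev in $K_\psi(\mu_{p^n})/F$ and justifies this using only the hypotheses stated in the lemma: if $K_\psi$ and $K(\mu_{p^n})$ were not strongly linearly disjoint over $K$ then, by irreducibility, $K_\psi\subseteq K(\mu_{p^n})$, and since $\Gal(K(\mu_{p^n})/K)$ is a cyclic $p$-group with trivial conjugation action (sitting inside the abelian $\Gal(F(\mu_{p^n})/F)$, and nontrivial thanks to $\mu_{p^2}\not\subset K$), $W$ would be a trivial $\gal{F}$-module, contradicting $W\not\cong\mathbf{1}$. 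The resulting primes are split in $K=F(W,\mu_p)$, i.e.\ trivial \emph{for $W$}, which is what the lemma needs (note that $K$ is redefined to be $F(W,\mu_p)$ here, so ``trivial'' should be read in the ``for $W$'' sense of the remark following Assumption \ref{multfree}). You instead set $M=K_\psi\cdot F(\br,\mu_{p^n})$ and try to ensure splitness in $F(\br)$; to exclude $K_\psi\subseteq F(\br,\mu_p)$ you are then forced to invoke Assumption \ref{multfree}'s vanishing $H^1(\Gal(F(\br,\mu_p)/F),\br(\fgder)^*)=0$ together with $W$ being a summand of $\br(\fgder)^*$. But neither $\br$ nor Assumption \ref{multfree} appears among the hypotheses of the lemma, which is stated for an abstract irreducible $W$ with the single cohomological hypothesis $H^1(\Gal(F(W)/F),W)=0$. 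Your argument therefore does not establish the lemma as stated; it establishes a stronger conclusion under stronger hypotheses that happen to be available in the paper's applications. The fix is to drop $F(\br)$ from the picture, work in $K_\psi(\mu_{p^n})/F$, and observe (essentially your first disjointness step, reorganized) that $K_\psi\cap K(\mu_{p^n})=K$ is forced by irreducibility, non-triviality, and $\mu_{p^2}\not\subset K$.
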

\begin{proof}
We must show that restriction gives an isomorphism of $\Gal(K/F)$-modules
\[
 \Gal(K_{\psi_1}\cdots K_{\psi_{s}}/K) \xrightarrow{\sim} \prod_{i=1}^{s} \Gal(K_{\psi_i}/K) \xrightarrow{\sim} \prod_{i=1}^{s} W.
\]
To see this, we induct on the number of factors. For $s=1$, the isomorphism follows from simplicity of the $\mathbb{F}_p[\gal{F}]$-module $W$ (note that $\psi_i|_{\gal{K}} \neq 0$). If the linear disjointness is known for $\psi_1, \ldots, \psi_i$, and if $K_{\psi_{i+1}}$ is contained in the composite $K_{\psi_1}\cdots K_{\psi_i}$, then we have a map of $\mathbb{F}_p[\gal{F, {\mc{T}}}]$-modules
\[
 W^{\oplus i} \xleftarrow[\psi_1, \ldots, \psi_i]{\sim} \Gal(K_{\psi_1}\cdots K_{\psi_i}/K) \onto \Gal(K_{\psi_{i+1}}/K) \xrightarrow[\psi_{i+1}]{\sim} W.
\]
Since $W$ is irreducible, the composite $W^{\oplus i} \to W$ has the form $(a_1, \ldots, a_i)$ for some $a_i \in k_W$, and we deduce that $\psi_{i+1}= \sum_{j=1}^i a_j \psi_j$, contradicting linear independence. We conclude that $K_{\psi_{i+1}}$ is not contained in $K_{\psi_1} \cdots K_{\psi_i}$, but again since $W$ is irreducible this forces these fields to be strongly linearly disjoint over $K$.

Now let $\psi$ be any non-zero element of $H^1(\gal{F, {\mc{T}}}, W)$, with splitting field $K_\psi$. The last claim follows by applying the \v{C}ebotarev density theorem in the Galois extension $\Gal(K_{\psi}(\mu_{p^n})/F)$: indeed, the hypotheses ensure that $K_{\psi}$ and $K(\mu_{p^n})$ are strongly linearly disjoint over $K$, since otherwise $W$ would admit the trivial $\Fp[\gal{F}]$-quotient $\Gal(K(\mu_{p^2})/K)$. 
\end{proof}

We next explain in Proposition \ref{klrstep1''} how to interpolate the class $z_{\mc{T}}$ by a global class after allowing ramification at a finite number of additional primes. An important technical point in the proof of Proposition \ref{doublingprop} requires that we impose an additional (at this point rather unmotivated) condition on our trivial primes. We define a field $K'$ whose role will be to capture the intersections of the fixed fields $K_{h^{(v)}}$ of certain auxiliary cocycles $h^{(v)} \in H^1(\gal{F, {\mc{T}} \cup v}, \br(\fgder))$ for varying trivial primes $v$: the fields $K_{h^{(v)}}$ will all properly contain $K'$ and be strongly linearly disjoint over $K'$ (but not necessarily over $K$).
\begin{defn}
  Let $K'$\index[terms]{K@$K'$} be the composite of all (elementary) abelian $p$-extensions
  $L$ of $K$ ($= F(\br,\mu_p)$ as before) that are Galois over $F$ and
  that satisfy
\begin{itemize}
\item $L/F$ is unramified outside ${\mc{T}}$;
\item the $\Fp[\Gal(K/F)]$-module $\Gal(L/K)$ is isomorphic to one of the $W_i$.
\end{itemize}
Because the extensions $L/F$ are unramified outside ${\mc{T}}$ with
absolutely bounded degree, $K'$ is a finite extension of $F$. Primes
split in $K'$ are of course also split in $K$, and 
    $K'$, $K(\mu_{p^{\infty}})$, and $K_{\psi}$ for any given $\psi
    \in H^1(\gal{F, {\mc{T}}}, \br(\fgder)^*)$ are strongly linearly disjoint
    over $K$ since no two of $\br(\fgder)$, $\br(\fgder)^*$, and the
    trivial representation have a common subquotient. Note also that
    $K(\rho_2(\fgder))$, the fixed field of $\rho_2$ acting on
    $\fgder$, is contained in $K'$. In what follows, we will refer to trivial primes split in $K'$
as $K'$-trivial primes. 
\end{defn}
\begin{prop}\label{klrstep1''} Continue in the above setting, so that in particular 
  $\Sha^1_{\mc{T}}(\gal{F, {\mc{T}}},\br(\fgder)^*)=0$, and hence by duality
  $\Sha^2_{\mc{T}}(\gal{F, {\mc{T}}}, \br(\fgder))=0$.
      Fix a Galois
      extension $L/F$ containing $K'$ that is unramified outside ${\mc{T}}$
      and is strongly linearly disjoint over $K'$ from the composite
      of $K'$, $K(\mu_{p^\infty})$, and the fixed fields $K_{\psi}$ of
      any collection of classes
      $\psi \in H^1(\gal{F, {\mc{T}}}, \br(\fgder)^*)$. Let $r$ be the
      dimension of the cokernel of $\Psi_{\mc{T}}$, fix an integer
      $c \geq 2$, and for $i=1, \ldots, r$ fix some
      $q_i \in \ker((\Z/p^c)^\times \to (\Z/p)^\times)$.  Then we can
      find the following:
\begin{itemize}
\item a collection $\{Y_i\}_{i=1}^r$ of elements of $\bigoplus_{v \in \mc{T}} H^1(\gal{F_v}, \br(\fgder))$ inducing an $\Fp$-basis of the cokernel of $\Psi_{\mc{T}} \colon H^1(\gal{F, {\mc{T}}}, \br(\fgder)) \rightarrow \bigoplus_{v \in {\mc{T}}}  H^1(\gal{F_v}, \br(\fgder))$;
\item for all $i$, a split maximal torus $T_i$, a root $\alpha_i \in \Phi(G^0, T_i)$, and a root vector $X_{\alpha_i} \in \mf{g}_{\alpha_i}$;
\end{itemize}
and, once these choices are made, we can choose any elements $g_{L/K', 1}, \ldots, g_{L/K', r} \in \Gal(L/K')$, and then find:
\begin{itemize}
\item for each $i$, a \v{C}ebotarev set $\mc{C}_i$ of $K'$-trivial primes $v \not \in {\mc{T}}$; and
\item a choice of decomposition group at each $v \in \mc{C}_i$ and a class $h^{(v)} \in H^1(\gal{F, {\mc{T}}\cup v}, \br(\fgder))$; 
\end{itemize}
such that for all $v \in \mc{C}_i$:
\begin{itemize}
\item $q_i= N(v) \pmod{p^c}$ is independent of $v \in \mc{C}_i$ ;
\item $h^{(v)}|_{\mc{T}}= Y_i$; 
\item $h^{(v)}(\tau_v)$ is a non-zero element of $\Fp X_{\alpha_i}$; and
\item The image of $\sigma_v$ in $\Gal(L/K')$ is $g_{L/K, i}$.
\end{itemize}
(The last two items implicitly use the choice of decomposition group at the prime $v$: see the remarks in Notation \ref{doublingnotation}. We also emphasize that the elements $g_{L/K', i}$ are fixed \textit{after} the pair $(T_i, \alpha_i)$ has been identified.) 
\end{prop}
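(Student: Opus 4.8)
The plan is to derive everything from global (Poitou--Tate) duality combined with the \v{C}ebotarev flexibility of trivial primes.

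\emph{Duality set-up and choice of $\{Y_i\}$, $(T_i,\alpha_i)$.} Since $\Sha^1_{\mc{T}}(\gal{F,\mc{T}},\br(\fgder)^*)=0$, the localization map $\Psi_{\mc{T}}^*$ for $\br(\fgder)^*$ is injective, so the Poitou--Tate nine-term sequence shows that the sum of local cup products induces a perfect pairing $\coker(\Psi_{\mc{T}})\times H^1(\gal{F,\mc{T}},\br(\fgder)^*)\to\frac{1}{p}\Z/\Z$, namely $(Y,\psi)\mapsto\sum_{w\in\mc{T}}\inv_w(Y_w\cup\psi|_w)$, which is well defined on cosets by global reciprocity (both modules being unramified outside $\mc{T}$). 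I would fix a basis $\psi_1,\dots,\psi_r$ of $H^1(\gal{F,\mc{T}},\br(\fgder)^*)$, take $\{Y_i\}$ to be the dual basis of $\coker(\Psi_{\mc{T}})$, and fix representatives $Y_i\in\bigoplus_{w\in\mc{T}}H^1(\gal{F_w},\br(\fgder))$, so $\langle Y_i,\psi_j\rangle_{\mc{T}}=\delta_{ij}$. Each $\psi_i$ restricts nontrivially to $\gal{K}$ (as $H^1(\Gal(K/F),\br(\fgder)^*)=0$), with image a nonzero $\gal{F}$-submodule $V_i\subseteq\br(\fgder)^*$. Because $(\fgder)^{G^0}=0$ (Assumption \ref{minimalp}) and $p$ is good, no nonzero element of $\fgder$ lies in the Lie algebra of every split maximal torus; so, fixing $0\ne\xi_i\in V_i$ and viewing it in $\fgder$ via the trace form, I can choose a split maximal torus $T_i$ and a root $\alpha_i\in\Phi(G^0,T_i)$ for which $\xi_i$ has nonzero $\mf{g}_{-\alpha_i}$-component. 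This produces the basis $\{Y_i\}$ and the pairs $(T_i,\alpha_i)$.

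\emph{Choosing the \v{C}ebotarev sets.} Fix $i$ and an arbitrary $g_{L/K',i}\in\Gal(L/K')$. Since $\br(\fgder)$, $\br(\fgder)^*$ and the trivial module share no common subquotient, the fields $K_{\psi_1},\dots,K_{\psi_r}$ (whose Galois groups over $K$ are subquotients of $\br(\fgder)^*$) are strongly linearly disjoint over $K$ from $K'$ and from $K(\mu_{p^\infty})$; and a multiplicity-aware refinement of Lemma \ref{lindisjoint} --- working inside each isotypic block of $\br(\fgder)^*$ and using $k_W$-linear independence of a basis of $H^1$ --- shows they are strongly linearly disjoint over $K$ from one another. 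Since $L$ is strongly linearly disjoint over $K'$ from the composite of $K'$, $K(\mu_{p^\infty})$ and these $K_{\psi_j}$, we obtain a splitting $\Gal(L\cdot K(\mu_{p^c})\cdot K_{\psi_1}\cdots K_{\psi_r}/K')\cong\Gal(L/K')\times\Gal(K(\mu_{p^c})/K)\times\prod_j\Gal(K_{\psi_j}/K)$, and $\mu_{p^2}\not\subset K$ forces $\Gal(K(\mu_{p^c})/K)\xrightarrow{\sim}\ker((\Z/p^c)^\times\to(\Z/p)^\times)$, so every such $q_i$ arises. I would then take $\mc{C}_i$ to be the \v{C}ebotarev set of (necessarily $K'$-trivial) primes $v\notin\mc{T}$, together with a choice of prime above $v$, whose Frobenius maps to $g_{L/K',i}$ in $\Gal(L/K')$, to the element realizing $N(v)\equiv q_i\pmod{p^c}$, to the element of $\Gal(K_{\psi_i}/K)\cong V_i$ corresponding to $\xi_i$, and to $0$ in each $\Gal(K_{\psi_j}/K)$ for $j\ne i$; thus $\psi_i(\sigma_v)=\xi_i$, $\psi_j(\sigma_v)=0$ for $j\ne i$, and $\psi_\bullet(\tau_v)=0$.

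\emph{Producing $h^{(v)}$.} For $v\in\mc{C}_i$ it suffices to find $z_v\in H^1(\gal{F_v},\br(\fgder))$ whose ramified part is a nonzero element of $\mf{g}_{\alpha_i}$ and with $(Y_i,z_v)\in\im(\Psi_{\mc{T}\cup v})$; any preimage is the desired $h^{(v)}$. By Poitou--Tate, $\im(\Psi_{\mc{T}\cup v})=\im(\Psi_{\mc{T}\cup v}^*)^\perp$; as $\Sha^1_{\mc{T}\cup v}(\gal{F,\mc{T}\cup v},\br(\fgder)^*)\subseteq\Sha^1_{\mc{T}}(\gal{F,\mc{T}},\br(\fgder)^*)=0$, we have $\im(\Psi_{\mc{T}\cup v}^*)\cong H^1(\gal{F,\mc{T}\cup v},\br(\fgder)^*)$, which by the inflation--restriction sequence at the trivial prime $v$ (valid since $\Sha^1_{\mc{T}}(\gal{F,\mc{T}},\br(\fgder)^*)=0$) is spanned by the inflated $\psi_j$ together with a complement of classes ramified at $v$, parametrized by their value at $\tau_v$. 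Writing $z_v(\sigma_v)=Y$, $z_v(\tau_v)=X\in\mf{g}_{\alpha_i}$ and using Lemma \ref{localduality} (together with the vanishing of the cup product of two classes ramified at $v$, the tame pro-$p$ inertia quotient having cohomological dimension one): orthogonality of $(Y_i,z_v)$ against the inflated $\psi_j$ reads $\delta_{ij}+\langle X,\psi_j(\sigma_v)\rangle=0$ --- automatic for $j\ne i$ by the choice of $\mc{C}_i$, and arranged for $j=i$ by scaling $X$ (the trace form being perfect on $\mf{g}_{\alpha_i}\times\mf{g}_{-\alpha_i}$ and $\psi_i(\sigma_v)=\xi_i$ having nonzero $\mf{g}_{-\alpha_i}$-part); orthogonality against a class $\psi'$ ramified at $v$ becomes, with $X$ now fixed and after choosing a linear section, an equation expressing a fixed $\Fp$-linear functional of $\psi'(\tau_v)$ as $\langle Y,\psi'(\tau_v)\rangle$, which is uniquely solvable for $Y\in\br(\fgder)$ by perfectness of the trace form. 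This yields $h^{(v)}$ with $h^{(v)}|_{\mc{T}}=Y_i$, $h^{(v)}(\tau_v)\in\mf{g}_{\alpha_i}\setminus\{0\}$, $N(v)\equiv q_i\pmod{p^c}$, and $\mathrm{Frob}_v\mapsto g_{L/K',i}$.

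\emph{Main obstacle.} I expect the genuinely delicate step to be the multiplicity bookkeeping in the second paragraph: Lemma \ref{lindisjoint} is stated for an irreducible module, so when $\br(\fgder)^*$ has repeated irreducible constituents one must verify that a basis of $H^1(\gal{F,\mc{T}},\br(\fgder)^*)$ still gives strongly linearly disjoint fixed fields over $K$ --- this is one of the points where handling multiplicities requires genuinely new input beyond the $\mathrm{GL}_2$ case. A more routine but fiddly point is keeping the local invariant pairing in the third paragraph explicit enough that the equation for $Y$ is affine-linear, hence solvable; the remaining ingredients --- \v{C}ebotarev, the inflation--restriction sequence at trivial primes, and Lemma \ref{localduality} --- are available exactly as stated in the text.
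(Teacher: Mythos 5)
Your overall shape is right---Poitou--Tate identifies $\coker(\Psi_{\mc{T}})$ with the dual of $H^1(\gal{F,{\mc{T}}},\br(\fgder)^*)$, one wants to use trivial primes to produce the classes $h^{(v)}$, and one computes the orthogonality conditions via Lemma \ref{localduality}---but there is a genuine gap at exactly the point you flag, and it cannot be filled the way you propose. You want to define $\mc{C}_i$ by requiring $\psi_i(\sigma_v)=\xi_i\neq 0$ and $\psi_j(\sigma_v)=0$ for $j\neq i$, and you justify the non-emptiness of this condition by asserting that a basis $\psi_1,\dots,\psi_r$ of $H^1(\gal{F,{\mc{T}}},\br(\fgder)^*)$ has fixed fields $K_{\psi_j}$ that are strongly linearly disjoint over $K$, citing a hoped-for ``multiplicity-aware refinement'' of Lemma \ref{lindisjoint}. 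This is false when $\br(\fgder)^*$ has a repeated constituent. If $\br(\fgder)^*\cong W^{\oplus 2}$ with $H^1(\gal{F,{\mc{T}}},W)=\Fp\psi$, then $H^1(\gal{F,{\mc{T}}},\br(\fgder)^*)=\Fp(\psi\otimes e_1)\oplus\Fp(\psi\otimes e_2)$, and the fixed fields of $\psi\otimes e_1$ and $\psi\otimes e_2$ over $K$ are literally the \emph{same} field $K_\psi$ (both kernels equal $\ker(\psi|_{\gal{K}})$). No choice of basis fixes this---the composite of all the $K_{\psi_j}$ is a single copy of $\Gal(K_\psi/K)\cong W$, not $W^{\oplus 2}$---and $(\psi\otimes e_1)(\sigma)$ and $(\psi\otimes e_2)(\sigma)$ are simultaneously zero or simultaneously nonzero, so your defining condition for $\mc{C}_i$ is empty. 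Even the weaker half-space conditions $\langle X_{\alpha_i},\psi_j(\sigma_v)\rangle=0$ that your orthogonality argument actually requires are not obviously simultaneously satisfiable from your choice of $(T_i,\alpha_i)$ alone, which was made only to pair a single $\xi_i$ nontrivially with $\mf{g}_{\alpha_i}$.

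This multiplicity obstruction is precisely the point of Remark \ref{multfreermk}: a direct argument along your lines works only when $\br(\fgder)$ is multiplicity-free. The paper avoids the issue by a more indirect construction: it first builds auxiliary primes $v_1,\dots,v_r$ one at a time, using the Greenberg--Wiles formula at each step to compute that $H^1_{L_{v_1},\dots,L_{v_s}}(\gal{F,{\mc{T}}\cup\{v_1,\dots,v_s\}},\br(\fgder))$ restricts to a codimension-$(r-s)$ subspace of $\bigoplus_{w\in{\mc{T}}}H^1(\gal{F_w},\br(\fgder))$; it then chooses $Y_i$ as any class in the image of the $v_i$-allowed Selmer group not already in $\im(\Psi_{\mc{T}})$, and a basis $\omega_{i,1},\dots,\omega_{i,r-1}$ of $H^1_{L_{v_i}^\perp}$, and defines $\mc{C}_i$ by the conditions $\psi_i(\sigma_v)\notin(\Fp X_{\alpha_i})^\perp$, $\omega_{i,k}(\sigma_v)\in(\Fp X_{\alpha_i})^\perp$. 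The crucial observation is that this \v{C}ebotarev condition is non-empty \emph{because $v_i$ itself satisfies it}---one does not need any linear disjointness of fixed fields. Your argument has no analogue of this witness and cannot certify non-emptiness. Separately, your handling of the orthogonality condition against classes $\psi'$ that are ramified at $v$ invokes vanishing of the cup product of two ramified classes on the grounds that the tame pro-$p$ inertia quotient has cohomological dimension one; this reasoning is suspect (the cup product $H^1\times H^1\to H^2$ lives on $\gal{F_v}$, not on inertia, and the pairing is an alternating form involving both $\sigma_v$- and $\tau_v$-values), but it is a secondary issue compared with the multiplicity gap, which sinks the construction of $\mc{C}_i$.
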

\begin{rmk}\label{multfreermk}
 One can ask whether it is possible to hit the class $z_{\mc{T}}$ by allowing only one additional prime of ramification; this is how the analogous argument in \cite{ramakrishna-hamblen} (for reducible two-dimensional $\br$) works. We have only been able to show such a statement when $\br(\fgder)$ is multiplicity-free as an $\Fp[\gal{F}]$-module, and even then only at the expense of arguments considerably more technical than those given here. Proposition \ref{klrstep1''} allows us to avoid this image restriction.
\end{rmk}
\begin{proof}
Since $\Sha_{\mc{T}}^2(\gal{F, {\mc{T}}}, \br(\fgder))=0$, the Poitou--Tate sequence
yields an exact sequence
\[
0 \to \Sha^1_{\mc{T}}(\gal{F, {\mc{T}}}, \br(\fgder)) \to H^1(\gal{F, {\mc{T}}}, \br(\fgder)) \xrightarrow{\Psi_{\mc{T}}} \bigoplus_{v \in {\mc{T}}} H^1(\gal{F_v}, \br(\fgder)) \to \left( H^1(\gal{F, {\mc{T}}}, \br(\fgder)^*)\right)^\vee \to 0,
\]
and $\coker(\Psi_{\mc{T}}) \xrightarrow{\sim} \left( H^1(\gal{F, {\mc{T}}}, \br(\fgder)^*)\right)^\vee$. In particular, if $\dim_{\Fp} \coker(\Psi_{\mc{T}})=r$ is non-zero, then $H^1(\gal{F, {\mc{T}}}, \br(\fgder)^*)$ contains a non-zero class $\psi_1$. We claim that we can choose a triple $(T_1, \alpha_1, X_{\alpha_1})$ consisting of a split maximal torus $T_1$, a root $\alpha_1 \in \Phi(G^0, T_1)$, and a root vector $X_{\alpha_1} \in \mf{g}_{\alpha_1}$ such that $\psi_1(\gal{K'})$ is not contained in $(\Fp X_{\alpha_1})^\perp$ (note that we work with the $\Fp$-span of $X_{\alpha_1}$ rather than the full root space; and here we are simply regarding $\fgder$ and its dual as $\Fp$-vector spaces). Indeed, for any $\Fp$-subspace $U$ not equal to the whole of $\br(\fgder)$, there is a root vector not in $U$. To check this, we must check that the $\Fp$-span, or equivalently the $k$-span, of all root vectors in $\fgder$ is equal to the whole of $\fgder$. This claim in turn reduces to the case in which $\fgder$ is simple, where again (using $p \gg_G 0$) it follows from irreducibility of $\fgder$ as a $k[G(k)]$-module. Thus to find the desired triple it suffices to note that $\psi_1(\gal{K'})$ is non-trivial, by combining the second and third parts of Assumption \ref{multfree}.

    We now apply the \v{C}ebotarev density theorem in the Galois extension $LK'K_{\psi_1}(\mu_{p^c})/F$: by the disjointness observations preceding the Proposition, there is a positive-density set $\mc{D}_1$ of $K'$-trivial primes $v$ and choice of decomposition group at $v$ such that $\sigma_v= g_{L/K', 1}$, $\psi_1(\sigma_v)$ is not in $(\Fp X_{\alpha_1})^\perp$, and $N(v) \equiv q_1 \pmod{p^c}$. For each $v_1 \in \mc{D}_1$, let $L_{v_1}= \{\phi \in H^1(\gal{F_{v_1}}, \br(\fgder)): \phi(\tau_{v_1}) \in \Fp X_{\alpha_1}\}$. Classes in $L_{v_1}^\perp$ are unramified, and so
    \[
    \left \{\psi \in H^1(\gal{F, {\mc{T}} \cup v_1}, \br(\fgder)^*): \psi|_{\gal{F_{v_1}}} \in L_{v_1}^{\perp} \right\}
    \]
is a subspace of $H^1(\gal{F, {\mc{T}}}, \br(\fgder)^*)$ (note there is no condition imposed on the restriction of $\psi$ to places in ${\mc{T}}$). To emphasize and more clearly keep track of the fact that for varying $v_1$ these are subspaces of this common cohomology group, we will write $H^1_{L_{v_1}^\perp}(\gal{F, {\mc{T}}}, \br(\fgder)^*)$ for this subspace, instead of the more typical $H^1_{L_{v_1}^\perp}(\gal{F, {\mc{T}} \cup v_1}, \br(\fgder)^*)$.

We now deduce from a few applications of the Greenberg--Wiles formula the following points:
\begin{itemize}
\item The fact that $\dim(L_{v_1})=1+\dim(L_{v_1}^{\mr{un}})$ (and $\dim(L_{v_1}^\perp)= \dim(L_{v_1}^{\mr{un}, \perp})-1$) and the existence of $\psi_1$ together imply that $h^1_{L_{v_1}^\perp}(\gal{F, {\mc{T}}}, \br(\fgder)^*)=r-1$, in the Selmer group notation of the previous paragraph.
\item The inclusion
\[
\Sha^1_{\mc{T}}(\gal{F, {\mc{T}}}, \br(\fgder)) \subseteq \ker\left(H^1_{L_{v_1}}(\gal{F, {\mc{T}} \cup v_1}, \br(\fgder)) \to \bigoplus_{v \in {\mc{T}}} H^1(\gal{F_v}, \br(\fgder))\right)
\]
is an equality. To see this, apply the Greenberg--Wiles formula to the Selmer systems $\mc{L}_1= \{0\}_{v \in {\mc{T}}} \cup \{L_{v_1}\}$ and $\mc{L}_2= \{0\}_{v \in {\mc{T}}}$ and use the previous bullet-point.
\item The cokernel of the restriction map $H^1_{L_{v_1}}(\gal{F, {\mc{T}} \cup v_1}, \br(\fgder)) \to \bigoplus_{v \in {\mc{T}}} H^1(\gal{F_v}, \br(\fgder))$ has dimension $r-1$: by the last bullet-point, it suffices to show that 
\[
h^1_{L_{v_1}}(\gal{F, {\mc{T}} \cup v_1}, \br(\fgder))=1+h^1(\gal{F, {\mc{T}}}, \br(\fgder)),
\]
which is immediate from the Greenberg--Wiles formula and the vanishing of $\Sha^1_{\mc{T}}(\gal{F, {\mc{T}}}, \br(\fgder)^*)$.
\end{itemize}
Now, if $r>1$, then for each $v_1 \in \mc{D}_1$ we can choose a
non-zero $\psi_2 \in H^1_{L_{v_1}^\perp}(\gal{F, {\mc{T}}},
\br(\fgder)^*)$. Note that $\psi_2$ depends on this subspace of the
Galois cohomology, rather than on $v_1$ itself. Then we can repeat the
above argument, choosing $(T_2, \alpha_2, X_{\alpha_2})$ such that
$\psi_2(\gal{K'})$ is not contained in $(\Fp X_{\alpha_2})^\perp$, and
then define a \v{C}ebotarev set $\mc{D}_2(v_1)$ (the notation includes
the dependence on the initial choice of $v_1$) as the set of
$K'$-trivial $v$ such that 
    (having fixed a suitable decomposition group at $v$) $\sigma_v= g_{L/K', 2}$, $N(v) \equiv q_2 \pmod{p^c}$, and $\psi_2(\sigma_v) \not \in (\Fp X_{\alpha_2})^\perp$. The same argument with the Greenberg--Wiles formula shows that 
\[
\ker\left(H^1_{L_{v_1}, L_{v_2}}(\gal{F, {\mc{T}} \cup v_1 \cup v_2}, \br(\fgder)) \to \bigoplus_{v \in {\mc{T}}} H^1(\gal{F_v}, \br(\fgder)) \right)
\]
equals $\ker\left(H^1_{L_{v_1}}(\gal{F, {\mc{T}} \cup v_1}, \br(\fgder)) \to \bigoplus_{v \in {\mc{T}}} H^1(\gal{F_v}, \br(\fgder))\right)$, and consequently that the dimension of the cokernel of the restriction map $H^1_{L_{v_1}, L_{v_2}}(\gal{F, {\mc{T}} \cup v_1 \cup v_2}, \br(\fgder)) \to \bigoplus_{v \in {\mc{T}}} H^1(\gal{F_v}, \br(\fgder))$ is now $r-2$. Proceeding inductively, we obtain \v{C}ebotarev sets $\mc{D}_s(v_{s-1})$, depending on $v_{s-1} \in \mc{D}_{s-1}(v_{s-2})$ (and so on), for $s=1, \ldots, r$, such that for all tuples $(v_1, \ldots, v_r)$ with each $v_s \in \mc{D}_s(v_{s-1})$, the restriction map
\[
H^1_{L_{v_1}, \ldots, L_{v_r}}(\gal{F, {\mc{T}} \cup v_1, \ldots, v_r}, \br(\fgder)) \to \bigoplus_{v \in {\mc{T}}} H^1(\gal{F_v}, \br(\fgder))
\]
is surjective. We are not done, however, as we need to show that we can in fact define \v{C}ebotarev sets for each $i =1, \ldots, r$ from which we can independently draw primes satisfying the conclusions of the Proposition.

To that end, the above argument produces an $\Fp$-basis $\psi_1, \ldots, \psi_r$ of $H^1(\gal{F, {\mc{T}}}, \br(\fgder)^*)$, a collection of root vectors $X_{\alpha_1}, \ldots, X_{\alpha_r}$, and a collection of elements $Y_1, \ldots, Y_r \in \bigoplus_{v \in {\mc{T}}} H^1(\gal{F_v}, \br(\fgder))$ that map to a basis of $\coker(\Psi_{\mc{T}})$: for $Y_i$, we take \textit{any} vector in the image of $H^1_{L_{v_i}}(\gal{F, {\mc{T}} \cup v_i}, \br(\fgder)) \to \bigoplus_{v \in {\mc{T}}} H^1(\gal{F_{v}}, \br(\fgder))$ that is not in $\im(\Psi_{\mc{T}})$. (These still span $\coker(\Psi_{\mc{T}})$ because if $\wt{Y}_i$ denotes a lift to $H^1_{L_{v_i}}(\gal{F, {\mc{T}}}, \br(\fgder))$ of $Y_i$, the $\{\wt{Y}_i\}_i$ span $\coker(H^1(\gal{F, {\mc{T}}}, \br(\fgder))\to H^1_{L_{v_1}, \ldots, L_{v_r}}(\gal{F, {\mc{T}} \cup v_1, \ldots, v_r}, \br(\fgder))$, since they are independent for ramification reasons.) For each $i$, we also can fix an $\Fp$-basis $\omega_{i, 1}, \ldots, \omega_{i, r-1}$ of $H^1_{L_{v_i}^\perp}(\gal{F, {\mc{T}}}, \br(\fgder)^*)$. Now we define the following \v{C}ebotarev condition:
\begin{align*}
  \mc{C}_i=\{&\text{$K'$-trivial primes $v$ such that 
               $N(v) \equiv q_i \pmod{p^c}$, $\sigma_v= g_{L/K', i}$, }\\
&\text{$\psi_i(\sigma_v) \not \in (\Fp X_{\alpha_i})^\perp$ and $\omega_{i, k}(\sigma_v) \in (\Fp X_{\alpha_i})^\perp$ for all $k=1, \ldots, r-1$}\}.
\end{align*}
We know that $v_i \in \mc{C}_i$, so each $\mc{C}_i$ is in fact a non-empty \v{C}ebotarev condition in the extension $LK'K_{\psi_i} \prod_{k=1}^{r-1} K_{\omega_{i, k}}(\mu_{p^c})/F$ (without this observation, the conditions defining $\mc{C}_i$ could be incompatible). Now, for all $v \in \mc{C}_i$, we define $L_v$ as before to be those classes $\phi \in H^1(\gal{F_v}, \br(\fgder))$ such that $\phi(\tau_v) \in \Fp X_{\alpha_i}$ and deduce an exact sequence
\[
0 \to \Sha^1_{\mc{T}}(\gal{F, {\mc{T}}}, \br(\fgder)) \to H^1_{L_v}(\gal{F, {\mc{T}} \cup v}, \br(\fgder)) \to \bigoplus_{w \in {\mc{T}}} H^1(\gal{F_w}, \br(\fgder)) \to (H^1_{L_v^\perp}(\gal{F, {\mc{T}}}, \br(\fgder)^*))^\vee \to 0;
\]
indeed, we apply the same Euler-characteristic arguments as above (using that $\psi_i|_v \not \in L_v^\perp$), note that the composite $H^1_{L_v}(\gal{F, {\mc{T}} \cup v}, \br(\fgder)) \to (H^1_{L_v^\perp}(\gal{F, {\mc{T}}}, \br(\fgder)^*))^\vee$ is zero (by the standard Poitou--Tate sequence for $\gal{F, {\mc{T}} \cup v}$ acting on $\br(\fgder)$), and then deduce the exactness by counting dimensions. We claim that $Y_i$ lies in the image of $H^1_{L_v}(\gal{F, {\mc{T}} \cup v}, \br(\fgder))$, for which it suffices to check that $Y_i$ annihilates $H^1_{L_v^\perp}(\gal{F, {\mc{T}}}, \br(\fgder)^*)$. We know that $Y_i$ annihilates $H^1_{L_{v_i}^\perp}(\gal{F, {\mc{T}}}, \br(\fgder)^*)$ (using exactness of the above sequence for $v_i$), so it suffices (and is in fact necessary) to observe that 
\[
H^1_{L_{v_i}^\perp}(\gal{F, {\mc{T}}}, \br(\fgder)^*)= H^1_{L_{v}^\perp}(\gal{F, {\mc{T}}}, \br(\fgder)^*);
\]
this holds because both subspaces of $H^1(\gal{F, {\mc{T}}}, \br(\fgder)^*)$ are equal to the span of $\omega_{i, 1}, \ldots, \omega_{i, r-1}$. We can therefore take $h^{(v)}$ to be an element of $H^1(\gal{F, {\mc{T}} \cup v}, \br(\fgder))$ restricting to $Y_i$; $h^{(v)}$ must be ramified at $v$ (since $Y_i$ is not in the image of $\Psi_{\mc{T}}$), so by definition of $L_v$ $h^{(v)}(\tau_v)$ is a non-zero multiple of $X_{\alpha_i}$.
\end{proof}
We will also need the following simpler variant of Proposition \ref{klrstep1''}.
\begin{lemma}\label{gens}
Continue with the hypotheses of Proposition \ref{klrstep1''}. Fix a Galois extension $L/F$ containing $K'$ that is unramified outside ${\mc{T}}$ and is strongly linear disjoint over $K'$ from the composite of $K'$, $K(\mu_{p^\infty})$, and the fixed fields $K_{\psi}$ of any collection of classes $\psi \in H^1(\gal{F, {\mc{T}}}, \br(\fgder)^*)$. Fix an element $g_{L/K'} \in \Gal(L/K')$, an integer $c \geq 2$, and a class $q_Z \in \ker((\Z/p^c)^\times \to (\Z/p)^\times)$. Let $Z \in \fgder$ be any non-zero element. There is a \v{C}ebotarev set $\mc{C}_Z$ of $K'$-trivial primes such that $N(v) \equiv q_Z \pmod{p^c}$ for each $v \in \mc{C}_Z$, and, for each $v \in \mc{C}_Z$, a decomposition group at $v$ and a class $h^{(v)} \in H^1(\gal{F, {\mc{T}} \cup v}, \br(\fgder))$ such that
\begin{itemize}
\item $\sigma_v= g_{L/K'}$;
\item the restriction $h^{(v)}|_{\mc{T}}$ is independent of $v \in \mc{C}$; and
\item $h^{(v)}(\tau_v)$ spans the line $\Fp Z$.
\end{itemize}
\end{lemma}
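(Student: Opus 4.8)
The plan is to mimic Proposition~\ref{klrstep1''}, but the task is strictly easier: we impose no constraint on $h^{(v)}|_{{\mc{T}}}$ beyond independence of $v$, so rather than trying to hit a prescribed class of $\coker(\Psi_{\mc{T}})$ we will simply arrange $h^{(v)}|_{{\mc{T}}}=0$. Write $M=\br(\fgder)$ and fix an $\Fp$-basis $\psi_1,\dots,\psi_r$ of $H^1(\gal{F,{\mc{T}}},M^*)$; recall from Proposition~\ref{klrstep1''} that $\Sha^1_{\mc{T}}(\gal{F,{\mc{T}}},M^*)=0$. First I would define $\mc{C}_Z$ to be the set of $K'$-trivial primes $v\notin{\mc{T}}$ admitting a choice of decomposition group (pinned down as in Notation~\ref{doublingnotation}) with $\sigma_v$ restricting to $g_{L/K'}$ in $\Gal(L/K')$, with $N(v)\equiv q_Z\pmod{p^c}$, and with $\psi_j(\sigma_v)=0$ for every $j$ (equivalently, $v$ split in each $K_{\psi_j}$). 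Because $L$ is strongly linearly disjoint over $K'$ from the composite of $K'$, $K(\mu_{p^\infty})$ and the $K_{\psi_j}$, and because $\br(\fgder)^*$ has no trivial $\Fp[\gal{F}]$-subquotient and none in common with $\br(\fgder)$ (Assumption~\ref{multfree}), these are compatible conditions cutting out a nonempty \v{C}ebotarev set in $\Gal\bigl(L\cdot K'\cdot K(\mu_{p^c})\cdot\textstyle\prod_j K_{\psi_j}/F\bigr)$. This nonemptiness is the one step where the disjointness hypotheses are really used.

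Now fix $v\in\mc{C}_Z$. Since $v$ is a trivial prime, $M|_{\gal{F_v}}$ is the trivial module $\fgder$ and $B^1(\gal{F_v},M)=0$, so there is a genuine cocycle $\phi_v^Z\in H^1(\gal{F_v},M)$ with $\phi_v^Z(\tau_v)=Z$ and $\phi_v^Z(\sigma_v)=0$; put $L_v=H^1_{\mr{unr}}(\gal{F_v},M)\oplus\Fp\,\phi_v^Z$, a subspace of dimension $\dim H^1_{\mr{unr}}(\gal{F_v},M)+1$. Since $L_v$ contains the unramified subspace, local Tate duality gives $L_v^\perp\subseteq H^1_{\mr{unr}}(\gal{F_v},M^*)$. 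Consider the Selmer system on $\gal{F,{\mc{T}}\cup v}$ that is unrestricted at every $w\in{\mc{T}}$ and equal to $L_v$ at $v$, together with its dual system ($\{0\}$ at ${\mc{T}}$, $L_v^\perp$ at $v$). Any class in the dual Selmer group is unramified at $v$ and vanishes on ${\mc{T}}$, hence lies in $\Sha^1_{\mc{T}}(\gal{F,{\mc{T}}},M^*)=0$; so the dual Selmer group vanishes. Comparing, via the Greenberg--Wiles formula (as in \cite[Theorem 2.19]{DDT2}), with the system that is \emph{unramified} at $v$ --- whose $M$-Selmer group is $H^1(\gal{F,{\mc{T}}},M)$ and whose dual Selmer group is again $\Sha^1_{\mc{T}}(\gal{F,{\mc{T}}},M^*)=0$ --- and using $\dim L_v=\dim H^1_{\mr{unr}}(\gal{F_v},M)+1$, one finds that the $L_v$-Selmer group strictly contains $H^1(\gal{F,{\mc{T}}},M)$. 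Choose $h^{(v)}$ in it but not in $H^1(\gal{F,{\mc{T}}},M)$; then $h^{(v)}$ is ramified at $v$, and since $h^{(v)}|_{\gal{F_v}}\in L_v$ its value $h^{(v)}(\tau_v)$ is a nonzero $\Fp$-multiple of $Z$, hence spans $\Fp Z$.

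It remains to make $h^{(v)}|_{{\mc{T}}}$ independent of $v$. For each $j$, global reciprocity over ${\mc{T}}\cup v$ applied to $h^{(v)}\in H^1(\gal{F,{\mc{T}}\cup v},M)$ and $\psi_j\in H^1(\gal{F,{\mc{T}}},M^*)\subseteq H^1(\gal{F,{\mc{T}}\cup v},M^*)$ gives $\sum_{w\in{\mc{T}}}\inv_w(h^{(v)}\cup\psi_j)=-\inv_v(h^{(v)}\cup\psi_j)$, and since $\psi_j$ is unramified at $v$ Lemma~\ref{localduality} identifies the right-hand side with $-\langle h^{(v)}(\tau_v),\psi_j(\sigma_v)\rangle=0$. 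Thus $h^{(v)}|_{{\mc{T}}}$ annihilates $\im(\Psi_{\mc{T}}^*)$, hence by Poitou--Tate lies in $\im(\Psi_{\mc{T}})$; choosing $g\in H^1(\gal{F,{\mc{T}}},M)$ with $g|_{{\mc{T}}}=h^{(v)}|_{{\mc{T}}}$ and replacing $h^{(v)}$ by $h^{(v)}-g$ --- which, as $g$ is unramified at $v$, does not affect $h^{(v)}(\tau_v)$, the conditions at $v$ concerning only the prime itself --- we may take $h^{(v)}|_{{\mc{T}}}=0$ for all $v\in\mc{C}_Z$. All three bulleted conclusions then hold. The argument is essentially pure bookkeeping with the Poitou--Tate machinery; there is no real obstacle beyond the \v{C}ebotarev nonemptiness of the first paragraph, which is exactly what the disjointness hypotheses were arranged to supply --- hence the billing as a ``simpler variant'' of Proposition~\ref{klrstep1''}.
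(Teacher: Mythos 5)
Your proof is correct, and it takes a genuinely different route from the paper's. The paper's proof of Lemma~\ref{gens} deliberately mirrors Proposition~\ref{klrstep1''}: it picks a single $\psi$ with $\psi(\gal{K'})\not\subset(\Fp Z)^\perp$, fixes a test prime $v_1$ with $\psi(\sigma_{v_1})\notin(\Fp Z)^\perp$, extracts $Y\in\im\bigl(H^1_{L_{v_1}}(\gal{F,{\mc{T}}\cup v_1},\br(\fgder))\to\bigoplus_{\mc{T}}H^1\bigr)\setminus\im(\Psi_{\mc{T}})$, and then imposes in $\mc{C}_Z$ both $\psi(\sigma_v)\notin(\Fp Z)^\perp$ and $\omega_j(\sigma_v)\in(\Fp Z)^\perp$ for a basis $\{\omega_j\}$ of $H^1_{L_{v_1}^\perp}(\gal{F,{\mc{T}}},\br(\fgder)^*)$, so that the dual Selmer group stays the same as $v$ ranges over $\mc{C}_Z$ and the \emph{same} $Y$ works for every $v$. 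You instead impose the opposite kind of condition --- $\psi_j(\sigma_v)=0$ for a full basis $\{\psi_j\}$ of $H^1(\gal{F,{\mc{T}}},\br(\fgder)^*)$ --- force the dual Selmer group to vanish outright (rather than merely hold fixed), produce an $h^{(v)}$ ramified at $v$ by a Greenberg--Wiles count, and then use global reciprocity together with Lemma~\ref{localduality} and the vanishing of $\psi_j(\sigma_v)$ to show $h^{(v)}|_{\mc{T}}\in\im(\Psi_{\mc{T}})$, after which you subtract an unramified-at-$v$ global class to normalize $h^{(v)}|_{\mc{T}}$ to $0$.

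The two approaches buy different things. The paper's version uses a single template that covers both Proposition~\ref{klrstep1''} (where the restriction $Y_i$ must span $\coker(\Psi_{\mc{T}})$, so one cannot force $Y_i=0$) and Lemma~\ref{gens}; it is therefore the uniform argument, and it proves nonemptiness of $\mc{C}_Z$ indirectly, by exhibiting the test prime $v_1$ in it. Your version exploits the fact that Lemma~\ref{gens} makes no demand on $h^{(v)}|_{\mc{T}}$ beyond independence of $v$, so you can place it in $\im(\Psi_{\mc{T}})$ and kill it; the price is that your \v{C}ebotarev conditions are all ``splitting'' conditions, whose mutual compatibility is immediate from the disjointness hypotheses, so nonemptiness is slightly cleaner. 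Both constructions feed into the subsequent doubling argument identically: the only features of the $h^{(v)}$ that Proposition~\ref{doublingprop} uses are that $h^{(v)}|_{\mc{T}}$ is fixed as $v$ varies, that $h^{(v)}(\tau_v)$ spans $\Fp Z$, and that $v$ lies over a fixed Frobenius in $\Gal(L/K')$ with $N(v)\equiv q_Z\ (p^c)$ --- and your choice $Y=0$ only simplifies the identity $z_{\mc{T}}'=z_{\mc{T}}-\sum_a Y_a$ to $z_{\mc{T}}'=z_{\mc{T}}$.
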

\begin{proof}
Recall from the discussion preceding Lemma \ref{localmodp^2} that we have enlarged ${\mc{T}}$ to ensure that for all $i \in I$, $H^1(\gal{F, {\mc{T}}}, W_i^*) \neq 0$. Since $(\Fp Z)^\perp \subset (\fgder)^*$ is a proper subspace, it does not contain some isotypic piece $(W_i^*)^{\oplus m_i}$, hence it does not contain some $\gal{F, {\mc{T}}}$-equivariantly embedded $W_i^* \hookrightarrow (W_i^*)^{\oplus m_i}$, and so there is a $\psi \in H^1(\gal{F, {\mc{T}}}, \br(\fgder)^*)$ such that $\psi(\gal{K'})$ is not contained in $(\Fp Z)^\perp$ (namely, a $\psi$ supported on a suitable copy of $W_i^*$). We can now repeat the argument of Proposition \ref{klrstep1''}. In brief, fix a $K'$-trivial prime $v_1$ such that $\psi(\sigma_{v_1})$ does not belong to $(\Fp Z)^\perp$, $\sigma_{v_1}=g_{L/K'}$, and $N(v_1) \equiv q_Z \pmod{p^c}$, and as before define $L_{v_1}$ to be the set of $\phi \in H^1(\gal{F_{v_1}}, \br(\fgder))$ such that $\phi(\tau_{v_1}) \in \Fp Z$. The same analysis shows that there is an element
\[
Y \in \im \Bigl (H^1_{L_{v_1}}(\gal{F, {\mc{T}} \cup v_1}, \br(\fgder)) \to
  \bigoplus_{w \in {\mc{T}}} H^1(\gal{F_w}, \br(\fgder) \Bigr ) \setminus \im(\Psi_{\mc{T}}),
\]
and that if we let $\omega_1, \ldots, \omega_s$ be a basis of the (codimension 1) subspace $H^1_{L_{v_1}^\perp}(\gal{F, {\mc{T}}}, \br(\fgder)^*)\subset H^1(\gal{F, {\mc{T}}}, \br(\fgder)^*)$, then
\begin{align*}
\mc{C}_Z= \{&\text{$K'$-trivial primes $v$: $N(v) \equiv q_Z \pmod{p^c}$, $\sigma_v= g_{L/K'}$,}\\
&\text{$\psi(\sigma_v) \not \in (\Fp Z)^\perp$ and $\omega_j(\sigma_v) \in (\Fp Z)^\perp$ for all $i=1, \ldots, s$}\}
\end{align*}
is a \textit{non-empty} (because $v_1 \in \mc{C}_Z$) \v{C}ebotarev condition. Then as in Proposition \ref{klrstep1''}, we also see that for all $v \in \mc{C}_Z$ there is a class $h^{(v)} \in H^1_{L_v}(\gal{F, {\mc{T}} \cup v}, \br(\fgder))$ such that $h^{(v)}|_{\mc{T}}= Y$, and $h^{(v)}(\tau_v)$ spans $\Fp Z$.
\end{proof}
    In the present subsection we can simply take $L=K'$, $c=2$, and each $q_i$ or $q_Z$ equal to some non-trivial element of $(\Z/p^2)^\times$ (trivial modulo $p$). Only in Theorem \ref{klr^N} will we use the full generality of the results we have just proven.

Now we fix any finite set of root vectors (for possibly different split maximal tori $T_a$) $\{X_{\alpha_a}\}_{a \in A}$ such that
\[
\sum_{a \in A} \Fp[\gal{F}]X_{\alpha_a}= \fgder.
\]
(Such a collection $\{X_{\alpha_a}\}$ clearly exists, since for any proper subspace $U$ of $\fgder$, there is some root vector not in $U$: see the proof of Proposition \ref{klrstep1''}.) Lemma \ref{gens} yields \v{C}ebotarev sets $\mc{C}_a= \mc{C}_{X_{\alpha_a}}$ and classes $Y_a \in \bigoplus_{w \in {\mc{T}}} H^1(\gal{F_w}, \br(\fgder))$ such that for all $v \in \mc{C}_a$, there is a class $h^{(v)} \in H^1(\gal{F, {\mc{T}} \cup v}, \br(\fgder))$ satisfying $h^{(v)}(\tau_v) \in \Fp X_{\alpha_a} \setminus 0$ and $h^{(v)}|_{\mc{T}}=Y_a$; moreover, $N(v) \in (\Z/p^2)^\times$ is independent of $v \in \mc{C}_a$. 
    Here decomposition groups at these primes get fixed once and for all; in particular, there is a fixed prime above each of these $v$'s in $K$, and we can also think of these sets as defining positive-density subsets of $K$ in this way.
Consider the class
\[
z_{\mc{T}}'= z_{\mc{T}}- \sum_{a \in A} Y_a.
\]
This new element may or may not be in the image of $\Psi_{\mc{T}}$, but we can in any case invoke Proposition \ref{klrstep1''} to produce a finite set $\{Y_b\}_{b \in B} \subset \bigoplus_{w \in {\mc{T}}} H^1(\gal{F_w}, \br(\fgder))$ that spans $\coker(\Psi_{\mc{T}})$ over $\Fp$, and, for each $b\in B$, a \v{C}ebotarev set $\mc{C}_b$ of $K'$-trivial primes and a root vector $X_{\alpha_b}$, and for each $v \in \mc{C}_b$ a class $g^{(v)}\in H^1(\gal{F, {\mc{T}} \cup v}, \br(\fgder))$ such that $g^{(v)}|_{\mc{T}}=Y_b$ and $g^{(v)}(\tau_v) \in \Fp X_{\alpha_b} \setminus 0$ (the reason for the shift in notation to $g^{(v)}$ will become apparent at the end of this paragraph); again, $N(v) \in (\Z/p^2)^\times$ is non-trivial and independent of $v \in \mc{C}_b$. In particular,
we can write
\[
z_{\mc{T}}'= h^{\mr{old}}+ \sum_{b \in B} c_bY_b
\]
for some class $h^{\mr{old}} \in H^1(\gal{F, {\mc{T}}}, \br(\fgder))$ and some $c_b \in \Fp$. We discard those $b \in B$ such that $c_b=0$. Thus, for all tuples 
\[
(v_a)_{a \in A} \times (v_b)_{b \in B} \in \prod_{a \in A} \mc{C}_a \times \prod_{b \in B} \mc{C}_b,
\]
we can write 
\[
z_{\mc{T}}= h^{\mr{old}}|_{\mc{T}}+\sum_{a \in A} h^{(v_a)}|_{\mc{T}} + \sum_{b \in B} c_b g^{(v_b)}|_{\mc{T}}.
\]
Note that the vectors $h^{(v_a)}(\tau_{v_a})$ are non-zero multiples of $X_{\alpha_a}$ for all $a \in A$, so the collection $\{h^{(v_a)}(\tau_{v_a})\}_{a \in A}$ is a set of $\Fp[\gal{F}]$-generators of $\fgder$. Having made note of this, we will in the argument that follows not need to preserve the distinction between the sets $A$ and $B$, so we set $N= A \cup B$. In order to preserve this uniformity of notation, for all $b \in B$ and $v \in \mc{C}_b$ we set $h^{(v)}= c_b g^{(v)}$, so we can re-express the above equality as
\begin{equation} \label{z_T}
z_{\mc{T}}= h^{\mr{old}}|_{\mc{T}}+ \sum_{n \in N} h^{(v_n)}|_{\mc{T}}
\end{equation}
for any $\un{v}=(v_n)_{n \in N} \in \prod_{n \in N} \mc{C}_n$. As noted above, there is a tuple of congruence classes $(q_n)_{n \in N} \in (\Z/p^2)^\times$ such that $N(v_n) \equiv q_n \pmod{p^2}$ for all $\un{v} \in \prod_{n \in N} \mc{C}_n$.

    For each $n \in N$, Proposition \ref{klrstep1''} or Lemma \ref{gens} has specified a decomposition group at each $v_n \in \mc{C}_n$. In particular, there is a specified prime $v_{n, K'}$ of $K'$ above $v_n$, and for an $N$-tuple $\un{v}= (v_n)_{n \in N}$, we write $\un{v}_{K'}$ for the $N$-tuple $(v_{n, K'})_{n \in N}$. To be precise, we let let $\mc{C}_{K'} = \prod_{n \in N} \mc{C}_{n, K'}$ denote this set of tuples $(v_{n, K'})_{n \in N}$ of primes of $K'$, but to avoid overburdening the notation, we may omit the additional subscripts if it is clear we are working with primes of $K'$. Since $K'/F$ is split at each $v_n$, each $\mc{C}_{n, K'}$ is still a positive-density set of primes of $K'$. Given the fixed (purely local) choice of $\sigma_{v_n}$ and $\tau_{v_n}$ in the tame quotient of $\gal{F_{v_n}}$, all the properties in the conclusions of Proposition \ref{klrstep1''} and Lemma \ref{gens}, except for the condition in $\Gal(L/K')$, depend only on this prime $v_{n, K'}$ of $K'$, since they involve specification of the conjugacy class of $\sigma_{v_n}$ in an abelian extension of $K'$. We will continue to write $\sigma_{v_n}$ with the understanding that it means the Frobenius element induced by the choice of decomposition group, but keeping in mind that in any abelian extension of $K'$ it only depends on $v_{n, K'}$.

We will need to argue in terms of Dirichlet densities of $N$-tuples of
primes. For any number field $M$, we define the Dirichlet density of a
subset $P$ of $\{\text{primes of $M$}\}^N$ to be (if it exists) \index[terms]{D@$\delta(P)$}
\[
\delta(P)= \lim_{s \to 1^+} \frac{\sum_{\underline{v} \in P} N(\underline{v})^{-s}}{\sum_{\text{all $\underline{v}$}} N(\underline{v})^{-s}},
\]
where $N(\underline{v})= \prod_{n \in N} N(v_n)$. In particular, the
density of a product $P= \prod_{n \in N} P_n$ of sets $P_n$ of primes
exists if each $P_n$ has a density, and in this case $\delta(P)=
\prod_{n \in N} \delta(P_n)$. We make a corresponding definition of
upper Dirichlet density $\delta^+(P)$ \index[terms]{D@$\delta^+(P)$} of a set of $N$-tuples of primes. In particular, the preceding discussion yields \v{C}ebotarev sets $\mc{C}= \prod_{n \in N} \mc{C}_n$ and $\mc{C}_{K'}= \prod_{n \in N} \mc{C}_{n, K'}$ of positive Dirichlet density in their respective number fields.

The following argument substantially uses global duality, and we need to preface with a technical clarification of what coefficients we can take in the duality pairings. We have the $\Fp[\gal{F}]$-isotypic decomposition 
\[
\br(\fgder)= \bigoplus_{i \in I} V_i = \bigoplus_{i \in I} W_i^{\oplus m_i},
\]
where the various $W_i$ are mutually non-isomorphic irreducible $\Fp[\gal{F}]$-modules with endomorphism algebras $k_{W_i} = \mr{End}_{\Fp[\gal{F}]}(W_i)$ (a finite extension of $\Fp$). We may (and do) fix an isomorphism of $V_i$
with $W_i \otimes_{\Fp}\mbb{F}_{p^{m_i}}$ as
$k_{W_i}[\gal{F}]$-modules (with trivial Galois action on
$\mbb{F}_{p^{m_i}}$). This gives $V_i$ the structure of an
$A_i[\gal{F}]$-module, where
$A_i := k_{W_i} \otimes_{\Fp} \mbb{F}_{p^{m_i}}$, with $V_i$ being
finite free as an $A_i$-module. In the sequel, duals and duality
pairings will be considered with respect to this fixed structure.\footnote{We
  use the trace maps to identify duals over the various \'{e}tale
  $\Fp$-algebras that we consider.} 

\begin{prop}\label{doublingprop}
Fix any $2|N|$-tuple $(A_n, A_n')_{n \in N}$ of elements of $\widehat{G^{\mathrm{der}}}(\mathcal{O}/\varpi^2)$. Then there is a finite $2|N|$-tuple of $K'$-trivial primes $\mathcal{Q}=\left( (v_n, v'_n) \right)_{n \in N}$, disjoint from $\mathcal{T}$ and having $\{v_n, v'_n\} \subset \mathcal{C}_n$ for all $n \in N$, and a class $h \in H^1(\Gamma_{F, {\mathcal{T}} \cup {\mathcal{Q}}}, \bar{\rho}(\mathfrak{g}^{\mathrm{der}}))$ such that
\begin{itemize}
\item $h|_{\mathcal{T}}= z_{\mathcal{T}}$.
\item For some $\alpha_n$-root vector $X_n$, $(1+\varpi h)\rho_2(\tau_{v_n})=(1+\varpi h)\rho_2(\tau_{v'_n})=u_{\alpha_n}(X_n)$.
\item For some $z_n, z'_n \in Z_{G^0}(\mathcal{O}/\varpi^2) \cap \widehat{G}(\mathcal{O}/\varpi^2)$,
\[
(1+\varpi h)\rho_2(\sigma_{v_n})= A_n \cdot z_n,
\]
and
\[
(1+\varpi h)\rho_2(\sigma_{v'_n})=A'_n \cdot z'_n.
\] 
\end{itemize}
\end{prop}
\begin{proof}
We have seen in \eqref{z_T} that there is a class $h^{\mr{old}} \in H^1(\gal{F, {\mc{T}}}, \br(\fgder))$ such that for any $N$-tuple $\un{v}= (v_n)_{n \in N} \in \mc{C}= \prod_{n \in N} \mc{C}_n$, the global class $h(\un{v})=h^{\mr{old}}+ \sum_{n \in N} h^{(v_n)}$ satisfies $h(\un{v})|_{\mc{T}}= z_{\mc{T}}$. Since we cannot say anything about the restrictions $h(\un{v})|_{v_n}$, we will use the ``doubling method" of \cite{klr} to find the desired ${\mc{Q}}$ and $h$. To that end, for any two $N$-tuples $\un{v}, \un{v}' \in \mc{C}$, we consider the class
\begin{equation}\label{doublingeqtn}
h= h^{\mr{old}}- \sum_{n \in N} h^{(v_n)}+ 2 \sum_{n \in N} h^{(v'_n)} \in H^1(\gal{F, {\mc{T}} \cup \{v_n\} \cup \{v'_n\}}, \br(\fgder)),
\end{equation}
which still satisfies $h|_{\mc{T}}= z_{\mc{T}}$ (and the inertial conditions dictated by the construction of the classes $h^{(v_n)}$). The argument will show that for a suitable choice of $\un{v}$ and $\un{v}'$, $h$ will satisfy the conclusion of the Proposition with the set ${\mc{Q}}$ equal to $\{v_n\}_{n \in N} \cup \{v'_n\}_{n \in N}$.

    We let $\mf{l}_{K'} \subset \mc{C}_{K'}$, lying above and in bijection with $\mf{l} \subset \mc{C}$, denote a positive upper-density subset (now no longer necessarily a product of \v{C}ebotarev sets) such that the $N$-tuples $(\sum_{n \in N} h^{(v_n)}(\sigma_{v_m}))_{m \in N}$, $(h^{\mr{old}}(\sigma_{v_m}))_{m \in N}$, and $(h^{(v_m)}(\tau_{v_m}))_{m \in N}$ are independent of the choice of $\un{v} \in \mf{l}$ (also recall that $K(\rho_2(\fgder)) \subset K'$, so for any $(v_n)_{n \in N} \in \mc{C}$, $\rho_2(\sigma_{v_n})$ lies in the center of $G^0$ for all $n \in N$); this is possible since as we vary over $\mc{C}$, these $N$-tuples take on only finitely many values. These are conditions in abelian extensions of $K'$ so depend only on the places in $K'$ rather than their extensions to larger number fields.

For $\un{v} \in \mf{l}$, we write $X_n$ for the now independent-of-$\un{v}$ value
$h^{(v_n)}(\tau_{v_n})$ (for all $n \in N$, this is a non-zero
multiple of $X_{\alpha_n}$). Recall the decomposition
$\br(\fgder)= \bigoplus_{i \in I} V_i = \bigoplus_{i \in I}
W_i^{\oplus m_i}$ into $\Fp[\gal{F}]$-isotypic components. For
$i \in I$ we let $X_{n, i}$ denote the $V_i$-component of $X_n$. By
construction, therefore, we have
\[
\sum_{n \in N} \Fp[\gal{F}] X_{n, i}= V_i.
\]

We will show that for any fixed $N$-tuples $(C_m)_{m \in N}$ and $(C'_m)_{m \in N}$ of elements of $\fgder$, there exist $\un{v}, \un{v}' \in \mf{l}$ such that
\begin{align}\label{doubling}
&\sum_{n \in N} h^{(v'_n)}(\sigma_{v_m})= C_m, \\ \label{doubling2}
&\sum_{n \in N} h^{(v_n)}(\sigma_{v'_m})= C'_m,
\end{align}
for all $m \in N$. This will suffice to prove the Proposition, since, by Equation (\ref{doublingeqtn}), it will allow us to prescribe the values $h(\sigma_{v_m})$ and $h(\sigma_{v'_m})$ for all $m \in N$; we then 
choose the $C_m$ and $C'_m$ such that the values $(1+\vpi h)\rho_2(\sigma_{v_m})$ and $(1+\vpi h)\rho_2(\sigma_{v'_m})$ take on whatever values  modulo the center we wish to prescribe.

We will now study the condition, for \textit{fixed} $\un{v}= (v_n)_{n
  \in N} \in \mf{l}$, imposed on $\un{v'}$ by Equations
(\ref{doubling}) and (\ref{doubling2}), beginning with Equation
(\ref{doubling2}).
    Strictly speaking, we will now construct \v{C}ebotarev conditions
    on primes of $K'$. For each $n \in N$, consider the maximal Galois
    extension $K^{(v_n)}$ of $F$ inside $K_{h^{(v_n)}}$ that is
    unramified at $v_n$; this contains $K$, and $\sum_{n \in N}
    h^{(v_n)}(\Gal(K_{h^{(v_n)}}/K^{(v_n)}))= \fgder$, since the
    $n$-component of this sum contains $X_n$ and is
    $\Fp[\gal{F}]$-stable (only the $n \in A$ are needed to guarantee
    Equation (\ref{span}) holds).
        Moreover, since $K_{h^{(v_n)}} \cap K'=K^{(v_n)}$, we likewise have
\begin{equation}\label{span}
\sum_{n \in N} h^{(v_n)}(\Gal(K'K_{h^{(v_n)}}/K'))= \fgder.
\end{equation}

For each $m \in N$, we consider the \v{C}ebotarev condition $\mf{w}'_m$ on primes $w$ of $K'$, split over $F$ (a restriction that does not change the density), that satisfy $N(w) \equiv q_m \pmod{p^2}$, where $q_m$ is the class fixed in the construction of the sets $\mc{C}_m$, and the condition
\[
\sum_{n \in N} h^{(v_n)}(\sigma_w)= C_m.
\]
Equation (\ref{span}) and the fact that $K' \cap K(\mu_{p^\infty})= K$ implies that this condition is non-empty. 

Now we turn to the condition needed to satisfy Equation (\ref{doubling}). For all $m \in N$ and $i \in I$, let $\{\eta^{(v_m)}_{i, j}\}_{j=1}^{d_i}$ be elements of $H^1(\gal{F, {\mc{T}} \cup v_m}, W_i^*)$ that lift a $k_{W_i}$-basis of $H^1(\gal{F, {\mc{T}} \cup v_m}, W_i^*)/H^1(\gal{F, {\mc{T}}}, W_i^*)$. We will use the (global duality) exact sequence
\[
H^1(\gal{F, {\mc{T}} \cup v_m, v_n'}, V_i) \to \bigoplus_{w \in {\mc{T}} \cup v_m, v_n'} H^1(\gal{F_w}, V_i) \to H^1(\gal{F_{{\mc{T}} \cup v_m, v_n'}}, V_i^*)^\vee,
\] 
where both duals are taken as $A_i$-modules; the first map is restriction, and the second is $(x_w)_w \mapsto (\psi \mapsto \sum_w \langle x_w, \psi|_{\gal{F_w}} \rangle_w)$, where $\langle \cdot, \cdot \rangle_w$ refers to the $A_i$-linear version of the local duality pairing. Thus, writing $h^{(v)}= \sum_{i \in I} h_i^{(v)}$ for the decomposition into $V_i$-components, regarding $W_i^*$ as $W_i^* \otimes 1 \subset V_i^*$, and invoking Lemma \ref{localduality} for the local calculation at the trivial primes $v_m$ and $v_n'$, we have for all $m, n, i, j$ the global duality relation
\begin{align*}
\langle \eta^{(v_m)}_{i, j}(\tau_{v_m}), h_i^{(v'_n)}(\sigma_{v_m}) \rangle&= -\sum_{x \in {\mc{T}}} \langle \eta^{(v_m)}_{i, j}, h_i^{(v'_n)}\rangle_x - \langle \eta^{(v_m)}_{i, j}(\sigma_{v'_n}) , h_i^{(v'_n)}(\tau_{v'_n})\rangle,\\
&= -\sum_{x \in {\mc{T}}} \langle \eta^{(v_m)}_{i, j}, h_i^{(v_n')}\rangle_x - \langle \eta^{(v_m)}_{i, j}(\sigma_{v'_n}) , X_{n, i}\rangle,
\end{align*}
where we systematically work with the $A_i$-linear pairings. Summing over $n$, we want to show that for all $m \in N$, $i \in I$, $j=1, \ldots, d_i$, we can prescribe by a \v{C}ebotarev condition (depending on our fixed $\un{v}$ or $\un{v}_{K'}$) on $\un{v}' \in \mc{C}$ 
the values
\[
\sum_{n \in N} \langle \eta^{(v_m)}_{i, j}(\sigma_{v'_n}), X_{n, i} \rangle \in A_i,
\]
for then (recalling that $\sum_{n \in N} h_i^{(v'_n)}|_{\mc{T}}$ is independent of $\underline{v}' \in \mc{C}$) if moreover $\un{v}'$ is in $\mf{l}$ we can achieve the same for the values
\[
\langle \eta^{(v_m)}_{i, j}(\tau_{v_m}), \sum_{n \in N} h_i^{(v'_n)}(\sigma_{v_m}) \rangle.
\]
Prescribing these values for varying $m, i, j$ will allow us to achieve the equality of Equation (\ref{doubling}).

The splitting fields $K_{\eta^{(v_m)}_{i, j}}$ are strongly linearly disjoint over $K$ from $K'$ and from one another as we vary $m\in N$, $i\in I$, and $j=1, \ldots, d_i$,\footnote{Since the $W_i^*$ are irreducible, the fields are disjoint as $m$ varies because $K_{\eta^{(v_m)}_{i, j}}$ is ramified at $v_m$ (and not at $v_{m'}$ for $m' \neq m$); they are disjoint as $i$ varies because the $W_i^*$ are mutually non-isomorphic; and they are disjoint as $j$ varies by Lemma \ref{lindisjoint}.} so it suffices for this last claim to note that for any fixed non-zero vector $w_i^* \in W_i^*$ we have
\[
\sum_{n \in N} \langle W_i^*, X_{n, i} \rangle= \sum_{n \in N} \langle \Fp[\gal{F}]w_i^*, X_{n, i} \rangle = \sum_{n \in N} \langle w_i^*, \Fp[\gal{F}]X_{n, i} \rangle= \langle w_i^*, V_i \rangle= A_i.
\]
(Here the pairing of a vector with a submodule simply means the module of all values taken by pairing elements of that submodule against the given vector.)

    Taken together, these remarks define a positive-density \v{C}ebotarev condition $\mf{l}_{\un{v}, K'}$ on $N$-tuples of primes (split over $F$) in $K'$ given by the intersection of 
\begin{itemize}
\item the conditions just prescribed in the fields $K_{\eta^{(v_m)}_{i, j}}K'$;
\item the conditions defining $\prod_{m \in N} \mf{w}_m'$.
\end{itemize}
The condition thus obtained is non-trivial since all the fields $K' K_{h^{(v_n)}}$, $K' K_{\eta^{(v_m)}_{i, j}}$, and $K'(\mu_{p^2})$ are strongly linearly disjoint over $K'$
(we use here the disjointness of $\br(\fgder)$, $\br(\fgder)^*$, and the trivial representation, and the fact that over $K'$ the field $K'K_{h^{(v_n)}}$ is totally ramified at $v_n$, and   $K'K_{\eta^{(v_m)}_{i, j}}$ is totally ramified at $v_m$). We write $L_{\un{v}}$ for the composite of all of these fields except for $K'(\mu_{p^2})$, so that $\mf{l}_{\un{v}}$ is a non-trivial condition in $L_{\un{v}}(\mu_{p^2})/K'$. Note that $L_{\un{v}}$ is linearly disjoint over $K'$ from any composite $M$ of fields $K'K_{\psi}$, $\psi \in H^1(\gal{F, {\mc{T}}}, \br(\fgder)^*)$, since the $K_{\psi}$ are ramified only in ${\mc{T}}$ (and by the choice of $K'$). The condition defining $\mc{C}$ takes place in some $M(\mu_{p^2})$, $M$ as above; $L_{\un{v}}(\mu_{p^2}) \cap M(\mu_{p^2})= K'(\mu_{p^2})$, and restricting the conditions $\mf{l}_{\un{v}}$ and $\mc{C}$ to $K'(\mu_{p^2})$ yields the same condition (i.e. that the tuple $(w_n)_{n \in N}$ of primes has $N(w_n) \equiv q_n \pmod{p^2}$). Finally, as $\un{v}$ itself varies, the fields $L_{\un{v}}(\mu_{p^2})$ are strongly linearly disjoint over $K'(\mu_{p^2})$ (by the same ramification observation).

If some $\un{v}'_{K'} \in \mf{l}_{\un{v}, K'}$ moreover lies in $\mf{l}_{K'}$, then
\begin{align*}
&\sum_{n \in N} h^{(v'_n)}(\sigma_{v_m})= C_m \\
&\sum_{n \in N} h^{(v_n)}(\sigma_{v'_m})= C'_m.
\end{align*}
We have, however, no assurance that this intersection is non-empty, so
now we must invoke the limiting logic of \cite{klr} and
\cite{ramakrishna-hamblen} that allows the doubling method to
succeed. The various disjointness remarks in the previous paragraph
are crucial to this argument.

    In the remainder of the proof, we will write $\mf{l}$, $\mf{l}_{\un{v}}$, etc., always intending the corresponding positive-density sets $\mf{l}_{K'}$, $\mf{l}_{\un{v},K'}$, etc.; we adopt this convention so as not to overburden the notation. If for each member of a finite subset $\{\un{v}_1, \ldots, \un{v}_s\} \subset \mf{l}$, the intersection $\mf{l} \cap \mf{l}_{\un{v}_k}$ is empty, then $\mf{l} \setminus \{\un{v}_{1}, \ldots, \un{v}_{s}\}$ is contained in $\mf{l} \cap \bigcap_{k=1}^s \overline{\mf{l}_{\un{v}_k}}$, and in particular is contained in $\mc{C} \cap \bigcap_{k=1}^s \overline{\mf{l}_{\un{v}_k}}$ (we take complements in the set of all $N$-tuples of primes of $K'$ split over $F$). We claim that the upper-density of this latter intersection tends to $0$ as $s$ increases. Let $d_k$ be the degree $[L_{\un{v}_k}:K']$. If a tuple $\un{v}'=(v'_n)_{n \in N}$ lies in $\mc{C} \cap_{k=1}^s \ov{\mf{l}_{\un{v}_{k}}}$, it must (by virtue of lying in $\mc{C}$) satisfy $N(v'_n) \equiv q_n \pmod{p^2}$. Thus since $\Gal(\prod_k L_{\un{v}_k}(\mu_{p^2})/K'(\mu_{p^2})) \cong \prod_k \Gal(L_{\un{v}_k}/K')$, 
\[
\delta(\mc{C} \cap \cap_{k=1}^s \ov{\mf{l}_{\un{v}_k}}) \leq \prod_{k=1}^s \left(1-\frac{1}{d_k^{|N|}}\right) \leq \left(1-\frac{1}{D^{|N|}}\right)^s,
\]
where $D$ is a uniform (independent of $k$) upper bound of the $d_k$: such a $D$ exists as a function of $p$, $\dim \fgder$, and $|N|$.

Letting $s$ tend to infinity, 
we see that $\delta^+(\mf{l})$ is less than any positive number, contradicting the fact that $\mf{l}$ has positive upper-density. We conclude that for some $\un{v} \in \mf{l}$, there is a $\un{v}' \in \mf{l} \cap \mf{l}_{\un{v}}$, and so the proof is complete.\footnote{We remark that $\delta( \cap_{k=1}^s \ov{\mf{l}_{\un{v}_k}})$ does not tend to zero as $s$ goes to infinity; this would only be the case if the conditions $\mf{l}_{\un{v}_k}$ were actually imposed in extensions that were strongly linearly disjoint over $K'$.}
\end{proof}
In our application of this result (see Theorem \ref{klr^N}), there will be two ways we choose the values $(1+\vpi h)\rho_2(\sigma_w)$ for $w \in {\mc{Q}}$; we make these choices explicit here. Note that the following proof uses the assumption $p \gg_G 0$ (in a mild way) but does not require enlarging $\mc{O}$.
\begin{lemma}\label{generalposition}
In the conclusion of Proposition \ref{doublingprop}, we can prescribe
the values 
    $t_w:= (1+\vpi h)\rho_2(\sigma_w) \in T_w(\mc{O}/\vpi^2)$, for $w \in {\mc{Q}}$, such that:
\begin{itemize}
\item If $e \geq 2$, then $t_w$ lies in $Z_{G^0}(\mc{O}/\vpi^2)$ (with value determined by $\mu$), and in particular $\alpha_w(t_w) \equiv 1 \equiv N(w) \pmod{\vpi^2}$.
\item If $e=1$, then for all roots $\beta \in \Phi(G^0, T_w)$, $\beta(t_w) \not \equiv 1 \pmod{\vpi^2}$, and $\alpha_w(t_w) \equiv N(w) \pmod{\vpi^2}$.
\end{itemize}
(In both cases the lifts belong to $\Lift^{\alpha_w, \mu}_{\br|_{\gal{F_w}}}(\mc{O}/\vpi^2)$, but we have imposed some precise conditions that will be useful in the application.)
\end{lemma}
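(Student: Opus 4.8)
The plan is to extract the conclusion purely from the degrees of freedom already provided by Proposition \ref{doublingprop} --- namely the freedom, for each $w \in {\mc{Q}}$, to prescribe $(1+\vpi h)\rho_2(\sigma_w)$ to be an arbitrary element of $\wh{G}(\mc{O}/\vpi^2)$ with multiplier $\mu(\sigma_w)$ --- together with a short general-position argument in the case $e=1$. First I would record the shape of the allowed values: since $h$ is $\br(\fgder)$-valued and the primes of ${\mc{Q}}$ are $K'$-trivial, $(1+\vpi h)\rho_2(\sigma_w)$ automatically lies in $\wh{G}(\mc{O}/\vpi^2)$ and has multiplier $\mu(\sigma_w)$, and the set of such elements is a single coset of $\wh{G^{\mr{der}}}(\mc{O}/\vpi^2)=\exp(\vpi\,\fgder\otimes k)$. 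Using Assumption \ref{minimalp}, which makes $\wh{Z_G^0}(\mc{O}/\vpi^2)\xrightarrow{\sim}\wh{(G/G^{\mr{der}})}(\mc{O}/\vpi^2)$, this coset contains a unique element $z_w\in Z_G^0(\mc{O}/\vpi^2)$; and because $Z_G^0\subseteq T_w$, the coset meets $T_w(\mc{O}/\vpi^2)$ exactly in $z_w\cdot\exp(\vpi\,\mf{t}_w^{\mr{der}}\otimes k)$, every member of which has multiplier $\mu(\sigma_w)$. So it suffices to choose $Y'\in\mf{t}_w^{\mr{der}}\otimes k$ (allowing $Y'=0$), set $t_w:=z_w\exp(\vpi Y')$, and prescribe $(1+\vpi h)\rho_2(\sigma_w):=t_w$ by Proposition \ref{doublingprop}.

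Next I would split into the two cases. For $e\ge2$ take $Y'=0$, so $t_w=z_w$ is central with value determined by $\mu$; every root kills the center, and since $N(w)\equiv1\pmod p$ and $p\mc{O}=\vpi^e\mc{O}\subseteq\vpi^2\mc{O}$ one gets $\alpha_w(t_w)=1\equiv N(w)\pmod{\vpi^2}$, which is the first bullet. For $e=1$ (so $\mc{O}=W(k)$, $\vpi=p$) write $N(w)=1+pc_w$ with $c_w\in k$; here I would note that the ${\mc{Q}}$-primes satisfy $N(w)\not\equiv1\pmod{p^2}$ --- this is built into their \v{C}ebotarev conditions in Proposition \ref{klrstep1''} and Lemma \ref{gens} (the relevant $q_n$ lies in $\ker((\Z/p^2)^\times\to(\Z/p)^\times)\setminus\{1\}$) and into the choice of ${\mc{T}}\setminus{\mc{S}}$ non-split in $K(\mu_{p^2})$ --- so $c_w\ne0$. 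Because $d\beta$ vanishes on $\mf{z}_G$, for every root $\beta\in\Phi(G^0,T_w)$ one has $\beta(t_w)=1+p\,d\beta(Y')\pmod{p^2}$, so the two required properties become the purely linear-algebraic demands $d\alpha_w(Y')=c_w$ and $d\beta(Y')\ne0$ for all $\beta\in\Phi(G^0,T_w)$.

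The heart of the argument, and the only place where $p\gg_G0$ is used, is producing such a $Y'$. Since $p$ is very good for $G^{\mr{der}}$, each $d\beta$ is a nonzero linear form on $\mf{t}_w^{\mr{der}}\otimes k$, so $H=\{Y':d\alpha_w(Y')=c_w\}$ is a nonempty affine hyperplane; if $\dim\mf{t}_w^{\mr{der}}\le1$ then $H$ is a single point and the only roots are $\pm\alpha_w$, for which $d(\pm\alpha_w)(Y')=\pm c_w\ne0$, so there is nothing to check. If $\dim\mf{t}_w^{\mr{der}}\ge2$, then for each $\beta$ either $d\beta$ is proportional to $d\alpha_w$ (hence a nonzero constant on $H$, imposing no condition) or $\{d\beta=0\}\cap H$ is a proper affine subspace of $H$; since $\#\Phi(G^0,T_w)$ is bounded in terms of $G$ alone and $|k|\ge p$, once $p>\#\Phi(G^0,T_w)$ these finitely many proper subspaces cannot cover $H(k)$, so a valid $Y'$ exists. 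I expect this counting step to be the only genuine obstacle; everything else is bookkeeping. It then remains to verify the three conditions of Definition \ref{triviallifts} for $(1+\vpi h)\rho_2|_{\gal{F_w}}$: $\rho(\sigma_w)=t_w$ lies in $T_w(\mc{O}/\vpi^2)\subseteq T_w\cdot Z_{G^0}(\mf{g}_{\alpha_w})(\mc{O}/\vpi^2)$; the composite appearing in Definition \ref{triviallifts} restricts on $T_w$ to the character $\alpha_w\colon T_w\to\mathbb{G}_m$, which sends $t_w$ to $N(w)=q$; $\rho(\tau_w)=u_{\alpha_w}(X_w)\in U_{\alpha_w}(\mc{O}/\vpi^2)$ is already part of Proposition \ref{doublingprop}; and the multiplier is $\mu$ since $h$ is $\fgder$-valued. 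Hence $(1+\vpi h)\rho_2|_{\gal{F_w}}\in\Lift^{\alpha_w,\mu}_{\br|_{\gal{F_w}}}(\mc{O}/\vpi^2)$, as desired.
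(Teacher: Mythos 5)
Your proof is correct and takes essentially the same route as the paper's: both reduce the $e=1$ case to choosing an element of $\mf{t}_w^{\mr{der}}\otimes k$ subject to the linear constraint $d\alpha_w=c_w$ and lying off the hyperplanes $d\beta=0$, and both use the bound on $|\Phi(G^0,T_w)|$ to guarantee a point exists once $p\gg_G 0$. The only cosmetic difference is the parametrization: you write $t_w=z_w\exp(pY')$ with $Y'$ constrained by $d\alpha_w(Y')=c_w$, whereas the paper writes $t_b=(1+pb)\alpha_w^\vee(q_w^{1/2})$ with $b\in\ker\alpha_w$ and then splits $\fgder=\fg_1\oplus\fg_2$ by simple factor before counting; your uniform treatment of all roots saves that decomposition and is marginally cleaner, but the hyperplane-avoidance step and the source of the bound on $p$ are the same.
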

\begin{proof}
  Recall that prior to the proof of Proposition \ref{doublingprop} we
  were able to specify congruence conditions $q_n \in (\Z/p^2)^\times$
  for all $n \in N$; any tuples $\un{v}$ and $\un{v}'$ appearing in
  the conclusion of the Proposition satisfy
  $N(v_n) \equiv N(v'_n) \equiv q_n \pmod{p^2}$. In particular, to
  prove the Lemma we just have to show that for fixed $N(w)$ there are
  elements $t_w$ as in the Lemma statement. The $e \geq 2$ case is
  obvious. The $e=1$ case is just a matter of checking that the
  condition $p \gg_G 0$ can be arranged to guarantee that elements
  $t_w$ in sufficiently general position exist. Fix a trivial prime
  $w \in {\mc{Q}}$, with associated $(T_w, \alpha_w)$, and let $q_w=
  N(w)$. Let $q_w^{1/2}$ denote the square-root of $q_w$ in
  $\mc{O}/p^2$ that is congruent to $1$ modulo $p$. Consider elements
  of $\wh{T_w^{\mr{der}}}(\mc{O}/p^2)$ of the form
  $t_b= (1+pb)\alpha_w^\vee(q_w^{1/2})$ for $b \in \ker(\alpha)$ and
  having fixed (specified by $\mu$) projection to $\mf{z}_G$ under the
  decomposition $\fg= \fgder \oplus \mf{z}_G$. By factoring out a term
  $(1+pz)$ with $z \in \mf{z}_G$, it will suffice to treat the case
  $b \in \fgder$. Any such $t_b$ satisfies $\alpha(t_b)=q_w$ and
  reduces to $1 \in G(k)$; we will find $b$ such that
  $\beta(t_b) \neq 1$ for all $\beta \in \Phi:= \Phi(G^0,
  T_w)$. Decompose $\fgder= \fg_1 \oplus \fg_2$, where $\fg_1$ is the
  simple factor supporting $\alpha_w$, and $\fg_2$ is the sum of the
  remaining simple factors. Correspondingly decompose
  $\Lie(T_w)= \mf{t}_1 \oplus \mf{t}_2$, the set of roots
  $\Phi= \Phi_1 \sqcup \Phi_2$, and the element $b= b_1+b_2$. We
  choose $b_2 \in \mf{t}_2$ such that $\beta(\mf{t}_2) \neq 0$ for all
  $\beta \in \Phi_2$; that this is possible requires that the union of
  hyperplanes $\cup_{\beta \in \Phi_2} \ker(\beta|_{\mf{t}_2})$ not
  equal all of $\mf{t}_2$, which is clearly not a problem for
  $p \gg_G 0$. To choose $b_1 \in \mf{t}_1$, first note that, for any
  $\beta \in \Phi_1$, if there is no
  $b_{\beta} \in \ker(\alpha_w|_{\mf{t}_1})$ such that
  $\beta(b_\beta)= -\frac{q_w-1}{2p}\langle \beta, \alpha^\vee
  \rangle$, then the condition $\beta(t_b) \neq 1$ will be satisfied
  automatically for any choice of
  $b_1 \in \ker(\alpha_w|_{\mf{t}_1})$, so we now restrict to the
  subset $\Phi_1^{*}$ of $\Phi_1$ for which such $b_{\beta}$ do exist
  (and we fix one such $b_{\beta}$ for each $\beta$). Then we choose
  $b_1$ in the complement of the union of hyperplanes
\[
\bigcup_{\beta \in \Phi_1^{*} \setminus \{-\alpha\}} \left(b_{\beta}+ \ker(\beta|_{\ker(\alpha_w|_{\mf{t}_1})}) \right)
\]
inside $\ker(\alpha_w|_{\mf{t}_1})$ (note that since $\beta$ belongs to the same simple factor as $\alpha_w$ and is not equal to $-\alpha_w$, $\ker(\beta|_{\ker(\alpha_w|_{\mf{t}_1})})$ is indeed a hyperplane in $\ker(\alpha_w|_{\mf{t}_1})$). The total number of such hyperplanes is bounded in a way depending only on the Dynkin type of $G^{\mr{der}}$, so for $p \gg_G 0$, this complement is non-empty. Clearly for such a $b_1$, and $b=b_1+b_2$, we have $\beta(t_b) \neq 1$ for all $\beta \in \Phi$ and $\alpha(t_b)=q_w$.
\end{proof}

\subsection{Constructing the mod $\vpi^{\n}$ lift}\label{klr^Nsection}
In the proof of the main theorem (\S \ref{relativeliftsection}) we will prove a lifting theorem that requires as input a carefully-constructed mod $\vpi^{\n}$ representation for some ${\n}$ depending on both local (the restrictions $\br|_{\gal{F_v}}$ for $v \in {\mc{S}}$) and global (the image of $\br$) properties of our given residual representation $\br$. To that end, in this section we extend Proposition \ref{doublingprop} to prove the following:
\begin{thm}\label{klr^N}
  Let $p \gg_G 0$ be a prime. Let $F$ be any number field, and let
  $\br \colon \gal{F, {\mc{S}}} \to G(k)$ be a continuous representation such
  that $\br|_{\gal{\widetilde{F}(\zeta_p)}}$ is absolutely
  irreducible.\footnote{An inspection of the proof shows that we only
    use the following: $\br$ satisfies Assumption \ref{multfree} and
    the field $K$ is linearly disjoint over $\tF(\mu_p)$ from
    $\tF(\mu_{p^\infty})$.} Assume that $[\tF(\zeta_p):\tF]$ is
  strictly greater than the constant $a_G$ of Lemma \ref{cyclicq}. Fix
  a lift $\mu$ of the multiplier character
  $\bar{\mu}= \br \pmod{G^{\mr{der}}}$. Moreover assume that for all
  $v \in {\mc{S}}$ there are lifts $\rho_v \colon \gal{F_v} \to G(\mc{O})$
  with multiplier $\mu$.
  Let ${\mc{T}} \supset {\mc{S}}$ be the set constructed in
  \S \ref{klrsection} in the discussion preceding Construction
  \ref{makelambda}, and likewise fix unramified lifts $\rho_v$ to
  $G(\mc{O})$ for each $v \in {\mc{T}} \setminus {\mc{S}}$, such that
  $\rho_v \pmod{\vpi^2}$ is the lift $\lambda_v$ specified in
  Construction \ref{makelambda}.

Then there exists a sequence of finite sets of primes of $F$,
$ {\mc{T}}\subset {\mc{T}}_2 \subset {\mc{T}}_3 \subset \dots \subset {\mc{T}}_{\n} \dots$,
and for each ${\n} \geq 2$ a
lift $\rho_{\n} \colon \gal{F, {\mc{T}}_{\n}} \to G(\mc{O}/\vpi^{\n})$ of $\br$ with
multiplier $\mu$, such that  $\rho_{\n} = \rho_{{\n}+1} \pmod{\vpi^{\n}}$. Furthermore:
\begin{enumerate}
\item If $w \in {\mc{T}}_{\n} \setminus {\mc{T}}$ is ramified in $\rho_{\n}$, then there is a split maximal torus and root $(T_w, \alpha_w)$ 
such that $\rho_{\n}(\sigma_w) \in T_w(\mc{O}/\vpi^{\n})$, $\alpha_w(\rho_{\n}(\sigma_w))= N(w) \pmod{\vpi^{\n}}$, and $\rho_{\n}|_{\gal{F_w}} \in \Lift^{\mu, \alpha_w}_{\br}(\mc{O}/\vpi^{\n})$; and one of the following two properties holds:
\begin{enumerate}
\item For some $s \leq e$,
  $\rho_s(\tau_w)$ is equal to a non-trivial element of
  $U_{\alpha}(\mc{O}/\vpi^s)$, and for all $n \geq s$
  $\rho_n|_{\gal{F_w}}$ is $\widehat{G^{\mr{der}}}(\mc{O})$-conjugate to the
  reduction mod $\vpi^n$ of a fixed lift $\mc{O}$-lift $\lambda_w$ of
  $\rho_s|_{\gal{F_w}}$. We can choose $\lambda_w$ to be constructed as in Lemma
  \ref{auxlift} to be a formally smooth point of an irreducible
  component of the generic fiber of the local lifting ring.
\item For some $s \geq e$, in fact always 1 or 2 if $e=1$ and $e$ if $e \geq 2$, $\rho_s$ is trivial mod center on $\gal{F_w}$, while $\alpha_w(\rho_{s+1}(\sigma_w)) \equiv N(w) \not \equiv 1 \pmod{\vpi^{s+1}}$, and $\beta(\rho_{s+1}(\sigma_w)) \not \equiv 1 \pmod{\vpi^{s+1}}$ for all roots $\beta \in \Phi(G^0, T_w)$.
\end{enumerate}
\item For all $v \in {\mc{T}}$, $\rho_{\n}|_{\gal{F_v}}$ is $\wh{G^{\mr{der}}}(\mc{O}/\vpi^n)$-conjugate
  to $\rho_v \pmod{\vpi^{\n}}$.
\item The image $\rho_{\n}(\gal{F})$ contains $\wh{G^{\mr{der}}}(\mc{O}/\vpi^{\n})$.
\end{enumerate}
\end{thm}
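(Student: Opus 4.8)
The plan is to construct the lift $\rho_{\n}$ inductively on $\n$, maintaining conditions (1)--(3) throughout, and deriving (3) as a byproduct of having arranged the local behavior at the primes of ${\mc{T}} \setminus {\mc{S}}$ via Construction \ref{makelambda}. The key observation is that condition (3) is \emph{already essentially forced at the level $\n = 2$}: in Construction \ref{makelambda} the unramified lifts $\lambda_w$ at places $w \in {\mc{T}} \setminus {\mc{S}}$ were chosen so that the elements $\lambda_w(\sigma_w)$ generate $\wh{G^{\mr{der}}}(\mc{O}/\vpi^2)$, and Proposition \ref{doublingprop} together with condition (2) produces a $\rho_2$ whose restriction to each such $w$ is strictly equivalent to $\lambda_w$. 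Hence the image $\rho_2(\gal{F})$ contains a set of $\wh{G}(\mc{O})$-conjugates of the $\lambda_w(\sigma_w)$; since these generate $\wh{G^{\mr{der}}}(\mc{O}/\vpi^2)$ and the latter is normal in $\rho_2(\gal{F})$ (being the congruence-$\vpi$ kernel intersected with the derived part), we conclude $\wh{G^{\mr{der}}}(\mc{O}/\vpi^2) \subset \rho_2(\gal{F})$.

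For the inductive step from $\n$ to $\n+1$: assuming $\rho_{\n}(\gal{F}) \supset \wh{G^{\mr{der}}}(\mc{O}/\vpi^{\n})$, I would show the analogous containment for $\rho_{\n+1}$. The subtlety is that $\wh{G^{\mr{der}}}(\mc{O}/\vpi^{\n+1})$ surjects onto $\wh{G^{\mr{der}}}(\mc{O}/\vpi^{\n})$ with kernel $\fgder \otimes_{\mc{O}} \vpi^{\n}(\mc{O}/\vpi^{\n+1})$, so one must check that this last piece $\fgder \otimes k$ lies in the image. Here the argument again leans on the auxiliary primes $w \in {\mc{T}} \setminus {\mc{S}}$: for such a $w$ the lift $\rho_{\n+1}|_{\gal{F_w}}$ is (strictly equivalent to) $\lambda_w \pmod{\vpi^{\n+1}}$, an unramified lift whose Frobenius value $\lambda_w(\sigma_w)$ is a lift to $G(\mc{O}/\vpi^{\n+1})$ of the generators from level $2$. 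The set of all $\wh{G}(\mc{O})$-conjugates of these Frobenius elements, as $w$ ranges over ${\mc{T}} \setminus {\mc{S}}$, is a normal subset of $\rho_{\n+1}(\gal{F})$; the subgroup it generates reduces onto $\wh{G^{\mr{der}}}(\mc{O}/\vpi^{\n})$ by the inductive hypothesis applied after noting the generation statement was arranged over $\mc{O}/\vpi^2$ with the full torus coordinate free, so by a standard pro-$p$ Frattini / Nakayama-type argument (the reduction map $\wh{G^{\mr{der}}}(\mc{O}/\vpi^{\n+1}) \to \wh{G^{\mr{der}}}(\mc{O}/\vpi^{\n})$ has $p$-group kernel contained in the Frattini subgroup of the pro-$p$ group $\wh{G^{\mr{der}}}(\mc{O})$), the subgroup generated by these elements is all of $\wh{G^{\mr{der}}}(\mc{O}/\vpi^{\n+1})$.

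I expect the main obstacle to be the bookkeeping of \emph{strict equivalence versus honest equality}: Proposition \ref{doublingprop}, Lemma \ref{generalposition}, and the iterated doubling argument of \S\ref{klr^Nsection} only control $\rho_{\n}|_{\gal{F_w}}$ up to $\wh{G}(\mc{O}/\vpi^{\n})$-conjugacy, and moreover the global $\rho_{\n}$ itself is only determined up to global conjugacy. One must therefore phrase the image statement conjugation-invariantly: it suffices to show that \emph{some} $\wh{G}(\mc{O}/\vpi^{\n})$-conjugate of $\rho_{\n}$ has image containing $\wh{G^{\mr{der}}}(\mc{O}/\vpi^{\n})$, and since $\wh{G^{\mr{der}}}(\mc{O}/\vpi^{\n})$ is normalized by all of $G(\mc{O}/\vpi^{\n})$, this is equivalent to the statement for $\rho_{\n}$ itself. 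The second point requiring care is that the main construction of $\rho_{\n}$ in \S\ref{klrsection}--\S\ref{klr^Nsection} may \emph{add} new ramified primes ${\mc{T}}_{\n} \setminus {\mc{T}}$ at each stage and may \emph{modify} $\rho_{\n}$ at previously-unramified auxiliary primes; one needs that the generation property at the places $w \in {\mc{T}} \setminus {\mc{S}}$ is preserved — which it is, because at those places the local lift is pinned (up to $\wh{G}$-conjugacy) to $\lambda_w \pmod{\vpi^{\n}}$ by condition (1a)-type behavior and by the fact that ${\mc{T}} \setminus {\mc{S}}$ consists of the ``old'' trivial primes whose local lifts were fixed in Construction \ref{makelambda} to have image generating $\wh{G^{\mr{der}}}(\mc{O}/\vpi^2)$. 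Putting these together, condition (3) follows once one has verified that the construction of the $\rho_{\n}$'s genuinely interpolates these $\lambda_w$, which is exactly what conditions (1) and (2) of the theorem record.
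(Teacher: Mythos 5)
There is a genuine gap: your proposal addresses only conclusion (3) and treats conclusions (1) and (2) as if they were already established, but those are the main content of the theorem. You write that condition (3) follows ``once one has verified that the construction of the $\rho_{\n}$'s genuinely interpolates these $\lambda_w$, which is exactly what conditions (1) and (2) of the theorem record'' --- this is circular, since Theorem \ref{klr^N} \emph{is} the assertion that such a construction exists, and proving (1) and (2) is precisely what the bulk of the argument must do. There is no prior construction in \S\ref{klrsection}--\S\ref{klr^Nsection} that you can cite here; \S\ref{klrsection} only produces a $\rho_2$, and \S\ref{klr^Nsection} is this theorem. The actual proof must, at each inductive step from $\rho_{n-1}$ to $\rho_n$: (i) specify local mod-$\vpi^n$ lifts $\lambda_w$ for all $w \in {\mc{T}}_{n-1}$ with a careful case analysis according to whether $w \in {\mc{T}}$, whether $\rho_{n-1}|_{\gal{F_w}}$ is unramified, whether $w$ falls under clause (1a) (invoking Lemma \ref{auxlift} to produce smooth $\mc{O}$-points) or (1b); (ii) re-run the doubling method of Proposition \ref{doublingprop}, invoking Proposition \ref{klrstep1''} and Lemma \ref{gens} with a carefully chosen auxiliary field $L = K(\rho_{n-1}(\fgder))K'_{n-1}$, congruence classes $q_i \pmod{p^c}$, and elements $g_{L/K'_{n-1},i}$; (iii) check the needed linear disjointness of the fields involved, which itself depends on the inductive hypothesis that $\im(\rho_{n-1}) \supset \wh{G}^{\mr{der}}(\mc{O}/\vpi^{n-1})$ (to compute $\Gal(L/K'_{n-1})$); and (iv) choose the tuples $(C_m)$, $(C'_m)$ so that $\rho_n(\sigma_{v_i})$ has the precise shape required by (1a)/(1b). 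None of this appears in the proposal, so the lifts $\rho_{\n}$ whose image properties you analyze have not actually been produced.

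As for your Frattini argument for (3) itself: it is a plausible alternative to what the paper does, but the paper takes a more direct route, explicitly enlarging ${\mc{T}}_{n-1}$ at each stage by new split trivial primes $w$ at which an unramified $\lambda_w$ is chosen (with $\mu(\sigma_w)$ trivial by a further splitting condition) so that the $\lambda_w(\sigma_w)$ generate $\ker(G^{\mr{der}}(\mc{O}/\vpi^n) \to G^{\mr{der}}(\mc{O}/\vpi^{n-1}))$. Your Frattini approach needs, and does not supply, a justification that the generators constructed in Construction \ref{makelambda} actually lie in $\wh{G^{\mr{der}}}(\mc{O}/\vpi^{n})$ rather than merely in $\wh{G}(\mc{O}/\vpi^{n})$ (this requires controlling $\mu(\sigma_w)$), and that the Frattini subgroup of $\wh{G^{\mr{der}}}(\mc{O}/\vpi^{n+1})$ contains the kernel of reduction to $\mc{O}/\vpi^2$ for all $e$ (plausible via $[\fgder,\fgder] = \fgder$, but it should be spelled out). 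These secondary issues aside, the central missing idea is the construction itself.
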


\begin{rmk}
\begin{itemize}
\item
There is a minor notational irritation in that at auxiliary primes $w$ we find some split maximal torus and root $(T_w, \alpha_w)$, and then construct our desired local lifts relative to these. We do not literally interpolate the desired local lifts, but rather, at the inductive step passing from $\rho_{n-1}$ to $\rho_n$, only up to $\ker(G^{\mr{der}}(\mc{O}/\vpi^n)\to G^{\mr{der}}(\mc{O}/\vpi^{n-1}))$-conjugacy. Thus technically at the next stage in the induction we work with some conjugate of the original $(T_w, \alpha_w)$. To track this in the notation would be a mess, so we will implicitly assume that our pairs $(T_w, \alpha_w)$ are updated in this fashion at each stage in the induction. The only thing we have to observe is that the relevant local lifting rings are isomorphic, so no assertion about the structure of subsequent local lifts changes; we have previously noted a finer version of this assertion in Remark \ref{whconj}.
\item Note that there is no need to enlarge $\mc{O}$ in the course of this proof once it is, as assumed, large enough that each $\rho_v$ for $v \in {\mc{S}}$ is $G(\mc{O})$-valued. In particular, at all auxiliary primes of ramification $w \in {\mc{T}}_{\n} \setminus {\mc{T}}$, the relevant local lifts are defined over $\mc{O}$ (as is evident from the proof's application of Lemma \ref{trivsmooth} and Lemma \ref{auxlift}).
\end{itemize}
\end{rmk}

\begin{proof}
By Corollary \ref{irrcor}, Assumption \ref{multfree} holds for $\br$, and we can apply the results and techniques of \S \ref{klrsection}. We inductively lift $\br$ to a $\rho_n \colon \gal{F, {\mc{T}}_n} \to G(\mc{O}/\vpi^n)$, for each $n=2,3, \ldots$, at each stage increasing the ramification set ${\mc{T}}_n$. The case $n=2$ is settled by Proposition \ref{doublingprop}, which produces an auxiliary set of primes ${\mc{T}}_2 \supset {\mc{T}}$ and a lift $\rho_2 \colon \gal{F, {\mc{T}}_2} \to G(\mc{O}/\vpi^2)$ such that for each $w \in {\mc{T}}_2 \setminus {\mc{S}}$, one of the following holds:
\begin{itemize}
\item Either $w$ is one of the primes (with corresponding fixed lifts) in ${\mc{T}} \setminus {\mc{S}}$ introduced in the discussion up to and including Construction \ref{makelambda}, so that $w$ is a trivial prime, and the lift $\rho_2|_{\gal{F_w}}$ is unramified;
\item or $w$ is one of the primes introduced in the proof of
  Proposition \ref{doublingprop}. That is, $w$ splits in $K'$, $N(w)
  \equiv 1 \pmod{\vpi^e}$ but $N(w) \not \equiv 1 \pmod{\vpi^{e+1}}$,
  and there is a pair $(T_w, \alpha_w)$ for which $\rho_2|_{\gal{F_w}} \in \Lift^{\mu, \alpha}_{\br|_{\gal{F_w}}}(\mc{O}/\vpi^2)$, and moreover $\rho_2(\sigma_w) \in T_w(\mc{O}/\vpi^2)$ is 
\begin{itemize}
\item trivial modulo center if $e \geq 2$; and 
\item satisfies $\beta(\rho_2(\sigma_w)) \not \equiv 1 \pmod{\vpi^2}$ for all roots $\beta$ if $e=1$. 
\end{itemize}
(This precise conclusion follows from combining Lemma \ref{generalposition} with Proposition \ref{doublingprop}.)
\end{itemize}

We now carry out the induction step, showing how, for $n \geq 3$, to
pass from a lift $\rho_{n-1} \colon \gal{F, {\mc{T}}_{n-1}} \to
G(\mc{O}/\vpi^{n-1})$ to $\rho_n$.
For each $w \in {\mc{T}}_{n-1}$, choose a lift $\lambda_w$ of
$\rho_{n-1}|_{\gal{F_w}}$ to $G(\mc{O}/\varpi^n)$ as follows:
\begin{itemize}
  \item If $w \in {\mc{T}}$ let $\lambda_w$ be the reduction of
    $\rho_w$ (from the statement of the Theorem) modulo $\varpi^n$.
    \item If $w \in {\mc{T}}_{n-1} \setminus {\mc{T}}$ and $\rho_{n-1}|_{\gal{F_w}}$
      is unramified then let $\lambda_w$ be any
      unramified (multiplier $\mu$) lift.
    \item 
      Otherwise, choose $\lambda_w$ satisfying:
     \begin{itemize}
\item $\lambda_w \in \Lift_{\br|_w}^{\mu, \alpha_w}(\mc{O}/\vpi^n)$ (recall from Lemma \ref{trivsmooth} that this functor is formally smooth).
\item If $n-1 \leq e$, and $w \in {\mc{T}}_{n-1}\setminus {\mc{T}}_{n-2}$ (i.e., $w$
  was introduced at the previous step of the induction), then we apply
  Lemma \ref{auxlift} to produce a lift $\lambda_w \in \Lift^{\mu,
    \alpha_w}_{\br}(\mc{O})$ that defines a formally smooth point on
  the corresponding generic fiber of the lifting ring, and moreover
  has $\lambda_w(\sigma_w) \in T_w(\mc{O})$ (see the proof of the
  Lemma). Here and at all later stages of the induction we use the
  reduction $\lambda_w \pmod{\vpi^n}$ of this fixed lift (or some
  $\widehat{G}$-conjugate). If $w \in {\mc{T}}_{n-2}$, then by this
  construction we have already specified the desired lift of
  $\rho_{n-1}|_{\gal{F_w}}$.
\item If $n-1 \geq e+1$, and $w \in {\mc{T}}_{n-1} \setminus {\mc{T}}_{n-2}$, then
  by induction $\rho_{n-1}(\sigma_w) \in T_w(\mc{O}/\vpi^{n-1})$,
  $\alpha_w(\rho_{n-1})(\sigma_w)= N(w) \pmod{\vpi^{n-1}}$,
  $\rho_{n-1}(\tau_w) \in U_{\alpha}(\mc{O}/\vpi^{n-1})$, and we have
  one of the following two cases:
\begin{itemize}
\item If $e=1$: if $n-1 \geq 3$, then $\rho_{n-1}|_{\gal{F_w}}$ is trivial (mod center) modulo $\vpi^2$, and $\beta(\rho_{n-1}(\sigma_w)) \not \equiv 1 \pmod{\vpi^3}$ for all $\beta \in \Phi(G^0, T_w)$ (in particular, $N(w) \not \equiv 1 \pmod{\vpi^3}$ but is trivial modulo $\vpi^2$); and if $n-1=2$, then $\rho_{n-1}|_{\gal{F_w}}$ is in general position and has already been described. We choose some $\lambda_w \pmod{\vpi^n}$ that continues to satisfy the conditions of item (1b) of the Theorem statement (including the conditions common to (1a) and (1b)).
\item If $e>1$: $\rho_{e}|_{\gal{F_w}}$ is trivial (mod center), and $\rho_{e+1}|_{\gal{F_w}}$ is in general position: $\beta(\lambda_w(\sigma_w)) \not \equiv 1 \pmod{\vpi^{e+1}}$ for all $\beta \in \Phi(G^0, T_w)$ (in particular, $N(w) \not \equiv 1 \pmod{\vpi^{e+1}}$ but is trivial modulo $\vpi^e$). Again we choose $\lambda_w \pmod{\vpi^n}$ to retain all the desired properties.
\end{itemize}
\end{itemize}
\end{itemize}

We also enlarge ${\mc{T}}_{n-1}$ by a finite set of primes split in
$K(\rho_{n-1}(\fgder))$ and introduce at these $w$ any unramified
$\lambda_w \colon \gal{F_w} \to G(\mc{O}/\vpi^n)$ with multiplier
$\mu$ (which we may assume trivial by imposing a further splitting
condition) such that $\lambda_w \pmod{\vpi^{n-1}}$ is trivial, and the
elements $\lambda_w(\sigma_w)$ generate
$\ker(G^{\mr{der}}(\mc{O}/\vpi^n) \to
G^{\mr{der}}(\mc{O}/\vpi^{n-1})$.\footnote{This is the analogue of the
  enlargement of the initial set ${\mc{T}}$ in Construction
  \ref{makelambda}. Both there and here, the only reason for
  introducing these primes is that if, without them, the localization
  map $\Psi_{\mc{T}}$ defined after Construction \ref{makelambda} had image
  containing $z_{\mc{T}}$, then the global lifts we produce wouldn't
  automatically have image containing
  $\wh{G^{\mr{der}}}(\mc{O}/\vpi^n)$. If $\Psi_{\mc{T}}$ missed $z_{\mc{T}}$, in
  which case we have to run the argument of Proposition
  \ref{doublingprop}, then this big image condition would be automatic
  from the proof of Proposition \ref{doublingprop}, namely from the
  fact that the root spaces $\fg_{\alpha_i}$ associated with the
  auxiliary \v{C}ebotarev sets $\mc{C}_i$ satisfy
  $\sum_{i \in N} \Fp[\gal{F}]\fg_{\alpha_i}=\fgder$, and the image of
  $\rho_n$ would by construction contain each
\[
\fg_{\alpha_i} \subset \fgder \cong \ker\left(\wh{G^{\mr{der}}}(\mc{O}/\vpi^n) \to \wh{G^{\mr{der}}}(\mc{O}/\vpi^{n-1}) \right).
\]
We additionally remark that when $e=1$ the fact that $\im(\rho_{n-1})$
contains $\wh{G^{\mr{der}}}(\mc{O}/\vpi^{n-1})$ formally implies the
corresponding statement for $\im(\rho_n)$, as noted in the proof of
Corollary \ref{infiniteram}.} We  denote this enlarged set by ${\mc{T}}_{n-1}'$.

Since there are no local obstructions to lifting $\rho_{n-1}$, and $\Sha^2_{{\mc{T}}_{n-1}'}(\gal{F, {\mc{T}}_{n-1}'}, \br(\fgder))=0$ (by global duality and the vanishing of $\Sha^1_{{\mc{T}}_{n-1}'}(\gal{F, {\mc{T}}_{n-1}'}, \br(\fgder)^*)$), there is some lift $\rho'_n \colon \gal{F, {\mc{T}}_{n-1}'} \to G(\mc{O}/\vpi^n)$ of multiplier $\mu$. We wish to correct $\rho'_n$ to match the $\{\lambda_w\}_{w \in {\mc{T}}_{n-1}'}$ locally, and so we again apply the method of Proposition \ref{doublingprop}. Let $z_{{\mc{T}}_{n-1}'}=(z_w)_{w \in {\mc{T}}_{n-1}'} \in \bigoplus_{w \in {\mc{T}}_{n-1}'} H^1(\gal{F_w}, \br(\fgder))$ be the collection of cohomology classes such that $(1+\vpi^{n-1}z_w)\rho'_n$ is equivalent to $\lambda_w$ for all $w \in {\mc{T}}_{n-1}'$; that is, we can choose representative cocycles so that these two lifts of $\rho_{n-1}|_{\gal{F_w}}$ are actually equal. 

    We will now apply Proposition \ref{klrstep1''} and Lemma \ref{gens} after replacing the set ${\mc{T}}$ by ${\mc{T}}_{n-1}'$ and the field $K'$ by its unramified-outside-${\mc{T}}_{n-1}'$ analogue: that is, $K'_{n-1}$ is the composite of abelian $p$-extensions $M/K$ that are Galois over $F$, unramified outside ${\mc{T}}_{n-1}'$, and have $\Gal(M/K)$ isomorphic as $\gal{F}$-module to one of the simple factors $W_j$ of $\br(\fgder)$.

Following the notation of Proposition \ref{klrstep1''} and Lemma \ref{gens}, we let $L=K(\rho_{n-1}(\fgder))K'_{n-1}$, $c= \lceil \frac{n}{e} \rceil$, and we choose $q_i \in (\Z/p^c)^\times$ such that compatibly with the condition $\alpha_i(\rho_{n-1}(g_{L/K'_{n-1}, i})) \equiv q_i \pmod{p^c}$ we may take $\rho_{n-1}(g_{L/K'_{n-1}, i}) \pmod{Z_{G^0}}$ equal to:
\begin{itemize}
\item for $n-1 \leq e$, the trivial class; 
\item for $n-1 \geq e+1$, an element $t_{n-1,i} \in T_i(\mc{O}/\vpi^{n-1})$ that is trivial modulo $\vpi^{\max\{e, 2\}}$, in general position modulo $\vpi^{\max\{e, 2\}+1}$, and satisfies $\alpha(t_{n-1, i}) \equiv q_i \pmod{p^c}$. (We do this as in Lemma \ref{generalposition}.)
\end{itemize} 
We now make a series of linear disjointness observations that show we
may apply Proposition \ref{klrstep1''} and Lemma \ref{gens} with the
above setup. First we note that the intersection of $K$ with the
cyclotomic $\Z_p$-extension of $F$ is trivial by Lemma \ref{cyclicq}
and our assumptions that $\br$ is absolutely irreducible and $p \gg_G
0$; thus $K \cap F(\mu_{p^\infty})= F(\mu_p)$. The field $L$ is linearly disjoint over $K$ from the composite of $K(\mu_{p^{\infty}})$ and any composite of extensions $K_{\psi}$, $\psi \in H^1(\gal{F, {\mc{T}}_{n-1}'}, \br(\fgder)^*)$: indeed, we can filter $\Gal(L/K)$ by normal subgroups with successive quotients that are $\gal{F}$-modules having no constituent in common with $\br(\fgder)^*$ or the trivial representation. We can therefore apply Proposition \ref{klrstep1''} and Lemma \ref{gens} with any choice of classes $q_i \in \ker((\Z/p^c)^\times \to (\Z/p)^\times)$. To show that the above choices of $g_{L/K'_{n-1}, i}$ are allowable, we have to compute $\Gal(L/K'_{n-1})$. We may inductively assume (this is something we will have to check persists at each step) that $\im(\rho_{n-1})$ contains $\wh{G^{\mr{der}}}(\mc{O}/\vpi^{n-1})$, so that $\Gal(K(\rho_{n-1}(\fgder))/K)$ is isomorphic to $\wh{G^{\mr{der}}}(\mc{O}/\vpi^{n-1})$. The abelianization of the latter group is isomorphic to $\wh{G^{\mr{der}}}(\mc{O}/\vpi^2)$ (this is proven by induction, the key point being that $\fgder= [\fgder, \fgder]$), and since $K'_{n-1}/K$ is abelian, we see that $K'_{n-1} \cap K(\rho_{n-1}(\fgder))= K(\rho_2(\fgder))$. Thus the image under $\Ad(\rho_{n-1})$ of $\Gal(L/K'_{n-1})$ contains $\ker(G^{\mr{der}}(\mc{O}/\vpi^{n-1}) \to G^{\mr{der}}(\mc{O}/\vpi^2))$, which suffices to find the desired $t_{n-1, i}$. 



The proof of Proposition \ref{doublingprop} now applies \textit{mutatis mutandis}. We obtain an indexing set $N$, \v{C}ebotarev sets $\mc{C}_i$ for $i \in N$ from Proposition \ref{klrstep1''} and Lemma \ref{gens} (with the setup of the previous paragraph), a class $h^{\mr{old}} \in H^1(\gal{F, {\mc{T}}_{n-1}'}, \br(\fgder))$, and, for each $N$-tuple $\un{v}=(v_i)_{i \in N} \in \prod_{i \in N} \mc{C}_i$, classes $h^{(v_i)} \in H^1(\gal{F, {\mc{T}}_{n-1}' \cup v_i}, \br(\fgder))$ satisfying
\[
z_{{\mc{T}}_{n-1}'}= h^{\mr{old}}|_{{\mc{T}}_{n-1}'} + \sum_{i \in N} h^{(v_i)}|_{{\mc{T}}_{n-1}'}
\] 
and the various conclusions of Proposition \ref{klrstep1''} and Lemma \ref{gens}.

We as before fix a positive upper-density subset (strictly speaking we work with the primes of $K'_{n-1}$ specified by the choice of decomposition groups in Proposition \ref{klrstep1''} and Lemma \ref{gens}) $\mf{l} \subset \prod_{i \in N} \mc{C}_i$ such that the $N$-tuples $(\sum_{i \in N} h^{(v_i)}(\sigma_{v_j}))_{j \in N}$, $(h^{\mr{old}}(\sigma_{v_j}))_{j \in N}$, $(h^{(v_i)}(\tau_{v_i}))_{i \in N}$, and $(\rho'_n(\sigma_{v_i}))_{i \in N}$ are independent of $\un{v} \in \mf{l}$ ($\rho'_n(\sigma_{v_i})$ here depends on the specified place of $L$, not just of $K'_{n-1}$). Then having fixed a $\un{v} \in \mf{l}$, we as in Proposition \ref{doublingprop} define a \v{C}ebotarev condition $\mf{l}_{\un{v}}$ in the composite over $K'_{n-1}$ of the extensions $L(\mu_{p^c})$, $K_{h^{(v_m)}}$, $K_{\eta^{(v_m)}_{i, j}}$, assuring that we can satisfy analogues of Equations (\ref{doubling}) (for tuples moreover in $\mf{l}$) and (\ref{doubling2}), and agreeing with the conditions induced by $\mc{C}= \prod_{i \in N} \mc{C}_i$ on the intersection of the respective fields in which the conditions are defined, namely $L(\mu_{p^c})$. The existence of this non-trivial condition $\mf{l}_{\un{v}}$ follows as before using that $K_{h^{(v_m)}}K'_{n-1}/K'_{n-1}$ is non-trivial and totally ramified at $v_m$, and likewise for $K_{\eta^{(v_m)}_{i, j}}K'_{n-1}/K'_{n-1}$; we use these observations as before, and also to note that any composite of such fields is linearly disjoint over $K'_{n-1}$ from $L(\mu_{p^c})$ (which is ramified only above ${\mc{T}}_{n-1}'$).

The limiting argument now works as in Proposition \ref{doublingprop}, and we can find $\un{v}, \un{v}' \in \mf{l}$ satisfying Equations (\ref{doubling}) and (\ref{doubling2}) with any choice of tuples $(C_m)_{m \in N} \in (\fgder)^N$ and $(C_m')_{m \in N} \in (\fgder)^N$. 
To conclude the proof of the theorem, we specify these values so that the modified lift $\rho_n= (1+\vpi^{n-1}h)\rho'_{n}$, where $h= h^{\mr{old}}-\sum_{i \in N}h^{(v_i)} + 2\sum_{i \in N} h^{(v'_i)}$, satisfies all of the local requirements of the Theorem. Since we have arranged $h|_{{\mc{T}}_{n-1}'}=z_{{\mc{T}}_{n-1}'}$, this new $\rho_n$ is, at each $w \in {\mc{T}}_{n-1}'$, $\ker(G(\mc{O}/\vpi^n) \to G(\mc{O}/\vpi^{n-1}))$-conjugate to the specified lift $\lambda_w$. We can choose the tuples $(C_m)_{m \in N}$ and $(C_m')_{m \in N}$ to arrange that $\rho_n(\sigma_{v_i})$ and $\rho_n(\sigma_{v'_i})$ are any desired lifts of the values previously specified for $\rho_{n-1}(g_{L/K'_{n-1}, i})$; it is clear that we can make these choices to satisfy the conclusions (1) and (2) of the Theorem, setting ${\mc{T}}_n= {\mc{T}}_{n-1}' \cup \{v_i\}_{i \in N} \cup \{v'_j\}_{j \in N}$. Part (3) follows inductively since when we enlarged ${\mc{T}}$ at the beginning of the inductive step, we ensured that for some subset of ${\mc{T}}$ the values $\rho_n(\sigma_w)=\lambda_w(\sigma_w)$ generate $\ker(G^{\mr{der}}(\mc{O}/\vpi^n) \to G^{\mr{der}}(\mc{O}/\vpi^{n-1}))$. 
\end{proof}

Letting ${\n}$ approach $\infty$ in Theorem \ref{klr^N} immediately yields a generalization to any reductive group of the main theorem of \cite{klr}. Theorem \ref{klr^N} is more precise than what is needed for this particular application, and here we sketch how to prove it somewhat more generally:
\begin{cor}\label{infiniteram}
Let $\br \colon \gal{F, {\mc{S}}} \to G(k)$ satisfy Assumption \ref{multfree}, except we do not require that $K$ does not contain $\mu_{p^2}$. (In particular, the results of Appendix \ref{groupsection} will show that for $p \gg_G 0$, it suffices here to assume $\br|_{\gal{\tF(\zeta_p)}}$ is absolutely irreducible, and $[\tF(\zeta_p):\tF]>a_G$, for the integer $a_G$ arising in Lemma \ref{cyclicq}.) Fix a lift $\mu \colon \gal{F, {\mc{S}}} \to G/G^{\mr{der}}(\mc{O})$ of $\bar{\mu}= \br \pmod{G^{\mr{der}}}$, and assume that for all $v \in {\mc{S}}$, there are lifts $\rho_v \colon \gal{F_v} \to G(\mc{O})$ of $\br|_{\gal{F_v}}$ with multiplier $\mu$. Then there exists an infinitely ramified lift
\[
\xymatrix{
& G(\mc{O}) \ar[d] \\
\gal{F} \ar@{-->}[ur]^{\rho} \ar[r]_{\br} & G(k)
}
\]
such that $\rho|_{\gal{F_v}}= \rho_v$ modulo $\wh{G^{\mr{der}}}(\mc{O})$-conjugacy for all $v \in {\mc{S}}$, and $\rho(\gal{F})$ contains $\wh{G^{\mr{der}}}(\mc{O})$.
\end{cor}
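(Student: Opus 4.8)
The plan is to deduce Corollary \ref{infiniteram} directly from Theorem \ref{klr^N} by letting the modulus $\n$ tend to infinity. First I would observe that under the stated hypotheses—Assumption \ref{multfree} except without the $\mu_{p^2}$ constraint—all the inputs to Theorem \ref{klr^N} are available: by Corollary \ref{irrcor} (applied when $\br|_{\gal{\tF(\zeta_p)}}$ is absolutely irreducible and $[\tF(\zeta_p):\tF]>a_G$) Assumption \ref{multfree} holds up to the $\mu_{p^2}$ condition, and we are given the local lifts $\rho_v \colon \gal{F_v} \to G(\mc{O})$ with multiplier $\mu$ for $v \in {\mc{S}}$, as well as the choice of $\mu$. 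The point is that Theorem \ref{klr^N} only genuinely used the disjointness of $K$ and $\tF(\mu_{p^\infty})$ over $\tF(\mu_p)$ (cf.~the footnote to Theorem \ref{klr^N}), which the $\mu_{p^2}$ hypothesis was a convenient way to guarantee but which also follows from Lemma \ref{cyclicq} once $p \gg_G 0$ and $\br$ is absolutely irreducible. So the body of the argument of Theorem \ref{klr^N} applies, except that when constructing trivial primes we can no longer always also impose $N(w) \not\equiv 1 \pmod{p^2}$; I would note that this refinement was only needed at the single point in the proof of Theorem \ref{klr^N} controlling lifts modulo $\vpi^{\n}$ for the fine structure of the local conditions, and for the present (cruder) conclusion—where we only need the local lifts to be $\wh{G^{\mr{der}}}(\mc{O})$-conjugate to $\rho_v$, with no control of irreducible components—it can be dispensed with.

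Next I would pass to the limit. Theorem \ref{klr^N} produces a nested sequence of finite sets ${\mc{T}} \subset {\mc{T}}_2 \subset {\mc{T}}_3 \subset \cdots$ and compatible lifts $\rho_{\n} \colon \gal{F, {\mc{T}}_{\n}} \to G(\mc{O}/\vpi^{\n})$ with $\rho_{\n} = \rho_{\n+1} \pmod{\vpi^{\n}}$, multiplier $\mu$, such that $\rho_{\n}|_{\gal{F_v}}$ is strictly equivalent to $\rho_v \pmod{\vpi^{\n}}$ for all $v \in {\mc{S}}$ (indeed all $v \in {\mc{T}}$), and $\rho_{\n}(\gal{F})$ contains $\wh{G^{\mr{der}}}(\mc{O}/\vpi^{\n})$. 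Setting ${\mc{T}}_\infty = \bigcup_{\n} {\mc{T}}_{\n}$ and $\rho := \varprojlim_{\n} \rho_{\n} \colon \gal{F, {\mc{T}}_\infty} \to G(\mc{O})$ gives a continuous lift of $\br$ with multiplier $\mu$. At each $v \in {\mc{S}}$, the compatible strict equivalences $\rho_{\n}|_{\gal{F_v}} \sim \rho_v \pmod{\vpi^{\n}}$ are witnessed by elements $g_{\n} \in \wh{G^{\mr{der}}}(\mc{O}/\vpi^{\n})$; since $\wh{G^{\mr{der}}}(\mc{O})$ is compact (profinite), a standard compactness/diagonal argument extracts a convergent subsequence of (suitably adjusted) $g_{\n}$'s with limit $g \in \wh{G^{\mr{der}}}(\mc{O})$ conjugating $\rho|_{\gal{F_v}}$ to $\rho_v$—so $\rho|_{\gal{F_v}} = \rho_v$ up to $\wh{G^{\mr{der}}}(\mc{O})$-conjugacy. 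Similarly, since $\rho_{\n}(\gal{F}) \supseteq \wh{G^{\mr{der}}}(\mc{O}/\vpi^{\n})$ for all $\n$ and $\wh{G^{\mr{der}}}(\mc{O}) = \varprojlim \wh{G^{\mr{der}}}(\mc{O}/\vpi^{\n})$, taking the inverse limit of images shows $\rho(\gal{F}) \supseteq \wh{G^{\mr{der}}}(\mc{O})$.

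Finally I would verify that $\rho$ is infinitely ramified. This is where the argument differs from just quoting the limit: the set ${\mc{T}}_\infty$ is infinite precisely because at the inductive step passing from $\rho_{\n-1}$ to $\rho_{\n}$ (in the proof of Theorem \ref{klr^N}, via the doubling method of Proposition \ref{doublingprop}) one is genuinely forced to introduce new ramified trivial primes $\{v_i\} \cup \{v'_j\}$, each of which is ramified in $\rho_{\n}$ (by the inertial conditions $\rho_{\n}(\tau_{v_i}) = u_{\alpha_{v_i}}(X_{v_i})$ with $X_{v_i} \neq 0$). Since these ramification conditions persist under further lifting—once $\rho_n(\tau_w)$ is a nontrivial element of $U_{\alpha_w}(\mc{O}/\vpi^n)$, the same holds modulo every higher power by the construction—the limit $\rho$ is ramified at infinitely many primes, hence not finitely ramified. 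Concretely I would remark that at each stage $\n \geq 3$ where the doubling method is invoked, $|{\mc{T}}_{\n} \setminus {\mc{T}}_{\n-1}| \geq 1$, so $|{\mc{T}}_\infty| = \infty$ and each added prime contributes genuine ramification to $\rho$.

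The main obstacle I anticipate is purely bookkeeping rather than conceptual: one must check that the $\mu_{p^2}$ hypothesis really is used only in the one cosmetic place claimed, so that dropping it does not break the linear-disjointness inputs to Proposition \ref{klrstep1''}, Lemma \ref{gens}, and the doubling argument. The substantive disjointness one needs—that $\br(\fgder)$, $\br(\fgder)^*$, and the trivial representation pairwise share no $\Fp[\gal{F}]$-subquotient, and that $K$ is disjoint over $\tF(\mu_p)$ from the cyclotomic $\Z_p$-extension—is guaranteed by Assumption \ref{multfree} together with Lemma \ref{cyclicq} for $p \gg_G 0$; so the only genuine adjustment is replacing "$N(w) \not\equiv 1 \pmod{p^2}$" by "$N(w) \equiv 1 \pmod p$" in the Čebotarev conditions, which weakens nothing needed here. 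Everything else is the compactness argument for the inverse limit, which is routine.
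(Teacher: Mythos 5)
Your plan matches the paper's: re-run the inductive construction of Theorem \ref{klr^N} with weakened local bookkeeping at the auxiliary primes, then pass to the inverse limit. But two details are misstated in a way that matters.

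First, you cannot literally ``deduce Corollary \ref{infiniteram} directly from Theorem \ref{klr^N},'' because the footnote to Theorem \ref{klr^N} shows that its proof uses all of Assumption \ref{multfree}, \emph{including} the $\mu_{p^2}$ condition. You do acknowledge that the Čebotarev conditions have to change, but your description of the change---``replacing $N(w) \not\equiv 1 \pmod{p^2}$ by $N(w) \equiv 1 \pmod p$''---is not what the paper does and by itself does not make the argument go through. When applying Proposition \ref{klrstep1''} and Lemma \ref{gens} one must \emph{fix in advance} a class $q_i \in (\Z/p^c)^\times$, since the entire doubling argument and the bookkeeping of the fiber of $\Lift^{\mu,\alpha_i}_{\br}$ over $\rho_{n-1}|_{\gal{F_{v_i}}}$ require knowing $N(v_i) \bmod p^c$. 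When $\mu_{p^2} \subset K$, the only choice of $q_i$ compatible with the other splitting conditions (in particular, split in $K$) is the \emph{trivial} class, and the paper accordingly takes all $q_i$ trivial with $c \geq n/e$ and all $g_{L/K'_{n-1},i}$ equal to $1$. This makes the new auxiliary primes trivial primes modulo $\vpi^n$ so that $\rho_n(\sigma_{v_i})$ can be taken trivial mod center; it is a positive specification of the condition, not merely deleting the $\pmod{p^2}$ nontriviality. Without fixing some $q_i$ your construction has nothing to feed Proposition \ref{klrstep1''}, and choosing any nontrivial $q_i$ risks an empty Čebotarev set.

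Second, your appeal to Lemma \ref{cyclicq} to get the disjointness of $K$ from the cyclotomic $\Z_p$-extension is not available under the stated hypotheses: that lemma requires $\br$ absolutely irreducible, which Corollary \ref{infiniteram} does \emph{not} assume (it assumes only Assumption \ref{multfree} minus the $\mu_{p^2}$ condition). The paper avoids this issue---the disjointness it actually needs is of $L = K(\rho_{n-1}(\fgder))K'_{n-1}$ from composites of $K'_{n-1}$, $K(\mu_{p^\infty})$, and $K_\psi$'s \emph{over $K'_{n-1}$}, and this follows from the inductive maximal-image hypothesis $\im(\rho_{n-1}) \supseteq \wh{G}^{\mr{der}}(\mc{O}/\vpi^{n-1})$, not from Lemma \ref{cyclicq}. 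Your closing argument that $\rho$ is infinitely ramified is fine (and more explicit than the paper bothers to be), and the compactness/inverse-limit bookkeeping is routine as you say; but the two points above need to be repaired for the proof to actually close.
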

\begin{rmk}
In this degree of generality, it is not known, but certainly expected, that local lifts $\rho_v$ as above always exist.
\end{rmk}
\begin{proof}
The proof is as in Theorem \ref{klr^N}, except it is no longer necessary to track the local structure at auxiliary primes of our lifts as carefully, and in particular we can work with somewhat weaker hypotheses (compare Remark \ref{infinitetriv}). 
For any $G(\mc{O})$-valued
representation $\lambda$, write $\lambda_n$ for its reduction modulo
$\vpi^n$. 
Given a lift $\rho_{n-1} \colon \gal{F, {\mc{T}}_{n-1}} \to G(\mc{O}/\vpi^{n-1})$ such that
\begin{itemize}
\item for all $v \in {\mc{T}}$, $\rho_{n-1}|_{\gal{F_v}}$ is $\wh{G^{\mr{der}}}(\mc{O}/\vpi^{n-1})$-conjugate to the reduction $\rho_{v, n-1}$ of the fixed lift $\rho_v$;
\item for all $v \in {\mc{T}}_{n-1} \setminus {\mc{T}}$, $\rho_{n-1}|_{\gal{F_v}}$ is either unramified or belongs to some $\Lift_{\br|_{v}}^{\mu, \alpha_v}(\mc{O}/\vpi^{n-1})$;
\item and $\rho_{n-1}(\gal{F})$ contains $\wh{G^{\mr{der}}}(\mc{O}/\vpi^{n-1})$;
\end{itemize}
we sketch how to carry out the induction step and construct a lift $\rho_n$ with the analogous properties. Enlarge ${\mc{T}}_{n-1}$ to ${\mc{T}}'_{n-1}$ as in the proof of Theorem \ref{klr^N} (these are the primes that ensure maximal image of $\rho_n$). Then note that without assuming $\mu_{p^2}$ is not contained in $K$, Proposition \ref{klrstep1''} and Lemma \ref{gens} can be modified to allow:
\begin{itemize}
\item $L=K(\rho_{n-1}(\fgder))K'_{n-1}$;
\item all $g_{L/K'_{n-1}, i}$ equal to 1;
\item and all classes $q_i$ trivial, with $c$ taken to be an integer greater than or equal to $\frac{n}{e}$.
\end{itemize}
Indeed, the proof of \textit{loc.~cit.~}works as before because the (split) conditions on $g_{L/K'_{n-1}, i}$ and $q_i$ are of course compatible, and $L$ is linearly disjoint over $K'_{n-1}$ from any composite of $K'_{n-1}$, $K(\mu_{p^\infty})$, and $K_{\psi}$'s (for $\psi \in H^1(\gal{F, {\mc{T}}_{n-1}'}, \br(\fgder)^*)$) by the same maximal image argument in Theorem \ref{klr^N}. Then the proof of Theorem \ref{klr^N} works as before, except we arrange that the new lift $\rho_n$ and the primes $\{v_i, v'_i\}_{i \in N}$ introduced in the induction step have $\rho_n(\sigma_{v_i})$ and $\rho_n(\sigma_{v'_i})$ trivial mod center; since by construction $q_i \equiv 1 \pmod {\vpi^n}$, the restrictions $\rho_n|_{\gal{F_{v_i}}}$ (and likewise $\rho_n|_{\gal{F_{v'_i}}}$) do indeed belong to $\Lift^{\mu, \alpha_i}_{\br|_{v_i}}(\mc{O}/\vpi^n)$ for the corresponding root $\alpha_i$. This suffices to continue lifting inductively, since we know from Lemma \ref{trivsmooth} that this local lifting functor is formally smooth (regardless of congruence conditions on $q_i$). We have thus
constructed $\rho= \varprojlim_n \rho_n$ having the desired properties. (Note that the conclusion that $\rho(\gal{F})$ contains $\wh{G^{\mr{der}}}(\mc{O})$, clear from the construction, is in fact automatic once it has been checked modulo $\vpi^{N_1}$ for some sufficiently large $N_1$: see Lemma \ref{frattini}.)

\end{proof}

\section{Relative deformation theory}\label{relativeliftsection}
In this section we explain the relative deformation theory method and prove our main theorem, Theorem \ref{mainthm}.

\subsection{Relative Selmer groups}
We need some preliminaries before proceeding to the heart of the argument.
Let $F$ be a number field and ${\mc{S}}$ a finite set of primes of $F$. Let
$n \geq 1$ be an integer and let
$\rho_n:\Gamma_F \to G(\mc{O}/\varpi^n)$ be a continuous
homomorphism. Throughout this section, when we have an integer $r < n$, we will write $\rho_{r}$ for the reduction $\rho_n\pmod{\vpi^{r}}$. For each prime $v \in {\mc{S}}$ 
we assume that for $0 < r \leq n$ we have subgroups ${{\z}}_{r,v} \subset Z^1(\Gamma_{F_v}, \rho_r(\fgder))$ such that
\begin{itemize}
\item Each ${{\z}}_{r, v}$ contains the group of boundaries $B^1(\Gamma_{F_v}, \rho_r(\fgder))$.
\item As $r$ varies, the inclusion and reduction maps induce short exact sequences
\[
0 \to {{\z}}_{a, v} \to {{\z}}_{a+b, v} \to {{\z}}_{b, v} \to 0
\]
as in the cases of Lemma \ref{extracocycles^N} or Proposition \ref{prop:gensmooth}.
\end{itemize}
We let $L_{r,v} \subset H^1(\Gamma_{F_v}, \rho_r(\fgder))$ be the image of ${{\z}}_{r,v}$
and we let $L_{r,v}^{\perp} \subset H^1(\Gamma_{F_v},
\rho_r(\fgder)^*)$ be the annihilator of $L_{r,v}$ under the local duality pairing.

We define the Selmer group \index[terms]{H@ $H^1_{\mc{L}_r}(\Gamma_{F,{\mc{S}}},\rho_r(\fgder))$} $H^1_{\mc{L}_r}(\Gamma_{F,{\mc{S}}},
\rho_r(\fgder))$ to be
\[
\ker \left ( H^1(\Gamma_{F,{\mc{S}}}, \rho_r(\fgder)) \to \bigoplus_{v \in {\mc{S}}}
\frac{H^1(\Gamma_{F_v}, \rho_r(\fgder))}{L_{r,v}} \right )
\]
and we define the dual Selmer group \index[terms]{H@$H^1_{\mc{L}_r^{\perp}}(\Gamma_{F,{\mc{S}}}, \rho_r(\fgder)^*)$}
$H^1_{\mc{L}_r^{\perp}}(\Gamma_{F,{\mc{S}}}, \rho_r(\fgder)^*)$
analogously.

\begin{lemma} \label{lem:exseq}
  For any $a,b$ such that $0 < a,b$ and $a+b \leq n$ there are exact
  sequences
  \[
    H^1_{\mc{L}_a}(\Gamma_{F,{\mc{S}}},
    \rho_a(\fgder)) \to H^1_{\mc{L}_{a+b}}(\Gamma_{F,{\mc{S}}},
    \rho_{a+b}(\fgder)) \to H^1_{\mc{L}_b}(\Gamma_{F,{\mc{S}}},
    \rho_b(\fgder))\]
  and
   \[
    H^1_{\mc{L}_a^{\perp}}(\Gamma_{F,{\mc{S}}},
    \rho_a(\fgder)^*) \to H^1_{\mc{L}_{a+b}^{\perp}}(\Gamma_{F,{\mc{S}}},
    \rho_{a+b}(\fgder)^*) \to H^1_{\mc{L}_b^{\perp}}(\Gamma_{F,{\mc{S}}},
    \rho_b(\fgder)^*) \ .\]
\end{lemma}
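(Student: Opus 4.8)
The plan is to derive both exact sequences from the exact sequence of coefficient modules
\[
0 \to \rho_a(\fgder) \to \rho_{a+b}(\fgder) \to \rho_b(\fgder) \to 0,
\]
which comes from the short exact sequence $0 \to \mc{O}/\vpi^a \to \mc{O}/\vpi^{a+b} \to \mc{O}/\vpi^b \to 0$ (using the identification of $\mc{O}/\vpi^a$ as a submodule of $\mc{O}/\vpi^{a+b}$ via $1 \mapsto \vpi^b$ fixed in \S\ref{notation}) tensored with $\fgder$ and equipped with the adjoint $\Gamma_F$-action through $\rho_{a+b}$ (which restricts/reduces correctly on the sub and quotient). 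Taking $\Gamma_{F,{\mc{S}}}$-cohomology yields a long exact sequence, and in particular an exact sequence $H^1(\Gamma_{F,{\mc{S}}}, \rho_a(\fgder)) \to H^1(\Gamma_{F,{\mc{S}}}, \rho_{a+b}(\fgder)) \to H^1(\Gamma_{F,{\mc{S}}}, \rho_b(\fgder))$ in the middle. The same applies locally at each $v \in {\mc{S}}$, and all these fit into a commutative ladder with vertical maps to $\bigoplus_{v\in{\mc{S}}} H^1(\Gamma_{F_v}, -)$.

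The key point is then a diagram chase compatible with the Selmer conditions. First I would observe that the hypothesis that the ${{\z}}_{r,v}$ sit in short exact sequences $0 \to {{\z}}_{a,v} \to {{\z}}_{a+b,v} \to {{\z}}_{b,v} \to 0$, passing to images in cohomology, gives that the maps $L_{a,v} \to L_{a+b,v} \to L_{b,v}$ are compatible with the maps on the ambient $H^1(\Gamma_{F_v}, \rho_r(\fgder))$ induced by the coefficient sequence; hence there are induced maps on quotients $H^1(\Gamma_{F_v},\rho_a(\fgder))/L_{a,v} \to H^1(\Gamma_{F_v},\rho_{a+b}(\fgder))/L_{a+b,v} \to H^1(\Gamma_{F_v},\rho_b(\fgder))/L_{b,v}$. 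Then for the first claimed sequence: given $x \in H^1_{\mc{L}_{a+b}}$, its image $\bar{x}$ in $H^1(\Gamma_{F,{\mc{S}}},\rho_b(\fgder))$ has local restrictions landing in $L_{b,v}$ (by commutativity of the ladder and the fact that $x$'s restrictions land in $L_{a+b,v}$), so $\bar{x} \in H^1_{\mc{L}_b}$; this gives the second map. For exactness in the middle, if $x \in H^1_{\mc{L}_{a+b}}$ maps to $0$ in $H^1(\Gamma_{F,{\mc{S}}},\rho_b(\fgder))$, then by the global coefficient sequence $x$ lifts to some $y \in H^1(\Gamma_{F,{\mc{S}}},\rho_a(\fgder))$; one must check $y$ can be chosen in $H^1_{\mc{L}_a}$. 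The issue is that $y$ is only well-defined modulo the image of $H^0(\Gamma_{F,{\mc{S}}}, \rho_b(\fgder))$ under the connecting map, but more importantly its local restrictions need not a priori lie in $L_{a,v}$.

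The main obstacle is precisely this last verification — showing that the preimage $y$ of $x$ in global $H^1$ with $\rho_a(\fgder)$-coefficients can be adjusted to lie in the Selmer subspace $H^1_{\mc{L}_a}$. The resolution is to run the same argument locally at each $v \in {\mc{S}}$: since $x|_{\Gamma_{F_v}} \in L_{a+b,v}$, i.e.\ lifts to ${{\z}}_{a+b,v}$, and the cocycle sequence for ${{\z}}$ is exact, the local restriction $y|_{\Gamma_{F_v}}$ differs from an element of $L_{a,v}$ by a class coming from $H^1(\Gamma_{F_v},\rho_a(\fgder))$ that maps to zero in $H^1(\Gamma_{F_v},\rho_{a+b}(\fgder))$, i.e.\ by the image of the local connecting map from $H^0(\Gamma_{F_v},\rho_b(\fgder))$. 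One then modifies $y$ by a global class (here one should use that $B^1 \subset {{\z}}_{a,v}$, so coboundary-type corrections stay inside the local conditions) to cut out the discrepancy, using surjectivity of the relevant reduction maps on cocycles; I expect this to reduce to a short calculation with the explicit description of $L_{a,v}$ as the image of ${{\z}}_{a,v}$ together with the exactness $0 \to {{\z}}_{a,v} \to {{\z}}_{a+b,v} \to {{\z}}_{b,v} \to 0$. The second (dual) exact sequence is then obtained by exactly the same argument applied to the dual coefficient sequence $0 \to \rho_b(\fgder)^* \to \rho_{a+b}(\fgder)^* \to \rho_a(\fgder)^* \to 0$ (note the reversal of $a$ and $b$, since dualizing $0 \to \mc{O}/\vpi^a \to \mc{O}/\vpi^{a+b} \to \mc{O}/\vpi^b \to 0$ reverses the order), using that $L_{r,v}^\perp$ is the annihilator of $L_{r,v}$ and that the compatibility of the $L_{r,v}$ under inclusion/reduction dualizes to a compatibility of the $L_{r,v}^\perp$; the only thing to check is that annihilators of a compatible system remain compatible, which follows from the perfectness of the local duality pairing and the exact sequences relating the $\rho_r(\fgder)$.
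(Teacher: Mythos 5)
Your outline identifies the right coefficient sequence, the right ladder, and, crucially, the right sticking point (middle-exactness of the first sequence: a lift $y$ of $x$ need not be Selmer). You also list the correct ingredients ($B^1 \subset {{\z}}_{r,v}$, exactness of the $\z$-sequences, surjectivity of cocycle reduction). But the crucial step is not actually carried out, and the way you frame it risks a genuine error.

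You propose to pick an arbitrary lift $y \in H^1(\Gamma_{F,{\mc{S}}},\rho_a(\fgder))$ of $x$, observe that $y|_v$ differs from an element of $L_{a,v}$ by an element in the image of the local connecting map $\delta_v: H^0(\Gamma_{F_v}, \rho_b(\fgder)) \to H^1(\Gamma_{F_v},\rho_a(\fgder))$, and then ``modify $y$ by a global class'' to absorb these local discrepancies. As stated, this is not obviously possible: you would need a single class in the image of the \emph{global} connecting map whose local restrictions match an independently chosen tuple $(\delta_v(h_v))_v$, and there is no reason this tuple should be the restriction of a global class. The paper avoids this local-to-global issue entirely by working at the level of cocycles from the start: choose a cocycle $\phi'$ cohomologous to a representative of $x$ that maps to $0$ in $Z^1(\Gamma_{F,{\mc{S}}},\rho_b(\fgder))$ (not just $H^1$). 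Since ${{\z}}_{a+b,v} \supset B^1$, each $\phi'|_v$ still lies in ${{\z}}_{a+b,v}$, maps to $0$ in ${{\z}}_{b,v}$, hence by exactness lies in ${{\z}}_{a,v}$. The injectivity of $Z^1(\Gamma_{F,{\mc{S}}},\rho_a(\fgder)) \to Z^1(\Gamma_{F,{\mc{S}}},\rho_{a+b}(\fgder))$ gives a unique global cocycle $\phi_a$ lifting $\phi'$, and by the same uniqueness locally, $\phi_a|_v = \phi'|_v \in {{\z}}_{a,v}$. No after-the-fact correction is needed; the ``right'' $y$ is built into the choice of $\phi'$. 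In effect, your $y$ and $[\phi_a]$ do differ by a global connecting-map class, but the argument must be run in the other direction --- fix the cocycle $\phi'$ first, then take its unique lift --- rather than fixing $y$ first and trying to correct it.

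For the dual sequence, you say ``exactly the same argument'' and wave at compatibility of annihilators under perfectness of local duality. This misses a subtlety the paper flags explicitly: the exact sequence one gets for the spaces $\z^\perp_{r,v}$ (defined as preimages of $L_{r,v}^\perp$ in $Z^1(\Gamma_{F_v},\rho_r(\fgder)^*)$) is only $0 \to \z^\perp_{b,v} \to \z^\perp_{a+b,v} \to \z^\perp_{a,v}$, and its exactness at the middle term requires precisely the \emph{surjectivity} of ${{\z}}_{a+b,v} \to {{\z}}_{b,v}$ --- not just perfectness of the pairing. That the sequence is left- rather than right-exact is fine for the argument, but you need to notice which direction the exactness runs and where the extra surjectivity hypothesis is used.
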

\begin{proof}
By assumption, the exact sequence
  \begin{equation} \label{eq1}
    0 \to \rho_a(\fgder) \to \rho_{a+b}(\fgder) \to \rho_b(\fgder) \to
    0
    \end{equation}
    gives rise to an exact sequence
  \begin{equation} \label{eq1'}
    0 \to {{\z}}_{a,v} \to {{\z}}_{a+b,v} \to {{\z}}_{b,v} \to 0. 
  \end{equation}
 
  Suppose $\phi \in Z^1(\gal{F, {\mc{S}}}, \rho_{a+b}(\fgder))$ represents an element of $H^1_{\mc{L}_{a+b}}(\gal{F, {\mc{S}}}, \rho_{a+b}(\fgder))$ that maps to zero in $H^1(\gal{F, {\mc{S}}}, \rho_{b}(\fgder))$. We may replace $\phi$ by a cohomologous cocycle $\phi'$ that maps to zero in $Z^1(\gal{F, {\mc{S}}}, \rho_b(\fgder))$; since ${{\z}}_{a+b, v}$ contains all coboundaries, we still have $\phi'|_{\gal{F_v}} \in {{\z}}_{a+b, v}$ for all $v \in {\mc{S}}$. Moreover, the commutative diagram of exact sequences
  \[
  \xymatrix{0 \ar[r] & Z^1(\gal{F, {\mc{S}}}, \rho_a(\fgder)) \ar[r] \ar[d] & Z^1(\gal{F, {\mc{S}}}, \rho_{a+b}(\fgder)) \ar[r] \ar[d] & Z^1(\gal{F, {\mc{S}}}, \rho_b(\fgder)) \ar[d] \\
  0 \ar[r] & \bigoplus_{v \in {\mc{S}}} Z^1(\Gamma_{F_v}, \rho_a(\fgder)) \ar[r] & \bigoplus_{v \in {\mc{S}}} Z^1(\Gamma_{F_v}, \rho_{a+b}(\fgder)) \ar[r] & \bigoplus_{v \in {\mc{S}}} Z^1(\gal{F_v}, \rho_b(\fgder))}
  \]
  implies that there is a unique cocycle $\phi_a \in Z^1(\gal{F, {\mc{S}}}, \rho_a(\fgder))$ mapping to $\phi'$ and (by Equation (\ref{eq1'})) unique cocycles $\xi_{a, v} \in {{\z}}_{a, v} \subset Z^1(\gal{F_v}, \rho_a(\fgder))$ mapping to $\phi'|_{\gal{F_v}}$. By commutativity and the uniqueness, $\phi_a|_{\gal{F_v}}= \xi_{a, v} \in {{\z}}_{a, v}$ for all $v \in {\mc{S}}$, and the exactness of the first Selmer group sequence follows. Exactness of the sequence for dual Selmer groups (with the roles of $a$ and $b$ reversed) follows in a similar way, replacing the ${{\z}}_{r, v}$ with the preimages ${\z}^\perp_{r, v}$ of $L_{r, v}^\perp$ in $Z^1(\gal{F_v}, \rho_r(\fgder)^*)$: then we have an exact sequence
  \[
  0 \to {\z}^\perp_{b, v} \to {{\z}}_{a+b, v}^\perp \to {{\z}}_{a, v}^\perp,
  \]
  where the maps exist by functoriality properties of the local duality pairings, and exactness additionally uses the surjectivity of ${{\z}}_{a+b, v} \to {{\z}}_{b, v}$.
  \end{proof}

The basic object we will study in what follows is the relative (dual) Selmer group:
\begin{defn}\label{def:relativeSelmer}
For $0 < r \leq n$, we define the $r$-th relative Selmer
group \index[terms]{H@$\ov{H^1_{\mc{L}_r}(\Gamma_{F,{\mc{S}}},
    \rho_r(\fgder))}$} to be
\[
\ov{H^1_{\mc{L}_r}(\Gamma_{F,{\mc{S}}}, \rho_r(\fgder))} := \im \left ( H^1_{\mc{L}_r}(\Gamma_{F,{\mc{S}}}, \rho_r(\fgder)) \to H^1_{\mc{L}_1}(\Gamma_{F,{\mc{S}}}, \br(\fgder)) \right )
\]
and the $r$-th relative dual Selmer group \index[terms]{H@$\ov{H^1_{\mc{L}_r^{\perp}}(\Gamma_{F,{\mc{S}}}, \rho_r(\fgder)^*)}$} be
\[
\ov{H^1_{\mc{L}_r^{\perp}}(\Gamma_{F,{\mc{S}}}, \rho_r(\fgder)^*)} := \im \left ( H^1_{\mc{L}_r^{\perp}}(\Gamma_{F,{\mc{S}}}, \rho_r(\fgder)^*) \to H^1_{\mc{L}_1^{\perp}}(\Gamma_{F,{\mc{S}}}, \br(\fgder)^*) \right ).
\]
Given an element $\phi$ in a modulo $\vpi^r$ (dual) Selmer group, we will write $\bar{\phi}$ for its image in the corresponding modulo $\vpi$ (dual) Selmer group.

In addition, we say that the local conditions $\mc{L}_r$, for $0 < r \leq n$, are \emph{balanced}\index[terms]{balanced} if 
\[
 |H^1_{\mc{L}_r}(\Gamma_{F,{\mc{S}}}, \rho_r(\fgder))| = | H^1_{\mc{L}_r^{\perp}}(\Gamma_{F,{\mc{S}}}, \rho_r(\fgder)^*)|
\] 
for $0 < r \leq n$.
\end{defn}

\begin{lemma} \label{lem:bal} Suppose the local conditions
  $\mc{L}_r=\{L_{r,v}\}_{v \in {\mc{S}}}$, $0 < r \leq n$, are balanced and the spaces of
  invariants $\br(\fgder)^{\Gamma_F}$ and $(\br(\fgder)^*)^{\Gamma_F}$
  are both zero. Then the relative Selmer and dual Selmer groups are
  also balanced, i.e.,
\[
\dim( \ov{H^1_{\mc{L}_n}(\Gamma_{F,{\mc{S}}}, \rho_n(\fgder))}) = \dim(\ov{H^1_{\mc{L}_n^{\perp}}(\Gamma_{F,{\mc{S}}}, \rho_n(\fgder)^*)}).
\]
\end{lemma}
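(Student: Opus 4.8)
The key numerical identity to establish is that the "size defect" between a Selmer group and the image of its reduction map to the mod $\vpi$ Selmer group can be computed purely in terms of local and global invariants that are insensitive to the balancedness hypothesis; the balancedness of the relative groups will then follow by subtracting the analogous identities for $\rho_n(\fgder)$ and for $\rho_n(\fgder)^*$. Concretely, I would first record the elementary observation that for any exact sequence of the form \eqref{eq1} the long exact cohomology sequence, together with the vanishing $\br(\fgder)^{\Gamma_F}=0=(\br(\fgder)^*)^{\Gamma_F}$ (which, by the running Assumption \ref{minimalp}, propagates to $\rho_r(\fgder)^{\Gamma_F}$ being no larger than $\rho_{r-1}(\fgder)^{\Gamma_F}$ — actually all these invariant spaces vanish since $(\fgder)^{G^0}=0$ and the coefficients are successive extensions of $\br(\fgder)$), gives a clean description of the kernel of $H^1_{\mc{L}_n}(\Gamma_{F,{\mc{S}}},\rho_n(\fgder)) \to H^1_{\mc{L}_1}(\Gamma_{F,{\mc{S}}},\br(\fgder))$, using Lemma \ref{lem:exseq} to control how the Selmer conditions interact with reduction.

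The main computation I would carry out is an induction on $n$. Write $s_n = \dim H^1_{\mc{L}_n}(\Gamma_{F,{\mc{S}}},\rho_n(\fgder))$ and $\bar{s}_n = \dim \ov{H^1_{\mc{L}_n}(\Gamma_{F,{\mc{S}}},\rho_n(\fgder))}$, and similarly $s_n^\perp$, $\bar{s}_n^\perp$ for the dual side. Applying Lemma \ref{lem:exseq} with $a=n-1$, $b=1$, and chasing the resulting exact sequences while using that all the modules involved have trivial $\Gamma_F$-invariants (so that the maps $H^0$ terms do not interfere), I expect to get an identity of the shape $s_n = s_{n-1} + (\text{something computable}) - \bar{s}_{n}'$, where the correction terms come from the failure of surjectivity of $\rho_n(\fgder)\to\rho_{n-1}(\fgder)$ on cohomology and from the local quotients $H^1(\Gamma_{F_v},-)/L_{r,v}$; crucially these correction terms are expressed via the $\rho_r(\fgder)^{\Gamma_{F_v}}$ and the orders of the $L_{r,v}$, which by the \emph{balanced} hypothesis on $\mc{L}_r$ match their dual-side analogues after invoking local Tate duality (the Euler characteristic formula, as in \cite[Theorem 2.19]{DDT2}, governs the global discrepancy). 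Combining the global Poitou--Tate exact sequence for $\rho_r(\fgder)$ with that for $\rho_r(\fgder)^*$ and the balancedness input $s_r = s_r^\perp$ for all $r \leq n$, the correction terms cancel and one is left with $\bar{s}_n = \bar{s}_n^\perp$.

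The cleanest route, which I would actually pursue to avoid bookkeeping errors, is to work directly with the defining exact sequences: by definition $\ov{H^1_{\mc{L}_n}(\Gamma_{F,{\mc{S}}},\rho_n(\fgder))}$ is the cokernel of $H^1_{\mc{L}_n}(\Gamma_{F,{\mc{S}}},\vpi\cdot\rho_n(\fgder)) \to H^1_{\mc{L}_n}(\Gamma_{F,{\mc{S}}},\rho_n(\fgder))$ up to identifying the target of reduction — more precisely, unwinding $\ov{H^1_{\mc{L}_n}}$ as an honest image and using Lemma \ref{lem:exseq} repeatedly (peeling off one power of $\vpi$ at a time), one reduces the claim to the single statement that $\dim H^1_{\mc{L}_n}(\Gamma_{F,{\mc{S}}},\rho_n(\fgder)) - \dim H^1_{\mc{L}_n}(\Gamma_{F,{\mc{S}}},\rho_{n-1}(\fgder)) $ equals its dual analogue, which is where the balancedness of $\mc{L}_{n-1}$ and $\mc{L}_n$ feeds in through the Greenberg--Wiles formula applied to the three coefficient modules $\br(\fgder)$, $\rho_{n-1}(\fgder)$, $\rho_n(\fgder)$ (and their duals). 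The main obstacle I anticipate is purely organizational: making sure the image/kernel terms in the two exact sequences of Lemma \ref{lem:exseq} are matched up correctly under local duality — in particular that $L_{n,v}^\perp$ reduces to $L_{n-1,v}^\perp$ compatibly (which is exactly the content of the surjectivity ${{\z}}_{a+b,v}\to{{\z}}_{b,v}$ noted at the end of the proof of Lemma \ref{lem:exseq}) — rather than any deep new input; once the diagram is set up correctly the equality is forced by a dimension count.
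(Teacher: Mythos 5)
Your proposal eventually lands on the right mechanism (your ``cleanest route'' paragraph), but it invokes far more machinery than the paper needs, and the earlier paragraphs propose inductions and cancellations that are not part of the argument. The paper's proof is a two-line diagram chase: apply Lemma \ref{lem:exseq} with $a=n-1$, $b=1$ to obtain
\[
H^1_{\mc{L}_{n-1}}(\Gamma_{F,{\mc{S}}},\rho_{n-1}(\fgder)) \to H^1_{\mc{L}_n}(\Gamma_{F,{\mc{S}}},\rho_n(\fgder)) \to H^1_{\mc{L}_1}(\Gamma_{F,{\mc{S}}},\br(\fgder)),
\]
and its dual analogue. The vanishing of $\br(\fgder)^{\Gamma_F}$ (resp.\ $(\br(\fgder)^*)^{\Gamma_F}$) forces the first map to be injective, so the dimension of the relative Selmer group is exactly $\dim H^1_{\mc{L}_n} - \dim H^1_{\mc{L}_{n-1}}$, and likewise on the dual side. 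The \emph{balanced} hypothesis of Definition \ref{def:relativeSelmer} is, by definition, the equality of the orders of Selmer and dual Selmer groups for \emph{every} $r \leq n$; subtracting these equalities at $r=n$ and $r=n-1$ gives the result. No Greenberg--Wiles or Euler characteristic input is needed, and there is no induction on $n$.

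The source of the overcomplication appears to be a conflation of the two distinct uses of ``balanced'' in the paper: Definition \ref{balancedness} is a \emph{local} condition on submodules $L_{M,v}$, whereas Definition \ref{def:relativeSelmer} (the one in force in Lemma \ref{lem:bal}) is a \emph{global} assertion that the Selmer and dual Selmer groups already have equal order. Your second and third paragraphs treat the hypothesis as local data that must be converted into a global dimension statement via the Greenberg--Wiles formula; that conversion happens elsewhere in the paper (e.g.\ when the $L_{r,v}$ are produced), but by the time one reaches Lemma \ref{lem:bal} the global equality is already the input. Also, a small typo: you write $H^1_{\mc{L}_n}(\Gamma_{F,{\mc{S}}},\rho_{n-1}(\fgder))$ where the Selmer system should be $\mc{L}_{n-1}$.
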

\begin{proof}
 Consider the exact sequences in Lemma \ref{lem:exseq} for the case $a=n-1$ and $b=1$. The vanishing of the invariants implies the first map in each sequence is injective, and then the result follows immediately from exactness and the balanced assumption.
\end{proof}

\subsection{Annihilating the relative (dual) Selmer group}
We now begin to work with a residual representation as in our eventual
theorem. Let $\br \colon \gal{F, {\mc{S}}} \to G(k)$ satisfy the hypotheses
of Theorem \ref{klr^N}. Let $\Gamma$ be the inverse image in
$G^{\mr{ad}}(\mc{O})$ of $\Ad \circ \br(\gal{\tF})
    \subset G^{\mr{ad}}(k)$. We apply Corollary \ref{cor:lazard} to
the $p$-adic Lie group $\Gamma$, the $\Gamma$-module $\Lie(\Gamma) = \fgder$, and the integer $m=1$ to deduce that the image of the reduction map
  \[
  H^1(\Gamma, \fgder \otimes_{\mc{O}} \mc{O}/\vpi^M) \to H^1(\Gamma, \fgder \otimes k)
  \]
  is zero for all $M$ greater than some integer $M_1$ (depending only on $\im(\br)$).
  
We now assume that we have integers $M \geq M_1$ and $N \geq M$ and a
homomorphism $\rho_N \colon \gal{F, {\mc{S}}'} \to G(\mc{O}/\vpi^N)$ lifting
our given $\br$ such that $\im(\rho_N)$ contains
$\wh{G^{\mr{der}}}(\mc{O}/\vpi^N)$ (for instance, any $\rho_N$
produced by an application of Theorem \ref{klr^N}, with ${\mc{S}}'$ being the
enlarged ramification set needed to produce the lift). We may and do
assume that $M$ and $N$ are both divisible by $e$, the ramification index of
$\mc{O}$. For any $1 \leq r \leq N$, we let $F_r$
\index[terms]{F@$F_r$} be the splitting
field $F(\br, \rho_r(\fgder))$. 
As in \S \ref{klrsection}, we let $K=
F(\br, \mu_p)$. We also let 
$F_N^*= F_N(\mu_{p^{N/e}})$. \index[terms]{F@$F_N^*$}
\begin{lemma}\label{lemma:fn}
  Provided $M$ is sufficiently large, in a manner depending only on $\im(\br)$, we have:
\begin{itemize}
 \item $H^1(\Gal(F_N^*/F), \rho_M(\fgder)^*)=0$.
 \item The map $H^1(\Gal(F_N^*/F), \rho_M(\fgder)) \to H^1(\Gal(F_N^*/F), \br(\fgder))$ is zero.
\end{itemize}
\end{lemma}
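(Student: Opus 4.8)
\textbf{Proof plan for Lemma \ref{lemma:fn}.}

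The plan is to reduce both statements to the Lazard-type vanishing encapsulated in Corollary \ref{cor:lazard}, applied to the $p$-adic Lie group $\Gamma$ (the inverse image in $G^{\mr{ad}}(\mc{O})$ of $\Ad\circ\br(\gal{\tF})$) just as in the paragraph preceding the lemma. The key observation is that $\Gal(F_N/\tF)$ is, via $\Ad\circ\rho_N$, a quotient of $\Gamma$ (indeed it is the image of $\Gamma$ in $G^{\mr{ad}}(\mc{O}/\vpi^N)$, since $\im(\rho_N)\supset \wh{G}^{\mr{der}}(\mc{O}/\vpi^N)$ forces $\Ad\circ\rho_N(\gal{\tF})$ to be all of $\Gamma \bmod \vpi^N$), so inflation lets us transport cohomology statements about $\Gamma$-modules to statements about $\Gal(F_N/\tF)$-modules, and then to $\Gal(F_N^*/F)$ via the Hochschild--Serre spectral sequence for the tower $F \subset \tF \subset F_N \subset F_N^*$. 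The adjoint-versus-$G^{\mr{der}}$ distinction is harmless because the isogeny $G^{\mr{der}}\to G^{\mr{ad}}$ induces an isomorphism on Lie algebras (Assumption \ref{minimalp}), so $\fgder$ is the same $\Gamma$-module either way.

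First I would handle the second bullet. The reduction map $\rho_M(\fgder)\to\br(\fgder)$ factors the map in question, and by Corollary \ref{cor:lazard} applied with the integer $m=1$ there is $M_1$ (depending only on $\im(\br)$) such that for $M\ge M_1$ the image of $H^1(\Gamma,\fgder\otimes\mc{O}/\vpi^M)\to H^1(\Gamma,\fgder\otimes k)$ is zero. Inflating along $\Gamma\onto\Gal(F_N/\tF)$ and using that inflation is injective in degree $1$, the map $H^1(\Gal(F_N/\tF),\rho_M(\fgder))\to H^1(\Gal(F_N/\tF),\br(\fgder))$ is then also zero (any class in the source inflates to a class in $H^1(\Gamma,-)$ whose image downstairs is zero, hence was zero). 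One then passes from $\Gal(F_N/\tF)$ to $\Gal(F_N^*/F)$: the extension $F_N^*/F_N$ is cyclotomic of $p$-power degree and $\tF/F$ has order prime to $p$, so via inflation-restriction (and the fact that $H^1$ and $H^0$ of a prime-to-$p$ group with $p$-torsion coefficients vanish) $H^1(\Gal(F_N^*/F),\rho_M(\fgder))$ injects into $H^1(\Gal(F_N^*/\tF),\rho_M(\fgder))$, and a further inflation-restriction along $F_N^*/F_N$ compares this to $H^1(\Gal(F_N/\tF),\rho_M(\fgder))$ modulo the contribution of $\Gal(F_N^*/F_N)$-cohomology of the invariants; chasing the same diagram with $\br(\fgder)$ coefficients and the naturality of these sequences reduces the claimed vanishing to the $\Gal(F_N/\tF)$-level statement just proved, provided $M$ is large enough that the auxiliary terms (governed by $H^0(\Gal(F_N/\tF),\rho_M(\fgder))\to H^0(\Gal(F_N/\tF),\br(\fgder))$, again controlled by Lazard) also behave. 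I would take $M$ larger than a constant $M_1$ absorbing all these finitely many conditions.

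For the first bullet, $H^1(\Gal(F_N^*/F),\rho_M(\fgder)^*)=0$: here one again uses inflation-restriction for $F\subset\tF\subset F_N\subset F_N^*$. Over $\tF$ the module $\rho_M(\fgder)^*$ is, up to the cyclotomic twist, the Lie-algebra module, and $\Gal(F_N^*/\tF)$ surjects onto (a quotient of) $\Gamma$ with kernel the cyclotomic piece; the relevant vanishing $H^1(\Gamma,\fgder^*\otimes\mc{O}/\vpi^M)=0$ (or at least that its image in degree-$1$ cohomology downstairs vanishes, combined with vanishing of the lower-rung contributions) is exactly the kind of statement Corollary \ref{cor:lazard} delivers once $M$ is large — using that $\fgder$ has a nondegenerate invariant trace form (Assumption \ref{minimalp}), so $\fgder^*\cong\fgder(1)$ as $\Gamma$-modules up to twist and the Lie-algebra cohomology $H^1(\fgder,\fgder)$ vanishes for $p$ very good. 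The prime-to-$p$ descent from $\tF$ to $F$ is again automatic. I expect the main obstacle to be bookkeeping: making sure the single constant $M$ (depending only on $\im(\br)$, and in particular \emph{not} on $N$) simultaneously kills the genuinely $N$-dependent-looking cyclotomic contributions coming from $\mu_{p^{N/e}}\subset F_N^*$. The point that rescues this is that those contributions are controlled by $H^\bullet$ of the procyclic group $\Gal(F_N^*/F_N)$ acting on finite modules annihilated by $\vpi^M$, so they only ever see $\mu_{p^{\lceil M/e\rceil}}$, not $\mu_{p^{N/e}}$; I would make this reduction explicit early, which is what decouples $M$ from $N$.
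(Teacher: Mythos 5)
Your argument for the second bullet is essentially the paper's: apply Corollary \ref{cor:lazard} with $m=1$ to $\Gamma$, use injectivity of inflation along $\Gamma \onto \Gal(F_N/\tF)$, and descend. The descent from $\Gal(F_N^*/F)$ to $\Gal(F_N/\tF)$ is slicker than you suggest, and it matters since you want $M$ independent of $N$: in the inflation--restriction sequence for $F_N \subset F_N^*$, the restriction term is $H^1(\Gal(F_N^*/F_N), \rho_M(\fgder))^{\Gal(F_N/F)} = \Hom\bigl(\Gal(F_N^*/F_N),\, \rho_M(\fgder)^{\Gal(F_N/F)}\bigr)$ since $\Gal(F_N/F)$ acts trivially on the abelian piece $\Gal(F_N^*/F_N)$, and this vanishes because $\rho_M(\fgder)$ has no trivial $\gal{F}$-submodule (Corollary \ref{irrcor}). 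There is nothing for Lazard to control in the auxiliary $H^0$'s; they just vanish outright, so the last sentence of that paragraph is solving a nonexistent problem. The prime-to-$p$ reduction from $\Gal(F_N/F)$ to $\Gal(F_N/\tF)$ is then as you say.

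The first bullet is where your plan has a genuine gap: Corollary \ref{cor:lazard} cannot produce $H^1(\Gal(F_N^*/F), \rho_M(\fgder)^*) = 0$. This is an \emph{honest} vanishing of cohomology with fixed torsion coefficients; Lazard only shows that the image of a reduction map $H^1(\Gamma, \mc{O}/\vpi^N) \to H^1(\Gamma, \mc{O}/\vpi^m)$ dies for $N \gg m$, and $H^1(\Gamma, \fgder\otimes\mc{O}/\vpi^M)$ itself is not zero in general --- the torsion module $H^1(\Gamma, \fgder)$ is merely annihilated by a bounded power of $\vpi$. Your parenthetical hedge (``or at least that its image ... vanishes'') doesn't help because the lemma really does assert that the whole group vanishes, and Proposition \ref{prop:killrel} needs exactly that. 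Remark \ref{advanishing} explicitly flags this asymmetry: the analogous vanishing with $\br(\fgder)$-coefficients is \emph{false} (since $\br(\fgder)$ is a quotient of $\Gal(F_N^*/K)$), while with $\br(\fgder)^*$-coefficients it holds for group-theoretic, not Lazard-theoretic, reasons. The paper's argument: filter $\rho_M(\fgder)^*$ so the graded pieces are all $\br(\fgder)^*$ (no lower bound on $M$ needed), reducing to $H^1(\Gal(F_N^*/F), \br(\fgder)^*)=0$; then inflation--restriction for $F \subset K \subset F_N^*$ gives inflation term $H^1(\Gal(K/F), \br(\fgder)^*)=0$ --- literally one of the conditions in Assumption \ref{multfree}, imported via Corollary \ref{irrcor} --- and restriction term $\Hom_{\gal{F}}(\Gal(F_N^*/K), \br(\fgder)^*)=0$, because the maximal abelian $p$-torsion quotient of $\Gal(F_N^*/K)$ is $\br(\fgder)\oplus\Z/p$ (using $\im(\rho_N)\supset\wh{G^{\mr{der}}}(\mc{O}/\vpi^N)$) and the no-common-subquotient hypothesis rules out any nonzero $\gal{F}$-map to $\br(\fgder)^*$. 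Neither the trace form nor the vanishing of Lie-algebra cohomology enters; it is the finite-group $H^1$-vanishing in Assumption \ref{multfree} that carries the day.
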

\begin{proof}
For the first item, it suffices to prove that $H^1(\Gal(F_N^*/F), \br(\fgder)^*) = 0$. 
Consider the inflation-restriction sequence
  \begin{equation} \label{eq:infres} H^1(\Gal(K/F),
    \br(\fgder)^*) \to H^1(\Gal(F_N^*/F),
    \br(\fgder)^*) \to
    H^1(\Gal(F_N^*/K),\br(\fgder)^*)^{\Gal(K/F)}
  \end{equation}
The first term in (\ref{eq:infres}) is zero by Corollary
    \ref{irrcor}, which tells us that all conditions in Assumption
    \ref{multfree} are satisfied under the hypotheses of Theorem
    \ref{klr^N}. The last term is $\Hom_{\gal{F}}(\Gal(F_N^*/K),
    \br(\fgder)^*)$, and since $\im(\rho_N)$ contains
    $\wh{G^{\mr{der}}}(\mc{O}/\vpi^N)$, the maximal abelian, $p$-torsion
    quotient of $\Gal(F_N^*/K)$ is a quotient of (in fact equal to)
    $\br(\fgder) \oplus \Z/p$ (as noted in the proof of Theorem
    \ref{klr^N}). This $\gal{F}$-module admits no non-zero
    $\gal{F}$-invariant maps to $\br(\fgder)^*$, as follows again from
    Corollary \ref{irrcor}, so the last term in (\ref{eq:infres}) is also zero.


For the second item,  let $F_N$ be as above, 
and consider the inflation-restriction exact sequence
\begin{equation*} 
0 \to H^1(\Gal(F_N/F), \rho_M(\fgder)) \to H^1(\Gal(F_N^*/F), \rho_M(\fgder)) \to H^1(\Gal(F_N^*/F_N),\rho_M(\fgder))^{\Gal(F_N/F)}.
\end{equation*}
The right-hand side is zero: since $F(\mu_{p^{N/e}})/F$ is abelian, $\Gal(F_N/F)$ acts trivially on $\Gal(F_N^*/F_N)$, and, again by Corollary \ref{irrcor}, $\rho_M(\fgder)$ has no trivial proper $\gal{F}$-submodule.

We are reduced to proving that the map $H^1(\Gal(F_N/F), \rho_M(\fgder)) \to H^1(\Gal(F_N/F), \br(\fgder))$ is zero if $ N \geq M \geq M_1$. Since $[F_N: \tF(\rho_N(\fgder))]$ and $[\tF:F]$ are coprime to $p$, inflation-restriction reduces us to the same claim with $\ov{\Gamma}:= \Gal(\tF(\rho_N(\fgder))/\tF)$ in place of $\Gal(F_N/F)$. Let $\Gamma$ as above be the inverse image in $G^{\mr{ad}}(\mc{O})$ of $\ov{\Gamma} \subset G^{\mr{ad}}(\mc{O}/\vpi^N)$.
Using the fact that the inflation map $H^1(\ov{\Gamma}, A) \to H^1(\Gamma, A)$ is injective for any $\ov{\Gamma}$-module $A$, 
the claim then follows from the choice of $M \geq M_1$ produced by Corollary \ref{cor:lazard}.
\end{proof}
\begin{rmk}\label{advanishing}
Note that even if we were to assume $H^1(\Gal(K/F), \br(\fgder))=0$ (a mild assumption for $\br$ absolutely irreducible, by Lemma \ref{irr}), we would not gain vanishing of $H^1(\Gal(F_N^*/F), \br(\fgder))$ (in contrast to the case of $\br(\fgder)^*$ coefficients), because $\br(\fgder)$ is a quotient of $\Gal(F_N^*/K)$.
\end{rmk}

We continue with our fixed lift $\rho_N \colon \gal{F,{\mc{S}}'} \to G(\mc{O}/\vpi^N)$, with reduction $\rho_M$. We define the set of auxiliary primes that we will consider in annihilating the (relative) dual Selmer group:
\begin{defn}\label{reldefaux}
  Let ${\mc{Q}}_N$\index[terms]{Q@$\mc{Q}_N$} 
  be the set of trivial primes $v$ of $F$ satisfying the following properties:
\begin{itemize}
 \item $N(v) \equiv 1 \pmod {\vpi^M}$ but $N(v) \not \equiv 1 \pmod{\vpi^{M+1}}$. (Recall that $M$ is a multiple of $e$.)
 \item $\rho_N|_{\gal{F_v}}$ is unramified (with multiplier $\mu$), and $\rho_M|_{\gal{F_v}}$ is trivial mod center.
 \item There exist a split maximal torus $T$ of $G$ and a root $\alpha \in \Phi(G^0, T)$ such that $\rho_N(\sigma_v) \in T(\mc{O}/\vpi^N)$ and $\alpha(\rho_N(\sigma_v))= \kappa(\sigma_v)=N(v)$.
 \item For all roots $\beta \in \Phi(G^0, T)$, $\beta(\rho_{M+1}(\sigma_v)) \not \equiv 1 \pmod{\vpi^{M+1}}$.
\end{itemize}
\end{defn}
The set ${\mc{Q}}_N$ corresponds to a \v{C}ebotarev condition in the
extension $F_N^*/F$; it is a set of positive density because
$\im(\rho_N)$ contains $\wh{G^{\mr{der}}}(\mc{O}/\vpi^N)$, and (as
before) $K(\rho_N(\fgder))$ and $K(\mu_{p^{\infty}})$ are linearly
disjoint over $K$. For these primes we will consider the functors of
lifts $\Lift^{\mu, \alpha}_{\br}$ and $\Lift^{\mu, \alpha}_{\rho_M}$
as in Definitions \ref{triviallifts} and \ref{relativetriv}.

\begin{lemma}\label{lem:triv2}
Assume $N \geq 4M$. Then for any $v \in {\mc{Q}}_N$, with corresponding $(T, \alpha)$, the following properties hold:
\begin{itemize}
 \item For any $m \geq N-M$ and $1 \leq r \leq M$, the fibers of $\Lift^{\mu, \alpha}_{\rho_M}(\mc{O}/\vpi^{m+r}) \to \Lift^{\mu, \alpha}_{\rho_M}(\mc{O}/\vpi^m)$ are non-empty and stable under $Z^{\alpha}_{r, v}$, where $Z^{\alpha}_{r, v} \subset Z^1(\gal{F_v}, \rho_r(\fgder))$ is the submodule produced in Lemma \ref{extracocycles^N}.
 \item Let $L^{\alpha}_{r, v}$ be the image of $Z^{\alpha}_{r, v}$ in $H^1(\gal{F_v}, \rho_r(\fgder))$, and let $L^{\alpha, \perp}_{r, v}$ be its annihilator in $H^1(\gal{F_v}, \rho_r(\fgder)^*)$ under the local duality pairing. Then
 \begin{itemize}
  \item $|L^{\alpha}_{r, v}|= |H^0(\gal{F_v}, \rho_r(\fgder))|= | H^1_{\mr{unr}}(\gal{F_v}, \rho_r(\fgder))|$.
  \item The inclusion 
\[
L^{\alpha}_{r, v} \cap H^1_{\mr{unr}}(\gal{F_v}, \rho_r(\fgder)) \into H^1_{\mr{unr}}(\gal{F_v}, \rho_r(\fgder))
\]  
has cokernel isomorphic to $\mc{O}/\vpi^r$, and this cokernel is generated by the image of the unramified cocycle that maps $\sigma_v$ to $H_{\alpha}= d(\alpha^\vee)(1)$, the usual coroot element in $\mf{t}^{\mr{der}}$.
\item The inclusion 
\[
L^{\alpha, \perp}_{r, v} \cap H^1_{\mr{unr}}(\gal{F_v}, \rho_r(\fgder)^*) \into H^1_{\mr{unr}}(\gal{F_v}, \rho_r(\fgder)^*)
\]  
has cokernel isomorphic to $\mc{O}/\vpi^r$, and this cokernel is generated by the image of the unramified cocycle that maps $\sigma_v$ to any element of $(\fgder)^*$ whose restriction to $\fg_{\alpha}$ spans the free rank one $\mc{O}/\vpi^r$-module $\Hom_{\mc{O}}(\fg_{\alpha}, \mc{O}/\vpi^r)$.
 \end{itemize}
\end{itemize}
\end{lemma}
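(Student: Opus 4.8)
The plan is to reduce everything to the explicit cocycle description furnished by Lemma \ref{extracocycles^N}, which we may invoke because the hypotheses of that lemma are exactly arranged to hold for $v \in {\mc{Q}}_N$ in the case $s = M$. Indeed, by Definition \ref{reldefaux} the reduction $\rho_M|_{\gal{F_v}}$ is trivial mod center and $N(v) \equiv 1 \pmod{\vpi^M}$ but $N(v) \not\equiv 1 \pmod{\vpi^{M+1}}$, the element $\rho_N(\sigma_v)$ lies in $T(\mc{O}/\vpi^N)$ with $\alpha(\rho_N(\sigma_v)) = N(v)$ and $\rho_N(\tau_v) = 1 \in U_\alpha$ (since $\rho_N|_{\gal{F_v}}$ is unramified), and every root $\beta$ satisfies $\beta(\rho_{M+1}(\sigma_v)) \not\equiv 1 \pmod{\vpi^{M+1}}$. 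Thus with $M+s = M+M = 2M \leq N$ (using $N \geq 4M \geq 2M$) we may apply Lemma \ref{extracocycles^N} with $s = M$, obtaining the spaces $Z^\alpha_{r,v} := Z^\alpha_r \subset Z^1(\gal{F_v}, \rho_r(\fgder))$ for $1 \leq r \leq M$; the third bullet of that lemma, applied for all $m \geq 2s + M = 3M$ (which is implied by $m \geq N - M \geq 3M$), gives the first displayed property, that the fibers of $\Lift^{\mu,\alpha}_{\rho_M}(\mc{O}/\vpi^{m+r}) \to \Lift^{\mu,\alpha}_{\rho_M}(\mc{O}/\vpi^m)$ are non-empty and $Z^\alpha_{r,v}$-stable.

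For the order-of-$L^\alpha_{r,v}$ claim, I would argue directly from the spanning set of $Z^\alpha_r$ recorded in the proof of Lemma \ref{extracocycles^N}, namely the $\phi^r_{X_\beta}$ for $\beta \in \Phi(G^0,T)$, the unramified cocycles $\phi^{\mr{un},r}_X$ for $X$ in a basis of $\ker(\alpha|_{\mf{t}^{\mr{der}}})$, and the ramified cocycle $\phi^r_\alpha$. Since $Z^\alpha_r$ is free of rank $\dim \fgder$ over $\mc{O}/\vpi^r$ and contains all coboundaries $B^1(\gal{F_v}, \rho_r(\fgder))$, whose order is $|\rho_r(\fgder)| / |\rho_r(\fgder)^{\gal{F_v}}|$, the image $L^\alpha_{r,v} = Z^\alpha_r / B^1$ has order $|\rho_r(\fgder)^{\gal{F_v}}|$ (up to the finite-index subtlety one resolves by comparing ranks over $\mc{O}/\vpi^r$); and because $\br|_{\gal{F_v}}$ is trivial mod center (more precisely $\rho_M$ is, and $r \leq M$), $H^0(\gal{F_v}, \rho_r(\fgder)) = \rho_r(\fgder)^{\gal{F_v}}$ has the same order as $H^1_{\mr{unr}}(\gal{F_v}, \rho_r(\fgder))$ by the usual local Euler characteristic / unramified cohomology computation (the Frobenius-invariants and Frobenius-coinvariants of $\rho_r(\fgder)$ have equal order). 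This gives the first sub-bullet.

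For the two cokernel statements I would compute $L^\alpha_{r,v} \cap H^1_{\mr{unr}}$ explicitly. An unramified class is represented by a cocycle killing $\tau_v$; among the generators of $Z^\alpha_r$ the ones vanishing on $\tau_v$ are exactly the $\phi^{\mr{un},r}_X$ and those $\phi^r_{X_\beta}$ with $\phi^r_{X_\beta}(\tau_v) = 0$ — but since $\rho_N(\tau_v) = 1$, in fact $\phi^r_{X_\beta}(\tau_v)$ is $\Ad(\rho_{M+r}(\tau_v))$-related to $X_\beta$, and as $\rho_{M+r}(\tau_v)$ is trivial this computation shows the unramified part of $L^\alpha_{r,v}$ is spanned by the images of $\phi^{\mr{un},r}_X$, $X \in \ker(\alpha|_{\mf{t}^{\mr{der}}})$, together with the $\phi^r_{X_\beta}$ for all roots $\beta$; modulo coboundaries this is everything in $H^1_{\mr{unr}}$ except the line generated by the coroot $H_\alpha = d(\alpha^\vee)(1) \in \mf{t}^{\mr{der}}$, which is not in $\ker(\alpha|_{\mf{t}^{\mr{der}}})$ because $\langle \alpha, \alpha^\vee \rangle = 2$ and $p$ is odd. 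Hence the cokernel is the free rank-one $\mc{O}/\vpi^r$-module generated by the unramified cocycle sending $\sigma_v \mapsto H_\alpha$, as claimed. The dual statement then follows by the local duality pairing of Lemma \ref{localduality}: $L^{\alpha,\perp}_{r,v} \cap H^1_{\mr{unr}}(\gal{F_v}, \rho_r(\fgder)^*)$ is the annihilator, under the pairing $\inv_F(\phi \cup \psi) = \pm\langle \phi(\sigma_v), \psi(\tau_v)\rangle$ or $\langle \phi(\tau_v), \psi(\sigma_v)\rangle$, of the full ramified-plus-unramified $L^\alpha_{r,v}$ inside $H^1_{\mr{unr}}$; unwinding the pairing using the nondegenerate trace form on $\fgder$ identifies this cokernel as the unramified line generated by a cocycle whose $\sigma_v$-value pairs nontrivially with $X_\alpha$, i.e. restricts to a generator of $\Hom_{\mc{O}}(\fg_\alpha, \mc{O}/\vpi^r)$. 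The main obstacle is purely bookkeeping: keeping straight, across the reduction maps $\rho_r \to \rho_{r-1}$ and the identifications of $\rho_r(\fgder)^*$ with $\rho_r(\fgder)$ via the trace form, exactly which cocycle represents which generator and that the evaluations at $\sigma_v$ versus $\tau_v$ land where claimed; all the cohomological inputs (freeness, the short exact sequences, local duality) are already in hand from Lemma \ref{extracocycles^N} and Lemma \ref{localduality}.
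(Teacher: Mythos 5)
Your proposal is correct and follows essentially the same route as the paper's own proof: verify that the data attached to $v \in \mc{Q}_N$ satisfies the hypotheses of Lemma~\ref{extracocycles^N} with $s=M$ (using $N \geq 4M$ so that $m \geq N-M \geq 3M = 2s+M$), then read off $L^\alpha_{r,v}$ from the explicit generating cocycles $\phi^{\mr{un},r}_X$, $\phi^r_{X_\beta}$, $\phi^r_\alpha$, observing that the $\phi^r_{X_\beta}$ are unramified because $\rho_N|_{\gal{F_v}}$ is, so that $L^\alpha_{r,v}\cap H^1_{\mr{unr}}$ is exactly $\ker(\alpha|_{\mf{t}^{\mr{der}}})\oplus\bigoplus_\beta\fg_\beta$ with the cokernel line generated by $H_\alpha$; the dual statement then falls out of Lemma~\ref{localduality}. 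The only cosmetic difference is that you compute $|L^\alpha_{r,v}|$ via the quotient $Z^\alpha_r/B^1$ and a general Frobenius-(co)invariants Euler-characteristic remark, whereas the paper simply notes that, the action being trivial modulo center, all three quantities are $|\mc{O}/\vpi^r|^{\dim\fgder}$ by direct inspection; both are correct.
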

\begin{proof}
 For the first point, we simply apply Lemma \ref{extracocycles^N} with $s=M$. For the second point, we note that since $\rho_M$ is trivial at $v$, and $N(v) \equiv 1 \pmod{\vpi^M}$, $H^1_{\mr{unr}}(\gal{F_v}, \rho_r(\fgder))$ consists of all cocycles $\phi^{\mr{un}, r}_X$ for $X \in \rho_r(\fgder)$ (here we use the notation of Lemma \ref{extracocycles^N}, i.e. $\phi^{\mr{un}, r}_X(\sigma_v)= X$). By the proof of Lemma \ref{extracocycles^N}, we see that $L^\alpha_{r, v}$ consists of the unramified cocycles $\phi^{\mr{un}, r}_X$ for all $X \in \ker(\alpha|_{\mf{t}^{\mr{der}}}) \oplus \bigoplus_{\beta \in \Phi(G^0, T)} \fg_{\beta}$, as well as the ramified cocycle $\phi^r_{\alpha}$; indeed, since $\rho_N|_{\gal{F_v}}$ is unramified, the cocycles denoted $\phi^r_{X_\beta}$ in Lemma \ref{extracocycles^N} are equal to the cocycles $\phi^{\mr{un}, r}_{X_\beta}$. It follows immediately that the cokernel of $L^{\alpha}_{r, v} \cap H^1_{\mr{unr}}(\gal{F_v}, \rho_r(\fgder)) \into H^1_{\mr{unr}}(\gal{F_v}, \rho_r(\fgder))$ is the free $\mc{O}/\vpi^r$-module of rank 1 spanned by (the image of) $\phi^{\mr{un}, r}_{H_{\alpha}}$. The claim about the other inclusion follows from this description of $L^{\alpha}_{r, v}$, the fact that $H^1(\gal{F_v}, \rho_r(\fgder))= \Hom(\gal{F_v}, \rho_r(\fgder))$ is isomorphic to two copies of $\rho_r(\fgder)$ (via evaluation at $\sigma_v$ and $\tau_v$), and Lemma \ref{localduality}. 
\end{proof}

The following two central results, Proposition \ref{prop:killrel} and Theorem \ref{thm:killrel}, are our replacements for Ramakrishna's original Selmer-annihilation arguments. We will prove them under the restricted hypothesis that $\fgder$ consists of a single $\pi_0(G)$-orbit of simple factors; in our main theorem, Theorem \ref{mainthm}, we will reduce to this case.
\begin{prop} \label{prop:killrel} 
Assume that $\fgder$ consists of a single $\pi_0(G)$-orbit of simple factors. Let ${\mc{Q}}$ be any finite subset of ${\mc{Q}}_N$, and let $\phi \in H^1_{\mc{L}_{M}}(\Gamma_{F, {\mc{S}}' \cup {\mc{Q}}}, \rho_M(\fgder))$ and $\psi \in H^1_{\mc{L}_{M}^{\perp}}(\Gamma_{F, {\mc{S}}' \cup {\mc{Q}}}, \rho_M(\fgder)^*)$ be such that $0 \neq \ov{\phi} \in \ov{H^1_{\mc{L}_{M}}(\Gamma_{F, {\mc{S}}' \cup {\mc{Q}}}, \rho_M(\fgder))}$ and $0 \neq \ov{\psi} \in \ov{H^1_{\mc{L}_{M}^{\perp}}(\Gamma_{F, {\mc{S}}' \cup {\mc{Q}}}, \rho_M(\fgder)^*)}$. Then there exists a prime $v \in {\mc{Q}}_N$, with associated torus and root $(T, \alpha)$, such that
  \begin{itemize}
  \item $\ov{\psi}|_{\Gamma_{F_v}} \notin L_{1,v}^{\alpha, \perp}$; and
  \item $\phi|_{\gal{F_v}} \not \in L^{\alpha}_{M, v}$.
  \end{itemize}
\end{prop}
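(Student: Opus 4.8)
The plan is to use the flexibility of trivial primes in ${\mc{Q}}_N$, together with the vanishing results of Lemma \ref{lemma:fn}, to find a single prime $v$ that simultaneously ``detects'' both $\ov{\phi}$ and $\ov{\psi}$ in the right quotients. The key point is that for $v \in {\mc{Q}}_N$ the restriction $\rho_M|_{\gal{F_v}}$ is trivial (mod center), so $\rho_M(\Frob_v)$ lies in \emph{every} maximal torus of $G^0$; by Lemma \ref{lem:triv2} the quotient $H^1_{\mr{unr}}(\gal{F_v}, \rho_M(\fgder))/(L^{\alpha}_{M, v} \cap H^1_{\mr{unr}})$ is a free rank-one $\mc{O}/\vpi^M$-module generated by the unramified cocycle sending $\sigma_v$ to the coroot $H_\alpha$, and dually $H^1_{\mr{unr}}(\gal{F_v},\rho_M(\fgder)^*)/(L^{\alpha,\perp}_{M,v}\cap H^1_{\mr{unr}})$ is generated by an unramified class sending $\sigma_v$ to any functional whose restriction to $\fg_\alpha$ is a generator. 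So the two conditions in the conclusion translate (since $\psi$, being unramified or with controlled ramification away from the new prime, restricts to an unramified class at $v$, as does $\phi$ up to the $\vpi$-reduction) into statements about the \emph{values} $\bar\phi(\sigma_v)$ and $\bar\psi(\sigma_v)$ in $\br(\fgder)$ and $\br(\fgder)^*$: we need $\bar\psi(\sigma_v)$ to pair nontrivially against $\fg_\alpha$, and $\bar\phi(\sigma_v)$ to have nonzero ``$H_\alpha$-component'' (equivalently $\alpha(\bar\phi(\sigma_v)) \neq 0$ in the appropriate sense), for some allowable root $\alpha$.

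\textbf{Key steps.}
First I would reduce the membership conditions $\phi|_{\gal{F_v}}\notin L^{\alpha}_{M,v}$ and $\ov\psi|_{\gal{F_v}}\notin L^{\alpha,\perp}_{1,v}$ to the value conditions just described, using Lemma \ref{lem:triv2} and Lemma \ref{localduality}: here one checks that the restriction of $\phi$ to $\gal{F_v}$ lies in $H^1_{\mr{unr}}$ modulo the lower-level pieces (because $v\notin {\mc{S}}'$ so $\phi$ is unramified at $v$, and $\rho_M$ is unramified there), so $\phi|_{\gal{F_v}}\in L^{\alpha}_{M,v}$ iff its image in the rank-one cokernel vanishes iff $\alpha(\bar\phi(\sigma_v))=0$; similarly on the dual side. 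Second, I would set up the relevant splitting field: let $F_N^{**}$ be the compositum of $F_N^* = F_N(\mu_{p^{N/e}})$ with the fixed fields $K_{\bar\phi}$ and $K_{\bar\psi}$ of the reductions $\bar\phi, \bar\psi$; by Lemma \ref{lemma:fn} the classes $\bar\phi|_{\gal{F_N^*}}$ and $\bar\psi|_{\gal{F_N^*}}$ are nonzero (this is exactly where $0\neq\ov\phi$, $0\neq\ov\psi$ and the vanishing of $H^1(\Gal(F_N^*/F),\rho_M(\fgder))\to H^1(\Gal(F_N^*/F),\br(\fgder))$, resp.\ $H^1(\Gal(F_N^*/F),\rho_M(\fgder)^*)=0$, get used: they guarantee the classes do not become trivial or inflated). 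Third, apply the \v{C}ebotarev density theorem in $F_N^{**}/F$: I want a Frobenius conjugacy class in $\Gal(F_N^{**}/F)$ whose image in $\Gal(F_N^*/F)$ lies in the \v{C}ebotarev class cutting out ${\mc{Q}}_N$ (so $v\in{\mc{Q}}_N$ with some $(T,\alpha)$), and whose image under $\bar\phi$, $\bar\psi$ realizes prescribed target values $\bar\phi(\sigma_v)=x\in\br(\fgder)$, $\bar\psi(\sigma_v)=y\in\br(\fgder)^*$. Fourth, and this is the group-theoretic heart: I must show that among the roots $\alpha$ available (those $\alpha$ for which $\rho_N(\sigma_v)$ can be put in the corresponding $T\cdot Z_{G^0}(\fg_\alpha)$ shape and $\beta(\rho_{M+1}(\sigma_v))\not\equiv 1$ for all $\beta$), there is one with $\alpha(x)\neq 0$ and $\langle y, \fg_\alpha\rangle\neq 0$ — here I would use the hypothesis that $\fgder$ is a single $\pi_0(G)$-orbit of simple factors to ensure the root vectors span $\fgder$ over $\Fp[\gal F]$ (as in the proof of Proposition \ref{klrstep1''}), so that no proper subspace (like $\ker\alpha$ or $\fg_\alpha^\perp$) can contain all the relevant data for every root; the orbit condition is what makes the Galois action transitive enough to deduce this from the nonvanishing of $\bar\phi$, $\bar\psi$ alone.

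\textbf{Main obstacle.}
The hard part will be the fourth step: arranging the \emph{simultaneous} genericity of $\bar\phi(\sigma_v)$ and $\bar\psi(\sigma_v)$ with respect to a \emph{common} root $\alpha$, while keeping $\alpha$ compatible with the constraint (built into ${\mc{Q}}_N$) that $\rho_N(\sigma_v)$ has the prescribed torus shape \emph{and} that $\beta(\rho_{M+1}(\sigma_v))\not\equiv 1$ for \emph{all} roots $\beta$. The subtlety is that the \v{C}ebotarev condition simultaneously fixes the conjugacy class of $\rho_N(\sigma_v)$ (hence the available $(T,\alpha)$ up to conjugacy) and the values $\bar\phi(\sigma_v)$, $\bar\psi(\sigma_v)$; these choices must be shown to be \emph{independent}, which is where strong linear disjointness of $K(\rho_N(\fgder))$, $K(\mu_{p^\infty})$, $K_{\bar\phi}$, and $K_{\bar\psi}$ over appropriate subfields — exactly the kind of disjointness established in Assumption \ref{multfree} (no common subquotient among $\br(\fgder)$, $\br(\fgder)^*$, trivial) and used throughout \S\ref{klr^Nsection} — is essential. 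Once the disjointness is in hand, one runs the density argument, prescribing $\rho_N(\sigma_v)$ to a regular-enough element so that a suitable root $\alpha$ exists, and prescribing $\bar\phi(\sigma_v)$, $\bar\psi(\sigma_v)$ (via the surjectivity of restriction-to-inertia at the \v{C}ebotarev prime together with the nonvanishing from Lemma \ref{lemma:fn}) to lie off the finitely many ``bad'' hyperplanes $\ker\alpha$ and $\fg_\alpha^\perp$; since $p\gg_G 0$ these hyperplanes do not exhaust the space, and the desired $v$ exists.
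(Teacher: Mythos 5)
Your overall framing is close to the paper's, and several of your steps match it: the paper also uses the inflation--restriction diagram plus Lemma \ref{lemma:fn} to show the relevant restrictions to $\gal{F_N^*}$ are nonzero, exploits strong linear disjointness of $F_N^*(\phi)$ and $F_N^*(\ov\psi)$ over $F_N^*$, and then applies \v{C}ebotarev in the corresponding composite. But there are two genuine gaps.

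First, you reduce both conclusions to conditions on the mod $\vpi$ reductions $\bar\phi(\sigma_v)$, $\bar\psi(\sigma_v)$, and claim Lemma \ref{lemma:fn} gives $\bar\phi|_{\gal{F_N^*}} \neq 0$. That lemma does not say this: it says the \emph{map} $H^1(\Gal(F_N^*/F), \rho_M(\fgder)) \to H^1(\Gal(F_N^*/F), \br(\fgder))$ is zero, which (as in the paper's diagram) forces the mod $\vpi^M$ restriction $\phi|_{\gal{F_N^*}}$ to be nonzero; it does not control $H^1(\Gal(F_N^*/F), \br(\fgder))$ itself, which can be nonzero (cf.\ Remark \ref{advanishing}), so $\bar\phi|_{\gal{F_N^*}}$ could a priori vanish even when $\ov\phi \neq 0$. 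Relatedly, $\phi|_{\gal{F_v}} \notin L^\alpha_{M,v}$ is the statement that the image of $\phi(\sigma_v)$ in the rank-one $\mc{O}/\vpi^M$-cokernel of Lemma \ref{lem:triv2} is nonzero, not that $\alpha(\bar\phi(\sigma_v)) \neq 0$. The paper therefore works throughout with the nonzero $\gal{F}$-stable submodule $\phi(\gal{F_N^*}) \subset \rho_M(\fgder)$, and handles the passage from $\rho_M(\fgder)$ to $\br(\fgder)$ inside the Lie-theoretic claim by a separate $\vpi$-power reduction.

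Second, and more seriously, you do not establish the existence of a \emph{single} pair $(T,\alpha)$ satisfying both non-containment conditions simultaneously. Your appeal to ``root vectors span $\fgder$ over $\Fp[\gal{F}]$'' (as in Proposition \ref{klrstep1''}) shows that for $\phi(\gal{F_N^*})$ alone there is a suitable $(T,\alpha)$, and likewise, separately, for $\ov\psi(\gal{F_N^*})$; it does not show these can be taken equal, and the ``available'' $(T,\alpha)$ is in fact determined by the choice of $\rho_N(\sigma_v)$ rather than free to vary independently. The technical heart of the paper's proof (after reducing to $G$ simple via the single-orbit hypothesis, and from $\rho_M(\fgder)$ to $\br(\fgder)$ as above) is that for \emph{any} fixed nonzero $\ov A \in \fgder$, $\ov B \in (\fgder)^*$, the set of $g \in G(k)$ with $\Ad(g)\ov A \notin \ker(\alpha_1|_{\mf{t}_1}) \oplus \bigoplus_\beta \fg_\beta$ \emph{and} $\Ad(g)\ov B \notin \fg_{\alpha_1}^\perp$ is nonempty; this is proved via a point-counting argument using the Bruhat decomposition and a uniform degree bound on the two defining closed subschemes, valid for $p \gg_G 0$. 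Your sketch replaces this with the assertion that ``these hyperplanes do not exhaust the space,'' but the relevant object is not a union of finitely many hyperplanes in $\fgder$ being avoided by a freely chosen vector; it is a union of two subvarieties of $G$ that one must show does not cover $G(k)$, which is what the counting argument accomplishes.
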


\begin{proof}
We first note that both $\phi|_{\gal{F_N^*}}$ and $\ov{\psi}|_{\gal{F_N^*}}$ are non-zero. For the former, consider the diagram
\[
 \xymatrix{
      0 \ar[r]  & H^1(\Gal(F_N^*/F), \rho_M(\fgder)) \ar[r] \ar[d]^0 &
      H^1(\Gamma_{F,{\mc{S}}' \cup {\mc{Q}}}, \rho_{M}(\fgder)) \ar[r] \ar[d] & H^1(\Gamma_{F_N^*},
      \rho_M(\fgder)) \ar[d] \\
      0 \ar[r] & H^1(\Gal(F_N^*/F), \br(\fgder)) \ar[r]  &
      H^1(\Gamma_{F,{\mc{S}}' \cup {\mc{Q}}}, \br(\fgder)) \ar[r]  & H^1(\Gamma_{F_N^*},
      \br(\fgder)), 
}
\]
where the (exact) rows are inflation-restriction sequences, and where
Lemma \ref{lemma:fn} implies the map labeled by
$0$ is zero. If $\phi|_{\gal{F_N^*}}$ were zero, then
$\ov{\phi}$ would be zero, a contradiction. In particular,
$\phi(\gal{F_N^*})$ is a non-zero
$\gal{F}$-stable submodule of
$\rho_M(\fgder)$. Similarly, Lemma \ref{lemma:fn} implies that
$\ov{\psi}|_{\gal{F_N^*}}$ is non-zero. As usual, the fixed fields
$F_N^*(\phi)$ and
$F_N^*(\ov{\psi})$ of these restricted cocycles are then non-trivial
and linearly disjoint extensions of
$F_N^*$. We claim that there is a pair $(T,
\alpha)$ consisting of a split maximal torus $T$ and a root $\alpha
\in \Phi(G^0, T)$ such that
$\phi(\gal{F_N^*})$ is not contained in $\ker(\alpha|_{\mf{t}}) \oplus
\bigoplus_{\beta} \fg_{\beta}$ and such that
$\ov{\psi}(\gal{F_N^*})$ is not contained in the annihilator of
$\fg_{\alpha}$ under local duality.  Granted this claim, we explain
how to finish the proof. Let $\gamma_1 \in
\gal{F}$ be any element such that
$\rho_N(\gamma_1)$ satisfies the conditions (on
$\rho_N(\sigma_v)$ and
$\kappa(\gamma_1)$) of Definition \ref{reldefaux} for
the pair $(T,
\alpha)$ (we have already noted why such elements
    exist).
  By the claim and the linear disjointness, we can choose an element $\gamma_2 \in \Gal(F_N^*(\phi, \ov{\psi})/F_N^*)$ such that 
  \begin{align*}
 & \phi(\gamma_2 \gamma_1)= \phi(\gamma_2) + \phi(\gamma_1) \not \in \ker(\alpha|_{\mf{t}}) \oplus \bigoplus_{\beta} \fg_{\beta},\\
 & \ov{\psi}(\gamma_2 \gamma_1)= \ov{\psi}(\gamma_2) + \ov{\psi}(\gamma_1) \not \in \fg_{\alpha}^{\perp}.
  \end{align*}
  Applying the \v{C}ebotarev density theorem, we take $v$ to be any prime in the positive-density set of primes whose Frobenius elements are equal to the element $\gamma_2 \gamma_1$ in $\Gal(F_N^*(\phi, \ov{\psi})/F)$.
  
To finish the proof, we return to the claim that such a pair $(T,
\alpha)$ exists. First note that the images $\phi(\gal{F_N^*})$ and
$\ov{\psi}(\gal{F_N^*})$ have non-trivial projection to each simple
factor of $\fgder$: this follows from the $\gal{F}$-equivariance of
$\phi|_{\gal{F_N^*}}$ and $\ov{\psi}|_{\gal{F_N^*}}$ and the fact that
$\fgder$ is a single $\pi_0(G)$-orbit of simple factors. We will meet
the desired conditions on $(T, \alpha)$ if and only if we do so after
replacing $\phi(\gal{F_N^*})$ and $\ov{\psi}(\gal{F_N^*})$ by their
projections to a single simple factor of $\fgder$ (namely, the one
containing $\fg_{\alpha}$); thus in the (purely Lie-theoretic)
remainder of the argument we may and do assume that $G$ is connected
and simple. Fix a pair $(T_1, \alpha_1)$. For any $g \in G(\mc{O})$,
we write $(T_g, \alpha_g)$ for $\Ad(g)(T_1, \alpha_1)$. To the non-zero cocycle $\ov{\psi}$ we can associate a proper closed subscheme $Y_{\ov{\psi}}$ of $G_k$ whose functor of points assigns to a $k$-algebra $R$ the set:
 \[
 Y_{\ov{\psi}}(R)= \{g \in G(R): \langle \ov{\psi}(\gal{F_N^*}), \mf{g}_{\alpha_g} \rangle = 0 \}.
 \]
 For notational convenience, we modify the initial choice $(T_1, \alpha_1)$ so that $1 \not \in Y_{\ov{\psi}}$.\footnote{This modification depends on $\ov{\psi}$, but the bound on $p$ we derive will not depend on this.} We let $U_{\ov{\psi}}$ be the open complement $G \setminus Y_{\ov{\psi}}$, and we let $U_{\ov{\psi}, M}$ be the following set (not scheme):
 \[
 U_{\ov{\psi}, M}= \{g \in G(\mc{O}/\vpi^M): g\pmod{\vpi} \in U_{\ov{\psi}}(k)\}.
 \]
 We claim there is a constant $C_G$ depending only on the root datum of $G$ such that for all $p > C_G$ the intersection
 \[
 \bigcap_{g \in U_{\ov{\psi}, M}} \Ad(g) \left( \ker(\alpha_1|_{\mf{t}_1}) \oplus \bigoplus_{\beta \in \Phi(G^0, T_1)} \fg_{\beta} \right)
 \]
 is zero. In what follows, we write $X_g$ for the term in this intersection corresponding to $g$, and for any subset $I$ of $U_{\ov{\psi}, M}$ we write $X_I$ for $\bigcap_{g \in I} X_g$. Granted that $X_{U_{\ov{\psi}, M}}$ is zero, we are done: there is then some $g \in U_{\ov{\psi}, M}$ such that $\phi(\gal{F_N^*})$ is not contained in $X_g$, and $(T_g, \alpha_g)$ is then the desired pair $(T, \alpha)$.
 
We next observe that $X_{\wh{G}(\mc{O}/\vpi^M)}$ is contained in $\vpi^{M-1} \rho_M(\fgder)$: indeed, for any $Z \in \rho_M(\fgder)$ and $1 \leq r \leq M$ such that $Z \not \equiv 0 \pmod{\vpi^r}$, the span $\mc{O}/\vpi^M[\wh{G}(\mc{O}/\vpi^M)] \cdot Z$ contains $\vpi^{r} \rho_M(\fgder)$. If $X_{\wh{G}(\mc{O}/\vpi^M)}$ were not contained in $\vpi^{M-1} \rho_M(\fgder)$, then it would contain $\vpi^{M-1} \rho_M(\fgder)$, which is not possible (for $p \gg_G 0$) for even a single $X_g$. Consequently the entire intersection $X_{U_{\ov{\psi}, M}}$ is zero provided $\bigcap_{g \in U_{\ov{\psi}}(k)} \ov{X}_g$ is zero, where we write $\ov{X}_g$ for $\Ad(g)(X_1 \otimes k)$. It therefore suffices to show that if we fix any two non-zero elements $\ov{A} \in \fgder$ and $\ov{B} \in (\fgder)^*$, then for $p \gg_G 0$ there exists $g \in G(k)$ such that $\Ad(g)^{-1} \ov{A} \not \in \ker(\alpha_{\mf{t}_1}) \oplus \bigoplus_{\beta} \fg_{\beta}$ and $\Ad(g)^{-1} \ov{B} \not \in \fg_{\alpha_1}^\perp$: indeed, taking $\ov{B}$ to be a non-zero element of $\ov{\psi}(\gal{F_N^*})$, the set $U_{\ov{\psi}}(k)$ contains the locus thus associated to $\ov{B}$, so if $\ov{A}$ were a non-zero element of $\cap_{g \in U_{\ov{\psi}(k)}} \ov{X}_g$ we would obtain a contradiction.

Thus we are reduced to showing that if $p \gg_G 0$, then for \textit{any} pair $(\ov{A}, \ov{B})$ as above, 
\[
 \{\text{$g \in G(k): \Ad(g) \ov{A} \not \in \ker(\alpha_{\mf{t}_1}) \oplus \bigoplus_{\beta \in \Phi(G^0, T_1)} \fg_{\beta}$ and $\Ad(g) \ov{B} \not \in \fg_{\alpha_1}^\perp$}\}
\]
is non-empty. The locus in question is the set of $k$-points of the complement $G \setminus (\Phi_{\ov{A}} \cup \Phi_{\ov{B}})$ of two proper closed subschemes $\Phi_{\ov{A}}$ and $\Phi_{\ov{B}}$. By the Bruhat decomposition (\cite[Corollary 14.14]{borel:linalg}), $G$ contains an open subset $X$ that is isomorphic to an explicit open subset, namely $\mathbb{G}_m^{\dim(T)} \times \mathbb{A}^{|\Phi(G, T)|}$, of an affine space $\mathbb{A}^{d_G}$, where $d_G= \dim(G)$, so $|G(k)| \geq (q-1)^{\dim(T)}\cdot q^{|\Phi(G, T)|}$. It suffices to produce a lower bound for the number of $k$-points of $X \setminus \left(X \cap (\Phi_{\bar{A}} \cup \Phi_{\bar{B}})\right)$. From the explicit descriptions of $\Phi_{\ov{A}}$ and $\Phi_{\ov{B}}$, there is an integer $d$ depending only on the root datum of $G$ such that $X \cap \Phi_{\ov{A}}$ and $X \cap \Phi_{\ov{B}}$ are cut out (in $X \subset \mathbb{A}^{d_G}$) by some equations of degree at most $d$: note that if $t_1, \ldots, t_{\dim(T)}$ are the coordinates on the copies of $\mathbb{G}_m$, the equations we at first write down have $t_i^{-1}$ in them, but multiplying by a power of $\prod t_i$ that is independent of $\ov{A}$ and $\ov{B}$ we can assume that the equations do extend to make sense on all of $\mathbb{A}^{d_G}$, and it is at this point that we extract the degree $d$. Therefore $|\left(X \cap (\Phi_{\ov{A}} \cup \Phi_{\ov{B}})\right)(k)|$ is at most $2d \cdot q^{d_G-1}$, since any non-zero $f \in k[x_1, \ldots, x_a]$ of degree $b$ has at most $b q^{a-1}$ solutions. We conclude that
\[
|G \setminus (\Phi_{\ov{A}} \cup \Phi_{\ov{B}})(k)| \geq (q-1)^{\dim(T)} \cdot q^{|\Phi(G, T)|} - 2dq^{d_G-1} \geq \frac{1}{2^{\dim(T)}}q^{d_G}- 2d q^{d_G-1}.
\]
In particular, for $p \gg_G 0$, this complement is non-empty, and the proof is complete. 
\end{proof}
  Recall that we say a submodule $L_{M, v} \subset H^1(\gal{F_v}, \rho_M(\fgder))$ is \textit{balanced} if $|L_{M, v}|$ satisfies part (3) of the conclusion of Proposition \ref{prop:gensmooth}.
\begin{thm} \label{thm:killrel} Continue to assume that $\fgder$ consists of a single $\pi_0(G)$-orbit of simple factors. For any initial set $\mc{L}_M= \{L_{M, v}\}_{v \in {\mc{S}}'}$ of balanced local conditions at primes $v \in {\mc{S}}'$, there exists a finite set ${\mc{Q}} \subset {\mc{Q}}_N$ such that  
  \[
    \ov{H^1_{\mc{L}_{M} \cup \{L_{M, v}\}_{v \in {\mc{Q}}}}(\Gamma_{{\mc{S}}' \cup {\mc{Q}}}, \rho_M(\fgder))} = 0
  \]
  and 
  \[ 
    \ov{H^1_{\mc{L}_{M}^{\perp} \cup \{L_{M, v}^\perp\}_{v \in {\mc{Q}}}}(\Gamma_{{\mc{S}}' \cup {\mc{Q}}}, \rho_M(\fgder)^*)}
    = 0,
  \]
  where for $v \in {\mc{Q}}$ we take $L_{M, v}$ to be the submodule (where for notational simplicity we omit the root $\alpha$) in Lemma \ref{lem:triv2}. 
\end{thm}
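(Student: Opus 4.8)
The plan is to kill the relative dual Selmer group first by an inductive "one prime at a time" argument, using Proposition \ref{prop:killrel} as the engine, and then to deduce the simultaneous vanishing of the relative Selmer group from the balancedness of the relative Selmer and dual Selmer groups (Lemma \ref{lem:bal}), together with the observation that adding a prime $v \in {\mc{Q}}_N$ with the local condition $L_{M,v}$ of Lemma \ref{lem:triv2} changes the Selmer/dual-Selmer "size balance" in a controlled way.

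\medskip

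\noindent\textbf{Step 1: the inductive setup.} I would fix the starting local conditions $\mc{L}_M = \{L_{M,v}\}_{v \in {\mc{S}}'}$, which are balanced by hypothesis; note that by Lemma \ref{lem:bal} (the invariants $\br(\fgder)^{\gal{F}}$ and $(\br(\fgder)^*)^{\gal{F}}$ vanish under Assumption \ref{multfree}, which holds by Corollary \ref{irrcor}) the relative Selmer and relative dual Selmer groups $\ov{H^1_{\mc{L}_M}(\Gamma_{F,{\mc{S}}'}, \rho_M(\fgder))}$ and $\ov{H^1_{\mc{L}_M^\perp}(\Gamma_{F,{\mc{S}}'}, \rho_M(\fgder)^*)}$ have the same $\Fp$-dimension. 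The inductive claim I would prove is: if at some stage we have a finite set ${\mc{Q}} \subset {\mc{Q}}_N$ with the relative dual Selmer group nonzero, then we can enlarge ${\mc{Q}}$ by one prime $v \in {\mc{Q}}_N$, with the local condition $L_{M,v}$ from Lemma \ref{lem:triv2}, so that the relative dual Selmer group strictly decreases in dimension, while the relative Selmer group does not increase. Since the relative dual Selmer group is finite-dimensional, finitely many steps bring it to $0$.

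\medskip

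\noindent\textbf{Step 2: the inductive step via Proposition \ref{prop:killrel}.} Suppose $\ov{H^1_{\mc{L}_M^\perp \cup \{L_{M,w}^\perp\}_{w \in {\mc{Q}}}}(\Gamma_{F, {\mc{S}}' \cup {\mc{Q}}}, \rho_M(\fgder)^*)} \neq 0$, so there is $\psi$ with $0 \neq \ov\psi$ in this relative dual Selmer group. If the relative Selmer group is also nonzero, pick $\phi$ with $0 \neq \ov\phi$ in it; if it is zero, the argument simplifies and one only needs to control $\psi$ (in which case one applies the one-class version of Proposition \ref{prop:killrel}, or simply notes that the second bullet imposes no obstruction). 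Proposition \ref{prop:killrel} produces a prime $v \in {\mc{Q}}_N$ with $\ov\psi|_{\gal{F_v}} \notin L_{1,v}^{\alpha,\perp}$ and $\phi|_{\gal{F_v}} \notin L_{M,v}^\alpha$. Adjoining $v$ with local condition $L_{M,v} = L_{M,v}^\alpha$: the condition $\phi|_{\gal{F_v}} \notin L_{M,v}^\alpha$ means $\phi$ is killed in the new relative Selmer group, so the relative Selmer group does not grow (and in fact the map on relative Selmer groups stays injective, by the usual comparison of Selmer conditions differing at one prime, since we are only adding — not relaxing — a condition, cf.\ the exact sequences underlying Lemma \ref{lem:exseq}). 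The condition $\ov\psi|_{\gal{F_v}} \notin L_{1,v}^{\alpha,\perp}$ kills $\ov\psi$ in the new relative dual Selmer group; and a Greenberg--Wiles / Poitou--Tate count (using that $L_{M,v}$ is balanced, as recorded via Lemma \ref{lem:triv2} and Proposition \ref{prop:gensmooth}(3)) shows the new mod-$p$ dual Selmer group has dimension at most that of the old one, hence so does its image, the relative dual Selmer group — so it strictly decreases. Here I must be careful to run the balancedness bookkeeping at \emph{every} level $1 \leq r \leq M$, not just $r = 1$, so that the hypotheses of Lemma \ref{lem:bal} remain in force for the new ${\mc{Q}}$; the cocycle spaces $Z^\alpha_{r,v}$ of Lemma \ref{extracocycles^N} (valid since $N \geq 4M$, via Lemma \ref{lem:triv2}) are exactly what make the sequences $0 \to {{\z}}_{a,v} \to {{\z}}_{a+b,v} \to {{\z}}_{b,v} \to 0$ hold, so Lemma \ref{lem:exseq} and hence Lemma \ref{lem:bal} apply throughout.

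\medskip

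\noindent\textbf{Step 3: conclusion.} Once the relative dual Selmer group is zero, Lemma \ref{lem:bal} applied with the final set of balanced local conditions $\mc{L}_M \cup \{L_{M,v}\}_{v \in {\mc{Q}}}$ forces the relative Selmer group $\ov{H^1_{\mc{L}_M \cup \{L_{M,v}\}_{v \in {\mc{Q}}}}(\Gamma_{{\mc{S}}' \cup {\mc{Q}}}, \rho_M(\fgder))}$ to have dimension zero as well, giving both vanishing statements. The main obstacle is Step 2 — specifically, verifying that adjoining a single prime $v \in {\mc{Q}}_N$ with the local condition $L_{M,v}^\alpha$ genuinely decreases the relative dual Selmer dimension rather than leaving it unchanged: this requires not just that $\ov\psi$ itself is killed, but that no new dual Selmer classes are created, which is where the precise structure of $L_{M,v}^\alpha$ and $L_{M,v}^{\alpha,\perp}$ from Lemma \ref{lem:triv2} (their intersections with the unramified subspaces having cokernel exactly $\mc{O}/\vpi^r$, generated by the explicit coroot/root-dual elements) and the balancedness are essential. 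A secondary technical point is ensuring the relative Selmer group never increases; this follows because we only ever strengthen local conditions (adding $L_{M,v}$ at a new prime $v$, which for cocycles unramified outside the old set is an extra constraint at $v$), so the relative Selmer groups form a nonincreasing chain.
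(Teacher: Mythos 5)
Your proposal chooses the wrong monovariant, and this is a genuine gap rather than a stylistic difference. You try to run an induction in which the dimension of the \emph{relative} dual Selmer group strictly decreases at each step (and the relative Selmer group does not increase). The paper explicitly disavows this: the parenthetical remark in the proof says ``The reader should note that the logic of the proof is not that at each step we decrease the size of the relative Selmer group,'' and the remark following the theorem observes that the number of auxiliary primes needed may exceed the dimension of the relative dual Selmer group --- which would be impossible if your monovariant decreased by one each step. The quantity that actually decreases is the cardinality of the \emph{absolute} mod~$\vpi^M$ Selmer group $H^1_{\mc{L}_M}(\Gamma_{F,\mc{S}'}, \rho_M(\fgder))$, and the induction terminates once either this group or the relative Selmer group vanishes.

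The mechanism by which you claim the decrease --- ``we are only adding, not relaxing, a condition'' --- is also incorrect. Passing from the Selmer group over $\mc{S}'$ to one over $\mc{S}' \cup \{v\}$ replaces the implicit unramified condition at $v$ by $L_{M,v}^{\alpha}$; these two rank-$\dim(\fgder)$ submodules of $H^1(\gal{F_v}, \rho_M(\fgder))$ are incomparable (by Lemma \ref{lem:triv2}, $L_{M,v}^\alpha$ contains a ramified cocycle and misses the unramified cocycle $\phi^{\mathrm{un},M}_{H_\alpha}$), so the Selmer group can a priori grow or shrink, and your appeal to monotonicity via Lemma \ref{lem:exseq} does not apply. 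What the paper does is introduce the intermediate relaxed condition $L'_{M,v} = L_{M,v} + H^1_{\mathrm{unr}}(\gal{F_v},\rho_M(\fgder))$, observes that $(L'_{M,v})^\perp = L_{M,v}^\perp \cap H^1_{\mathrm{unr}}$ and that $\psi$ (via the first bullet of Proposition \ref{prop:killrel}) witnesses an exact drop of $\mc{O}/\vpi^M$ in the corresponding dual Selmer group, and then deduces from two applications of the Greenberg--Wiles formula that $H^1_{\mc{L}_M}(\Gamma_{F,\mc{S}'},\rho_M(\fgder)) = H^1_{\mc{L}_M \cup L'_{M,v}}(\Gamma_{F,\mc{S}' \cup \{v\}},\rho_M(\fgder))$. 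Only then does the second bullet (on $\phi$) force the strict inclusion $H^1_{\mc{L}_M \cup L_{M,v}} \subsetneq H^1_{\mc{L}_M \cup L'_{M,v}} = H^1_{\mc{L}_M}$. This two-step comparison through $L'_{M,v}$ is the essential device your proposal is missing. Your use of Lemma \ref{lem:bal} to close the argument once relative dual Selmer vanishes, and your verification that the new conditions remain balanced via Lemma \ref{lem:triv2}, are both correct; but without the correct monovariant you have not shown the process terminates at zero relative dual Selmer.
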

\begin{proof}
  We will show that either the relative Selmer group in question is zero, or we can find a prime $v \in {\mc{Q}}_N$ such that 
  \[
  |H^1_{\mc{L}_{M} \cup L_{M, v}}(\Gamma_{{\mc{S}}' \cup \{v\}}, \rho_M(\fgder))|<|H^1_{\mc{L}_{M}}(\Gamma_{{\mc{S}}'}, \rho_M(\fgder))|. 
  \]
Applying this assertion inductively, we either arrive at a trivial relative Selmer group as in the conclusion of the Theorem or, even stronger, we annihilate the entire mod $\vpi^M$ Selmer group. (The reader should note that the logic of the proof is not that at each step we decrease the size of the relative Selmer group.)

By the Greenberg--Wiles formula and the assumption that the local conditions are balanced, if $\ov{H^1_{\mc{L}_{M}}(\Gamma_{{\mc{S}}'}, \rho_M(\fgder))}$ is non-zero, then we can find $\phi$ and $\psi$ as in the hypotheses of Proposition \ref{prop:killrel}. Let $v \in {\mc{Q}}_N$ be any prime as in its conclusion. Since $\ov{\psi}|_{\Gamma_{F_v}}$ is the (non-zero) image of $\psi|_{\Gamma_{F_v}} \in H^1_{\unr}(\Gamma_{F_v}, \rho_M(\fgder)^*)$ in $H^1_{\unr}(\Gamma_{F_v}, \br(\fgder)^*)$, it
follows that $\psi$ must generate a submodule of $H^1_{\unr}(\Gamma_{F_v}, \rho_M(\fgder)^*)$ isomorphic to $\mc{O}/\varpi^M$. The choice of local condition at $v$ (namely, the result of Lemma \ref{lem:triv2}) and the fact that $\ov{\psi}|_{\Gamma_{F_v}} \notin L_{1,v}^{\perp}$ then implies that $\varpi^i \psi|_{\Gamma_{F_v}} \in L_{M,v}^{\perp}$ if and only if $\varpi^i \psi =0$.

Let $L_{M,v}' = L_{M,v} + H^1_{\unr}(\Gamma_{F_v}, \rho_M(\fgder))$, so (by Lemma \ref{lem:triv2}) $L_{M,v}'/L_{M,v} \cong \mc{O}/\varpi^{M}$. Since $(L_{M,v}')^{\perp} = L_{M,v}^{\perp} \cap H^1_{\unr}(\Gamma_{F_v}, \rho_M(\fgder)^*)$, it follows from the previous paragraph that
\[
H^1_{\mc{L}_M^{\perp}}(\Gamma_{F,{\mc{S}}'}, \rho_M(\fgder)^*)/ H^1_{\mc{L}_M^{\perp} \cup (L_{M,v}')^{\perp}}(\Gamma_{F,{\mc{S}}' \cup \{v\}}, \rho_M(\fgder)^*)
\] 
is also isomorphic to $\mc{O}/\varpi^M$. Two applications of the Greenberg--Wiles formula then imply that the inclusion\begin{equation} \label{eq:inc}
    H^1_{\mc{L}_{M}}(\Gamma_{F,{\mc{S}}'}, \rho_M(\fgder)) \hookrightarrow
    H^1_{\mc{L}_{M} \cup L_{M,v}'}(\Gamma_{F,{\mc{S}}' \cup \{v\}}, \rho_M(\fgder))
\end{equation}
is an equality. But now the fact that $\phi|_{\gal{F_v}}$ does not belong to $L_{M, v}$ implies that the inclusion
\[
H^1_{\mc{L}_M \cup L_{M, v}}(\gal{F, {\mc{S}}' \cup \{v\}}, \rho_M(\fgder)) \subset H^1_{\mc{L}_M \cup L'_{M, v}}(\gal{F, {\mc{S}}' \cup \{v\}}, \rho_M(\fgder))=H^1_{\mc{L}_M}(\gal{F, {\mc{S}}'}, \rho_M(\fgder))
\]
is strict, and our induction argument can proceed.

\end{proof}
\begin{rmk}
We remark  that it is entirely  possible that   the relative mod $p$ dual Selmer group $\ov{H^1_{\mc{L}_M^{\perp}}(\Gamma_{F,{\mc{S}}}, \rho_M(\fgder)^*)}$ is all of $ H^1_{\mathcal{L}_1^{\perp}}(\Gamma_{F,{\mc{S}}' \cup {\mc{Q}}},\br(\fgder)^*)$. This is because while Lie elements of mod $p$ Selmer groups   by Lazard's results  do not lift to mod $p^M$ Lie elements,  they could well lift to mod $p^M$ elements of Galois cohomology.  One of the key steps  of killing relative dual Selmer is that given $\alpha \in H^1_{{\mathcal{L}_M}}(\Gamma_{F,{\mc{S}}' \cup {\mc{Q}}},\rho_M(\fgder))$  that maps to  a non-zero element of the Selmer group  $H^1_{\mathcal{L}_1}(\Gamma_{F,{\mc{S}}' \cup {\mc{Q}}},\br(\fgder))$, there are auxiliary primes  at which $\alpha$ does not satisfy the Selmer condition. We do this simultaneously with killing suitable elements of dual Selmer. This  is  the  reason that the  number of places needed to kill the relative dual Selmer  $\ov{H^1_{\mc{L}_M^{\perp}}(\Gamma_{F,{\mc{S}}}, \rho_M(\fgder)^*)}$  may be larger than its dimension.
\end{rmk}

\subsection{Main theorem}
We can now finally collect all of our results to prove the main theorem. 
\begin{thm}\label{mainthm}
Let $p \gg_G 0$ be a prime. Let $F$ be a totally real field, and let $\br \colon \gal{F, {\mc{S}}} \to G(k)$ be a continuous representation unramified outside a finite set of finite places ${\mc{S}}$ containing the places above $p$. Let $\tF$ denote the smallest extension of $F$ such that $\br(\gal{\tF})$ is contained in $G^0(k)$, and assume that  $[\tF(\zeta_p): \tF]$ is strictly greater than the integer $a_G$ arising in Lemma \ref{cyclicq}. Fix a lift $\mu \colon \gal{F, {\mc{S}}} \to G/G^{\mr{der}}(\mc{O})$ of $\bar{\mu}=\br \pmod{G^{\mr{der}}}$, and assume that $\br$ satisfies the following:
\begin{itemize}
\item $\br$ is odd, i.e. for all infinite places $v$ of $F$, $h^0(\gal{F_v}, \br(\fgder))= \dim(\mr{Flag}_{G^{\mr{der}}})$.
\item $\br|_{\gal{\tF(\zeta_p)}}$ is absolutely irreducible.
\item For all $v \in {\mc{S}}$, $\br|_{\gal{F_v}}$ has a lift $\rho_v \colon \gal{F_v} \to G(\mc{O})$ of type $\mu|_{\gal{F_v}}$; and that for $v \mid p$ this lift may be chosen to be de Rham and regular in the sense that the associated Hodge--Tate cocharacters are regular.
\end{itemize}
Then there exist a finite extension $E'$ of $E=\Frac(\mc{O})$ (whose ring of integers and residue field we denote by $\mc{O}'$ and $k'$), and depending only on the set $\{\rho_v\}_{v \in {\mc{S}}}$; a finite set of places $\wt{{\mc{S}}}$ containing ${\mc{S}}$; and a geometric lift 
\[
\xymatrix{
& G(\mc{O}') \ar[d] \\
\gal{F, \wt{{\mc{S}}}} \ar[r]_{\br} \ar[ur]^{\rho} & G(k') 
}
\]
of $\br$, and having multiplier $\mu$, such that $\rho(\gal{F})$ contains $\wh{G^{\mr{der}}}(\mc{O}')$. Moreover, if we fix an integer $t_0$ and for each $v \in {\mc{S}}$ an irreducible component defined over $\mc{O}$ and containing $\rho_v$ of:
\begin{itemize}
\item for $v \in {\mc{S}} \setminus \{v \mid p\}$, the generic fiber of the local lifting ring, $R^{\square, \mu}_{\br|_{\gal{F_v}}}[1/\vpi]$ (where $R^{\square, \mu}_{\br|_{\gal{F_v}}}$ pro-represents $\Lift_{\br}^{\mu}|_{\gal{F_v}}$); and
\item for $v \mid p$, the lifting ring $R_{\br|_{\gal{F_v}}}^{\square, \mu, \tau, \mbf{v}}[1/\vpi]$ whose $\ov{E}$-points parametrize lifts of $\br|_{\gal{F_v}}$ with specified inertial type $\tau$ and Hodge type $\mbf{v}$ (see \cite[Prop. 3.0.12]{balaji} for the construction of this ring);  
\end{itemize}
then the global lift $\rho$ may be constructed such that, for all
$v \in {\mc{S}}$, $\rho|_{\gal{F_v}}$ is congruent modulo $\vpi^{t_0}$
to some $\wh{G^{\mr{der}}}(\mc{O}')$-conjugate of $\rho_v$, and
$\rho|_{\gal{F_v}}$ belongs to the specified irreducible component for
every $v \in {\mc{S}}$.\footnote{To be clear, the set $\wt{{\mc{S}}}$
  may depend on the integer $t_0$, but the extension $\mc{O}'$ does
  not depend on $t_0$.}
\end{thm}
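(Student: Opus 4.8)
The plan is to combine the three main ingredients developed in the paper: (1) the mod $\vpi^N$ lifting of Theorem \ref{klr^N}, which provides a lift $\rho_N \colon \gal{F, {\mc{S}}'} \to G(\mc{O}/\vpi^N)$ interpolating prescribed local behavior at ${\mc{S}}$ and with image containing $\wh{G}^{\mr{der}}(\mc{O}/\vpi^N)$; (2) the generic-fiber analysis of \S\ref{genericfibersection}, particularly Proposition \ref{prop:gensmooth} together with Lemma \ref{smoothapprox}, which lets us produce (after a finite extension $\mc{O}'/\mc{O}$ depending only on $\{\rho_v\}$) formally smooth $\mc{O}'$-points $\rho_v'$ on the specified irreducible components, approximating $\rho_v$ modulo $\vpi^{t_0}$ as closely as we wish, and attaches to these points balanced local conditions $\mc{L}_M$ that ``behave as if formally smooth'' modulo sufficiently high powers of $\vpi$; and (3) the relative deformation theory of \S\ref{relativeliftsection}, namely Theorem \ref{thm:killrel}, which kills the relative mod $p$ dual Selmer group after allowing ramification at a finite set ${\mc{Q}} \subset {\mc{Q}}_N$ of trivial primes. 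First I would reduce, as promised in the statement of Theorem \ref{thm:killrel}, to the case where $\fgder$ is a single $\pi_0(G)$-orbit of simple factors, treating each orbit separately (the oddness hypothesis and the direct-sum decomposition $\fgder = \bigoplus_j \fgder_j$ make this legitimate, since $H^1$ and the Selmer conditions decompose accordingly).

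Next I would fix the numerology: choose $M \geq \max\{M_1, 3, e+1\}$ with $e \mid M$, where $M_1$ comes from Lemma \ref{lemma:fn} (equivalently Corollary \ref{cor:lazard}), then choose $N \geq \max\{4M,\, N_e + M,\, 2M + \max\{2e,4\}\}$ divisible by $e$, where $N_e$ is the maximum over the auxiliary smooth points (produced by Lemma \ref{auxlift} at trivial primes introduced in the first $e$ stages) and over $v \in {\mc{S}}$ of the output $n_0$ of Proposition \ref{prop:gensmooth} applied with $r_0 = M$. With these in place, Theorem \ref{klr^N} yields $\rho_N$, and by the choices of $N$ relative to $M$ and to the local bounds, the pair $(\rho_{N-M}, \rho_N)$ is in relative good position (Definition \ref{relgoodpos}) at every place of ${\mc{S}}' \cup {\mc{Q}}$, where ${\mc{Q}}$ is supplied by Theorem \ref{thm:killrel}. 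The heart of the argument is then the inductive construction, for each $n \geq N-M$, of pairs $(\tau_n, \rho_{n+M})$ with $\tau_n \colon \gal{F, {\mc{S}}' \cup {\mc{Q}}} \to G(\mc{O}'/\vpi^n)$, $\rho_{n+M} \colon \gal{F, {\mc{S}}' \cup {\mc{Q}}} \to G(\mc{O}'/\vpi^{n+M})$, starting from $(\rho_{N-M}, \rho_N)$ and satisfying properties (1)--(4) of the outline: compatibility $\tau_n = \tau_{n+1} \pmod{\vpi^n}$ and $\tau_{n+1} = \rho_{n+M} \pmod{\vpi^{n+1}}$, local membership in the prescribed $\mr{Lift}_v$-classes, and relative good position throughout.

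The inductive step is where the ``diagram of relative deformation theory'' enters. Given $(\tau_n, \rho_{n+M})$, I need to produce $\rho_{n+1+M}$ lifting $\tau_{n+1} := \rho_{n+M} \pmod{\vpi^{n+1}}$ to $G(\mc{O}'/\vpi^{n+1+M})$, staying in relative good position. The obstruction to lifting locally at each place is handled by the balancedness and the $Z^{\alpha}_{r,v}$-stability of the fibers (Lemma \ref{extracocycles^N}, Lemma \ref{lem:triv2}, Proposition \ref{prop:gensmooth}); the global obstruction is controlled by comparing the Poitou--Tate sequences for $\rho_M(\fgder)$ and $\rho_{M-1}(\fgder)$ via the reduction map, and the vanishing of the relative mod $p$ dual Selmer group $\ov{H^1_{\mc{L}_M^{\perp}}(\gal{F, {\mc{S}}' \cup {\mc{Q}}}, \rho_M(\fgder)^*)} = 0$ from Theorem \ref{thm:killrel}, combined with Lemma \ref{lem:bal} (so that the relative Selmer group is also balanced, hence zero) and Lemma \ref{lem:exseq}, forces the relevant connecting maps to vanish so that the lift exists and can be chosen in the desired classes; I will formalize this as Claim \ref{relativediagram} inside the proof. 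Passing to the limit, $\rho := \varprojlim_n \tau_n \colon \gal{F, \wt{{\mc{S}}}} \to G(\mc{O}')$ with $\wt{{\mc{S}}} = {\mc{S}}' \cup {\mc{Q}}$ is then a lift of $\br$: it is geometric because at each $v \mid p$ its restriction lies, by construction (the $\mr{Lift}_v$ classes were defined from the fixed $p$-adic Hodge type component $R^{\square, \mu, \mbf{v}}_{\br|_{\gal{F_v}}}[1/\vpi]$ via Proposition \ref{prop:gensmooth} and Remark \ref{whconj}), on that irreducible component, hence is de Rham of the prescribed regular Hodge type; it is unramified outside $\wt{{\mc{S}}}$ and finitely ramified; it has multiplier $\mu$ by the fixed-multiplier versions of all the lemmas; its image contains $\wh{G}^{\mr{der}}(\mc{O}')$ by property (3) of Theorem \ref{klr^N} together with the Frattini-type argument (Lemma \ref{frattini}) ensuring this persists in the limit; at $v \in {\mc{S}} \setminus \{v \mid p\}$ it lies on the specified component for the same reason; and for all $v \in {\mc{S}}$ it is congruent to a $\wh{G}(\mc{O}')$-conjugate of $\rho_v$ modulo $\vpi^{t_0}$ because $\rho_N$ interpolated $\rho_v \pmod{\vpi^{t_0}}$ (taking $t_0 \leq n_0 \leq N-M$, enlarging $N$ if necessary) and the subsequent $\tau_n$'s only modify $\rho_N$ by elements of $\wh{G}^{(N-M)}$.

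The main obstacle I anticipate is the inductive step itself, specifically verifying that the lift $\rho_{n+1+M}$ can be chosen to remain in relative good position \emph{simultaneously} at all places of ${\mc{S}}' \cup {\mc{Q}}$ while also respecting the compatibility $\tau_{n+1} = \rho_{n+M} \pmod{\vpi^{n+1}}$; this is precisely the point where the two-term nature of the relative argument (lifting pairs, not single representations) is essential and where the diagram of relative deformation theory must be set up with care — one must track that the reduction map $\rho_M(\fgder) \to \rho_{M-1}(\fgder)$ induces compatible maps on \emph{all three} columns (global cohomology, local quotients, dual Selmer duals) and that the vanishing of relative dual Selmer, not merely intrinsic dual Selmer, suffices to split off the obstruction. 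A secondary subtlety is bookkeeping the coefficient field: one must check that $\mc{O}'$ is fixed once and for all at the outset (from Lemma \ref{smoothapprox} applied to each $v \in {\mc{S}}$) and is never enlarged again — in particular that Theorem \ref{klr^N}, run over $\mc{O}'$ from the start, and all subsequent trivial-prime constructions work over $\mc{O}'$ without further extension, which holds because the trivial-prime lifts of Lemma \ref{trivsmooth} and Lemma \ref{auxlift} are defined over the base ring.
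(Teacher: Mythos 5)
Your proposal follows the paper's proof essentially step for step: the reduction (Claim \ref{orbitredn}) to $G^0$ adjoint with $\fgder$ a single $\pi_0(G)$-orbit, the approximation of $\rho_v$ by formally smooth $\mc{O}'$-points via Lemma \ref{smoothapprox} (fixing $\mc{O}'$ once and for all), the choice of $M$ from Corollary \ref{cor:lazard} and $N$ large enough for Proposition \ref{prop:gensmooth} and Lemma \ref{frattini}, producing $\rho_N$ by Theorem \ref{klr^N}, killing relative dual Selmer via Theorem \ref{thm:killrel}, and then the inductive construction of pairs $(\tau_n,\rho_{n+M})$ using the diagram of relative deformation theory (Claim \ref{relativediagram}). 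The numerical bounds you give for $N$ are phrased slightly differently from the paper's $N = \max\{N_0+M, N_1+M, t_0+M, 4M\}$ with $e \mid N$, but encode the same requirements, and you correctly identify the key point — that it is the vanishing of \emph{relative} mod $p$ dual Selmer, not intrinsic dual Selmer, that makes the inductive step go through.
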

\begin{rmk}\label{Mbound}
A number of our preliminary results either take in or produce Galois representations modulo $\vpi^M$ and $\vpi^N$ for integers $M$ and $N$ that satisfy various bounds, both absolute and relative to each other. In the proof of the present theorem, we will finally gather these statements together and use certain fixed values of $M$ and $N$. There are three requirements on $M$: 
\begin{itemize}
\item Most important, $M$ is at least the integer $M_1$ resulting from applying Corollary \ref{cor:lazard} (so ultimately $M$ depends on the image $\br(\gal{F})$), as described before Lemma \ref{lemma:fn}.
\item $M$ must be divisible by $e$ (to apply Definition \ref{reldefaux} and Lemma \ref{lem:triv2}).
\item Theorem \ref{klr^N} produces lifts mod $\vpi^{\n}$ for any $\n$, and at the auxiliary primes of ramification $w$ described by case (1b) of Theorem \ref{klr^N}, there is an integer $s \in \{1, 2, e\}$ such that $\rho_s|_{\gal{F_w}}$ is trivial mod center, but $\rho_{s+1}|_{\gal{F_w}}$ is ``in general position." $M$ must be at least any of these integers $s$, because for these $s$ we apply Lemma \ref{extracocycles^N} to $\rho_{M+s}$ (in the notation of that lemma). Taking into account the previous condition that $M$ is divisible by $e$, this just means we further require $M \geq 2$ when $e=1$.
\end{itemize} 
The requirements on $N$ are more complex and are described in the paragraph leading up to Equation (\ref{bounds}) below.
\end{rmk}
\begin{proof}
We begin with a preliminary reduction:
\begin{claim} \label{orbitredn}
It suffices to prove the theorem when $G^0$ is adjoint, and $\fg$ (now equal to $\fgder$) is equal to a single $\pi_0(G)$-orbit of simple factors.
\end{claim}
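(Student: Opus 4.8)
The plan is to reduce the general statement of Theorem \ref{mainthm} to the special case treated in Proposition \ref{prop:killrel} and Theorem \ref{thm:killrel} by a sequence of harmless modifications to $G$ and to $\br$. First I would pass from $G$ to $G^{\mr{ad}}$: the isogeny $G^{\mr{der}} \to G^{\mr{ad}}$ induces an isomorphism on Lie algebras (by Assumption \ref{minimalp}), and composing $\br$ with $G \to G^{\mr{ad}}$ does not change the adjoint module $\br(\fgder)$, its invariants, the oddness condition (which is phrased purely in terms of $\br(\fgder)$ and $\mr{Flag}_{G^{\mr{der}}} = \mr{Flag}_{G^{\mr{ad}}}$), or the relevant Selmer and deformation-theoretic data. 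One must check that a geometric lift to $G^{\mr{ad}}(\mc{O}')$ with the prescribed local behavior can be promoted back to a geometric lift to $G(\mc{O}')$ with multiplier $\mu$ and with image containing $\wh{G^{\mr{der}}}(\mc{O}')$; this uses that the obstruction and the discrepancy in multiplier live in cohomology of $\br(\mf z_G)$, exactly as in the discussion at the end of \S\ref{defprelimsection}, together with the fact that the kernel of $G^{\mr{der}} \times Z_G^0 \to G^0$ has order prime to $p$. The local conditions (components of the local lifting rings, Hodge types) also match up under this isogeny, so nothing is lost.

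Next I would reduce to the case where $\fg = \fgder$ consists of a single $\pi_0(G)$-orbit of simple factors. Write $\fgder = \bigoplus_{j} \mf{a}_j$ as a sum of $\pi_0(G)$-orbits of simple factors; correspondingly $G^{\mr{ad}} = \prod_j G_j$ as a product of groups, each stable under $\pi_0(G)$, and $\br$ decomposes as a product $\br = (\br_j)_j$ with $\br_j \colon \gal{F, {\mc{S}}} \to G_j(k)$. Oddness is additive over the factors: $h^0(\gal{F_v}, \br(\fgder)) = \sum_j h^0(\gal{F_v}, \br_j(\mf a_j))$ and $\dim \mr{Flag}_{G^{\mr{der}}} = \sum_j \dim \mr{Flag}_{G_j}$, so since each summand on the left is at least the corresponding summand on the right (any involution of a semisimple Lie algebra has invariants of dimension at least that of its flag variety), equality forces each $\br_j$ to be odd. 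Absolute irreducibility of $\br|_{\gal{\tF(\zeta_p)}}$ is preserved because each $\br_j(\mf a_j)$ is a direct summand of $\br(\fgder)$; the auxiliary numerical hypothesis on $[\tF(\zeta_p):\tF]$ and the constants $a_{G_j}$, $M_1$, etc.\ only improve (one takes the max over $j$, still depending only on the root datum); and the local lifts $\rho_v$ and the chosen irreducible components of the local lifting rings decompose compatibly. Applying the single-orbit case of the theorem to each $\br_j$ produces geometric lifts $\rho_j$ (over a common finite extension $\mc{O}'$, enlarging if necessary), and their product is the desired geometric lift of $\br$; image containing $\wh{G^{\mr{der}}}(\mc{O}')$ follows since it holds factorwise and the $\rho_j$ can be arranged to have independent images by a standard Goursat/linear-disjointness argument (or is built into the construction of Theorem \ref{klr^N}, which already produces full image, since the ramification sets and auxiliary primes for distinct $j$ can be chosen disjoint).

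Finally, in the single-orbit adjoint case one may further reduce---although this is cosmetic and can be absorbed into the main argument---to assuming $\br$ surjects onto $\pi_0(G)$ by replacing $G$ with the preimage of $\im(\br \bmod G^0)$, as already noted at the start of \S\ref{klrsection}; this does not change the deformation theory. The main obstacle I anticipate in writing this out carefully is bookkeeping rather than mathematics: ensuring that \emph{all} the hypotheses and outputs of the theorem---oddness, the irreducibility and cyclotomic-degree conditions, existence of local lifts of type $\mu$ with the de Rham/regularity condition at $v \mid p$, the choice of irreducible components of local lifting rings, the congruence mod $\vpi^{t_0}$, and the image condition---transport correctly under both the isogeny $G \to G^{\mr{ad}}$ and the product decomposition, and that the coefficient field $\mc{O}'$ can be taken uniform. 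Once these compatibilities are verified, Claim \ref{orbitredn} follows and the remainder of the proof of Theorem \ref{mainthm} proceeds under the standing assumption that $G^0$ is adjoint and $\fg$ is a single $\pi_0(G)$-orbit of simple factors.
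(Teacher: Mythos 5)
Your two-step reduction (first to $G^0$ adjoint, then decomposing into $\pi_0(G)$-orbits) is exactly the paper's, and your verification that oddness propagates to the factors (each summand in $h^0(\gal{F_v},\br(\fgder))$ is at least $\dim\mr{Flag}_{G_s}$, so equality of sums forces equality termwise) is the paper's argument verbatim. There are, however, two points where the paper is more careful than your sketch and where the imprecision could have propagated into actual trouble had you pushed on.

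First, the adjoint reduction. You propose to lift to $G^{\mr{ad}}(\mc{O}')$ and then ``promote back'' to $G(\mc{O}')$ with the prescribed multiplier, citing cohomology of $\br(\mf{z}_G)$. The paper instead proves the stronger and cleaner statement that reduction modulo $Z_{G^0}$ gives \emph{isomorphisms of lifting and deformation functors} $\Lift^{\mu}_{\br} \to \Lift_{\br_Z}$ and $\Def^{\mu}_{\br}\to\Def_{\br_Z}$ (both tangent and obstruction spaces match, since fixing $\mu$ kills the $\mf{z}_G$-component of $\fg=\fgder\oplus\mf{z}_G$ and $Z_{G^0}\cap G^{\mr{der}}$ has order prime to $p$, so $\wh{G}=\wh{G^0}$ acts the same way on both sides). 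This avoids a separate ``promotion'' step: a lift of $\br_Z$ \emph{is} automatically the same data as a type-$\mu$ lift of $\br$, and the comparison of local conditions is immediate.

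Second, and more substantively, your decomposition is written as though $G$ itself were a product. You write $G^{\mr{ad}}=\prod_j G_j$ and then $\br = (\br_j)_j$ with $\br_j\colon\gal{F,{\mc{S}}}\to G_j(k)$. But $\br$ is valued in the \emph{disconnected} $G(k)$, and each $\br_j$ must retain the $\pi_0(G)$-structure to even state, for instance, what $\tF$ is for that factor, or to define the deformation problem. The correct target is $(G/G_{\neq j})(k)$, and the key observation in the paper is that the natural map $G\to\prod_s G/G_{\neq s}$ identifies $G$ with the subgroup of tuples having \emph{common} projection to $\pi_0(G)$. Since any lift over $R\in\mc{C}^f_{\mc{O}}$ has $\pi_0(G)(R)=\pi_0(G)(k)$-projection determined by $\br$, the lifting and deformation functors then decompose canonically as products of those for the $\br_s$, with no patching issue. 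As written, your $\br_j$'s have discarded the component-group data, which is not a harmless abuse: without it you cannot reconstitute a lift to $G$ from lifts to the factors. If you meant $G_j$ to stand for $G/G_{\neq j}$ the substance is right, but the notation as stated is incorrect and worth fixing.

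Your closing remark about the image condition requiring a Goursat-type argument, or hinging on the auxiliary primes being chosen disjoint across factors, is a legitimate observation; the paper's proof of the Claim does not explicitly discharge this point either, relying instead on the canonical decomposition of the deformation functors together with the fact that the remainder of the proof is carried out under the standing simplifying assumption. Overall this is the same proof as the paper's, modulo the two imprecisions above.
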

\begin{proof}[Proof of Claim:]
Writing $\br_Z$ for the image of $\br$ in $G/Z_{G^0}$, reduction modulo $Z_{G^0}$ induces morphisms of functors $\Lift_{\br}^\mu \to \Lift_{\br_Z}$ and $\Def_{\br}^\mu \to \Def_{\br_Z}$. Moreover, by Assumption \ref{minimalp} on $p$, both of these morphisms are isomorphisms, since they induce isomorphisms of both tangent and obstruction spaces.

In the remainder of the proof, we assume that $G^0$ is an adjoint group. It is then a product of simple adjoint groups, and we partition this product into the set $\Sigma$ of $\pi_0(G)$-orbits, writing $G^0= \prod_{s \in \Sigma} G_s$,
where $\pi_0(G)$ transitively permutes the simple factors of $G_s$.  We also write $G_{\neq s}$ for $\prod_{t \neq s} G_t$. Then the embedding $G \to G/G_s \times G/G_{\neq s}$ induces an isomorphism of $G$ onto the subgroup of pairs $(x_{\neq s}, x_{s})$ such that $x_{\neq s}$ and $x_s$ have the same image in $\pi_0(G)$ (consider the components under the orbit decomposition of $x_s^{-1} x_{\neq s} \in G^0$ to find suitable modifications of $x_s$ and $x_{\neq s}$ modulo $G_{\neq s}$ and $G_{s}$, respectively). Consequently, $G$ is isomorphic to the subgroup of $\prod_{s \in \Sigma} G/G_{\neq s}$ of elements with common projection to $\pi_0(G)$, and (since all lifts over $R \in \mc{C}_{\mc{O}}^f$ have $\pi_0(G)$-projection determined by the residual representation) we conclude $\Lift_{\br}$ and $\Def_{\br}$ themselves canonically decompose into the product of the corresponding functors with $\br$ replaced by its image $\br_{s}$ in each $G/G_{\neq s}$, each of which is a single $G$-orbit of simple factors. 

We therefore need only check that the hypotheses of our theorem still hold for each $\br_{s}$, and in turn that the conclusion of the theorem for each $\br_{s}$ then implies it for the original $\br$. The hypotheses of absolute irreducibility and existence of local lifts clearly still hold for each $\br_{s}$. To see that each $\br_{s}$ is odd, note that for all $v \mid \infty$, 
\[
h^0(\gal{F_v}, \br(\fg))= \sum_{s \in \Sigma} h^0(\gal{F_v}, \br_{s}(\mr{Lie}(G_s))) \geq \sum_{s \in \Sigma} \dim(\mr{Flag}_{G_s})= \dim(\mr{Flag}_G)= h^0(\gal{F_v}, \br(\fg)),
\]
so in each term ($s \in \Sigma$) in the sum, equality must hold, and therefore each $\br_{s}$ is odd. Finally, given at each $v \in {\mc{S}}$ a local lift $\rho_v$ of $\br|_{\gal{F_v}}$, if we can approximate $\rho_v \pmod{G_{\neq s}}$ (modulo $\wh{G_s}$-conjugacy) to any desired accuracy for all $s \in \Sigma$, then we have succeeded in approximating $\rho_v$ itself (modulo $\wh{G}$-conjugacy).
\end{proof}
For the remainder of the proof of the theorem, we therefore assume that $G^0$ is an adjoint group, and that $\fgder=\fg$ consists of a single $\pi_0(G)$-orbit of simple factors.

Now, for all $v \in {\mc{S}}$, fix as in the theorem statement irreducible components $\ov{R}_v[1/\vpi]$, containing the specified lifts $\rho_v$, of $R^{\square, \mu}_{\br|_{\gal{F_v}}}[1/\vpi]$ (for $v$ not above $p$) or $R_{\br|_{\gal{F_v}}}^{\square, \mu, \tau, \mbf{v}}[1/\vpi]$ (for $v$ above $p$); we may if desired enlarge $\mc{O}$ and make the component choice after extending scalars. Denote by $\ov{R}_v$ the scheme-theoretic closure of this component in $R_{\br|_{\gal{F_v}}}^{\square, \mu}$. Building on work of Kisin (\cite{kisin:pst}), Bellovin--Gee (\cite[Theorem 3.3.3]{Bellovin-Gee-G}) have shown that 
\[
\dim(\ov{R}_v[1/\vpi])=
\begin{cases}
\text{$\dim(\fgder)$ for $v$ not above $p$;}\\
\text{$\dim(\fgder)+\dim(\mr{Flag}_{G^{\mr{der}}})$ for $v$ above $p$,}
\end{cases} 
\]
and that $\ov{R}_v[1/\vpi]$ has a dense set of formally smooth closed points. (For $v$ above $p$, the dimension cited here results from the fact that the given lift $\rho_v$ is regular, so the parabolic associated to any Hodge--Tate cocharacter is in fact a Borel.) Recall from Lemma \ref{smoothapprox} that there is moreover a finite extension $\mc{O}'$ of $\mc{O}$, independent of the choice of $t_0$, and lifts $\rho_v'$ of $\br|_{\gal{F_v}}$ corresponding to $\mc{O}'$-points of the $\ov{R}_v$ such that $\rho_v' \equiv \rho_v \pmod{\vpi^{t_0}}$ and $\rho_v'$ defines a formally smooth point on $\ov{R}_v[1/\vpi]$. 
We replace $\rho_v$ by this approximation $\rho_v'$ and the ring $\mc{O}$ by $\mc{O}'$; for simplicity in the remainder of the proof, we will retain the original notation $\rho_v$, $\mc{O}$. The ring $\mc{O}$ is not enlarged at any later stage of the proof.

    Next we apply Lemma \ref{lemma:fn} to $\br$, producing an integer $M$, which we assume enlarged to satisfy $M \geq 2$ and $M$ is a multiple of $e$ (see Remark \ref{Mbound} for an explanation of these choices).
We run the first $e$ steps in the inductive argument of Theorem \ref{klr^N}, providing us with a lift $\rho_e \colon \gal{F, {\mc{T}}_e} \to G(\mc{O}/\vpi^e)$ satisfying all of the conclusions of \textit{loc.~cit.}. At the finite number of primes $w \in {\mc{T}}_e \setminus {\mc{S}}$ of ramification introduced in the course of constructing $\rho_e$, we have in Theorem \ref{klr^N} fixed local $G(\mc{O})$-valued lifts $\rho_w$ (by invoking Lemma \ref{auxlift}) that define formally smooth points on suitable irreducible components $\ov{R}_w[1/\vpi]$ of $R_{\br|_{\gal{F_w}}}^{\square, \mu}[1/\vpi]$. \textit{In the remainder of the argument, we treat these primes on the same terms as those of ${\mc{S}}$, so for notational convenience we enlarge ${\mc{S}}$ to include this finite set of primes.}

Thus for all $v$ in our updated ${\mc{S}}$ we have the components $\ov{R}_v[1/\vpi]$. We then apply Proposition \ref{prop:gensmooth} with $r_0=M$ to each pair $(\ov{R}_v[1/\vpi], \rho_v)_{v \in {\mc{S}}}$. Let $N_0$ be the maximum (over all $v \in {\mc{S}}$) of the integers $n_0$ thus produced by Proposition \ref{prop:gensmooth}, let $N_1$ be the integer produced by Lemma \ref{frattini}, and let 
\begin{equation}\label{bounds}
N= \max\{N_0+M, N_1+M, t+M, 4M\}. 
\end{equation} 
By increasing this $N$ if necessary, we may and
  do assume that $e \mid N$.
    With this choice of $N$ we continue (starting from
    $\rho_e$) the induction of Theorem \ref{klr^N}, targeting at
    places in ${\mc{S}}$ the fixed local lifts $\rho_v$. We thereby produce,
    for some finite set of primes ${\mc{S}}' \supset {\mc{S}}$, a $\rho_N \colon \gal{F, {\mc{S}}'} \to G(\mc{O}/\vpi^N)$ satisfying the conclusions of Theorem \ref{klr^N}: 
\begin{itemize}
\item For all $v \in {\mc{S}}$, $\rho_N|_{\gal{F_v}}$ is $\wh{G^{\mr{der}}}(\mc{O})$-conjugate to the fixed lift $\rho_v$.
\item The image $\im(\rho_N)$ contains $\wh{G^{\mr{der}}}(\mc{O}/\vpi^N)$.
\item For all $v \in {\mc{S}}' \setminus {\mc{S}}$, $\rho_N|_{\gal{F_v}}$ is
  unramified or satisfies (modulo $\varpi^{M+s}$) the hypotheses of Lemma \ref{extracocycles^N} for some $s \in \{1, 2, e\}$.
\end{itemize}
At each $v \in {\mc{S}}'$, we let $L_{r, v} \subset H^1(\gal{F_v},
\rho_r(\fgder))$, $1 \leq r \leq M$ be the subspace arising from the proof of Theorem \ref{klr^N} and the results of \S \ref{trivialsection} and \S \ref{genericfibersection} as follows:
\begin{itemize}
\item If $v \in {\mc{S}}$, 
we just noted that $\rho_N \equiv \rho_v \pmod{\vpi^N}$ modulo $\wh{G^{\mr{der}}}(\mc{O})$-conjugacy. We \textit{replace} $\rho_v$ with its suitable $\wh{G^{\mr{der}}}(\mc{O})$-conjugate, noting as in Remark \ref{whconj} that Proposition \ref{prop:gensmooth} applies---with the same quantitative bounds---to this conjugate, and then we let $L_{r, v}$ be the subspace of $H^1(\gal{F_v}, \rho_v(\fgder) \otimes_{\mc{O}} \mc{O}/\vpi^r)$ produced by applying Proposition \ref{prop:gensmooth} to our new $\rho_v$. 
\item If $v \in {\mc{S}}' \setminus {\mc{S}}$, and $\rho_N|_{\gal{F_v}}$ is unramified, then we simply take $L_{r, v}$ to be the (image in cohomology of the) subspace of unramified cocycles.
\item If $v \in {\mc{S}}' \setminus {\mc{S}}$, and $\rho_N|_{\gal{F_v}}$ is ramified, then the space $L_{r, v}$ is the one constructed in Lemma \ref{extracocycles^N}. 
\end{itemize}
We can then consider the (balanced) relative Selmer and dual Selmer groups $\ov{H^1_{\mc{L}_M}(\gal{F, {\mc{S}}'}, \rho_M(\fgder))}$ and $\ov{H^1_{\mc{L}_M^\perp}(\gal{F, {\mc{S}}'}, \rho_M(\fgder)^*)}$. We apply Theorem \ref{thm:killrel} (here is where we use the reduction of Claim \ref{orbitredn}) to produce a finite set ${\mc{Q}} \subset {\mc{Q}}_N$ of primes such that the ${\mc{Q}}$-new relative (dual) Selmer group $\ov{H^1_{\mc{L}_M^\perp \cup \{L_{M, v}^\perp\}_{v \in {\mc{Q}}}}(\gal{F, {\mc{S}}' \cup {\mc{Q}}}, \rho_M(\fgder)^*)}$ vanishes. For all $v \in {\mc{S}}' \cup {\mc{Q}}$ and all $n \geq N-M$, we let $D_{n, v} \subset \Lift_{\br|_{\gal{F_v}}}(\mc{O}/\vpi^n)$ be the class of lifts produced by applying Proposition \ref{prop:gensmooth} (for $v \in {\mc{S}}$), Lemma \ref{extracocycles^N} (for $v \in {\mc{S}}' \setminus {\mc{S}}$ that are ramified in $\rho_N$, putting us in the case (1b) of Theorem \ref{klr^N}), or Lemma \ref{lem:triv2} (for $v \in {\mc{Q}}$), or by simply taking $D_{n, v}$ to be the unramified lifts (for $v \in {\mc{S}}' \setminus {\mc{S}}$ that are unramified in $\rho_N$). In particular, for all $1 \leq r \leq M$, the fibers of $D_{n+M, v} \to D_{n, v}$ are stable under the preimages in $Z^1(\gal{F_v}, \rho_r(\fgder))$ of the subspaces $L_{r, v}$. (The only case in which we have not discussed this explicitly is the unramified case; but there it is straightforward.) The Theorem now results from the following:
\begin{claim}\label{relativediagram}
For each $n \geq N-M$, we have pairs of (multiplier $\mu$) liftings $(\tau_n, \rho_{n+M})$, where $\tau_n \colon \gal{F, {\mc{S}}' \cup {\mc{Q}}} \to G(\mc{O}/\vpi^n)$ and $\rho_{n+M} \colon \gal{F, {\mc{S}}' \cup {\mc{Q}}} \to G(\mc{O}/\vpi^{n+M})$, and $\tau_n= \rho_{n+M} \pmod{\vpi^n}$, with the following properties:
\begin{enumerate}
\item For each $v \in {\mc{S}}' \cup {\mc{Q}}$, $\tau_n|_{\gal{F_v}}$ belongs to $D_{n, v}$, and $\rho_{n+M}|_{\gal{F_v}}$ belongs to $D_{n+M, v}$.
\item $\tau_{n+1}= \rho_{n+M} \pmod{\vpi^{n+1}}$.
\item $\tau_n = \tau_{n+1} \pmod{ \vpi^n}$.
\end{enumerate}
\end{claim}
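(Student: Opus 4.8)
The plan is to prove Claim \ref{relativediagram} by induction on $n \geq N-M$, constructing the pairs $(\tau_n, \rho_{n+M})$ compatibly, and the geometric lift $\rho$ of the theorem will then be $\varprojlim_n \tau_n$ (the $\tau_n$'s are compatible by property (3), whereas the $\rho_{n+M}$'s need not be). The base case $n = N-M$ is provided by Step 1: take $\tau_{N-M} = \rho_{N-M} := \rho_N \pmod{\vpi^{N-M}}$ and $\rho_{N-M+M} = \rho_N$; property (1) holds because $(\rho_{N-M}, \rho_N)$ is in relative good position at all places of ${\mc{S}}' \cup {\mc{Q}}$ (this is exactly what the construction of $\rho_N$ via Theorem \ref{klr^N}, combined with Proposition \ref{prop:gensmooth}, Lemma \ref{extracocycles^N}, and Lemma \ref{lem:triv2}, guarantees for the chosen $N \gg M$ in Equation (\ref{bounds})), and properties (2), (3) are vacuous at the base.

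For the inductive step, suppose we have $(\tau_n, \rho_{n+M})$ with the stated properties. We must produce $(\tau_{n+1}, \rho_{n+1+M})$. Set $\tau_{n+1} := \rho_{n+M} \pmod{\vpi^{n+1}}$; this immediately gives property (2) for the new pair, and property (3) (i.e., $\tau_n = \tau_{n+1} \pmod{\vpi^n}$) holds since $\tau_n = \rho_{n+M} \pmod{\vpi^n}$ by the old property (2)-type compatibility (more precisely $\tau_n = \rho_{n+M}\pmod{\vpi^n}$ was part of the inductive data). The real content is constructing $\rho_{n+1+M}$: a lift of $\tau_{n+1}$ to $G(\mc{O}/\vpi^{n+1+M})$, with multiplier $\mu$, lying in $D_{n+1+M, v}$ at each $v \in {\mc{S}}' \cup {\mc{Q}}$, and reducing to $\rho_{n+M} \pmod{\vpi^{n+M}}$. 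Here I would argue in two sub-steps. First, lift $\rho_{n+M}$ itself to \emph{some} $\rho' \colon \gal{F, {\mc{S}}' \cup {\mc{Q}}} \to G(\mc{O}/\vpi^{n+1+M})$ of multiplier $\mu$: the obstruction lies in $\Sha^2_{{\mc{S}}' \cup {\mc{Q}}}(\gal{F, {\mc{S}}' \cup {\mc{Q}}}, \br(\fgder))$, which vanishes by global duality since $\Sha^1_{{\mc{S}}' \cup {\mc{Q}}}(\gal{F, {\mc{S}}' \cup {\mc{Q}}}, \br(\fgder)^*) = 0$ (the set ${\mc{T}} \subset {\mc{S}}'$ was chosen to kill this, and adding trivial primes preserves vanishing); and there are no local lifting obstructions at finite $v$ (Lemma \ref{localmodp^2}, bootstrapped, or more simply because the local functors $D_{\bullet, v}$ are nonempty over each $\mc{O}/\vpi^m$ — formal smoothness of the trivial-prime functors, and the generic-fiber analysis at $v \in {\mc{S}}$). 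Second, and this is the crux, correct $\rho'$ by a global cohomology class in $H^1(\gal{F, {\mc{S}}' \cup {\mc{Q}}}, \rho_M(\fgder))$ so that the corrected lift lands in $D_{n+1+M, v}$ at every $v \in {\mc{S}}' \cup {\mc{Q}}$.

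The correction step is where the "diagram of relative deformation theory" enters. At each $v$, the discrepancy between $\rho'|_{\gal{F_v}}$ and the target set $D_{n+1+M, v}$ is measured by a local class in $H^1(\gal{F_v}, \rho_M(\fgder))/L_{M, v}$ (using that $n+1 \geq N-M \geq M$, so $\rho_M$ controls the relevant twist, and that $D_{\bullet, v}$ is $L_{M,v}$-stable with surjective reduction maps — the defining properties of relative good position). Collecting these, we get an obstruction class in $\bigoplus_{v \in {\mc{S}}' \cup {\mc{Q}}} H^1(\gal{F_v}, \rho_M(\fgder))/L_{M, v}$; by the Poitou--Tate sequence, its image in $H^1_{\mc{L}_M^\perp \cup \{L_{M,v}^\perp\}_{v \in {\mc{Q}}}}(\gal{F, {\mc{S}}' \cup {\mc{Q}}}, \rho_M(\fgder)^*)^\vee$ is the only obstruction to it being hit by a global class. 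I would show this image vanishes by comparing with the mod-$\vpi$ situation via the commutative diagram displayed before Claim \ref{relativediagram}: the vertical reduction map $\rho_M(\fgder) \to \rho_{M-1}(\fgder)$ relates the two rows, and chasing the diagram, using that $\rho_{n+M}$ already lies in $D_{n+M, v}$ (so the obstruction dies modulo $\vpi^{n+M}$, hence modulo $\vpi$ after suitable twisting), reduces the vanishing to the vanishing of the \emph{relative} dual Selmer group $\ov{H^1_{\mc{L}_M^\perp \cup \{L_{M,v}^\perp\}_{v \in {\mc{Q}}}}(\gal{F, {\mc{S}}' \cup {\mc{Q}}}, \rho_M(\fgder)^*)}$, which is exactly what Theorem \ref{thm:killrel} (applied to build ${\mc{Q}}$) guarantees. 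The balancedness of the relative Selmer and dual Selmer groups (Lemma \ref{lem:bal}, using that $\br(\fgder)$ and $\br(\fgder)^*$ have no global invariants) is what makes the Poitou--Tate dimension count work out so that killing relative dual Selmer suffices.

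Once the obstruction is killed, pick a global class $h \in H^1(\gal{F, {\mc{S}}' \cup {\mc{Q}}}, \rho_M(\fgder))$ whose local restrictions are the required corrections modulo $L_{M,v}$; lifting $h$ to a cocycle and setting $\rho_{n+1+M} := (1 + \vpi^{n+M} \tilde{h})\rho'$ (with $\tilde{h}$ a $Z^1$-representative, noting $2(n+M) \geq n+1+M$ so this makes sense) gives a lift of $\tau_{n+1}$ — because $n+M \geq n+1$, so the correction is trivial modulo $\vpi^{n+1}$ — that now lies in $D_{n+1+M, v}$ at every $v$, using the $L_{M,v}$-stability (again relative good position). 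Finally one checks $\rho_{n+1+M} \equiv \rho_{n+M} \pmod{\vpi^{n+M}}$, immediate from the shape of the correction. This completes the induction. I expect the main obstacle to be the diagram chase itself: carefully tracking how the local obstruction classes at $v \in {\mc{S}}' \cup {\mc{Q}}$ interact with the reduction maps $\rho_M(\fgder) \to \rho_{M-1}(\fgder)$ and the dualize-twice map on the right-hand column, and verifying that the vanishing of relative (rather than intrinsic) dual Selmer is genuinely enough — this is precisely the subtlety flagged in the remark after Theorem \ref{thm:killrel}, that Lie elements killed by Lazard's results modulo $\vpi$ may still lift modulo $\vpi^M$, so one must be sure the relative framework absorbs exactly the problematic classes and no more.
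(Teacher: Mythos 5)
Your overall strategy is exactly the paper's: induct on $n$, set $\tau_{n+1} := \rho_{n+M} \pmod{\vpi^{n+1}}$, lift $\rho_{n+M}$ to some $\rho'$ mod $\vpi^{n+M+1}$ using vanishing of $\Sha^2$, measure the local discrepancy of $\rho'$ (as a lift of $\tau_{n+1}$) against the sets $D_{n+M+1,v}$ to obtain a class $(f_v)_v \in \bigoplus_v H^1(\gal{F_v},\rho_M(\fgder))/L_{M,v}$, observe that $(f_v)$ dies in $\bigoplus_v H^1(\gal{F_v},\rho_{M-1}(\fgder))/L_{M-1,v}$ because $\rho_{n+M}\in D_{n+M,v}$, chase the displayed Poitou--Tate diagram using the injectivity of the right-hand vertical map (which comes from the vanishing of the relative dual Selmer group via Lemma \ref{lem:exseq}), and then adjust $\rho'$ by a global cocycle. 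Up to this point your proposal is sound and faithfully reproduces the intended diagram chase.

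However, the final paragraph contains a concrete error. You impose on $\rho_{n+1+M}$ the additional requirement that it reduce to $\rho_{n+M}$ modulo $\vpi^{n+M}$, and correspondingly write the correction as $(1+\vpi^{n+M}\tilde h)\rho'$. Both of these are wrong. The correction must preserve $\tau_{n+1}$, i.e.\ the reduction modulo $\vpi^{n+1}$, so the correct shift is $\vpi^{n+1}$, not $\vpi^{n+M}$: two lifts of $\tau_{n+1}$ to $G(\mc O/\vpi^{n+M+1})$ differ by an element of $\ker(G(\mc O/\vpi^{n+M+1})\to G(\mc O/\vpi^{n+1}))\cong\rho_M(\fgder)$, and writing this multiplicatively gives $(1+\vpi^{n+1}\tilde h)\rho'$. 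With your shift $\vpi^{n+M}$, the cocycle $\tilde h$ --- which takes values in $\rho_M(\fgder)\cong\fgder\otimes\mc O/\vpi^M$ --- only contributes through its reduction modulo $\vpi$ once you work in $\mc O/\vpi^{n+M+1}$, so the correction cannot kill a general class in $H^1(\gal{F_v},\rho_M(\fgder))/L_{M,v}$; your argument as written would therefore fail to land $\rho_{n+1+M}|_{\gal{F_v}}$ in $D_{n+M+1,v}$. Using the correct shift $\vpi^{n+1}$, the modification generically \emph{does} change the reduction modulo $\vpi^{n+M}$, so $\rho_{n+M+1}\not\equiv\rho_{n+M}\pmod{\vpi^{n+M}}$ in general. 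This is precisely why the paper footnotes: ``Note that we do not claim that $\rho_{n+M+1}$ is a lift of $\rho_{n+M}$.'' The $\rho_{n+M}$'s are not a compatible system --- only the $\tau_n$'s are, as you yourself observed at the start --- and the compatibility requirement you later added is both unnecessary for the Claim and incompatible with carrying out the correction. Dropping that requirement and fixing the power of $\vpi$ repairs the argument and recovers the paper's proof.
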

\begin{proof}[Proof of Claim:] 
The proof will be by induction on $n$. We start the induction by setting $\tau_{N-M}$ to be $\rho_N \pmod {\varpi^{N-M}}$. The assumptions on $\rho_N$ ensure that the first property holds for the initial pair $(\tau_{N-M}, \rho_N)$. 

We now assume that we have constructed a pair of representations $(\tau_n, \rho_{n+M})$ satisfying the first property. We then set $\tau_{n+1}= \rho_{n+M} \pmod {\varpi^{n+1}}$, so the second and third properties also hold for this $n$. Recall that we have arranged the vanishing of $\Sha^1_{{\mc{S}}'}( \Gamma_{F,{\mc{S}}'}, \br(\fgder)^*)$, which implies the same for $\Sha^1_{{\mc{S}}' \cup {\mc{Q}}}( \Gamma_{F,{\mc{S}}'}, \br(\fgder)^*) \cong \Sha^2_{{\mc{S}}' \cup {\mc{Q}}}(\Gamma_{F, {\mc{S}}'}, \br(\fgder))^\vee$, so $\rho_{n+M}$ lifts to a homomorphism $\rho_{n+M+1}': \Gamma_{F,{\mc{S}}' \cup {\mc{Q}}} \to G(\mc{O}/\varpi^{n+M+1})$. Viewing the restriction of $\rho_{n+M+1}'$ to $\Gamma_{F_v}$ for $v \in {\mc{S}}' \cup {\mc{Q}}$ as a lift of the restriction of $\tau_{n+1}$ to $\Gamma_{F_v}$ and comparing with an element of the fiber of $D_{n+M+1, v} \to D_{n+1, v}$ over $\tau_{n+1}|_{\gal{F_v}}$, we get an element
\[
  (f_v)_{v \in {\mc{S}}' \cup {\mc{Q}}} \in \bigoplus_{v \in {\mc{S}}' \cup {\mc{Q}}} \frac{H^1(\Gamma_{F_v},
    \rho_M(\fgder))}{L_{M,v}} \ .
\]
Since $\rho_{n+M+1}'$ can be viewed as a lift of $\rho_{n+M} \pmod {\varpi^{n+1}}$, and $\rho_{n+M+1}' \mod \varpi^{n+M} = \rho_{n+M}$ is at each $v$ in the fiber of $D_{n+M, v} \to D_{n+1, v}$ over $\rho_{n+M} \pmod {\varpi^{n+1}} = \tau_{n+1}$, the image of $(f_v)$ in $\bigoplus_{v \in {\mc{S}}' \cup {\mc{Q}}} \frac{H^1(\Gamma_{F_v}, \rho_{M-1}(\fgder))}{L_{M-1,v}}$ vanishes.

We have a commutative diagram
  \[
    \xymatrix{
            H^1(\Gamma_{F, {\mc{S}}' \cup {\mc{Q}}}, \rho_M(\fgder)) \ar[r]
      \ar[d] & \bigoplus_{v \in {\mc{S}}' \cup {\mc{Q}}} \frac{H^1(\Gamma_{F_v}, \
      \rho_M(\fgder))}{L_{M,v}} \ar[r] \ar[d] &
    H^1_{\mc{L}_{M}^{\perp} \cup \{L_{M, v}^\perp\}_{v \in {\mc{Q}}}}(\Gamma_{F, {\mc{S}}' \cup {\mc{Q}}}, \rho_M(\fgder)^*)^{\vee} \ar[d]\\
           H^1(\Gamma_{F, {\mc{S}}' \cup {\mc{Q}}}, \rho_{M-1}(\fgder)) \ar[r]
       & \bigoplus_{v \in {\mc{S}}' \cup {\mc{Q}}} \frac{H^1(\Gamma_{F_v}, \
      \rho_{M-1}(\fgder))}{L_{M-1,v}} \ar[r]  &
    H^1_{\mc{L}_{M-1}^{\perp}\cup \{L_{M-1, v}^\perp\}_{v \in {\mc{Q}}}}(\Gamma_{F, {\mc{S}}' \cup {\mc{Q}}}, \rho_{M-1}(\fgder)^*)^{\vee} \\
  }
\]
in which the rows come from (part of) the Poitou--Tate exact sequence, and the vertical maps are induced by the reduction map $\rho_{M}(\fgder) \to \rho_{M-1}(\fgder)$ (and, for the third vertical map, dualizing twice); commutativity of the left-hand square is obvious and of the right-hand square follows from the properties of cup-product and the local duality pairings.  
Lemma \ref{lem:exseq} and the vanishing of the relative dual Selmer group
$\ov{H^1_{\mc{L}_{M}^{\perp}\cup \{L_{M, v}^\perp\}_{v \in {\mc{Q}}}}(\Gamma_{F,{\mc{S}}' \cup {\mc{Q}}}, \rho_M(\fgder)^*)}$ together imply that the map 
\[
H^1_{\mc{L}_{M-1}^{\perp} \cup  \{L_{M-1, v}^\perp\}_{v \in {\mc{Q}}}}(\Gamma_{F,{\mc{S}}' \cup {\mc{Q}}}, \rho_{M-1}(\fgder)^*) \to H^1_{\mc{L}_{M}^{\perp} \cup  \{L_{M, v}^\perp\}_{v \in {\mc{Q}}}}(\Gamma_{F,{\mc{S}}' \cup {\mc{Q}}}, \rho_{M}(\fgder)^*)
\] 
is surjective, so the last vertical map in the diagram is injective. The commutativity of the diagram then implies that $(f_v)_{v \in {\mc{S}}' \cup {\mc{Q}}}$ maps to zero in $H^1_{\mc{L}_{M}^{\perp}\cup \{L_{M, v}^\perp\}_{v \in {\mc{Q}}}}(\Gamma_{F, {\mc{S}}' \cup {\mc{Q}}},
\rho_M(\fgder)^*)^{\vee}$, so $\rho_{n+M+1}'$ can be modified by a cocycle in $H^1(\Gamma_{F,{\mc{S}}' \cup {\mc{Q}}}, \rho_M(\fgder)^*)$ to get a lift $\rho_{n+M+1}$ of $\tau_{n+1}$ such that for all $v \in {\mc{S}}' \cup {\mc{Q}}$, $\rho_{n+M+1}|_{\gal{F_v}}$ belongs to $D_{n+M+1, v}$\footnote{Note that we do not claim that $\rho_{n+M+1}$ is a lift of $\rho_{n+M}$.}; here crucially we use that for $n$ in the considered range the fibers of $D_{n+M+1, v} \to D_{n+1, v}$ are stable under all cocycles (with image) in $L_{M, v}$. This completes the induction step.
\end{proof}
Having established the Claim, we set $\rho= \varprojlim \tau_n \colon \gal{F, {\mc{S}}' \cup {\mc{Q}}} \to G(\mc{O})$. Since $\rho$ lifts $\rho_{N-M}$, we have achieved the desired (modulo $\vpi^{t_0}$) approximation of our fixed local lifts, and Lemma \ref{frattini} implies that $\im(\rho)$ contains $\wh{G^{\mr{der}}}(\mc{O})$, concluding the proof of the theorem.
\end{proof}
\begin{lemma}\label{frattini}
There is an integer $N_1$ depending only on $G^{\mr{der}}$ such that
any closed (in the $p$-adic topology) subgroup $P$ of $\wh{G^{\mr{der}}}(\mc{O})$ whose reduction modulo $\vpi^{N_1}$ equals $\wh{G^{\mr{der}}}(\mc{O}/\vpi^{N_1})$ must in fact equal $\wh{G^{\mr{der}}}(\mc{O})$.
\end{lemma}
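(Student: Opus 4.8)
The plan is to exploit the fact that $\wh{G^{\mr{der}}}(\mc{O})$ is a pro-$p$ group, filtered by the subgroups $\wh{G^{\mr{der}}}^{(n)}(\mc{O}) = \ker(\wh{G^{\mr{der}}}(\mc{O}) \to G^{\mr{der}}(\mc{O}/\vpi^n))$, with successive quotients $\wh{G^{\mr{der}}}^{(n)}(\mc{O})/\wh{G^{\mr{der}}}^{(n+1)}(\mc{O}) \cong \fgder \otimes_{\mc{O}} k$ (via the exponential map), and to reduce the statement to a Frattini-type argument. The key point, which accounts for the dependence of $N_1$ on $G^{\mr{der}}$, is that since $p$ is very good for $G^{\mr{der}}$ we have $\fgder = [\fgder, \fgder]$; concretely, for $\vpi$-adically close enough indices the commutator map and the adjoint action let one recover the ``new'' part $\fgder \otimes k$ of the $(n+1)$-st layer from the $n$-th layer together with bounded-index commutators. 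More precisely, I would show: there is an integer $N_1$ (depending only on $G^{\mr{der}}$, via the structure of $\fgder$ as a Lie algebra over $\mc{O}$ — in particular via how efficiently $\fgder$ is generated by brackets) such that for all $n \geq N_1$, the subgroup $\wh{G^{\mr{der}}}^{(n+1)}(\mc{O})$ is contained in the closed subgroup generated by commutators $[\wh{G^{\mr{der}}}^{(a)}(\mc{O}), \wh{G^{\mr{der}}}^{(b)}(\mc{O})]$ with $a + b = n+1$, $a, b \geq 1$, together with $p$-th powers if necessary.

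First I would set up the filtration and the identification of graded pieces, being careful about the ramified case: since $\mc{O}/\vpi^2 \cong k \oplus k[\varepsilon]$ when $e > 1$, the exponential/commutator formulas have to be applied layer by layer in $\vpi$, not $p$. Second, I would establish the Lie-algebra input: because $p$ is very good, $\fgder$ is a perfect Lie algebra (indeed $\fgder$ is a direct sum of simple Lie algebras over $\mc{O}$ with nondegenerate trace form), so there is a finite set of brackets expressing a spanning set — this gives a uniform integer, call it $c_G$, bounding the ``bracket length'' needed, and $N_1$ will be something like a small multiple of $c_G$ (or just $c_G$ itself). Third, the main step: given $P$ closed with $P \bmod \vpi^{N_1} = \wh{G^{\mr{der}}}(\mc{O}/\vpi^{N_1})$, I argue by induction on $n \geq N_1$ that $P \bmod \vpi^n = \wh{G^{\mr{der}}}(\mc{O}/\vpi^n)$. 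The inductive step: from $P$ surjecting onto $G^{\mr{der}}(\mc{O}/\vpi^n)$ and $P$ being a subgroup, the set $P \cap \wh{G^{\mr{der}}}^{(n)}(\mc{O})$ maps onto a $\Gal$- (here just conjugation-) stable, hence $\Ad(G^{\mr{der}}(k))$-stable, $k$-subspace of $\fgder \otimes k$; commuting elements of $P$ lifting generators in lower layers produces, via the perfectness $\fgder = [\fgder, \fgder]$ and the bound $c_G \leq N_1$, all of $\fgder \otimes k$ in that layer, so $P$ surjects onto $G^{\mr{der}}(\mc{O}/\vpi^{n+1})$. Taking the inverse limit over $n$ and using that $P$ is closed gives $P = \wh{G^{\mr{der}}}(\mc{O})$. (One can phrase this more slickly via the topological Frattini subgroup of the pro-$p$ group $\wh{G^{\mr{der}}}(\mc{O})$: the point is that the Frattini quotient is ``seen'' already modulo $\vpi^{N_1}$, so any subgroup surjecting modulo $\vpi^{N_1}$ surjects onto the whole Frattini quotient, hence equals the group.)

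The main obstacle I anticipate is making the commutator argument genuinely uniform across the graded layers when $\mc{O}$ is ramified, i.e.\ ensuring that the integer $c_G$ coming from $\fgder = [\fgder,\fgder]$ over $\mc{O}$ — and hence $N_1$ — can be chosen to depend only on $G^{\mr{der}}$ and not on $e = e(\mc{O}/\Z_p)$ or on which layer one is in. This should follow because the relevant brackets can be taken among a fixed $\mc{O}$-basis of $\fgder$ (the Chevalley basis, which exists integrally since $p$ is very good), and the structure constants are independent of $\mc{O}$; the exponential/commutator identity $[\exp(\vpi^a X), \exp(\vpi^b Y)] \equiv \exp(\vpi^{a+b}[X,Y]) \pmod{\vpi^{a+b+1}}$ is valid in $\wh{G^{\mr{der}}}(\mc{O})$ for $p$ odd and is uniform in $a, b$. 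A secondary, more routine, technical check is the closedness/compactness bookkeeping needed to pass from "surjective modulo every $\vpi^n$" to "equal", which is immediate since $\wh{G^{\mr{der}}}(\mc{O})$ is profinite and $P$ is assumed closed.
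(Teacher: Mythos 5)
Your proposal is correct in essence, but it takes a genuinely different route from the paper. Writing $H := \wh{G^{\mr{der}}}(\mc{O})$ and $H_n := \ker\bigl(G^{\mr{der}}(\mc{O}) \to G^{\mr{der}}(\mc{O}/\vpi^n)\bigr)$, the paper's proof uses no structure theory of $\fgder$ at all: it observes that the subgroup $H^p$ generated by $p$-th powers is open in the $p$-adic Lie group $H$, hence contains $H_{N_1}$ for some $N_1$; then, since in every finite quotient of $H$ the Frattini subgroup contains the image of $H^p$, any subgroup of $H$ surjecting onto $H/H_{N_1}$ surjects onto every finite quotient and, being closed, equals $H$. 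You instead exploit perfectness of $\fgder$ to show that the closed commutator subgroup already contains $H_2$: at each layer $n \geq 2$ the commutator pairing $H_1/H_2 \times H_{n-1}/H_n \to H_n/H_{n+1}$ identifies with the Lie bracket on $\fgder \otimes k$, which is surjective because $p$ is very good. Your route buys a sharper and genuinely uniform $N_1$ --- in fact $N_1 = 2$ works, independently of $e(\mc{O}/\Z_p)$ --- which matches the phrasing ``depending only on $G^{\mr{der}}$'' more literally; the $N_1$ produced by the paper's proof a priori grows with the ramification index. The paper's route buys brevity and avoids needing $\fgder = [\fgder, \fgder]$.

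Two corrections to your write-up, though. First, the worry about a ``bracket length'' constant $c_G$ is a red herring: at the inductive step you take the \emph{subgroup} of $H_n/H_{n+1}$ generated by single commutators $[p,p']$ with $p \in P$, $p' \in P \cap H_{n-1}$ (both available once $P$ surjects onto $H/H_n$), and perfectness makes the span of $[\fgder \otimes k, \fgder \otimes k]$ the whole layer in one step; no chain of brackets is needed, and $N_1 = 2$ suffices. Second, the claim that the image of $P \cap H_n$ in $\fgder \otimes k$ is ``$\Ad(G^{\mr{der}}(k))$-stable, hence a $k$-subspace'' is incorrect in its premise and also unnecessary: conjugation by $H = H_1$ acts \emph{trivially} on each graded piece $H_n/H_{n+1}$ (and $P$ maps to the identity in $G^{\mr{der}}(k)$), so no nontrivial $\Ad$-action is available from within $H$; the $k$-linearity you actually need comes directly from the $k$-bilinearity of the Lie bracket.
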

\begin{proof}
Since $H:= \wh{G^{\mr{der}}}(\mc{O})$ is a $p$-adic Lie group, the subgroup $H^p$ generated by $p^{th}$ powers is open, so there exists an integer $N_1$ such that $H^p$ contains the kernel of reduction modulo $\vpi^{N_1}$. In particular, if $P \pmod{\vpi^{N_1}}$ is equal to $\wh{G^{\mr{der}}}(\mc{O}/\vpi^{N_1})$, then $P$ surjects onto $H/H^p$. Since the Frattini subgroup of a finite $p$-group contains the subgroup generated by $p^{th}$ powers, Frattini's theorem for finite groups implies that $P$ surjects onto any finite quotient of $H$. Since $P$ is closed, we must in fact have $P=H$.
\end{proof}
\begin{rmk}\label{lnotplocal}
When $G= \mr{GL}_n$, $\mr{GSp}_{2n}$, or $\mr{GO}_n$, local lifts $\rho_v$ as in the theorem statement are known to exist for $v \nmid p$, by \cite[\S 2.4.4]{clozel-harris-taylor} and \cite[\S 7]{booher:minimal} (after possibly replacing $k$ with a finite extension). Many other cases for general $G$ can be worked out by hand (e.g., \cite[\S 4]{stp:exceptional} and \cite[\S 4]{stp:exceptional2}), but there is as yet no general result. We expect that it is always possible to find such $\rho_v$.

For $v \mid p$, much less is known, but the work of Emerton and Gee (\cite{emerton-gee:moduli}) shows that such lifts $\rho_v$ always exist when $G= \mr{GL}_n$. It is to be hoped that their methods will eventually eliminate this hypothesis entirely for arbitrary $G$. 
\end{rmk}
\begin{rmk}\label{effective}
We make some remarks on the effectivity of the bound $p\gg_G 0$ in the
theorem. The possible need to increase $p$ arises at several points in
the paper. In \S \ref{trivialsection}, $p$ is any prime. In \S
\ref{klrsection} \S \ref{klr^Nsection} we have not computed an explicit bound on $p$, but we could easily derive one by following the arguments of those sections; the bounds coming from these sections essentially amount to the condition that certain $\Fp$-vector spaces not be covered by a finite (bounded absolutely in terms of $G$) number of hyperplanes. In deducing the image hypothesis of \S \ref{klrsection} and \S \ref{klr^Nsection} from the irreducibility hypothesis of Theorem \ref{mainthm}, there is an explicit bound ensuring the cohomology ($H^0$ and $H^1$) vanishing, and an explicit bound (see Remark \ref{explicit}) to ensure disjointness of $\br(\fgder)$ and $\br(\fgder)^*$. The same remark and an inspection of the proof yields an effective bound for the integer $a_G$ in Lemma \ref{cyclicq}. Finally, the integer $d$ 
appearing in the proof of Proposition \ref{prop:killrel} is effective. In sum, the bound in Theorem \ref{mainthm} can be made effective.
\end{rmk}
\begin{rmk}
If we are instead given a homomorphism $\rho_n \colon \gal{F, {\mc{S}}} \to G(\mc{O}/\vpi^n)$ such that $\br$ satisfies the hypotheses of the theorem, and moreover (for all $v \in {\mc{S}}$) $\rho_n|_{\gal{F_v}}$ has a lift $\rho_v$ as in the theorem statement, then we can produce a geometric lift of $\rho_n$ (moreover approximating the given $\rho_v$'s).
\end{rmk}
\begin{rmk}\label{potautcompare}
When $G^0= \mr{GL}_n$ (or some minor variant thereof), we compare our results to the lifting results coming from potential automorphy theorems. The main lifting theorem of \cite[Theorem 4.3.1]{blggt:potaut} implies the existence of lifts with prescribed local behavior of an odd homomorphism $\br \colon \gal{F, {\mc{S}}} \to \mc{G}_n(k)$ such that $\br|_{\gal{\tF(\zeta_p)}}$ is absolutely irreducible, $p \geq 2(n+1)$; to be precise, one asks that:
\begin{itemize}
\item for $v \nmid p$, $\br|_{\gal{F_v}}$ has a lift on a given irreducible component of $R^{\square, \mu}_{\br|_{\gal{F_v}}}$; and 
\item for $v \mid p$, $\br|_{\gal{F_v}}$ has a potentially
  diagonalizable lift on a given irreducible component of some
  $R_{\br|_{\gal{F_v}}}^{\square, \mu, \tau,
    \mbf{v}}$ (for an inertial type $\tau$ and regular
  $p$-adic Hodge type $\mbf{v}$);
\end{itemize}
and one concludes that $\br$ has a global lift $\rho$ such that $\rho|_{\gal{F_v}}$ lies on the given irreducible components for all $v \in {\mc{S}}$. Our result (in addition to applying to general $G$) strengthens this in two ways: at $v \mid p$, we do not require the local lift $\rho_v$ to be potentially diagonalizable (we remark that the recent preprint \cite{calegari-emerton-gee} improves ``potentially diagonalizable" to ``globally realizable"); and for all $v \in {\mc{S}}$, we can approximate (modulo $\wh{G^{\mr{der}}}(\mc{O})$-conjugacy) the fixed local lifts to any desired degree of precision. What we lose is some sharpness in the bound on allowable $p$, and, more important, minimality of the lifts (i.e., our lifts are not unramified outside ${\mc{S}}$). And of course we do not establish potential automorphy!
\end{rmk}
\begin{rmk}
It would be interesting to pursue an analogue of our main theorem for \textit{reducible} $\br$; indeed, in some sense the seed of our project was the study of the paper \cite{ramakrishna-hamblen}, which produces irreducible lifts of (certain) reducible $\gal{\Q} \to \mr{GL}_2(k)$. Our methods will certainly adapt to cover many reducible cases as well, and we intend to pursue this problem in the future.\footnote{We have made progress on the reducible case: see \cite{fkp:reduciblearXiv}.}
\end{rmk}

We also note that the method of proof allows us, without assuming oddness of $\br$, to construct possibly non-geometric $p$-adic deformations, since the arguments of Theorem \ref{mainthm} only require that whenever we have a non-trivial dual Selmer class, we can also find a non-trivial Selmer class:
\begin{thm}\label{notodd}
Let $p\gg_G 0$ be a prime, let $F$ be any number field, and let $\br \colon \gal{F, {\mc{S}}} \to G(k)$ be a continuous representation unramified outside a finite set of places ${\mc{S}}$ containing those above $p$. Fix a lift $\mu \colon \gal{F, {\mc{S}}} \to G/G^{\mr{der}}(\mc{O})$ of $\br \pmod{G^{\mr{der}}}$. Let $\tF$ be as in Theorem \ref{mainthm}, assume that $[\tF(\zeta_p):\tF]=p-1$, and that $\br$ satisfies the following:
\begin{itemize}
\item $\br|_{\gal{\tF(\zeta_p)}}$ is absolutely irreducible.
\item For all $v \in {\mc{S}}$, $\br|_{\gal{F_v}}$ has a lift $\rho_v$ of type $\mu|_{\gal{F_v}}$, and for $v \mid p$ this lift can be chosen to correspond to a formally smooth point on an irreducible component of $R^{\square, \mu}_{\br|_{\gal{F_v}}}[1/\vpi]$ of dimension $(1+[F_v:\Q_p])\dim_k(\fgder)$. (For instance, this hypothesis holds if $H^2(\gal{F_v}, \br(\fgder))=0$.)
\end{itemize}
Then for some finite set of primes $\wt{{\mc{S}}} \supset {\mc{S}}$ and finite extension $\mc{O}'$ of $\mc{O}$, $\br$ admits a lift $\rho \colon \gal{F, \wt{{\mc{S}}}} \to G(\mc{O}')$, and $\rho$ may be arranged such that, for all $v \in {\mc{S}}$, $\rho|_{\gal{F_v}}$ is congruent modulo $\vpi^{t_0}$ to some $\wh{G^{\mr{der}}}(\mc{O}')$-conjugate of $\rho_v$.
\end{thm}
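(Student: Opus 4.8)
The plan is to transcribe the proof of Theorem~\ref{mainthm} essentially verbatim, the only structural change being that at the places $v\mid p$ one works with the full local lifting ring $R^{\square,\mu}_{\br|_{\gal{F_v}}}$ rather than with a fixed $p$-adic Hodge type quotient; the enlarged size of the resulting local deformation conditions is exactly what replaces, in the Greenberg--Wiles formula, the favorable archimedean input that oddness (and total reality) provide in Theorem~\ref{mainthm} --- in exchange one loses $p$-adic Hodge type control, hence geometricity. First I would carry out the reduction of Claim~\ref{orbitredn} to the case $G^0$ adjoint and $\fgder=\fg$ a single $\pi_0(G)$-orbit of simple factors; here the verification is strictly easier than in Theorem~\ref{mainthm}, there being no oddness to propagate to the factors $\br_s$. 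Since $[\tF(\zeta_p):\tF]=p-1$, for $p\gg_G 0$ this exceeds the constant $a_G$ of Lemma~\ref{cyclicq}, so Corollary~\ref{irrcor} applies: absolute irreducibility of $\br|_{\gal{\tF(\zeta_p)}}$ yields all of Assumption~\ref{multfree}, in particular the vanishing of the $\gal{F}$-invariants of $\br(\fgder)$ and $\br(\fgder)^*$ (hence of $\rho_r(\fgder)$ and $\rho_r(\fgder)^*$ for every $r$, as an invariant vector would have nonzero invariant reduction).

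Next I would set up the local theory. For $v\in{\mc{S}}$ with $v\nmid p$, proceed exactly as in Theorem~\ref{mainthm}, using the given component of $R^{\square,\mu}_{\br|_{\gal{F_v}}}[1/\vpi]$ (dimension $\dim_k\fgder$) and its formally smooth point $\rho_v$. For $v\mid p$ I would use the given $d_v$-dimensional component, $d_v=(1+[F_v:\Q_p])\dim_k\fgder$, and formally smooth point $\rho_v$ (this is where the hypothesis $H^2(\gal{F_v},\br(\fgder))=0$ would be used as a convenient sufficient condition, making the full lifting ring smooth over $\mc{O}$ of the stated dimension and guaranteeing $\rho_v$ exists); after replacing $\mc{O}$ by $\mc{O}'$ and the $\rho_v$ by $\vpi^{t_0}$-approximations as in Lemma~\ref{smoothapprox}, I would feed the Zariski closures $\ov{R}_v$ into \S\ref{genericfibersection}. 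The point to check is that the proof of Proposition~\ref{prop:gensmooth} --- resting on Lemmas~\ref{lem:serre}, \ref{lem:tr}, \ref{lem:bound}, all of which apply to an arbitrary complete local noetherian $\mc{O}$-algebra with a formally smooth generic-fibre point --- goes through for a $d_v$-dimensional component for \emph{any} $d_v$, producing for each $r$ a free $\mc{O}/\vpi^r$-submodule $\z_{r,v}\subset Z^1(\gal{F_v},\rho_r(\fgder))$ of rank $d_v$, containing all coboundaries and compatible under the maps of Lemma~\ref{lem:exseq}, with image $L_{r,v}$ satisfying $|L_{r,v}|=|\rho_r(\fgder)^{\gal{F_v}}|\cdot|\mc{O}/\vpi^r|^{\,d_v-\dim_k\fgder}=|\rho_r(\fgder)^{\gal{F_v}}|\cdot|\mc{O}/\vpi^r|^{\,[F_v:\Q_p]\dim_k\fgder}$.

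The global construction then runs as in Theorem~\ref{mainthm}: fix $M$ from Lemma~\ref{lemma:fn} and Corollary~\ref{cor:lazard} (enlarged so $e\mid M$, $M\geq 2$), apply Theorem~\ref{klr^N} (legitimate, since $\br|_{\gal{\tF(\zeta_p)}}$ is absolutely irreducible and $[\tF(\zeta_p):\tF]=p-1>a_G$) and continue its induction to a large $N$ divisible by $e$ and satisfying the bounds imposed in the proof of Theorem~\ref{mainthm}, obtaining $\rho_N\colon\gal{F,{\mc{S}}'}\to G(\mc{O}'/\vpi^N)$ with $\im(\rho_N)\supseteq\wh{G^{\mr{der}}}(\mc{O}'/\vpi^N)$, interpolating the $\rho_v$ at $v\in{\mc{S}}$ up to $\wh{G}(\mc{O}')$-conjugacy, and with the auxiliary local structure of Theorem~\ref{klr^N}; this fixes the local conditions $\mc{L}_M=\{L_{M,v}\}_{v\in{\mc{S}}'}$ (from the above variant of Proposition~\ref{prop:gensmooth} at $v\in{\mc{S}}$, from Lemma~\ref{extracocycles^N} with $s\in\{1,2,e\}$ or the unramified condition at $v\in{\mc{S}}'\setminus{\mc{S}}$). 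The one place the argument genuinely differs is the annihilation of relative dual Selmer. Using Greenberg--Wiles together with the vanishing of global invariants, the equalities $|L_{r,v}|=|H^0(\gal{F_v},\rho_r(\fgder))|$ at all $v\in{\mc{S}}'$ with $v\nmid p$ (by Lemma~\ref{lem:triv2}, resp.\ Lemma~\ref{extracocycles^N}, resp.\ the $\ell\neq p$ case of Proposition~\ref{prop:gensmooth}), the size formula above at $v\mid p$, and the vanishing of $H^i(\gal{F_v},\rho_r(\fgder))$ for $i\geq 1$ and $v\mid\infty$ (as $p$ is odd), one computes
\[
\frac{\big|H^1_{\mc{L}_r}(\gal{F,{\mc{S}}'},\rho_r(\fgder))\big|}{\big|H^1_{\mc{L}_r^\perp}(\gal{F,{\mc{S}}'},\rho_r(\fgder)^*)\big|}=\big|\mc{O}/\vpi^r\big|^{\;[F:\Q]\dim_k\fgder\;-\;\sum_{v\mid\infty}\dim_k\rho_r(\fgder)^{\gal{F_v}}}.
\]
Writing $r_1,r_2$ for the numbers of real and complex places of $F$: each real place contributes at most $\dim_k\fgder$ to the sum, each complex place exactly $\dim_k\fgder$, while $[F:\Q]=r_1+2r_2$, so the exponent is $\geq r_2\dim_k\fgder\geq 0$; hence $\dim_k H^1_{\mc{L}_r}\geq\dim_k H^1_{\mc{L}_r^\perp}$ for all $r$, and feeding this into the exact sequences of Lemma~\ref{lem:exseq} (with $a=r-1$, $b=1$, using injectivity of the first maps) gives the same inequality for the relative groups. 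This inequality is what replaces the balancedness hypothesis of Lemma~\ref{lem:bal} in Proposition~\ref{prop:killrel} and Theorem~\ref{thm:killrel}: whenever the relative dual Selmer group is nonzero it supplies $\ov{\psi}$, the inequality then supplies $\ov{\phi}$, and Proposition~\ref{prop:killrel} produces a prime $v\in{\mc{Q}}_N$ whose addition strictly shrinks the (finite) mod $\vpi^M$ intrinsic Selmer group exactly as in the proof of Theorem~\ref{thm:killrel} (nothing there uses oddness); iterating terminates with $\ov{H^1_{\mc{L}_M^\perp\cup\{L_{M,v}^\perp\}_{v\in{\mc{Q}}}}(\gal{F,{\mc{S}}'\cup{\mc{Q}}},\rho_M(\fgder)^*)}=0$. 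We make no attempt to kill relative Selmer and do not need to. With relative dual Selmer killed, the inductive construction of Claim~\ref{relativediagram} applies verbatim --- it uses only Lemma~\ref{lem:exseq}, the local conditions being free $\mc{O}/\vpi^r$-modules fitting into the stated short exact sequences, and the vanishing of relative dual Selmer --- producing compatible $\tau_n$ and $\rho=\varprojlim\tau_n\colon\gal{F,{\mc{S}}'\cup{\mc{Q}}}\to G(\mc{O}')$ lifting $\rho_{N-M}$, hence $\br$, with $\rho|_{\gal{F_v}}$ congruent modulo $\vpi^{t_0}$ to a $\wh{G}(\mc{O}')$-conjugate of $\rho_v$ for every $v\in{\mc{S}}$ (and, if desired, $\rho(\gal{F})\supseteq\wh{G^{\mr{der}}}(\mc{O}')$ via Lemma~\ref{frattini}); setting $\wt{{\mc{S}}}={\mc{S}}'\cup{\mc{Q}}$ finishes the plan.

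The step I expect to be the real obstacle is the Greenberg--Wiles bookkeeping at the places above $p$: one must verify that the full local lifting ring at $v\mid p$, once a formally smooth approximation has been extracted from \S\ref{genericfibersection}, produces local conditions with \emph{exactly} the exponent $[F_v:\Q_p]\dim_k\fgder$, and then that the resulting inequality $\dim_k(\text{relative Selmer})\geq\dim_k(\text{relative dual Selmer})$ is genuinely enough to drive the Ramakrishna-style annihilation in the form used by Theorem~\ref{thm:killrel}, which as written begins from a nonzero relative \emph{Selmer} class. Everything else is a faithful, if tedious, transcription of the proof of Theorem~\ref{mainthm} with the oddness and total-reality inputs deleted.
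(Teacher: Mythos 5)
Your proposal is correct and follows precisely the paper's own approach, which is given only as a terse modification of the proof of Theorem~\ref{mainthm}: at $v\mid p$ use the full local lifting ring, compute via the analogue of Proposition~\ref{prop:gensmooth} that $|L_{r,v}|=|\rho_r(\fgder)^{\gal{F_v}}|\cdot|\mc{O}/\vpi^r|^{[F_v:\Q_p]\dim_k\fgder}$, plug into Greenberg--Wiles to get the (relative) Selmer-to-dual-Selmer ratio $\geq 1$, and note that this inequality suffices. You correctly fill in the two points the paper leaves implicit, namely the dimension bookkeeping producing the exponent $[F_v:\Q_p]\dim_k\fgder$ and the observation that the $\geq$ inequality on the relative groups is enough to supply both the $\ov\phi$ and $\ov\psi$ that Proposition~\ref{prop:killrel} requires when starting from a nonzero relative dual Selmer class, so that the annihilation argument of Theorem~\ref{thm:killrel} runs unchanged.
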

\begin{rmk}
There is not to our knowledge a known result, analogous to the results of \cite{Bellovin-Gee-G} on the generic fibers of the unrestricted lifting rings $R^{\square, \mu}_{\br|_{\gal{F_v}}}$, and such results do not follow formally from the methods of \cite{Bellovin-Gee-G}. 
\end{rmk}
\begin{proof}
The argument is the same as that of Theorem \ref{mainthm}, except at places $v \mid p$ we consider all lifts of $\br|_{\gal{F_v}}$ and choose an irreducible component $\ov{R}_v[1/\vpi]$ of the full local lifting ring $R^{\square, \mu}_{\br|_{\gal{F_v}}}$ as specified in the hypotheses of the theorem. The corresponding subspaces $L_{r, v}$ produced by an analogue of Proposition \ref{prop:gensmooth} have order $|\rho_r(\fgder)^{\gal{F_v}}|\cdot |\mc{O}/\vpi^r|^{\dim_k(\fgder)[F_v:\Q_p]}$, and the corresponding application of the Greenberg--Wiles formula shows that
\[
|H^1_{\mc{L}_M}(\gal{F, {\mc{S}}'}, \rho_M(\fgder))|/|H^1_{\mc{L}_M^{\perp}}(\gal{F, {\mc{S}}'}, \rho_M(\fgder)^*)| \geq 1
\]
(equality holds when $F$ is totally real, and $\br(c_v)=1$ for all complex conjugations $c_v$), with analogous conclusions for the relative Selmer and dual Selmer groups. This inequality suffices to proceed as in the proof of Theorem \ref{mainthm}.
\end{proof}

\section{Examples}\label{examples}
In this section we gather a few examples to which our methods apply.
\subsection{The principal $\mr{SL}_2$}
In \cite{stp:exceptional} (and \cite{stp:exceptional2}) it was shown how the original lifting argument of \cite{ramakrishna02} and \cite{taylor:icos2} could be adapted to prove lifting results for $\br \colon \gal{F} \to G(k)$ whose image was (approximately) a principal $\mr{SL}_2$. In fact, the argument in that paper was carried out for the exceptional groups, at one point relying on a brute-force Magma computation (see \cite[Lemma 7.6]{stp:exceptional}); for the classical Dynkin types except for $D_{2n}$, case-by-case matrix calculations (not carried out in \cite{stp:exceptional}, but some of which appear in \cite{tang:thesisANT}) complete the argument. The arguments of the present paper apply to these examples without relying on case-by-case calculation, and moreover treating type $D_{2n}$ as well.

Let $G^0$ be a split connected reductive group over $\Z_p$. Recall that for $p \gg_{G^0} 0$, there is a unique conjugacy class of principal homomorphisms $\varphi \colon \mr{SL}_2 \to G^0$ defined over $\Z_p$ (see \cite{serre:principalsl2}). Assume that $G= {}^L H$, the L-group of a connected reductive group $H$ over $F$; that is, we choose over $\overline{F}$ a maximal torus and Borel subgroup $T_{\overline{F}} \subset B_{\overline{F}} \subset H_{\overline{F}}$ to obtain a based root datum, and then a choice of pinning allows us to define an $L$-group $G= {}^L H= H^\vee \rtimes \Gal(\tF/F)$ for some finite extension $\tF/F$. The principal $\mr{SL}_2$ extends to a homomorphism $\varphi \colon \mr{SL}_2 \times \gal{F} \to {}^L H$ (\cite[\S 2]{gross:principalsl2}), and we assume that $\varphi$ extends to a homomorphism $\mr{GL}_2 \times \gal{F} \to {}^L H$ (this is always the case if, e.g., $H$ is simply-connected, and in general it can be arranged by enlarging the center of $H$). The following crucial assumption is needed to use the principal $\mr{SL}_2$ to produce \textit{odd} homomorphisms valued in ${}^L H$:
\begin{assumption}\label{oddgroup}
Assume that $\tF/F$ is contained in a quadratic totally imaginary extension of the totally real field $F$, and that the automorphism of $H^\vee$ given by projecting any complex conjugation $c \in \gal{F}$ to $\Gal(\tF/F)$ preserves each simple factor $\mf{h}^\vee_i$ of $\mf{h}^\vee= \Lie(H^\vee)$, and acts on $\mf{h}^\vee_i$ as the identity if $-1 \in W_{\mf{h}_i}$ and as the opposition involution if $-1 \not \in W_{\mf{h}_i}$. 
\end{assumption}
This assumption leads to the following archimedean calculation:
\begin{lemma}
Let $\theta_v \in {}^L H(k)$ be the element 
\[
\theta_v= \varphi \left( \begin{pmatrix}
-1 &0 \\ 0 & 1
\end{pmatrix} \times c_v \right).
\]
Then 
\[
\dim_k(\fgder)^{\Ad(\theta_v)=1}= \dim_k (\mf{n}),
\]
where $\mf{n}$ is the unipotent radical of a Borel subgroup of $G$ (or of $G^{\mr{der}}$).
\end{lemma}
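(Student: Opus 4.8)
The statement is a purely archimedean Lie-theoretic calculation about the involution $\theta_v = \varphi\!\left(\mathrm{diag}(-1,1)\times c_v\right)$ acting on $\fgder = \mf{h}^\vee$ (since $G^0 = H^\vee$ is assumed semisimple/adjoint in the relevant reduction, $\fg = \fgder$). The plan is to compute $\dim (\fgder)^{\Ad(\theta_v)=1}$ by decomposing $\theta_v$ into its principal $\mathrm{SL}_2$ part and its Galois part, and to reduce to the two model cases singled out in Assumption \ref{oddgroup}: simple factors on which $-1 \in W$ (where $c_v$ acts trivially) and simple factors on which $-1 \notin W$ (where $c_v$ acts by the opposition involution).

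\textbf{Key steps.} First I would reduce to the case of a single simple factor $\mf{h}^\vee_i$: since by Assumption \ref{oddgroup} the image of $c_v$ in $\Gal(\tF/F)$ preserves each $\mf{h}^\vee_i$, the element $\theta_v$ acts factor-by-factor, and $\dim(\mf{n})$ and the invariants both add up over simple factors, so it suffices to prove $\dim(\mf{h}^\vee_i)^{\Ad(\theta_v)=1} = \dim(\mf{n}_i)$ for each $i$. Second, I would analyze the principal $\mathrm{SL}_2$: decompose $\mf{h}^\vee_i$ under $\varphi(\mathrm{SL}_2)$ into irreducibles $\bigoplus_j \Sym^{2d_j}$ where the $d_j$ run over the exponents of the simple factor (this is the classical Kostant description of the principal $\mathrm{SL}_2$, valid for $p \gg 0$). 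The semisimple element $\varphi(\mathrm{diag}(-1,1))$ acts on $\Sym^{2d}$ with eigenvalues $(-1)^{a}$ for the weight-$a$ vectors, so on each $\Sym^{2d}$ it has $d+1$ eigenvalues $+1$ and $d$ eigenvalues $-1$; summing, $\varphi(\mathrm{diag}(-1,1))$ has exactly $\dim(\mf{n}_i)$ more $(+1)$-eigenvectors than $(-1)$-eigenvectors (indeed $\dim(\mf{h}^\vee_i) = 2\dim(\mf{n}_i) + \mathrm{rk}$, and the difference of the two eigenspace dimensions is $\mathrm{rk}$, i.e.\ the number of summands, wait---more precisely the $(+1)$-eigenspace has dimension $\sum_j(d_j+1) = \dim(\mf{n}_i)+\mathrm{rk}$ and the $(-1)$-eigenspace has dimension $\sum_j d_j = \dim(\mf{n}_i)$). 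Third, I would bring in $c_v$: when $-1 \in W_{\mf{h}_i}$, Assumption \ref{oddgroup} says $c_v$ acts trivially on $\mf{h}^\vee_i$, so $\Ad(\theta_v) = \varphi(\mathrm{diag}(-1,1))$ and we want the $(-1)$-eigenspace dimension---hmm, this needs care: note $\theta_v$ is conjugate to $\varphi(\mathrm{diag}(-1,1))$ composed with a Weyl element representing $-1$, and the standard fact (cf.\ \cite[\S 4.5, \S 10.1]{stp:exceptional}) is that the correct normalization makes the invariants of $\theta_v$ equal to $\dim(\mf{n}_i)$; I would instead use that $\theta_v^2 = 1$ and that $\theta_v$ normalizes a maximal torus, reducing to counting via the $\pm 1$ eigenvalues of $\theta_v$ on $\mf{t}^\vee$ and on the root spaces. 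When $-1 \notin W_{\mf{h}_i}$, $c_v$ acts by the opposition involution $-w_0$, and one checks directly (or cites \cite{stp:exceptional}) that $\varphi(\mathrm{diag}(-1,1)) \cdot (-w_0)$ on $\mf{h}^\vee_i$ again has invariant space of dimension exactly $\dim(\mf{n}_i)$.

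\textbf{Main obstacle.} The delicate point is getting the bookkeeping of $\pm 1$ eigenvalues right in the two cases simultaneously---in particular tracking how the Galois action (trivial vs.\ opposition) interacts with the principal $\mathrm{SL}_2$ element and the choice of representative of $-1 \in W$ or $w_0$. The cleanest route, which I would adopt, is the one implicit in the paper's earlier remarks following Definition \ref{odd}: any involution of a semisimple Lie algebra has invariant subspace of dimension $\geq \dim(\mathrm{Flag}) = \dim(\mf{n})$, with equality exactly for the split-type (maximally split) involutions; Assumption \ref{oddgroup} is precisely engineered so that $\theta_v$ restricted to each simple factor is such a maximally split Cartan involution (the principal $\mathrm{SL}_2$ providing the split torus when $-1 \in W$, and the opposition twist correcting the outer part otherwise). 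So the proof amounts to: (1) reduce to simple factors, (2) observe $\theta_v|_{\mf{h}^\vee_i}$ is an involution, (3) identify it with the standard split Cartan involution using Assumption \ref{oddgroup} and the principal $\mathrm{SL}_2$, and (4) invoke $\dim(\mf{h}^\vee_i)^{\theta_v=1} = \dim(\mf{n}_i)$ for split Cartan involutions. I expect step (3) to be the one requiring the most care, and I would handle it by an explicit weight computation on $\mf{t}^\vee \oplus \bigoplus_\alpha \mf{h}^\vee_\alpha$ rather than abstractly.
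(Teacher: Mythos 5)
Your proof takes a genuinely different (more detailed) route than the paper, which simply cites \cite[Lemmas 4.19 and 10.1]{stp:exceptional} and records no computation. Fleshing out that citation via the Kostant decomposition is a reasonable thing to do, and the overall plan (reduce to simple factors, decompose under the principal $\mathrm{SL}_2$, count eigenvalues, then handle the two cases of Assumption \ref{oddgroup}) is the right one. But there is a concrete computational error in the eigenvalue bookkeeping, and it then propagates confusion into the third step.

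The element $\varphi(\mathrm{diag}(-1,1))$ is (modulo center, hence irrelevantly for $\Ad$) equal to $\rho^\vee(-1)$, which acts on the height-$m$ graded piece by $(-1)^m$. On the Kostant summand $\Sym^{2d_j}$ (heights $-d_j, \ldots, 0, \ldots, d_j$), the multiplicity of the $+1$-eigenvalue is $d_j+1$ and of $-1$ is $d_j$ \emph{only when $d_j$ is even}; for $d_j$ odd the counts are reversed, giving $d_j$ eigenvalues $+1$ and $d_j+1$ eigenvalues $-1$. Now $-1\in W$ on a simple factor precisely when \emph{all} its exponents $d_j$ are odd. So in that case the $(+1)$-eigenspace of $\Ad(\rho^\vee(-1))$ on $\mf{h}^\vee_i$ has dimension $\sum_j d_j = \dim\mf{n}_i$ outright, not $\dim\mf{n}_i + \mathrm{rk}$ as you claim. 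This is actually good news: since Assumption \ref{oddgroup} makes $c_v$ act trivially on such factors, $\theta_v = \varphi(\mathrm{diag}(-1,1))$ there, and the $(+1)$-eigenspace count you need is \emph{already} $\dim\mf{n}_i$, with no need for the subsequent ``this needs care''/Weyl-element hand-wringing. Your reversed count is what led you to think you wanted the $(-1)$-eigenspace and then to insert a spurious Weyl element; correcting the parity issue removes both problems at once.

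For the $-1\notin W$ factors, the opposition-involution twist does need a further (short) check, and your instinct to verify it by a weight computation on $\mf{t}^\vee \oplus \bigoplus_\alpha \mf{h}^\vee_\alpha$ is fine, as is appealing to the classification of involutions (the split/Chevalley one is characterized by $\dim\mf{g}^\theta = \dim\mf{n}$). But note that your ``cleanest route'' in the final paragraph is, in substance, exactly the content of the cited \cite[\S 4.5, \S 10.1]{stp:exceptional}; if you are going to invoke that, you should say you are reducing to the citation (as the paper does) rather than presenting it as an independent argument. If you want the self-contained version, fix the parity error and the $-1\in W$ case closes immediately; then the $-1\notin W$ case is a one-page root-space computation with the opposition involution.
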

\begin{proof}
Combining \cite[Lemma 4.19, 10.1]{stp:exceptional}, we find that $\dim_k (\fgder)^{\Ad(\theta_v)=1}= \dim_k (\mf{n})$. 
\end{proof}
\begin{thm}
Let $G= {}^L H$ be constructed as above, satisfying Assumption \ref{oddgroup}, let $p \gg_G 0$, and assume that $[\tF(\zeta_p): \tF]>a_G$. Let ${\mc{S}}$ be a finite set of places of the totally real field $F$ containing all $v \mid p$, assuming for simplicity that all places in ${\mc{S}}$ are split in $\tF/F$, and let $\bar{r} \colon \gal{F, {\mc{S}}} \to \mr{GL}_2(k)$ be a continuous representation satisfying the following properties:
\begin{enumerate}
\item For some subfield $k_0 \subset k$, the projective image of $\bar{r}$ contains $\mr{PSL}_2(k_0)$. 
\item $\det \bar{r}(c)=-1$ for all complex conjugations $c \in \gal{F}$.
\end{enumerate}
Then there exist a finite extension $\mc{O}'$ of $\mc{O}$ and a finite set of trivial primes ${\mc{Q}}$ such that $\br= \varphi \circ \bar{r} \colon \gal{F, {\mc{S}} \cup {\mc{Q}}} \to G(k)$ has a geometric lift $\rho \colon \gal{F, {\mc{S}} \cup {\mc{Q}}} \to G(\mc{O})$, with $\im(\rho)$ containing $\wh{G^{\mr{der}}}(\mc{O})$. (The more refined local conclusions of Theorem \ref{mainthm} also hold, given local liftings $\rho_v$, $v \in {\mc{S}}$, that one wants to approximate.)
\end{thm}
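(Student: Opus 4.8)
The plan is to verify that $\br = \varphi \circ \bar{r}$ satisfies all the hypotheses of Theorem \ref{mainthm}, and then invoke that theorem directly. First I would recall that since the projective image of $\bar r$ contains $\mathrm{PSL}_2(k_0)$, the image of $\varphi \circ \bar r$ restricted to $\gal{F}$ — or rather to $\gal{\tF}$, where $\bar r$ still has big image because the places of ${\mc{S}}$ (and hence the relevant ramification) are split in $\tF/F$ and more importantly $\tF$ is linearly disjoint appropriately — contains a conjugate of $\varphi(\mathrm{SL}_2(k_0))$. The key group-theoretic input (as in \cite{stp:exceptional}, and streamlined via Appendix \ref{groupsection}) is that for $p \gg_G 0$ the restriction of $\br$ to $\gal{\tF(\zeta_p)}$ is absolutely irreducible: this follows because $\fgder$, as an $\mathrm{SL}_2$-module via the principal homomorphism, decomposes into irreducible representations $\mathrm{Sym}^{2m_i}$ of $\mathrm{SL}_2$ whose highest weights are the exponents of the simple factors of $\fgder$, and these remain irreducible and mutually non-isomorphic (within each simple factor) for $p$ large; combined with the fact that $\mathrm{PSL}_2(k_0)$-image ensures $\bar r|_{\gal{\tF(\zeta_p)}}$ still has large image, Corollary \ref{irrcor} then gives that $\br|_{\gal{\tF(\zeta_p)}}$ is absolutely irreducible. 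One also needs $[\tF(\zeta_p):\tF] > a_G$, which is a hypothesis.

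Next I would check oddness. This is exactly the content of the preceding lemma: for each infinite place $v$ of $F$, the element $\br(c_v)$ is $\widehat{G}(k)$-conjugate to $\theta_v = \varphi\left(\begin{pmatrix} -1 & 0 \\ 0 & 1\end{pmatrix} \times c_v\right)$, because $\bar r(c_v)$ is $\mathrm{GL}_2(k)$-conjugate to $\mathrm{diag}(-1,1)$ (as $\det \bar r(c_v) = -1$ and $c_v^2 = 1$, with $p \neq 2$), and the lemma computes $\dim_k(\fgder)^{\Ad(\theta_v)=1} = \dim_k(\mf{n}) = \dim(\mathrm{Flag}_{G^{\mr{der}}})$. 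Hence $h^0(\gal{F_v}, \br(\fgder)) = \dim(\mathrm{Flag}_{G^{\mr{der}}})$ for all $v \mid \infty$, which is precisely the oddness condition of Definition \ref{odd}.

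Then I would address the local liftability hypothesis at all $v \in {\mc{S}}$. For a fixed multiplier $\mu$ (take, say, the one induced by composing $\varphi$ on the $\mathrm{GL}_2$-determinant, or any convenient geometric lift of $\bar\mu$), one must exhibit lifts $\rho_v \colon \gal{F_v} \to G(\mc{O})$ of $\br|_{\gal{F_v}}$ of type $\mu|_{\gal{F_v}}$, and for $v \mid p$ one that is de Rham with regular Hodge--Tate cocharacters. The clean way is to lift $\bar r|_{\gal{F_v}}$ itself: for $v \nmid p$ local lifts of the $\mathrm{GL}_2$-valued $\bar r|_{\gal{F_v}}$ exist (e.g.\ by \cite[\S 2.4.4]{clozel-harris-taylor}), and for $v \mid p$ one can take a crystalline (or de Rham) lift of $\bar r|_{\gal{F_v}}$ with distinct, sufficiently spread-out Hodge--Tate weights — then $\varphi$ applied to such a lift is automatically de Rham, and the regularity of the Hodge--Tate cocharacter of $\varphi \circ (\text{lift})$ follows because the principal $\mathrm{SL}_2$ sends a regular cocharacter of the torus of $\mathrm{SL}_2$ to a regular cocharacter of $T_{G^{\mr{der}}}$ (the coweight $2\rho^\vee$-type image of $\mathrm{diag}(t, t^{-1})$ pairs nontrivially with every root). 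Finally, if one is also given local liftings $\rho_v$ one wishes to approximate, these are taken as the input to the refined version. With all three bulleted hypotheses of Theorem \ref{mainthm} verified, the theorem produces a finite extension $\mc{O}'/\mc{O}$, a finite enlargement $\wt{{\mc{S}}} = {\mc{S}} \cup {\mc{Q}}$ of the ramification set by trivial primes, and a geometric lift $\rho$ of multiplier $\mu$ with $\rho(\gal{F})$ containing $\wh{G^{\mr{der}}}(\mc{O}')$, together with the refined local statements. I expect the main obstacle to be the careful verification of the absolute irreducibility of $\br|_{\gal{\tF(\zeta_p)}}$: one must control the $\mathrm{SL}_2$-decomposition of $\fgder$ for each Dynkin type uniformly in $p$ and ensure no accidental isomorphisms or vanishing of cohomology occur, which is where the condition $p \gg_G 0$ and the results of Appendix \ref{groupsection} do the real work; the rest is bookkeeping.
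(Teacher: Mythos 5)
Your proposal is correct and follows essentially the same route as the paper's own proof: verify the three hypotheses of Theorem \ref{mainthm} (absolute irreducibility of $\br|_{\gal{\tF(\zeta_p)}}$, oddness via the preceding lemma computing $\dim(\fgder)^{\Ad(\theta_v)=1}$, and local liftability obtained by lifting $\bar r$ in $\mr{GL}_2$ and composing with $\varphi$), then invoke that theorem. One small slip in the write-up: Corollary \ref{irrcor} does not \emph{give} absolute irreducibility of $\br|_{\gal{\tF(\zeta_p)}}$ --- it takes that as input and derives Assumption \ref{multfree}; the absolute irreducibility itself is what you argue from the principal-$\mr{SL}_2$ structure (and the paper cites \cite{stp:exceptional} for this). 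Your observation that $\varphi$ carries regular cocharacters of the $\mr{SL}_2$-torus to regular cocharacters of $T_{G^{\mr{der}}}$ (since the principal cocharacter is $2\rho^\vee$, pairing nontrivially with every root) is the right justification for regularity at $v \mid p$, which the paper leaves implicit; the paper cites \cite{muller:thesis} for the existence of the de Rham regular $\mr{GL}_2$-lift, which your sketch takes for granted.
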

\begin{proof}
For $p \gg_G 0$, it is straightforward to verify the irreducibility hypotheses of Theorem \ref{mainthm} (compare \cite[Theorem 7.4, Theorem 10.4]{stp:exceptional}). Note that the constant $a_G$ of Lemma \ref{cyclicq} can be replaced by 2, by the assumption on $\im(\bar{r})$. To satisfy the local hypotheses of Theorem \ref{mainthm}, it will even suffice to construct local lifts $r_v \colon \gal{F_v} \to \mr{GL}_2(\mc{O})$ such that, for $v \mid p$, $r_v$ is de Rham and regular. If $v \in {\mc{S}} \setminus \{v \mid p\}$, such lifts are known to exist even with $\mr{GL}_N$ in place of $\mr{GL}_2$ (\cite[Corollary 2.4.21]{clozel-harris-taylor}). If $v \mid p$, $\bar{r}|_{\gal{F_v}}$ admits a Hodge--Tate regular, potentially crystalline lift $r_v$ by \cite[Theorem 2.5.3, Theorem 2.5.4]{muller:thesis}. The theorem now follows.
\end{proof}
\begin{rmk}
In particular, starting with $\bar{r} \colon \gal{\Q} \to \mr{GL}_2(k)$ coming either from classical modular forms or elliptic curves, we can construct geometric representations $\rho \colon \gal{\Q} \to G(\mc{O})$ whose image has Zariski closure containing $G^{\mr{der}}$. This was the application of the lifting theorems in \cite{stp:exceptional}.
\end{rmk}
\subsection{Normalizers of tori}
In this subsection we make no effort to be maximally general. For simplicity we assume that $G^0/Z_{G^0}$ is simple. Let $T$ be a (split) maximal torus of $G^0$. Residual representations valued in $N_G(T)(k)$ lift to $G(\mc{O})$, since (provided $p$ does not divide $|W_{G^0}|$) the image of $\br$ has order prime to $p$. Our main theorem shows that non-trivial lifts, with image containing an open subgroup of $G^{\mr{der}}(\mc{O})$, also exist under suitable hypotheses on $\br$.

\begin{thm}
Let $p\gg_{G} 0$, and assume $[\tF(\zeta_p): \tF] > a_G$. Let $\br \colon \gal{F, {\mc{S}}} \to N_G(T)(k)$ satisfy the following:
\begin{itemize}
\item $\br|_{\gal{\tF(\zeta_p)}}$ is absolutely irreducible. For instance, we could assume $\im(\br)$ contains $N_G(T)(\Fp)$ (and $p \gg_G 0$); or that $\br(\gal{\tF(\zeta_p)})$ contains a regular semisimple element of $T$ whose centralizer is $T$ (automatic if $G^0$ is simply-connected), and that the projection of $\br(\gal{\tF(\zeta_p)})$ to the Weyl group contains a Coxeter element.
\item $\br$ is odd. For instance, we can make one of the following assumptions:
\begin{itemize}
\item If $-1 \in W_{G^0}$, then for all $v \mid \infty$, $\br(c_v)$ either projects to $-1 \in W_{G^0}$, or projects to $\rho^\vee(-1) \in G^{\mr{ad}}$ (where $\rho^\vee$ is the usual half-sum of the positive co-roots of $G$). 
\item If $-1 \not \in W_{G^0}$, then for all $v \mid \infty$, $\br(c_v)$ either equals $(w_0, \tau) \in G^0 \rtimes \pi_0(G)$, where $w_0$ lifts the longest element of $W_{G^0}$, and $\tau$ is a pinned outer automorphism of $G^0$ acting as the opposition involution on $T \cap G^{\mr{der}}$; or it projects to $(\rho^\vee(-1), \tau) \in G^{\mr{ad}} \rtimes \pi_0(G)$.
\end{itemize}
\item For all $v \mid p$, $\br|_{\gal{F_v}}$ factors through $T(k)$.
\end{itemize}
Then for some finite set of places ${\mc{T}} \supset {\mc{S}}$, $\br$ admits a geometric lift $\rho \colon \gal{F, {\mc{T}}} \to G(\mc{O})$ whose image contains $\wh{G^{\mr{der}}}(\mc{O})$.
\end{thm}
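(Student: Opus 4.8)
The plan is to reduce the statement to a direct application of Theorem \ref{mainthm}, so the work is entirely in verifying the hypotheses of that theorem for $\br \colon \gal{F, {\mc{S}}} \to N_G(T)(k) \subset G(k)$. First I would handle the existence of local lifts $\rho_v$ of type $\mu|_{\gal{F_v}}$ for all $v \in {\mc{S}}$. Fixing the multiplier $\mu$ (say the Teichm\"uller lift of $\bar\mu = \br \bmod G^{\mr{der}}$), for $v \nmid p$ one uses that $\br|_{\gal{F_v}}$ has image of order prime to $p$ (since $p \gg_G 0$ exceeds $|W_{G^0}|$ and $|\pi_0(G)|$), so the Teichm\"uller lift gives an unramified (or tame) lift of type $\mu$ and provides a point of $R^{\square,\mu}_{\br|_{\gal{F_v}}}[1/\vpi]$; for $v \mid p$, the hypothesis that $\br|_{\gal{F_v}}$ factors through $T(k)$ lets one write $\br|_{\gal{F_v}}$ as a direct sum of characters valued in $T(k)$ composed with cocharacters, and then lift each character to a de Rham character of $\gal{F_v}$ with any prescribed (regular, for the collection) Hodge--Tate weights, assembling them into a de Rham lift $\rho_v \colon \gal{F_v} \to T(\mc O) \subset G(\mc O)$ with regular Hodge--Tate cocharacters and the correct multiplier (adjusting by a finite-order twist to pin down $\mu|_{\gal{F_v}}$). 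This is essentially the same construction used for the normalizer-of-torus examples in \cite{stp:exceptional, stp:exceptional2}.

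Next I would verify absolute irreducibility of $\br|_{\gal{\tF(\zeta_p)}}$ on $\fgder$. The general hypothesis is assumed outright, but I would also check that the two sample sufficient conditions do the job: if $\im(\br) \supset N_G(T)(\Fp)$, then for $p \gg_G 0$ the group $N_G(T)(\Fp)$ already acts irreducibly on $\fgder \otimes \overline{\Fp}$ --- the torus part decomposes $\fgder$ into weight spaces $\mf t^{\mr{der}} \oplus \bigoplus_\alpha \fg_\alpha$, the Weyl group permutes the root spaces transitively within each simple factor (and $\pi_0(G)$ permutes the factors), and a short computation rules out proper invariant subspaces once $p$ is large enough that the relevant nonzero weight-pairings are nonzero; passing to $\gal{\tF(\zeta_p)}$ only loses the finitely many elements controlled by $[\tF(\zeta_p):\tF] > a_G$. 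The Coxeter-element condition is the more delicate alternative: a regular semisimple element of $T$ with centralizer exactly $T$ forces any invariant subspace to be a sum of weight spaces, and a Coxeter element in $W_{G^0}$ has no nontrivial invariant subset of roots, so again $\fgder$ is irreducible. I expect this step --- pinning down exactly how large $p$ must be for these group-theoretic irreducibility claims, and checking $\tF(\zeta_p)$ rather than $\tF$ --- to be the main obstacle, though it is routine in spirit and parallel to arguments already in \cite{stp:exceptional}.

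Then I would verify the oddness condition: for each $v \mid \infty$ I must show $h^0(\gal{F_v}, \br(\fgder)) = \dim(\mr{Flag}_{G^{\mr{der}}})$, i.e. that $\Ad\br(c_v)$ is an involution of $\fgder$ with fixed space of dimension $\dim \mf n + \dim \mf t^{\mr{der}} = \dim(\mr{Flag}_{G^{\mr{der}}})$; equivalently, that $\Ad \br(c_v)$ is a \emph{split} involution (acts as $-1$ on half the roots). For the cases $\br(c_v) \mapsto -1 \in W_{G^0}$ or $\br(c_v)$ conjugate to $\rho^\vee(-1)$, this is the standard computation that $\Ad(-1)$ on $T$-weights sends $\fg_\alpha \to \fg_{-\alpha}$ and fixes $\mf t^{\mr{der}}$, so the $+1$-eigenspace has dimension $\dim \mf t^{\mr{der}} + \#\{\text{pairs}\} = \dim \mf t^{\mr{der}} + \tfrac12|\Phi| = \dim \mr{Flag}$; the case $\rho^\vee(-1)$ gives the same count since $\mr{Ad}(\rho^\vee(-1))$ acts by $(-1)^{\langle\rho^\vee,\alpha\rangle}$ and one checks the fixed space again has dimension $\dim\mr{Flag}_{G^{\mr{der}}}$, as in \cite[\S 4.5, \S 10.1]{stp:exceptional}. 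When $-1 \notin W_{G^0}$, the element $(w_0,\tau)$ or its projection $(\rho^\vee(-1),\tau)$ with $\tau$ the opposition involution is engineered precisely so that the composite acts as a split involution; this is exactly the content of \cite[Lemma 4.19]{stp:exceptional}, and I would cite it. Finally, having checked all hypotheses of Theorem \ref{mainthm}, I conclude directly: there is a finite extension $\mc O'$, a finite set ${\mc{T}} \supset {\mc{S}}$, and a geometric lift $\rho \colon \gal{F, {\mc{T}}} \to G(\mc O')$ of $\br$ with multiplier $\mu$ and $\rho(\gal{F}) \supset \wh{G^{\mr{der}}}(\mc O')$; replacing $\mc O$ by $\mc O'$ (and noting that since the local lifts $\rho_v$ can be taken $\mc O$-valued in the prime-to-$p$ and the $T(\mc O)$ constructions, one may in fact avoid enlarging the coefficients) gives the statement, and the refined local approximation assertions carry over verbatim from Theorem \ref{mainthm}.
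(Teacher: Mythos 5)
Your proposal is correct and follows the paper's proof essentially verbatim: reduce to Theorem \ref{mainthm} by taking Teichm\"uller lifts at $v \nmid p$, a Hodge--Tate regular lift through $T(\mc{O})$ at $v \mid p$, and verifying oddness and absolute irreducibility group-theoretically, steps which the paper disposes of by citing \cite[Lemma 2.3]{yun:exceptional}, \cite[Lemma 10.1]{stp:exceptional}, and \cite[Proposition 10.7]{bhkt:fnfieldpotaut} where you sketch the computations directly. The one claim to retract is the parenthetical that $\mc{O}$ need not be enlarged: your Teichm\"uller lifts at $v \nmid p$ are not automatically formally smooth points of $R^{\square,\mu}_{\br|_{\gal{F_v}}}[1/\vpi]$ (e.g.\ $H^0(\gal{F_v}, \rho_v(\fgder)(1))$ can be nonzero), so Lemma \ref{smoothapprox} may still require a finite extension, and the statement should be read with the usual convention of permitting one.
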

\begin{proof}
First we check the local hypotheses of Theorem \ref{mainthm}. At primes in ${\mc{S}} \setminus \{v \mid p\}$ we may take the obvious Teichm\"{u}ller lifts, since the order of $\im(\br)$ is prime to $p$. At $v \mid p$, it is easy to lift a $\gal{F_v} \to T(k)$ to a potentially crystalline and Hodge--Tate regular $\rho_v \colon \gal{F_v} \to T(\mc{O})$. That the examples given of possible $\br(c_v)$ are in fact involutions follows from \cite[Lemma 2.3]{yun:exceptional}, \cite[Lemma 10.1]{stp:exceptional}, and a similar check in the case $\br(c_v)= (w_0, \tau)$. To check the irreducibility hypothesis is similarly straightforward: compare the proof of \cite[Proposition 10.7]{bhkt:fnfieldpotaut}.
\end{proof}
We have certainly not optimized the explicit descriptions of the possible local or global images here. It can be difficult, for instance, to realize $N_G(T)(\Fp)$ as a Galois group over $\Q$ (the sequence $1 \to T \to N_G(T) \to W_G \to 1$ need not split), and Theorem \ref{mainthm} is easily seen to apply when $\br(\gal{\tF})$ equals certain somewhat smaller subgroups of $N_G(T)(\Fp)$. Here we give two examples from the recent literature:
\begin{eg} In \cite{tang:thesisANT}, Tang classifies those connected reductive groups $G$ that arise as the Zariski closure of the image of a homomorphism $\gal{\Q} \to G(\overline{\Q}_p)$. The main theorem of \cite{tang:thesisANT} gives a complete answer to this question (for $p \gg 0$) modulo some elusive cases, consisting of certain simply-connected groups (e.g. $E_7^{\mr{sc}}$) for $p$ failing to satisfy some congruence condition (see \cite[Theorem 1.3]{tang:thesisANT}). As explained in \cite[Theorem 1.5, \S 3.4]{tang:thesisANT}, our main theorem allows Tang to treat these remaining cases by deforming $\br$ valued in a ``large enough" subgroup of $N_G(T)(\Fp)$.
\end{eg}
\begin{eg}
The main theorem of \cite{bcelmpp} produces $E_6$-Galois representations that arise in the cohomology of algebraic varieties (and are potentially automorphic) by studying the deformation theory of a carefully-constructed $\br \colon \gal{\Q} \to E_6^{\mr{sc}}(\Fp) \rtimes \Out(E_6)$ whose image projects onto a 3-Sylow subgroup of the Weyl group of $E_6$. Our theorem applies to find geometric lifts with full image of these $\br$ as well, and it also can lift them with sets of Hodge--Tate weights not accessible by the methods of \cite{bcelmpp} (which rely on potential automorphy theorems after composing with the minuscule representation of $E_6$). Of course, our arguments do not show these lifts are motivic or potentially automorphic!
\end{eg} 
\subsection{Deforming exotic finite subgroups}
We conclude by constructing some odd irreducible representations $\br \colon \gal{\Q} \to G(k)$ of a less Lie-theoretic flavor that Theorem \ref{mainthm} will lift to Zariski-dense geometric representations $\br \colon \gal{\Q} \to G(\mc{O})$. Recall that over $\CC$ we have an embedding $F_4(\CC) \into E^{\mr{sc}}_6(\CC)$ given by identifying $F_4$ to the stabilizer of a vector in one of the 27-dimensional minuscule representations $V_{\mr{min}}$ of $E_6^{\mr{sc}}$. Letting $H$ and $G$ be the split groups (over $\Z$) of type $F_4$ and $E_6^{\mr{sc}}$, we can realize this embedding $H \into G$ over $R= \mc{O}_E[\frac{1}{N}]$ for some number field $E$ and integer $N$ (a quantitative refinement of this soft ``spreading-out" assertion is of course possible). By \cite[1.1 Main Theorem]{cohen-wales}, the finite groups $A_6$ and $\mr{PSL}_2(\mathbb{F}_{13})$ embed into $F_4(\CC)$, and, perhaps after replacing $E$ (and hence $R$) by a finite extension, we may assume these groups are embedded into $H(R)$. This theorem also tells us the characters of $A_6$ and $\mr{PSL}_2(\mathbb{F}_{13})$ in $V_{\mr{min}}$ and the adjoint representation of $E_6$. Recalling the decompositions as $F_4$-representations
\begin{align*}
\Lie(E_6)&= \Lie(F_4) \oplus U, \\
V_{\mr{min}}&= \mathbbm{1} \oplus U,
\end{align*}
where $U$ is the irreducible 26-dimensional representation of $F_4$, we compute the following decompositions of $\Lie(F_4)$ as $A_6$ and $\mr{PSL}_2(\mathbb{F}_{13})$-representations:
\[
\Lie(F_4) \cong \begin{cases}
\chi_4 \oplus 3 \cdot \chi_5 \oplus 2\cdot \chi_7 \quad \text{(case $A_6$)}; \\
\chi_4 \oplus \{\text{$\chi_5$ or $\chi_6$}\} \oplus 2 \cdot \chi_9 \quad \text{(case $\mr{PSL}_2(\mathbb{F}_{13})$)},
\end{cases}
\]
where we use the ATLAS notation (\cite{ATLAS}) for characters. It turns out that for our purposes knowing whether $\chi_5$ or $\chi_6$ appears in the decomposition in the $\mr{PSL}_2(\mathbb{F}_{13})$ case is irrelevant. In particular, letting $c$ denote the unique conjugacy class of order 2 in either case, the ATLAS character tables tell us that the trace of $c$ acting on $\Lie(F_4)$ is $-4= -\rk(F_4)$, and so
\[
\dim \Lie(F_4)^{\Ad(c)=1}= \frac{\dim(F_4)- \rk(F_4)}{2}= \dim \mr{Flag}_{F_4},
\]
i.e. $\Ad(c)$ is an odd involution of $\Lie(F_4)$.
\begin{prop}\label{f4eg}
For all sufficiently large primes, there are representations $\br_1 \colon \gal{\Q} \to F_4(\overline{\mathbb{F}}_p)$ and $\br_2 \colon \gal{\Q} \to F_4(\overline{\mathbb{F}}_p)$ that have images $\im(\br_1) \cong A_6$, $\im(\br_2) \cong \mr{PSL}_2(\mathbb{F}_{13})$, and that admit geometric deformations $\rho_1, \rho_2 \colon \gal{\Q} \to F_4(\ov{\Z}_p)$ with Zariski-dense image. 
\end{prop}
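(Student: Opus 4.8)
The plan is to deduce Proposition \ref{f4eg} from Theorem \ref{mainthm} applied with $G = F_4$, the split group of that type over $\mc{O}$. Since $F_4$ is connected, split and semisimple, one has $G^0 = G^{\mr{der}} = G$, $\pi_0(G) = 1$, $\tF = \Q$, trivial multiplier $\mu$, and $\wh{G^{\mr{der}}}(\mc{O}') = \wh{F_4}(\mc{O}')$; also $[\tF(\zeta_p):\tF] = p-1 > a_G$ once $p \gg_G 0$. Writing $S$ for $A_6$ (resp. $\mr{PSL}_2(\mathbb{F}_{13})$), the first step is to realize $S$ as a Galois group over $\Q$: for $A_6 \cong \mr{PSL}_2(\mathbb{F}_9)$ this is classical, and $\mr{PSL}_2(\mathbb{F}_{13})$ is also known to occur; since $S$ is simple, generated by its involutions, and these form a single conjugacy class, I would choose the realization so that the corresponding field $L$ is not totally real, forcing complex conjugation into the involution class. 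Let $S_0$ be the (finite) set of rational primes ramified in $L$ together with those dividing the integer $N$ over which the embedding $S \into F_4(R)$ is defined; pick $p \gg_G 0$ with $p \notin S_0$ (so in particular $p \nmid |S|$), fix a prime $\mf{p}$ of $R$ above $p$ and a finite extension $\mc{O}$ of $\Z_p$ receiving the $\mf{p}$-adic completion of $R$, with residue field $k$. Then $S \into F_4(R) \to F_4(\mc{O}) \onto F_4(k)$ embeds $S$ in $F_4(k)$, and I set $\br_i \colon \gal{\Q} \onto S \into F_4(k) \into F_4(\Fpb)$ and ${\mc{S}} = S_0 \cup \{p\}$; crucially, $\br_i$ is \emph{unramified at $p$}.

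Next I would check the hypotheses of Theorem \ref{mainthm}. Oddness at $v = \infty$ is exactly the ATLAS computation recorded above: $\br_i(c)$ lies in the involution class of $S$, the trace of which on $\Lie(F_4)$ is $-4$, so (for $p \neq 2$, where $\Lie(F_4)$ is semisimple over $k$ of dimension $52$ and $\br_i(c)$ is a semisimple involution) $\dim_k \Lie(F_4)^{\Ad \br_i(c) = 1} = 24 = \dim \mr{Flag}_{F_4}$. For absolute irreducibility, the image of $\gal{\Q(\zeta_p)}$ is a normal subgroup of the simple group $S$ with cyclic quotient, hence all of $S$, so it suffices that $S \subset F_4(k)$ be $G$-irreducible; since $p \nmid |S|$ the finite subgroup $S$ is $G$-completely reducible, so if it lay in a proper parabolic it would lie in a proper Levi $L$, whose central torus $Z(L)^\circ \neq 1$ would centralize $S$, contradicting $\Lie(F_4)^S = 0$ (the displayed $S$-module decomposition of $\Lie(F_4)$ contains no trivial constituent), and this conclusion is stable under base change to $\bar k$. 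The remaining conditions of Assumption \ref{multfree} then follow from Corollary \ref{irrcor} for $p \gg_G 0$.

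The local hypotheses are where the only real work lies. As $\mu$ is trivial: for $v \in {\mc{S}}$ with $v \neq p$, the image $\br_i(\gal{\Q_v}) \subset S$ has order prime to $p$, so I take $\rho_v$ to be the Teichm\"uller lift, valued in $S \subset F_4(\mc{O})$. For $v = p$, since $\br_i|_{\gal{\Q_p}}$ is unramified it is given by a semisimple element $\gamma = \br_i(\Frob_p)$ of order prime to $p$, which I conjugate into a split maximal torus $T \subset F_4$; lifting $\gamma$ to $\widetilde\gamma \in T(\mc{O})$, choosing a regular cocharacter $\lambda \colon \mathbb{G}_m \to T$ and the crystalline character $\psi = \kappa^{p-1} \colon \gal{\Q_p} \to \mc{O}^\times$ (positive Hodge--Tate weight $p-1$, reduction $\equiv 1 \pmod{\vpi}$), the commuting product $\rho_p := \widetilde\gamma^{\mr{unr}} \cdot (\lambda \circ \psi) \colon \gal{\Q_p} \to T(\mc{O})$ is crystalline with regular Hodge--Tate cocharacter $(p-1)\lambda$ and reduces to $\br_i|_{\gal{\Q_p}}$. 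This is the step I expect to be the main obstacle in spirit: for general $G$ the existence of de Rham, Hodge--Tate regular local lifts at $p$ is not known (cf. Remark \ref{lnotplocal}), and the plan circumvents it entirely by engineering $\br_i$ to be unramified at $p$ so that a regular crystalline lift can be produced by hand inside a torus.

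Finally, Theorem \ref{mainthm} yields a finite extension $\mc{O}'/\mc{O}$, a finite set $\wt{{\mc{S}}} \supset {\mc{S}}$, and a geometric lift $\rho_i \colon \gal{\Q, \wt{{\mc{S}}}} \to F_4(\mc{O}') \subset F_4(\Zpb)$ of $\br_i$ with $\rho_i(\gal{\Q}) \supseteq \wh{F_4}(\mc{O}')$. The latter is an open subgroup of $F_4(\mc{O}')$ in the $p$-adic topology, so its Zariski closure is a closed subgroup of $F_4$ containing an identity neighbourhood, hence of full dimension, hence all of $F_4$ by connectedness; thus $\rho_i$ has Zariski-dense image, and $\br_i$ has image $\cong S$ as needed, giving $\br_1, \rho_1$ (case $A_6$) and $\br_2, \rho_2$ (case $\mr{PSL}_2(\mathbb{F}_{13})$). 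Apart from the local issue at $p$, the only nontrivial external input is the classical fact that $A_6$ and $\mr{PSL}_2(\mathbb{F}_{13})$ are realizable over $\Q$ by non-totally-real extensions, which is what makes $\br_i$ odd.
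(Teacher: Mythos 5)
Your proposal is correct and follows essentially the same route as the paper: realize $A_6$ and $\mr{PSL}_2(\mathbb{F}_{13})$ over $\Q$ with complex conjugation landing in the involution class, reduce the embeddings into $F_4(R)$ modulo a large prime $p$ unramified in $L$ and prime to $|S|$ and $N$, take Teichm\"uller lifts away from $p$, exploit that $\br_i|_{\gal{\Q_p}}$ factors through a torus to build a crystalline Hodge--Tate-regular lift at $p$, and then invoke Theorem \ref{mainthm}. Your write-up is somewhat more explicit than the paper in two places (the explicit torus-valued lift $\widetilde\gamma^{\mr{unr}}\cdot(\lambda\circ\kappa^{p-1})$, and the $G$-complete-reducibility argument for absolute irreducibility), while the paper instead cites the specific realizations of Hilbert and Shih and notes that simplicity of $\im(\br_i)$ makes the Lemma~\ref{cyclicq} step automatic; these are inessential differences of detail.
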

\begin{proof}
There are Galois extensions $L_1/\Q$ and $L_2/\Q$ satisfying $\Gal(L_1/\Q) \cong A_6$, $\Gal(L_2/\Q) \cong \mr{PSL}_2(\mathbb{F}_{13})$, and complex conjugation $c$ is non-trivial in each $\Gal(L_i/\Q)$: the constructions of $L_1$ and $L_2$ are due to Hilbert and Shih, respectively, and both are explained in \cite[\S 4.5, Theorem 5.1.1]{serre:topics}. It is easy to see that we can take $c$ to be non-trivial, and note that to apply Shih's theorem we use that $\left(\frac{2}{13}\right)=-1$. Let $p$ be any sufficiently large (in the sense of Theorem \ref{mainthm} for $F_4$, not dividing $N$, and not dividing $|\im(\br_i)|$) prime that is unramified in $L_i/\Q$. Reducing the inclusions $\Gal(L_i/\Q) \into H(R)$ modulo a prime of $R$ above $p$, we obtain residual representations $\br_i \colon \gal{\Q} \to H(\overline{\mathbb{F}}_p)$ satisfying the hypotheses of Theorem \ref{mainthm}. Indeed, by the character calculation preceding Proposition \ref{f4eg}, both $\br_i$ are odd. At primes $v \nmid p$ there are obvious Teichm\"{u}ller lifts of $\br_i|_{\gal{F_v}}$. At the prime $p$, $\br_i(\gal{\Q_p})$ is valued in some torus of $F_4$, since $p$ is unramified in $L_i$ and $\im(\br_i)$ is coprime to $p$. Finally, $L_i$ is linearly disjoint from $\Q(\mu_p)$ over $\Q$, and $\br_i$ satisfies our global image requirements: the hypotheses of Assumption \ref{multfree} are clearly satisfied since $\im(\br_i)$ has order coprime to $p$, and $\im(\br_i)$ has no non-trivial cyclic quotient, substituting for the application of Lemma \ref{cyclicq} in \S \ref{klr^Nsection} and \S \ref{relativeliftsection}.
\end{proof}
\begin{rmk}
We note that the multiplicities of $A_6$ and $\mr{PSL}_2(\mathbb{F}_{13})$ acting on $\Lie(F_4)$ are for certain irreducible constituents greater than 1, so these examples use the full generality of our methods: compare Remark \ref{multfreermk} and the discussion in the introduction. We also note that every non-trivial irreducible representation of $F_4$ has some multiplicity greater than 1 in its formal character, so we cannot apply potential automorphy theorems as in \cite{bcelmpp} to lift our $\br_i$: indeed, there are no $F_4$-valued Galois representations that are Hodge-Tate regular after embedding $F_4$ in some $\mr{GL}_N$. Moreover, the actions of the subgroups $A_6$ and $\mr{PSL}_2(\mathbb{F}_{13})$ on $U$ (the irreducible 26-dimensional representation of $F_4$) are reducible, further precluding the use of potential automorphy theorems.
\end{rmk}
We cannot resist two more examples, which likewise cannot be treated by other methods (again because any non-trivial irreducible representation of $E_8$ has some multiplicity greater than 1 in its formal character):
\begin{prop}
For all sufficiently large primes $p$, there are representations $\br_1 \colon \gal{\Q} \to E_8(\overline{\mathbb{F}}_p)$ and $\br_2 \colon \gal{\Q} \to E_8(\overline{\mathbb{F}}_p)$ having images $\im (\br_1) \cong \mr{PSL}_2(\mathbb{F}_{41})$ and $\im(\br_2) \cong \mr{PSL}_2(\mathbb{F}_{49})$, and that admit geometric deformations $\rho_1, \rho_2 \colon \gal{\Q} \to E_8(\overline{\Z}_p)$ with Zariski-dense images.
\end{prop}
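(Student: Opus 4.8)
The plan is to follow the template of Proposition~\ref{f4eg}. Over $\CC$ the finite simple groups $\mathrm{PSL}_2(\mathbb{F}_{41})$ and $\mathrm{PSL}_2(\mathbb{F}_{49})$ admit embeddings into $E_8(\CC)$ (these are among the ``exotic'' finite simple subgroups of $E_8(\CC)$ in the Cohen--Griess--Lisser and Liebeck--Seitz classifications), and each acts irreducibly on the $248$-dimensional adjoint representation $\Lie(E_8)$; this irreducibility is precisely the reason these two groups are singled out. First I would spread these embeddings out: letting $G$ be the split group of type $E_8$ over $\Z$, realize $\mathrm{PSL}_2(\mathbb{F}_{41}) \hookrightarrow G(R)$ and $\mathrm{PSL}_2(\mathbb{F}_{49}) \hookrightarrow G(R)$ over $R = \mc{O}_E[\tfrac{1}{N}]$ for a suitable number field $E$ and integer $N$ (possibly enlarging $E$). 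From the known character of each group on $\Lie(E_8)$ one reads off that the unique conjugacy class $c$ of order $2$ has trace $-8 = -\rk(E_8)$ on $\Lie(E_8)$; since $-1 \in W(E_8)$, this gives $\dim \Lie(E_8)^{\Ad(c)=1} = (\dim E_8 - \rk E_8)/2 = \dim \mr{Flag}_{E_8}$, i.e.\ $\Ad(c)$ is an odd involution of $\Lie(E_8)$.

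Next I would realize $\mathrm{PSL}_2(\mathbb{F}_{41})$ and $\mathrm{PSL}_2(\mathbb{F}_{49})$ as Galois groups over $\Q$ in such a way that complex conjugation lands in the nontrivial order-$2$ class. For $\mathrm{PSL}_2(\mathbb{F}_{41})$ one can invoke Shih's construction of $\mathrm{PSL}_2(\mathbb{F}_\ell)$ as a Galois group over $\Q$ (applicable here because $\big(\tfrac{3}{41}\big) = -1$), exactly as in the $F_4$ example, and check that complex conjugation is nontrivial. For $\mathrm{PSL}_2(\mathbb{F}_{49})$ Shih's method does not directly produce a group with quadratic coefficient field, so I would instead realize it as the projective image of the mod-$7$ Galois representation attached to a suitable newform whose Hecke field has a prime of residue degree $2$ above $7$ (such forms exist, e.g.\ by the large-image results of Dieulefait--Wiese and Reverter--Vila, or by explicit search), arranging that the representation is odd so that complex conjugation is nontrivial; alternatively one cites a known explicit $\mathrm{PSL}_2(\mathbb{F}_{49})$-extension of $\Q$ with a real place nonsplit. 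Either way we obtain fields $L_i/\Q$ with $\Gal(L_i/\Q) \cong \mathrm{PSL}_2(\mathbb{F}_{41})$ or $\mathrm{PSL}_2(\mathbb{F}_{49})$ and with $c \neq 1$.

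Then I would choose $p \gg_{E_8} 0$, not dividing $N$, unramified in $L_i/\Q$, and prime to $|\Gal(L_i/\Q)|$, and set $\br_i \colon \gal{\Q} \to G(\Fpb)$ to be the reduction of $\gal{\Q} \onto \Gal(L_i/\Q) \hookrightarrow G(R)$ modulo a prime of $R$ over $p$. I would then verify the hypotheses of Theorem~\ref{mainthm}: (i) $\br_i$ is odd, by the character computation above; (ii) $\br_i|_{\gal{\Q(\zeta_p)}}$ is absolutely irreducible, since $\im(\br_i)$ has order prime to $p$ and acts irreducibly on $\Lie(E_8)$ over $\CC$, hence over $\Fpb$ for $p \gg 0$, while $L_i$ is linearly disjoint from $\Q(\mu_p)$ over $\Q$; (iii) local liftability, taking the Teichm\"{u}ller lift at $v \nmid p$ (image prime to $p$), and at $v = p$ noting that $\br_i|_{\gal{\Q_p}}$ is unramified and valued in a torus of $G$, hence has a de~Rham, Hodge--Tate regular lift to that torus. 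As in Proposition~\ref{f4eg}, the simplicity of $\im(\br_i)$ (no nontrivial cyclic quotient) substitutes for the application of Lemma~\ref{cyclicq}, and Assumption~\ref{multfree} is immediate since $|\im(\br_i)|$ is prime to $p$. Theorem~\ref{mainthm} then produces a geometric lift $\rho_i \colon \gal{\Q} \to G(\mc{O})$ with $\im(\rho_i) \supseteq \wh{G^{\mr{der}}}(\mc{O}) = \wh{G}(\mc{O})$ (as $E_8$ equals its own derived group), which forces the Zariski closure of $\im(\rho_i)$ to be all of $E_8$. The main obstacle is the second paragraph: producing the two number fields $L_i$ with the correct Galois group \emph{and} with complex conjugation in the required conjugacy class --- routine via Shih for $\mathrm{PSL}_2(\mathbb{F}_{41})$, but for $\mathrm{PSL}_2(\mathbb{F}_{49})$ requiring either a careful choice of modular form with a residue-degree-$2$ Hecke prime above $7$ together with a computation of the mod-$7$ image, or an appeal to an explicit realization in the literature, plus a short ramification check to pin down complex conjugation. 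The remaining steps --- spreading out the complex embeddings, the $\Ad(c)$-trace computation, and the irreducibility of $\Lie(E_8)$ over these subgroups --- are character-theoretic bookkeeping of the same kind already carried out for $F_4$.
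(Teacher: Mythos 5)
Your overall plan matches the paper's: spread out the complex embeddings over $\mc{O}_E[1/N]$, check oddness via a character computation, realize the groups as Galois groups over $\Q$ (Shih for $\mathrm{PSL}_2(\mathbb{F}_{41})$, modular forms for $\mathrm{PSL}_2(\mathbb{F}_{49})$, where the paper cites Dieulefait--Vila), and then invoke Theorem~\ref{mainthm} exactly as in Proposition~\ref{f4eg}. However, there is a genuine error in your justification of absolute irreducibility.

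You assert that $\mathrm{PSL}_2(\mathbb{F}_{41})$ and $\mathrm{PSL}_2(\mathbb{F}_{49})$ ``act irreducibly on the $248$-dimensional adjoint representation $\Lie(E_8)$'' and use this both to explain why the groups were chosen and to verify hypothesis (ii) of Theorem~\ref{mainthm}. This is impossible: the largest dimension of a complex irreducible representation of $\mathrm{PSL}_2(\mathbb{F}_q)$ is $q+1$, so these groups cannot act irreducibly on a $248$-dimensional space (indeed, even in the $F_4$ case, the paper writes out a highly non-irreducible decomposition of $\Lie(F_4)$ under $A_6$ and $\mathrm{PSL}_2(\mathbb{F}_{13})$, with multiplicities). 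You are conflating two different meanings of irreducibility: what Theorem~\ref{mainthm} requires is $G$-irreducibility in the sense of the paper, i.e.\ that $\im(\br)$ is not contained in any proper parabolic subgroup of $E_8$ over $\ov{k}$, which is \emph{not} the same as irreducibility of the adjoint module. The correct substitute is Lie primitivity of these exotic finite subgroups, which is part of the Griess--Ryba classification (\cite{griess-ryba}); granting that, linear disjointness of $L_i$ from $\Q(\zeta_p)$ then gives the required hypothesis. Your remaining verifications (oddness from the trace $-8$ on $\Lie(E_8)$, Teichm\"uller lifts away from $p$, torus-valued de~Rham lift at $p$ since $p$ is unramified in $L_i$ and prime to $|\Gamma|$, simplicity of $\Gamma$ replacing Lemma~\ref{cyclicq}) are all fine once this is fixed.
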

\begin{proof}
The same method of proof applies: $\mr{PSL}_2(\mathbb{F}_{41})$ and $\mr{PSL}_2(\mathbb{F}_{49})$ embed into the complex Lie group $E_8(\CC)$ (\cite{griess-ryba}), and the necessary oddness follows from \cite[Table 1, Table 2]{griess-ryba}. Shih's theorem (\cite[Theorem 5.1.1]{serre:topics}) still applies to construct $\mr{PSL}_2(\mathbb{F}_{41})$ as a Galois group over $\Q$ (note $\left(\frac{3}{41}\right)=-1$), and the paper \cite{dieulefait-vila} establishes the realization of $\mr{PSL}_2(\mathbb{F}_{49})$ as a Galois group over $\Q$ (via modular forms, so that complex conjugation is non-trivial). 
\end{proof}

\appendix
\section{Some group theory: irreducible $G(k)$-representations for $p
  \gg_G 0$}\label{groupsection}

We prove a few group-theoretic lemmas showing that the image
hypotheses of \S \ref{klrsection} (namely, Assumption \ref{multfree})
in fact follow from the seemingly simpler assumption that $\br$ is
``absolutely irreducible,'' as long as $p$ is sufficiently large. We
note that the explicit bounds extracted here depend on the
classification of finite simple groups. Recall that a subgroup $\Gamma
\subset G(k)$, with $G$ a connected reductive group over $k$, is absolutely irreducible if $\Gamma$ is not contained in any proper parabolic subgroup of $G_{\overline{k}}$.
\begin{lemma}\label{irr}
 Let $\Gamma \subset G(k)$ be an absolutely irreducible finite subgroup. Assume $p > 2(\dim_k(\fgder) +1)$. Then:
\begin{enumerate}
\item $\fgder$ is a semisimple $k[\Gamma]$-module.
\item $\Gamma$ does not contain any non-trivial normal subgroup of
  $p$-power order.
 \item $H^1(\Gamma, \fgder)=0$, and the same holds if the action of $\Gamma$ on $\fgder$ is twisted by a character of $\Gamma$.
\end{enumerate}
\end{lemma}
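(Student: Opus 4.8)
\textbf{Proof proposal for Lemma \ref{irr}.}

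The plan is to reduce everything to statements about finite subgroups of reductive groups in large characteristic, where one can invoke the structural results of Serre and the classification-dependent bounds already in the literature (e.g. Larsen--Pink, Serre's notes on semisimplicity). First I would treat (2), since it is the structural backbone: if $\Gamma$ contained a nontrivial normal $p$-subgroup $P$, then the fixed points $(\overline{k}^n)^P$ for any faithful representation would be a proper nonzero $\Gamma$-stable subspace, and more to the point $P$ would lie in the unipotent radical of its own normalizer; using that $\Gamma$ normalizes $P$ and $P$ is a $p$-group in characteristic $p$, one shows $\Gamma$ is contained in the parabolic $P_{G}(\lambda)$ associated to a cocharacter attached to $P$ (concretely, $C_G(P)^0$ and $N_G(P)$ land in a common proper parabolic since $P$ is contained in the unipotent radical of a Borel after conjugation, and its normalizer preserves that Borel's associated parabolic structure). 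This contradicts absolute irreducibility. The bound $p > 2(\dim_k(\fgder)+1)$ is more than enough here; in fact one only needs $p$ larger than the order of the Weyl group times small constants, but stating it uniformly in $\dim\fgder$ is cleanest.

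Next, for (1): semisimplicity of $\fgder$ as a $k[\Gamma]$-module. Here I would apply Serre's theorem on semisimplicity of representations (the one used throughout the paper's preliminaries, cf. the running Assumption \ref{minimalp}): a representation of a finite group on a vector space of dimension $n$ over a field of characteristic $p$ is semisimple provided $p$ does not divide the order of... that is too strong; instead the relevant tool is Serre's bound that the representation $\fgder$ of $\Gamma$ is semisimple as soon as it is semisimple ``generically'' and $p$ exceeds a bound linear in $\dim\fgder$ — precisely the hypothesis $p > 2(\dim_k\fgder + 1)$ is the shape of Serre's bound ($p \geq 2\dim + 2$, equivalently the representation and its exterior/symmetric square behave well). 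The key input is that $\fgder$ is the Lie algebra of $G^{\mr{der}}$ and carries a nondegenerate $\Gamma$-invariant trace form (guaranteed by Assumption \ref{minimalp}, since $p$ is very good); self-duality plus the dimension bound lets one rule out non-split extensions, because a $\Gamma$-submodule $W \subset \fgder$ with $\fgder/W \cong W'$ and no splitting would force a nonzero class in $\Ext^1_{k[\Gamma]}(W', W)$, and such classes are controlled (killed) by the large-$p$ hypothesis via Serre's argument. I would cite this rather than reprove it.

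For (3): vanishing of $H^1(\Gamma, \fgder)$, including after twisting by a character $\chi\colon \Gamma \to k^\times$. Once (1) and (2) are in hand this is routine. First, any character $\chi$ has order prime to $p$ (as $k^\times$ has no $p$-torsion), so twisting by $\chi$ does not change semisimplicity and, more importantly, $H^1(\Gamma, \fgder \otimes \chi)$ only receives contributions from a Sylow-$p$ subgroup $\Gamma_p$ of $\Gamma$ by the usual restriction-corestriction argument: $H^1(\Gamma, M) \hookrightarrow H^1(\Gamma_p, M)$ since the composite with corestriction is multiplication by $[\Gamma:\Gamma_p]$, a unit. So it suffices to show $H^1(\Gamma_p, \fgder\otimes\chi) = 0$; but $\chi|_{\Gamma_p}$ is trivial, so we need $H^1(\Gamma_p, \fgder) = 0$. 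Now $\Gamma_p$ acts on $\fgder$, and by (2) applied to the whole group — actually here I would argue directly: $\fgder^{\Gamma_p}$ is a $\Gamma$-submodule (since $\Gamma_p$ is... not normal, so one must be slightly careful) — better, use that a nonzero $H^1(\Gamma_p, \fgder)$ would, by a standard argument with the action of a $p$-group on a nonzero $\mathbb{F}_p$-vector space always having fixed points, produce a nontrivial extension, and then invoke the theorem (essentially due to the large-$p$ hypothesis, cf. the way this is used in \cite{stp:exceptional}) that such $H^1$ vanishes when $p > 2(\dim\fgder+1)$. The cleanest route: cite the analogue of \cite[\S 5]{stp:exceptional} or the cohomological vanishing lemma that under $p \gg_G 0$ one has $H^i(\Gamma, \fgder) = 0$ for $i = 0, 1$ whenever $\Gamma \subset G(k)$ is absolutely irreducible, with the explicit bound as stated.

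\textbf{Main obstacle.} The hard part is pinning down that the clean numerical bound $p > 2(\dim_k\fgder + 1)$ genuinely suffices for \emph{all three} conclusions simultaneously, rather than merely for $p \gg_G 0$ non-effectively. Part (2) is elementary and robust; the real work is (1), where one must control $\Ext^1_{k[\Gamma]}$ between constituents of $\fgder$, and the classification of finite simple groups enters (via Larsen--Pink-type bounds, or Serre's explicit semisimplicity criterion) to guarantee that the constituents' dimensions and the structure of $\Gamma_p$ are constrained enough that the bound in terms of $\dim\fgder$ alone is effective. I would organize the proof so that (2) is proved first and used as a hypothesis-check in the cited semisimplicity and cohomology-vanishing results, and flag that the CFSG-dependence is inherited entirely from those cited results, as the appendix's opening paragraph already warns.
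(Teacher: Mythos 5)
Your treatment of part (3) contains a genuine error. The restriction--corestriction argument gives an injection $H^1(\Gamma, M) \hookrightarrow H^1(\Gamma_p, M)$ for a Sylow $p$-subgroup $\Gamma_p$, but that injection is useless here: $H^1(\Gamma_p, \fgder)$ is essentially never zero when $\Gamma_p$ is a nontrivial $p$-group acting on a nonzero $\mathbb{F}_p$-vector space (already $H^1(\mathbb{Z}/p, \mathbb{F}_p) \neq 0$). You cannot reduce vanishing of $H^1(\Gamma, \fgder)$ to vanishing of $H^1(\Gamma_p, \fgder)$, and your subsequent ``better, use that\dots'' detour circles back to assuming the very vanishing you want to prove. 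The paper's route is direct: it invokes Guralnick's theorem (\cite[Theorem A]{guralnick:CR}), whose hypotheses are exactly that the group has no nontrivial normal $p$-subgroup --- which is what part (2) establishes --- and that the module has dimension $d$ with $p > 2(d+1)$; no Sylow reduction appears. If you want the twisted case, note only that the theorem applies to the twisted module of the same dimension, or observe that $H^0(\Gamma, \fgder \otimes \chi) = 0$ (by Lemma~\ref{invariants}) and the $\Gamma/(\Gamma\cap Z_G(k))$-module is still faithful.

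On part (2), your parabolic-theoretic argument is not wrong but is far heavier than needed. The paper's argument is a three-line fixed-point observation: once $\fg_{\overline{k}}$ is known semisimple (part (1)), for each irreducible summand $U$ the invariants $U^H$ are nonzero (a $p$-group acting on a nonzero vector space in characteristic $p$ always has fixed vectors) and $\Gamma$-stable (by normality of $H$), hence $U^H = U$; so $H$ acts trivially on $\fg$, forcing $H \subset Z_G(k)$, which has order prime to $p$. Nothing about cocharacters, Borels, or unipotent radicals is needed, and no care about ``proper nonzero subspace'' on a faithful representation --- that observation by itself yields no contradiction. Also note the paper proves (1) before (2) for exactly this reason: the fixed-point argument uses semisimplicity of $\fg$.

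For part (1), the citation target is \cite[Corollaire 5.5]{serre:CR}, which gives semisimplicity for $p > 2h_G - 2$ with $h_G$ the maximal Coxeter number; the stated bound $p > 2(\dim_k\fgder + 1)$ dominates this. Your instinct to cite Serre is correct, but the bound you reason toward is not Serre's actual semisimplicity bound; it happens to be the dimension-dependent bound needed for Guralnick's theorem in part (3), which is where the $2(\dim+1)$ originates.
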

\begin{proof}
 Let $h_G$ be the maximum of the Coxeter numbers of the simple factors
 of $G$. By \cite[Corollaire 5.5]{serre:CR}, for $p> 2h_G-2$,
 $\mf{g}$, and hence its summand $\fgder$, is a semisimple
 $\Gamma$-module.

 Now suppose $H \unlhd \Gamma$ is a non-trivial normal subgroup of $G$
 of $p$-power order. Consider any irreducible
 $\overline{k}[\Gamma]$-summand $U$ of $\fg_{\overline{k}}$. The
 $\overline{k}$-vector space of invariants $U^H$ is non-trivial (since
 $H$ is a $p$-group) and is stabilized by $\Gamma$, hence must equal
 all of $U$. This holds for all $U$, so $\fg$ is a trivial $H$-module,
 and therefore $H$ is contained in the center $Z_{G}(k)$; but the
 latter clearly has order prime to $p$, so this contradiction proves (2).

 Finally, (2) allows us to apply \cite[Theorem A]{guralnick:CR} to
 deduce that $H^1(\Gamma, \fgder)=0$ for $p>2 (\dim_k(\fgder) +1)$ (to
 be precise, apply this result to $\Gamma/\Gamma \cap Z_{G}(k)$ acting
 on $\fgder$).
\end{proof}
The following lemma, with a different proof, also appears in \cite[Lemma 5.1]{bhkt:fnfieldpotaut}:
\begin{lemma}\label{invariants}
Let $G$ be a connected reductive group over $\bar{k}$. Assume $p >5$, and that $p \nmid n+1$ for any simple factor of $G^{\mr{ad}}$ of Dynkin type $A_n$. Let $\Gamma \subset G(\bar{k})$ be absolutely irreducible. Then $H^0(\Gamma, \fgder)=0$.
\end{lemma}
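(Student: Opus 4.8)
The plan is to reduce to the case where $G$ is semisimple (even simple) and adjoint, and then play off the absolute irreducibility of $\Gamma$ against the structure of the finite group $\Gamma$ and the fact that $p$ is large relative to $\fgder$. First I would note that $\fgder = \mf{g}$ after replacing $G$ by $G^{\mr{der}}$ (or by $G^{\mr{ad}}$, which has the same Lie algebra), so we may assume $G$ is semisimple of adjoint type; since the invariants in a direct sum of Lie algebras split as the direct sum of invariants, and $\Gamma$-absolute-irreducibility is inherited by the projections to each $\pi_0$-orbit of simple factors (compare the reduction in Claim \ref{orbitredn}), I would further reduce to $G$ simple adjoint, with $\Gamma$ acting on $\mf{g}$ through $\Ad$ possibly twisted by the permutation of factors, but in the simple case just $\mf{g}$ itself.

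Next I would argue that $H^0(\Gamma, \mf{g}) = \mf{g}^\Gamma$ is a subalgebra of $\mf{g}$ that is moreover an ideal: for $X \in \mf{g}^\Gamma$ and $Y \in \mf{g}$, and $\gamma \in \Gamma$, we have $\Ad(\gamma)[X,Y] = [X, \Ad(\gamma)Y]$, so $[\mf{g}^\Gamma, \mf{g}]$ is $\Gamma$-stable; more to the point, $\mf{g}^\Gamma$ is the Lie algebra of the (scheme-theoretic) centralizer $Z_G(\Gamma)$, which is a closed subgroup of $G$ normalized by... actually the cleaner route: $\mf{g}^\Gamma$ is stable under $\Ad(Z_G(\Gamma))$, and since $\Gamma$ is absolutely irreducible, $Z_G(\Gamma)$ is contained in the center of $G$ — indeed any proper Levi containing $Z_G(\Gamma)$ would force $\Gamma$ into the opposite parabolic, contradicting irreducibility — hence $Z_G(\Gamma)$ is finite of order prime to $p$ (the center of a simple adjoint group in good characteristic, or trivial). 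Then $\mf{g}^\Gamma = \Lie(Z_G(\Gamma))$ would be $0$ if $Z_G(\Gamma)$ were smooth, but one must be slightly careful since we only know $p$ is large, not that $Z_G(\Gamma)$ is smooth. The safe argument is instead: if $\mf{g}^\Gamma \neq 0$, pick $0 \neq X \in \mf{g}^\Gamma$; the line $kX$ is $\Gamma$-stable, so $\Gamma$ acts on $kX$ by a character, but $X$ is actually fixed, so... the point is $\mf{g}^\Gamma$ is a nonzero $\Gamma$-submodule with trivial action, and I want to derive a contradiction with irreducibility via the existence of a large $\Gamma$-stable proper subspace or parabolic.

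The cleanest finish, and the step I expect to be the main obstacle, is the Lie-theoretic/group-theoretic input: under the hypothesis $p > 5$ and $p \nmid n+1$ for $A_n$-factors (i.e. $p$ very good), $\mf{g}$ is a simple $G(\bar k)$-module (this is the irreducibility of the adjoint representation cited already in the proof of Lemma \ref{trivsmooth} and used around Proposition \ref{klrstep1''}), so its restriction to the absolutely irreducible $\Gamma$ has no trivial constituent unless $\mf{g}$ itself is trivial, which it is not. Concretely: $\mf{g}^\Gamma$ is a $\Gamma$-submodule; if it equals all of $\mf{g}$ then $\Gamma$ acts trivially on $\mf{g}$, forcing $\Gamma \subset Z_G(\bar k) = 1$, absurd since the trivial subgroup is contained in a Borel; so $\mf{g}^\Gamma$ is a proper nonzero $\Gamma$-submodule, contradicting the fact that a semisimple (by Lemma \ref{irr}(1), valid since $p > 5$ is comfortably less restrictive than $2(\dim\fgder+1)$ — here one does need the stronger bound, so I would either invoke Lemma \ref{irr} directly or note that for the mere statement $H^0 = 0$ one can get away with simplicity of $\mf{g}$ as a $G$-module plus irreducibility of $\Gamma$) $\Gamma$-module with no invariants... wait, semisimplicity alone does not forbid a trivial summand. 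So I must use that $\mf{g}$ is \emph{simple} as a $G(\bar k)$-module and that $\Gamma$ is Zariski-dense in no proper parabolic: the correct statement is that an absolutely irreducible $\Gamma \subset G(\bar k)$ acts irreducibly — or at least without trivial subrepresentation — on the simple module $\mf{g}$, because any $\Gamma$-stable subspace $V \subsetneq \mf{g}$ has stabilizer in $G$ a proper parabolic (by the theory of $G$-stable flags, or: the variety of subspaces of $\mf{g}$ of dimension $\dim V$ fixed by $\Gamma$ is nonempty and $G$-stable only if...). The honest hard kernel is exactly this last implication, which I would handle by: if $0 \neq X \in \mf{g}^\Gamma$, then $X$ is fixed by the Zariski closure $\overline{\Gamma}$, which is reductive (as $\Gamma$ is finite of order prime to $p$), so $\overline{\Gamma} \subseteq Z_G(X)$; since $\Gamma$ is absolutely irreducible, $\overline\Gamma$ is not contained in any proper parabolic, but $Z_G(X)$ for $X \neq 0$ nilpotent or semisimple is always contained in a proper parabolic (Jacobson–Morozov for nilpotent $X$; for semisimple $X \neq 0$, $Z_G(X)$ is a proper Levi — here using $p$ large so that centralizers are smooth of the expected dimension) — contradiction. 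This disposes of both cases and completes the proof; I would write the nilpotent case via an associated cocharacter (requiring $p$ good, which is implied) and the semisimple case via the root-space decomposition relative to a maximal torus through $X$.
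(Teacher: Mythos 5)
Your final plan coincides with the paper's: reduce to $G$ simple adjoint, observe that a nonzero $X \in \fg^\Gamma$ forces $\Gamma \subseteq C_G(X)$, and show $C_G(X)$ lies in a proper parabolic in both the nilpotent and the semisimple cases. Before getting there you cycle through several discarded ideas (the ideal argument, the semisimplicity-of-$\fg$-as-$\Gamma$-module argument), and you invoke ``$\overline\Gamma$ is reductive because $\Gamma$ has order prime to $p$'' --- which is false in general, $\Gamma$ only has no normal $p$-subgroup by Lemma~\ref{irr}(2) --- but these detours are harmless because the argument that survives does not use reductivity of $\overline\Gamma$ at all: $\Gamma \subseteq C_G(X)$ is immediate. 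Two points, however, are genuine gaps.

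First, you never explain why one may assume $X$ is nilpotent or semisimple; a general $X \in \fg^\Gamma$ is neither. The fix is standard and easy: take the Jordan decomposition $X = X_s + X_n$; uniqueness of the decomposition makes $X_s, X_n$ both $\Gamma$-fixed, and $\Gamma \subseteq C_G(X_s) \cap C_G(X_n)$, reducing to the two pure cases. The paper does exactly this.

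Second --- and this is the substantive gap --- the semisimple case is dispatched with the unjustified assertion that ``$Z_G(X)$ is a proper Levi, here using $p$ large so that centralizers are smooth of the expected dimension.'' Smoothness of $C_G(X)$ is not the right ingredient; it is used to identify $\Lie(C_G(X))$ with $C_\fg(X) = \mf{t} \oplus \bigoplus_{\alpha \in \Phi'}\fg_\alpha$ where $\Phi' = \{\alpha : d\alpha(X)=0\}$, but it does not by itself tell you $\Phi'$ has lower rank than $\Phi$. Over $\QQ$ this would be automatic (a nonzero $X$ cannot be annihilated by a spanning set of coroots), but over $k$ the linear forms $d\alpha$, $\alpha \in \Phi'$, may fail to span $\mf{t}^*$ even when $\Phi'$ spans $X^\bullet(T)_\RR$, because $p$ can divide the index $[\Z\Phi : \Z\Phi']$. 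The lemma's hypotheses ($p>5$ and $p \nmid n+1$ for $A_n$-factors) are designed precisely to rule this out, and they enter not through smoothness but through the fact that they force $\Lie(H)$ to have trivial center for every simple factor $H$ of $C_G(X)^{\mr{ad}}$. The paper's argument is: if $C_G(X)$ were semisimple, i.e.\ $\Phi'$ of full rank, then $\mf{z}(C_\fg(X)) = 0$ by the above, whereas $X$ manifestly lies in $\mf{z}(C_\fg(X))$; contradiction. Hence $C_G(X)$ has positive-dimensional center, $\RR\Phi'$ is a proper subspace, and Bourbaki's theorem on extending a basis of a closed subsystem to a basis of $\Phi$ places $C_G(X)$ inside a proper Levi. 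Without this step your semisimple case is incomplete; you would need to add it, or some equivalent argument, to close the proof.
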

\begin{proof}
By our characteristic assumptions (which imply that $G^{\mr{der}}$ and
$G^{\mr{ad}}$ have isomorphic Lie algebras), we may and do assume
$G=G$ is an adjoint group, and by considering each simple factor of
$G$ we may and do further assume that $G$ is simple. Let $X$ be an
element of $\fg^\Gamma$. We have the Jordan decomposition $X=X_s+X_n$
into semisimple and nilpotent parts in $\fg$, and uniqueness of
Jordan decomposition implies that both $X_s$ and $X_n$ are
$\Gamma$-invariant. Since $\Gamma$ is then contained in the
intersection $C_G(X_s) \cap C_G(X_n)$, it suffices to show that
$C_G(X)$ is contained in a proper parabolic when $X$ is either
semisimple or nilpotent. In either case, as long as $p>5$ (for $G$
not of type $A_n$) or $p \nmid n+1$ (for $G$ of type $A_n$), $C_G(X)$
is smooth (by a theorem of Richardson: see \cite[2.5
Theorem]{jantzen:nilporbits}). Assume $X$ is a non-zero
nilpotent. Then \cite[5.9 Proposition]{jantzen:nilporbits} implies
that $C_G(X)$ is contained in a proper parabolic subgroup. Now assume
$X$ is a non-zero semisimple element. There is a maximal torus $T$ of
$G$ such that $X$ belongs to $\mf{t}= \Lie(T)$
(\cite[11.8]{borel:linalg}). As usual, we can diagonalize the
$T$-action on $\fg$ to obtain a root system (in the real vector space
$X^\bullet(T) \otimes_{\Z} \RR$). The subgroup $C_G(X)$ is a connected
reductive group containing $T$: for the connectedness, we use that
$p>5$ (ensuring $p$ is not a ``torsion prime'') so that we can invoke \cite[Theorem 3.14]{steinberg:torsion}. By \cite[3.4 Proposition]{borel-tits:reductive}, $C_G(X)$ is determined by the root subgroups it contains (since it contains a maximal torus of $G$). For a root $\alpha \in \Phi(G, T)$, let $u_{\alpha} \colon \mathbf{G}_a \to G$ be the corresponding root subgroup. For $t \in T$, the relation 
\[
u_{\alpha}(y)t u_{\alpha}(y)^{-1}= t\cdot u_{\alpha}((\alpha(t)^{-1}-1)y)
\] 
lets us compute that (passing to the Lie algebra) $C_G(X)$ precisely contains those $U_{\alpha}= \im(u_{\alpha})$ drawn from the subset
\[
\Phi'= \{\alpha \in \Phi(G, T): d\alpha(X)=0\}
\]
of $\Phi= \Phi(G, T)$. We claim that the semisimple rank of $C_G(X)$ is strictly less than that of $G$. Temporarily granting this, we have that the roots $\Phi'$ span a proper subspace $\RR \Phi' \subset \RR \Phi=X^\bullet(T)_{\RR}$. By \cite[VI.1.7 Proposition 23]{bourbaki:lie456}, $\Phi'$ is a root system in the real vector space $\RR \Phi'$, and we can also consider it as a subsystem of the root system $\Phi''= \RR \Phi' \cap \Phi$. The latter, by \cite[VI.1.7 Proposition 24]{bourbaki:lie456} has a basis $I$ that extends to a basis of $\Phi$; and since $\RR \Phi'$ is strictly contained in $\RR \Phi$ this basis of $\Phi''$ is a proper subset of the extended basis of $\Phi$. It follows that $C_G(X)$ is contained in the (proper) Levi subgroup of $G$ associated to $I$, and therefore that $\Gamma$ is reducible.

To complete the proof, we establish the postponed claim that the inclusion $\RR \Phi' \subseteq \RR \Phi$ is proper. It suffices to show that $C_G(X)$ is not semisimple, i.e. has positive-dimensional center. Suppose it were semisimple. Its root system is a (not necessarily simple) subsystem of that of $G$, and so there are only finitely many possibilities for the root systems of the simple factors $H$ of $C_G(X)^{\mr{ad}}$. Under our assumptions on $p$, each of these simple factors satisfies the following two properties:
\begin{itemize}
\item $H^{\mr{sc}} \to H^{\mr{ad}}$ induces an isomorphism on Lie algebras.
\item $\Lie(H)$ has trivial center.
\end{itemize}
Indeed, note that $\Lie(H)$ has non-trivial center only when $p \leq 3$ or $H$ is of type $A_n$ and $p \mid n+1$: see the discussion of \cite[pp. 47-48]{seligman:modular} (which ensures that $\Lie(H)$ has a nonsingular trace form), and then apply \cite[Theorem I.7.2]{seligman:modular}. Thus under our assumptions on $p$, $\Lie(C_G(X))= C_{\fg}(X)$ must have trivial center. But $X$ visibly lies in the center, and we have therefore contradicted the supposed semisimplicity of $C_G(X)$.

\end{proof}
In the main theorem, we will use the next three lemmas (Lemma \ref{cyclicq}, specifically) to show that $\br(\fgder)$ and $\br(\fgder)^*$ have no common subquotient.

\begin{lemma} \label{larsenpink} Given integers $n, c_1 >0$, there
  exists an integer $c_2 > 0$ (depending only on $n$ and $c_1$) such
  that if $\Gamma \subset \mr{GL}_n(k)$ is a finite subgroup admitting
  a cyclic quotient of order $c_2$ and not containing any normal
  subgroup of order $p^a$ with $a>0$, then the centre of $\Gamma$
  contains a cyclic subgroup of order prime to $p$ and $\geq c_1$.
\end{lemma}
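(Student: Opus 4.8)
The plan is to reduce everything, via the Larsen--Pink structure theorem, to a statement about a normal abelian subgroup of bounded index, and then to run an averaging argument. The Larsen--Pink theorem on finite subgroups of algebraic groups, together with the Landazuri--Seitz--Zalesskii lower bounds on the dimensions of faithful representations of finite simple groups of Lie type in their natural characteristic, shows that there is a constant $J(n)$, depending only on $n$, with the following property: any finite subgroup $\Gamma \subseteq \mr{GL}_n(k)$ (with $p = \mathrm{char}(k)$) admits normal subgroups $\Gamma_3 \trianglelefteq \Gamma_2 \trianglelefteq \Gamma$ with $\Gamma_3$ a $p$-group, $\Gamma_2/\Gamma_3$ abelian of order prime to $p$, and $[\Gamma : \Gamma_2] \le J(n)$ (one collapses the three-layer Larsen--Pink filtration to two layers by absorbing the ``product of finite simple groups of Lie type'' layer into the index bound, its order being bounded in terms of $n$ by the representation-dimension estimates). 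I would apply this to our $\Gamma$; since by hypothesis $\Gamma$ has no non-trivial normal subgroup of $p$-power order, $\Gamma_3 = 1$, so $A := \Gamma_2$ is a normal abelian subgroup of $\Gamma$ of order prime to $p$ with $j := [\Gamma : A] \le J(n)$.

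Next I would introduce a norm map to manufacture central elements. Choosing representatives $x$ for the cosets of $A$ in $\Gamma$, set $N(a) = \prod_{\bar x \in \Gamma/A} x a x^{-1}$ for $a \in A$. Because $A$ is abelian and normal, the product is independent of the ordering of the factors and of the choice of representatives, so $N \colon A \to A$ is a well-defined map; and reindexing the coset product after left multiplication by any $y \in \Gamma$ shows $y N(a) y^{-1} = N(a)$, so $N(a) \in Z(\Gamma)$. Moreover $N(a) \in A$, so $N(a)$ has order prime to $p$.

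Now I would use the large cyclic quotient. Let $\phi \colon \Gamma \twoheadrightarrow C$ with $C$ cyclic of order $c_2$. Then $B := \phi(A)$ is a cyclic group of order prime to $p$ (a quotient of $A$, and a subgroup of $C$), and since $C/B$ is a quotient of $\Gamma/A$ we get $|B| \ge c_2/j \ge c_2/J(n)$. Pick $a \in A$ with $\phi(a)$ a generator of $B$; then $\phi(a)$ has order $m := |B| \ge c_2/J(n)$, and, $C$ being abelian, $\phi(N(a)) = \prod_{\bar x} \phi(x a x^{-1}) = \phi(a)^{\,j}$, which has order $m/\gcd(m,j) \ge m/j \ge m/J(n) \ge c_2/J(n)^2$. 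Hence $N(a)$ has order at least $c_2/J(n)^2$, and $\langle N(a)\rangle$ is a cyclic subgroup of $Z(\Gamma)$ of order prime to $p$ and at least $c_2/J(n)^2$. Taking $c_2 := \lceil c_1\, J(n)^2\rceil$, which depends only on $n$ and $c_1$, then yields the lemma.

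The real content is entirely in the cited input: the uniformity of the Larsen--Pink theorem is what makes $J(n)$ --- and hence $c_2$ --- independent of $p$ and of $\Gamma$, while the hypothesis that $\Gamma$ has no non-trivial normal $p$-subgroup is exactly what collapses the pro-$p$ layer $\Gamma_3$. (It is the extra hypothesis of a large cyclic quotient that makes the conclusion possible at all: without it, $\Gamma = \mr{PSL}_2(\mathbb{F}_{p^r})$ for large $r$ would be a counterexample, having trivial centre and no non-trivial normal $p$-subgroup.) The rest is the elementary norm computation, so I expect no serious obstacle beyond correctly quoting the structure theorem and checking that the simple-of-Lie-type layer is bounded in terms of $n$.
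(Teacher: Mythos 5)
Your reduction to a two-layer filtration rests on a false claim. You assert that there is a constant $J(n)$ depending only on $n$ such that \emph{every} finite $\Gamma \subset \mr{GL}_n(k)$ with no non-trivial normal $p$-subgroup has a normal abelian prime-to-$p$ subgroup $\Gamma_2$ of index at most $J(n)$, justifying this by saying the ``product of finite simple groups of Lie type'' layer from Larsen--Pink has order bounded in terms of $n$. It does not: the middle layer $\Gamma_1/\Gamma_2$ is a product of finite simple groups of Lie type \emph{in the defining characteristic $p$}, and these embed in $\mr{GL}_n(\overline{\mathbb{F}}_p)$ with unbounded order as the field size grows. Your own parenthetical example $\mr{PSL}_2(\mathbb{F}_{p^r}) \subset \mr{GL}_2(\overline{\mathbb{F}}_p)$ already kills the claim: here $\Gamma_1 = \Gamma$, $\Gamma_2 = 1$, and $[\Gamma:\Gamma_2]$ is unbounded in $r$. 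The Landazuri--Seitz--Zalesskii bounds you cite control projective representations in \emph{cross} characteristic, not the defining one, so they give no constraint here. This is precisely the failure of Jordan's theorem in characteristic $p$ that forces Larsen--Pink to retain the Lie-type layer.

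The correct route, and what the paper does, keeps all three layers $\Gamma_3 \subset \Gamma_2 \subset \Gamma_1 \subset \Gamma$ (with $\Gamma_3=1$ under our hypothesis, $[\Gamma:\Gamma_1]$ bounded by $J(n)$, but $\Gamma_1/\Gamma_2$ possibly huge) and exploits \emph{perfectness} of the Lie-type layer instead of trying to bound its size. Since $\Gamma_1/\Gamma_2$ is a product of nonabelian finite simple groups, it is perfect, so $\Gamma_1 = \Gamma_1^{\mr{der}}\,\Gamma_2$; hence $\Gamma_2$ surjects onto $\Gamma_1/\Gamma_1^{\mr{der}}$, and the image of $\Gamma_1$ in any abelian quotient of $\Gamma$ already comes from $\Gamma_2$. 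One also needs (from the internals of the Larsen--Pink construction) that $\Gamma_2$ lies in the centre of $\Gamma_1$, so conjugation on $\Gamma_2$ factors through $\Delta := \Gamma/\Gamma_1$, which \emph{does} have bounded order, and that $\Gamma_1^{\mr{der}} \cap \Gamma_2$ has bounded order. Now given a cyclic quotient $\phi\colon\Gamma \twoheadrightarrow C$ of order $c_2$, the image $\phi(\Gamma_1) = \phi(\Gamma_2)$ is cyclic of order $\geq c_2/J(n)$; picking $a \in \Gamma_2$ whose image generates it and applying your norm map over $\Delta$ (not over $\Gamma/\Gamma_2$) yields a central element of $\Gamma$, prime-to-$p$ order, whose image in $C$ is $\phi(a)^{|\Delta|}$, of order $\geq c_2/J(n)^2$. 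So your norm-map computation is sound in spirit; what is missing is the use of perfectness to see that the large cyclic quotient of $\Gamma$ is detected on $\Gamma_2$ rather than on the possibly enormous Lie-type layer.
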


\begin{proof}
  By Theorem 0.2 of \cite{larsen-pink:finite}, any finite subgroup
  $\Gamma \subset \mr{GL}_n(k)$ has normal subgroups
  $\Gamma_3 \subset \Gamma_2 \subset \Gamma_1 \subset \Gamma$ such that
  $\Gamma_3$ is a $p$-group, $\Gamma_2/\Gamma_3$ is an abelian group
  of order prime to $p$, $\Gamma_1/\Gamma_2$ is a product of finite
  simple groups of Lie type and $\Gamma/\Gamma_1$ has order bounded
  by a constant depending only on $n$. Our assumptions imply that
  $\Gamma_3$ is trivial. From the proof of the theorem
  \cite[p. 1156]{larsen-pink:finite} this imples that $\Gamma_2$ is in the
  centre of $\Gamma_1$, so the conjugation action of $\Gamma$ on
  $\Gamma_2$ factors through $\Gamma/\Gamma_1$.

  Let $\Gamma' = (\Gamma_1)^{\mr{der}}$. Clearly $\Gamma'$ lies in the
  kernel of any homomorphism from $\Gamma$ to an abelian group and
  $\Gamma_2$ surjects onto $\Gamma_1/\Gamma'$. Furthermore,
  $\Gamma' \cap \Gamma_2$ has order bounded by a constant depending
  only on $n$: this again follows from the construction of $\Gamma_1$ and $\Gamma_2$
  in \cite[p. 1156]{larsen-pink:finite} (note particularly the construction of the group denoted $G_2$ in \textit{loc.~cit.}). 
  Since the order of $\Gamma/\Gamma_1$ is bounded, if
  $\Gamma$ has a large cyclic quotient, the coinvariants of the action
  of $\Gamma/\Gamma_1$ on $\Gamma_1/\Gamma'$ must also have a large
  cyclic quotient, and so also a large cyclic subgroup. The lemma
  follows since if $A$ is any abelian group with an action of a finite
  group $\Delta$, the kernel of the averaging map from $A_{\Delta}$ to $A^{\Delta}$ is
  killed by the order of $\Delta$.
\end{proof}
\begin{rmk}\label{explicit}
The constant $c_2$ can be effectively bounded by invoking an explicit bound on the index $[\Gamma:\Gamma_1]$ obtained by Collins (\cite{collins:modularjordan}) using (unlike \cite{larsen-pink:finite}) the classification of finite simple groups.
\end{rmk}

\begin{lemma} \label{cent} For $G$ any (split) connected reductive
  group over $k$ there exists a constant $n_G$, depending only on the
  root datum of $G$, such that for any semisimple element
  $s \in G(k)$ the centralizer of $s^n$ in $G$ is a (not necessarily
  proper) Levi subgroup of $G$ for some $n$ dividing $n_G$.
\end{lemma}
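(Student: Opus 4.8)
The plan is to establish a sharper alternative, from which the lemma follows by iteration: there is a constant $e_G\geq 1$, depending only on the root datum of $G$, such that for every semisimple $s\in G(\overline{k})$ and every integer $m\geq 1$, either $C_G(s^m)$ is a Levi subgroup of $G$, or else $C_G(s^{m e_G})\supsetneq C_G(s^m)$. Everything here is geometric, so we may work over $\overline{k}$, conjugate $s$ into a fixed split maximal torus $T$, and use that forming centralizers commutes with conjugation; in the characteristics in which we work, $C_G(s)$ is smooth and $C_G(s)^0$ is a connected reductive group of maximal rank with root system $\Phi_s:=\{\alpha\in\Phi(G,T):\alpha(s)=1\}$.

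Granting the alternative, set $M_i:=C_G(s^{e_G^{\,i}})$ for $i\geq 0$, so that $M_0\subseteq M_1\subseteq M_2\subseteq\cdots$ is an ascending chain of closed subgroups of $G$; by the contrapositive each inclusion is strict until $M_i$ first becomes a Levi subgroup, and once the chain is stationary its stable value is a Levi subgroup. All $M_i$ contain $T$, hence have dimension at most $\dim G$ and (since $\pi_0(C_G(t))$ is a subquotient of the Weyl group $W$ for any semisimple $t$) component group of order at most $|W|$; a strict inclusion must strictly increase the dimension or, with the dimension fixed, the order of the component group, so the chain becomes stationary after at most some $N_G$ steps, with $N_G$ depending only on the root datum. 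Taking $n=e_G^{\,i}$ for the first $i\leq N_G$ at which $M_i$ is a Levi subgroup then gives $n\mid n_G:=e_G^{\,N_G}$, as required.

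It remains to prove the alternative and to exhibit $e_G$. Fix $m$, write $t:=s^m$ and $M:=C_G(t)$, and suppose $M$ is not a Levi subgroup of $G$. We use two standard facts: (i) $t\in Z(M)$, immediately from the definition of the centralizer; and (ii) for any subtorus $S\subseteq G$ the centralizer $C_G(S)$ is a Levi subgroup of $G$. Applying (ii) with $S=Z(M)^0$ and noting that $M$ centralizes $Z(M)^0\subseteq Z(M)$ gives $M\subseteq C_G(Z(M)^0)$ with $C_G(Z(M)^0)$ a Levi subgroup, hence the inclusion is strict. We take $e_G$ to be a common multiple of the exponents of the finite groups $\pi_0(Z(C_G(t)))$ as $t$ ranges over all semisimple elements; such a uniform $e_G$ exists and depends only on the root datum, because there are only finitely many conjugacy classes of subgroups of $G$ of the form $C_G(t)$: the identity component $C_G(t)^0$ is one of the finitely many pseudo-Levi subgroups (connected reductive subgroups of maximal rank, classified up to conjugacy by the closed subsystems of $\Phi(G,T)$ modulo $W$), and $C_G(t)$ lies between $C_G(t)^0$ and $N_G(C_G(t)^0)$, whose component group $N_W(\Phi_t)/W(\Phi_t)$ is again root-datum data. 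With this $e_G$, fact (i) gives $t^{e_G}\in Z(M)^0$, whence
\[
C_G(s^{m e_G})=C_G(t^{e_G})\supseteq C_G(Z(M)^0)\supsetneq M=C_G(s^m),
\]
which is the alternative.

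The main obstacle is the uniformity of $e_G$ — that $Z(C_G(t))$ is ``not too disconnected'' independently of $t$. This is a finiteness statement about centralizers, which one can also see directly: $Z(C_G(t)^0)$ is the intersection of the kernels of the roots in $\Phi_t$, whose component group is the torsion subgroup of $X^\bullet(T)/\Z\Phi_t$ (bounded over the finitely many $\Phi_t\subseteq\Phi(G,T)$), and passing to the $\pi_0(C_G(t))$-fixed points to obtain $Z(C_G(t))$ introduces at worst the component group of the fixed-point subscheme of a finite-order element of $W$ acting on a subtorus of $T$, whose exponent is again bounded in terms of the root datum. Once this bookkeeping is in place the argument is complete.
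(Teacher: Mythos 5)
Your proposal is correct and is essentially the same argument as the paper's, with a modest repackaging. The paper works directly with the group of multiplicative type $T^{W(t)}\cap T^{\Phi(t)}$ (using Humphreys' description of $C_G(t)$), but this is exactly your $Z(C_G(t))$: an element $z\in T$ is central in $C_G(t)$ iff it kills the roots in $\Phi(t)$ and is fixed by $W(t)$. The paper's $n_G'$ bounds the order of $\pi_0(T^{W(t)}\cap T^{\Phi(t)})=\pi_0(Z(C_G(t)))$; you bound its exponent via the finiteness of conjugacy classes of centralizers. Both arguments then raise $s$ to that power to land in the identity component (a torus), invoke $C_G(\text{torus})=\text{Levi}$ to detect termination, and iterate with a finiteness bound: the paper tracks the strict growth of $W(s_i)$ and $\Phi(s_i)$, you track the strict growth of the chain $C_G(s^{e_G^i})$ itself. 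These are the same termination argument in different clothes, so the net constants differ only cosmetically. One small caution: the parenthetical claiming pseudo-Levis are ``classified by closed subsystems modulo $W$'' overstates things slightly (not every closed subsystem is a $\Phi_t$), but your finiteness conclusion does not rely on that characterization, only on the finiteness of the set of $\Phi_t\subseteq\Phi$, so the argument is unaffected.
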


\begin{proof}
  Let $T$ be a maximal torus of $G$ containing $s$ and let $t$ be any
  element of $T(\bar{k})$. By the theorem in \S 2.2 of
  \cite{humphreys:conjugacy}, $C_G(t)$ is generated by $T$, the root
  subgroups $U_{\alpha}$ for which $\alpha(t) = 1$ and representatives
  (in $N(T)$) of the subgroup $W(t)$ of the Weyl group $W(G,T)$ fixing
  $t$.  Let $\Phi(t)$ be the subset of $\Phi(G,T)$ consisting of all
  roots which are trivial on $t$. Let $T^{W(t)}$ be the subgroup of
  $T$ fixed pointwise by $W(t)$ and let
  $T^{\Phi(t)} = \cap_{\alpha \in \Phi(t)} \; \mr{Ker}(\alpha)$.  Let
  $n_G'$ be the lcm of the orders of the torsion subgroups of all the
  character groups of the groups of multiplicative type
  $T^{W(t)} \cap T^{\Phi(t)}$ for all $t \in T(\bar{k})$; there are
  only finitely many distinct such subgroups since both $W(G,T)$ and
  $\Phi(G,T)$ are finite sets. Then $n_G'$ depends only on the root
  datum of $G$, and the order of the component group of any subgroup
  $T^{W(t)} \cap T^{\Phi(t)}$ divides $n_G'$.

  It follows that $s_1 := s^{n_G'}$ is contained in a torus $T_1$ such
  that $T_1 \subset T^{W(s)} \cap T^{\Phi(s)}$. We clearly have
  $W(s) \subset W(s_1)$ and $\Phi(s) \subset \Phi(s_1)$. If both inclusions
  are equalities then $C_G(s)$ equals $C_G(s_1)$. Since
  $C_G(s_1) \supset C_G(T_1) \supset C_G(s)$ by construction, it would follow that
  $C_G(s)$ is equal to the centralizer of a torus, hence (by \cite[4.15 Th\'{e}or\`{e}me]{borel-tits:reductive}) a Levi
  subgroup. If either of the inclusions is strict, we repeat the
  procedure after replacing $s$ by $s_1$. Since $W(G,T)$ and
  $\Phi(G,T)$ are both finite, after at most
  $m_G := |W(G,T)| + |\Phi(G,T)|$ steps we must have equality. Thus,
  we may take $n_G$ to be $(n'_G)^{m_G}$.
  \end{proof}

  \begin{lemma} \label{cyclicq}
    For $G$ any split semisimple group over $k$ there exists a
    constant $a_G$ depending only on the root datum of $G$ such that
    if $\Gamma \subset G(k)$ is an absolutely irreducible subgroup
    then $\Gamma$ has no cyclic quotient of order $\geq a_G$.
  \end{lemma}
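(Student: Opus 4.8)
The strategy is to reduce, via the structure theory of finite subgroups of $\mr{GL}_n$, to a statement about centralizers of semisimple elements, and then invoke Lemma \ref{cent}. First I would fix a faithful representation $G \hookrightarrow \mr{GL}_n$ (for $n$ depending only on the root datum of $G$), so that $\Gamma \subset \mr{GL}_n(k)$. I would argue the contrapositive: suppose $\Gamma$ has a cyclic quotient of order $c$ with $c$ large. Since $\Gamma$ is absolutely irreducible in $G$, by Lemma \ref{irr}(2) it contains no non-trivial normal subgroup of $p$-power order (note we are free to assume $p \gg_G 0$, which we may absorb into the constant $a_G$; alternatively one works directly with the Larsen--Pink subgroup $\Gamma_3$, which is a $p$-group and whose non-triviality would, as in the proof of Lemma \ref{irr}(2), force reducibility). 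Then Lemma \ref{larsenpink}, applied with this $n$ and with $c_1$ a constant to be chosen below (depending only on the root datum of $G$, via Lemma \ref{cent}), produces $c_2$ such that if $c \geq c_2$ then the centre $Z(\Gamma)$ contains a cyclic subgroup $C$ of order prime to $p$ with $|C| \geq c_1$.

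\textbf{Key steps.} Let $s$ be a generator of $C$; since $s$ has order prime to $p$ it is a semisimple element of $G(k)$, and it is central in $\Gamma$, so $\Gamma \subset C_G(s)$. By Lemma \ref{cent} there is an integer $n$ dividing $n_G$ such that $C_G(s^n)$ is a Levi subgroup $L$ of $G$. Now choose $c_1 > n_G$ (in fact it suffices to take $c_1 = n_G + 1$): then $s^n$ has order $\geq c_1/n_G > 1$, so $s^n \neq 1$, hence $s^n$ is a non-central semisimple element of $G$ and $L = C_G(s^n)$ is a \emph{proper} Levi subgroup. But $\Gamma \subset C_G(s) \subset C_G(s^n) = L$, and a proper Levi subgroup is contained in a proper parabolic subgroup of $G_{\overline{k}}$, contradicting absolute irreducibility of $\Gamma$. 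Therefore no such large cyclic quotient exists, and we may take $a_G = c_2$, which depends only on $n$ (hence on the root datum of $G$) and on $c_1 = n_G + 1$ (again only on the root datum), as required.

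\textbf{Main obstacle.} The delicate point is making sure that the element $s^n$ one lands on is genuinely non-central, so that the Levi $C_G(s^n)$ is proper: this is exactly why one needs the cyclic subgroup $C \subset Z(\Gamma)$ to have order strictly larger than $n_G$ rather than merely non-trivial, and why Lemma \ref{larsenpink} must be invoked with a sufficiently large $c_1$ rather than with $c_1$ equal to $2$. A secondary subtlety is the interface between the hypothesis ``$\Gamma$ has no normal $p$-subgroup'' required by Lemma \ref{larsenpink} and the absolute irreducibility hypothesis: for $p \gg_G 0$ this is immediate from Lemma \ref{irr}(2), but if one wants $a_G$ to depend only on the root datum with no constraint on $p$, one should instead note directly from the Larsen--Pink decomposition that the normal $p$-subgroup $\Gamma_3$ acts trivially on $\fg$ (its invariants in each irreducible summand are non-zero and $\Gamma$-stable), hence lies in $Z_G(k)$, which has order prime to $p$, so $\Gamma_3 = 1$; this reproduces the needed input for any $p$. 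Everything else is a routine chase through the cited structural results, and no computation beyond bookkeeping of the constants $n$, $n_G$, $c_1$, $c_2$ is needed.
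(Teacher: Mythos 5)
Your proof follows the paper's argument step for step: embed $G$ in some $\mr{GL}_n$, use Lemma \ref{irr} (2) (or the equivalent Larsen--Pink computation) to verify the no-normal-$p$-subgroup hypothesis, apply Lemma \ref{larsenpink} to produce a cyclic central subgroup of $\Gamma$ of order prime to $p$ and at least $c_1$, and then feed a generator $s$ into Lemma \ref{cent} so that $\Gamma \subset C_G(s) \subset C_G(s^n)$ with the latter a Levi. Your choice of constants ($c_1 = n_G + 1$, $a_G = c_2$) is literally the paper's ($c_1 - 1 = n_G$).

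The one place you are too quick is the deduction ``$s^n \neq 1$, hence $s^n$ is a non-central semisimple element.'' This inference is valid only when $Z_{G}$ is trivial, i.e.\ $G$ is adjoint; for general semisimple $G$, the non-identity element $s^n$ could lie in $Z_{G}(k)$, in which case $C_G(s^n) = G$ is not a proper Levi and no contradiction is obtained. The paper's own proof quietly inserts ``since $G$ is adjoint'' at precisely this step (even though the lemma's statement says only ``semisimple''); since the paper always applies the lemma to the adjoint image $\Ad \circ \br$, the discrepancy is harmless there. If you want your argument to cover semisimple $G$ as stated, either first reduce to the adjoint quotient (a subgroup of $G(k)$ is absolutely irreducible iff its image in $G^{\mr{ad}}(k)$ is, and passing to that image shrinks cyclic quotients by at most $|Z_{G}(k)|$), or take $c_1 > n_G^2$ rather than $n_G + 1$: the case of $t$ central in the construction of $n_G'$ in Lemma \ref{cent} shows that $|Z_G|$ divides $n_G'$ and hence $n_G$, so $|s^n| > n_G$ forces $s^n \notin Z_G(k)$. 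Either fix is a minor bookkeeping change; everything else in your proposal is exactly the paper's argument.
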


  \begin{proof}

    By embedding $G$ in $\mr{GL}_n$ for some $n$, and using Lemma
    \ref{irr} (2), we may apply Lemma \ref{larsenpink} with $c_1-1 $
    equal to the number $n_G$ obtained from Lemma \ref{cent}, to get
    $c_2$ such that if $\Gamma$ has a cyclic quotient of order
    $\geq c_2$ then the centre of $\Gamma$ contains a cyclic subgroup
    $Z$ of order at least $c_1$ and of order prime to $p$. By Lemma
    \ref{cent} there exists an integer $n < c_1$ so that $C_G(s^n)$ is
    a Levi subgroup, where $s$ is any generator of $Z$. By
    construction, $s^n$ is not the identity and since $G$ is adjoint,
    it is also not central, so $C_G(s^n)$ is a proper Levi subgroup of
    $G$. But $\Gamma \subset C_G(s^n)$ and this contradicts
    irreducibility once again.
  \end{proof}
Putting together the results of this section, we deduce:
\begin{cor}\label{irrcor}
Let $p \gg_G 0$ be a prime, and let $\br \colon \gal{F, {\mc{S}}} \to G(k)$ be a representation such that $\br|_{\gal{\tF(\zeta_p)}}$ is absolutely irreducible. Assume $[\tF(\zeta_p):\tF]$ is greater than the constant $a_G$ of Lemma \ref{cyclicq}. Then all of the conditions in Assumption \ref{multfree} hold for $\br$.
\end{cor}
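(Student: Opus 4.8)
The plan is to verify the three bullet points of Assumption~\ref{multfree} in turn, collecting the bounds on $p$ from the group-theoretic lemmas of this appendix. Throughout, write $\Gamma = \br(\gal{\tF(\zeta_p)}) \subset G^0(k)$, which is absolutely irreducible by hypothesis; note that since $[\tF(\zeta_p):\tF] > a_G \geq 2$ we have $p$ odd, and since $\mu_{p} \subset \tF(\zeta_p)$ the field $K = F(\br,\mu_p)$ is contained in $\tF(\zeta_p)$ up to a finite extension of degree prime to $p$. The first thing to do is reduce statements about $\gal{F}$- (or $\Gal(K/F)$-) modules to statements about the finite group $\Gamma$ via inflation-restriction, using that $[\tF(\zeta_p):F]$ differs from $|\Gamma|$ by the factor $[\tF:F]\cdot[\tF(\zeta_p):\tF]$ — only the last factor can be divisible by $p$, and it is a cyclic group, so inflation-restriction will reduce $H^1$ computations over $\Gal(K/F)$ to $H^1(\Gamma, -)$ plus a $\Z/p$-cyclotomic piece that we handle separately.

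First I would handle the condition $K \not\supset \mu_{p^2}$: this is exactly the assertion that $[\tF(\zeta_p):\tF] \neq $ a multiple forcing $\mu_{p^2} \subset K$, i.e.\ that $K \cap F(\mu_{p^\infty}) = F(\mu_p)$; but in fact the cleaner route is that $K \cap F(\mu_{p^\infty})$ corresponds to a cyclic quotient of $\Gamma$ (a quotient of $\Gal(K/\tF)$, hence of $\Gamma^{\mr{ab}}$ after accounting for the prime-to-$p$ pieces), so Lemma~\ref{cyclicq} applied with the hypothesis $[\tF(\zeta_p):\tF] > a_G$ forces this intersection to be $F(\mu_p)$, giving the $\mu_{p^2}$ condition. (The statement of Theorem~\ref{klr^N} only really uses the weaker disjointness $K \cap \tF(\mu_{p^\infty}) = \tF(\mu_p)$, so even the failure of the literal $\mu_{p^2}$ condition would be harmless, but here we get it outright.) Next, for the vanishing $H^1(\Gal(K/F), \br(\fgder)^*) = 0$ and the semisimplicity of $\br(\fgder)$ and $\br(\fgder)^*$: semisimplicity is Lemma~\ref{irr}(1) applied to $\Gamma$ (using $p > 2h_G - 2$), together with the fact that twisting a semisimple module by a character keeps it semisimple, and the $H^1$-vanishing follows from Lemma~\ref{irr}(3) (using $p > 2(\dim_k\fgder + 1)$), after the inflation-restriction reduction from $\Gal(K/F)$ to $\Gamma$ — here one uses Lemma~\ref{irr}(3) allows the action to be twisted by a character, which is what is needed since $\br(\fgder)^*$ is a Tate twist, and one checks the residual $\Gal(K(\mu_{p^\infty})/K)$-cohomology contributes nothing because $\br(\fgder)^*$ has no trivial quotient (Lemma~\ref{invariants}).

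Then I would establish the remaining parts of the third bullet: that $\br(\fgder)$ and $\br(\fgder)^*$ contain no trivial constituent, and that they share no common $\Fp[\gal{F}]$-subquotient. The first is Lemma~\ref{invariants}: $H^0(\Gamma, \fgder) = 0$ (using $p > 5$ and $p \nmid n+1$ for type $A_n$ factors), and dually $H^0(\Gamma, \fgder^*) = 0$, so neither module has a trivial sub or (by self-duality of the category of $\Fp[\Gamma]$-modules under $\Fp$-linear duality, composed with the Tate twist which is an automorphism of the category of $\Fp[\gal{F}]$-modules) a trivial quotient. For the no-common-subquotient claim: suppose $W$ is an irreducible $\Fp[\gal{F}]$-module appearing in both $\br(\fgder)$ and $\br(\fgder)^*$. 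Then $W^* \otimes \Fp(1)^{-1}$ — which is $W$ up to a cyclotomic twist — also appears in $\br(\fgder)$, and iterating, all cyclotomic twists $W(j)$ appear; since the group of such twists has order $[\tF(\zeta_p):\tF]$ (the order of $\bar\kappa$ restricted to $\Gal(\tF(\zeta_p)/\tF)$), comparing $\gal{F}$-module lengths forces a bound on $[\tF(\zeta_p):\tF]$ — but more efficiently, the presence of a $\gal{F}$-equivariant isomorphism $W \cong W'(j)$ for some constituent $W'$ of $\br(\fgder)^*$ and some nonzero $j$ produces, via Schur, a nontrivial cyclic ($\bar\kappa$-valued) quotient of $\Gamma$, contradicting Lemma~\ref{cyclicq} and the hypothesis $[\tF(\zeta_p):\tF] > a_G$; the case $j = 0$ is ruled out because then $W$ would be a self-dual ($\fgder \cong \fgder^*$ is already $\gal{F}$-equivariant via the trace form, so $\br(\fgder)$ and $\br(\fgder)^*$ differ precisely by the cyclotomic twist, and sharing an untwisted constituent would again force $\bar\kappa$ trivial on that constituent's span, contradicting absolute irreducibility as above). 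The main obstacle I anticipate is the bookkeeping in this last step — carefully tracking the interplay between the Tate twist, the identification $\fgder \cong (\fgder)^*$ via the nondegenerate trace form (valid since $p$ is very good, cf.\ Assumption~\ref{minimalp}), and the reduction to a statement about cyclic quotients of $\Gamma$ so that Lemma~\ref{cyclicq} can be brought to bear; none of the individual lemmas is hard, but assembling them so that every invocation's characteristic hypothesis is met (and recording that the resulting $p \gg_G 0$ depends only on the root datum, via Remark~\ref{explicit}) requires care.
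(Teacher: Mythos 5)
Your plan invokes exactly the same lemmas as the paper (Lemmas \ref{irr}, \ref{invariants}, \ref{cyclicq}), and your treatment of the $\mu_{p^2}$ condition, the semisimplicity, the $H^1$-vanishing, and the absence of trivial constituents is on track. The gap is in the final step, the ``no common subquotient'' claim. You split into cases $j = 0$ and $j \neq 0$, but this split does not correspond to anything: since $\br(\fgder)^* \cong \br(\fgder)(1)$ via the trace form, a common $\Fp[\gal{\tF}]$-constituent $W$ of $\br(\fgder)$ and $\br(\fgder)^*$ means precisely that some constituent $W_i$ of $\br(\fgder)$ satisfies $W_i \cong W_j(1)$ for constituents $W_i, W_j$; the twist is always by $\bar\kappa$ and is never ``$j = 0$''. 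For the ``$j\neq 0$'' branch you assert that Schur's lemma produces a ``nontrivial cyclic $\bar\kappa$-valued quotient of $\Gamma$''. This does not follow: a $\gal{\tF}$-equivariant isomorphism $W_i \cong W_j(1)$ is a datum about the endomorphism algebra of these modules, not about the quotient structure of the image group, and I do not see how to extract a cyclic quotient of $\Gamma$ from it.

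The observation you make in the ``$j = 0$'' branch is in fact the correct argument for the whole claim, and the paper uses it uniformly. Concretely: Lemma \ref{cyclicq} (applied to the absolutely irreducible group $\Ad \circ \br(\gal{\tF})$, noting $[\tF(\zeta_p):\tF] > a_G$) shows that $\tF(\zeta_p)$ is not contained in $\tF(\br(\fgder))$; equivalently, $\bar\kappa$ is nontrivial on $\gal{\tF(\br(\fgder))}$. Now if $W_i \cong W_j(1)$ as $\gal{\tF}$-modules with $W_i, W_j$ constituents of $\br(\fgder)$, then $\gal{\tF(\br(\fgder))}$ acts trivially on both $W_i$ and $W_j$, while the isomorphism forces it to act on $W_i$ via $\bar\kappa$---so $\bar\kappa$ would be trivial there, a contradiction. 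So you are not missing the idea, but the ``via Schur'' branch should be discarded and the ``force $\bar\kappa$ trivial on the constituent's span'' observation promoted to cover all cases. (One more minor point: for the $\mu_{p^2}$ and cyclic-quotient arguments you should apply Lemma \ref{cyclicq} to $\Ad \circ \br(\gal{\tF})$, not literally to $\Gamma = \br(\gal{\tF(\zeta_p)})$ as you set up at the start; since $[\tF(\zeta_p):\tF]$ and the order of the center are prime to $p$, an order-$p$ cyclic quotient passes between these groups, so this is easily repaired.)
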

\begin{proof}
  Since $[K:\tF(\br(\fgder), \mu_p)]$ and
    $[\tF(\mu_p):F]$ are both prime to $p$, under our assumptions on
    $p$ and absolute irreducibility of $\br|_{\gal{\tF(\zeta_p)}}$,
    Lemmas \ref{irr}, \ref{invariants}, and \ref{cyclicq} imply that the first two
    conditions of Assumption \ref{multfree}, as well as the semisimplicity and the condition
    on no trivial subrepresentations, hold (to show $\mu_{p^2}$ is not contained in $K$, note that $p>[\tF(\zeta_p):\tF]>a_G$, so that $\Ad\circ \br(\gal{\tF})$ has no order $p$ quotient). Moreover, Lemma
    \ref{cyclicq} implies that $\br(\fgder)$ and $\br(\fgder)^*$ have
    no common $\Fp[\gal{\tF}]$-subquotient. To see this, first note
    that the lemma implies that the fixed field $\tF(\br(\fgder))$
    cannot contain $\tF(\zeta_p)$ (for then the adjoint image of
    $\br(\gal{\tF})$ would have a large cyclic quotient, as we take
    $[\tF(\zeta_p):\tF] >a_G$). Letting $\{W_i\}_{i \in I}$ be the
    simple $\Fp[\gal{\tF}]$-module constituents of $\br(\fgder)$, if
    $\br(\fgder)$ and $\br(\fgder)^* \cong \br(\fgder)(1)$ had a
    common constituent, there would be an isomorphism
    $W_i \cong W_j(1)$ for some $i, j \in I$. We can choose
    $\sigma \in \gal{\tF}$ acting trivially on $W_i$ and $W_j$ but
    non-trivially on $\tF(\zeta_p)$, contradicting the equivalence
    $W_i \cong W_j(1)$. Thus, all the conditions of Assumption
    \ref{multfree} hold.
\end{proof}

\section{Application of some results of Lazard}\label{lazardsection}

The following lemma and its corollary, the latter being crucial for us,  are deduced from results  of \cite{lazard}.

\begin{lemma} \label{lem:laz} Let $G$ be a compact $p$-adic Lie group
  such that its Lie algebra $\fg$ is semisimple. Let $\mc{O}$ be
  the ring of integers in a finite extension of $\Q_p$ and let $M$ be
  a finitely generated free $\mc{O}$-module on which $G$ acts
  continuously and $\mc{O}$-linearly. If $M\otimes_{\Z_p}\Q_p$ does
  not contain the trivial representation of $\fg$,\footnote{The Lie
    algebra $\fg$ acts on  $M\otimes_{\Z_p}\Q_p$ by
    \cite{lazard}, V, Lemma (2.4.4).} then there exists
  an integer $n \geq 0$ such that for $\varpi$ a uniformizer of $\mc{O}$,
  $H^i(G,M/\varpi^mM)$ is killed by $\varpi^n$ for all $i > 0$ and
  $m \geq 0$. If $M$ contains the trivial representation then the same
  holds for $i=1$.
\end{lemma}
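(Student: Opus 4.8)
The plan is to reduce the statement to a cohomological vanishing fact over $\Q_p$ and then control denominators via the mod-$\varpi^m$ reductions. First I would invoke Lazard's theory: for $G$ a compact $p$-adic Lie group and $V$ a finite-dimensional continuous $\Q_p$-representation, $H^i(G, V)$ is computed by Lie algebra cohomology, i.e.\ $H^i(G,V) \cong H^i(\fg, V)$ (this is the key input from \cite{lazard}, after possibly passing to an open subgroup and using that $H^*(G,-)$ and $H^*(\fg,-)$ agree for $G$ of finite cohomological dimension with the appropriate ``$p$-valuable'' hypotheses, which one arranges by shrinking $G$). When $\fg$ is semisimple and $V = M \otimes_{\Z_p}\Q_p$ contains no trivial $\fg$-summand, Whitehead's lemmas give $H^i(\fg, V) = 0$ for all $i > 0$; when $V$ does contain the trivial representation, semisimplicity of $V$ still forces $H^1(\fg, V) = 0$ (the trivial summand contributes $\Hom_{\fg}(\fg, \Q_p) = 0$ since $\fg = [\fg,\fg]$, and the nontrivial summands contribute nothing by Whitehead I). So $H^i(G, M\otimes\Q_p) = 0$ in the relevant range.

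Next I would pass from $\Q_p$-coefficients to $\mc{O}$-coefficients and then to finite coefficients. Since $M$ is finite free over $\mc{O}$, the long exact sequence attached to $0 \to M \xrightarrow{\varpi^m} M \to M/\varpi^m M \to 0$ shows $H^i(G, M/\varpi^m M)$ is an extension of $H^i(G,M)[\varpi^m]$ by $H^{i-1}(G,M)/\varpi^m$. Because $G$ is compact, $H^j(G,M)$ is a finitely generated $\mc{O}$-module for each $j$; and because $H^j(G, M)\otimes_{\mc{O}}\Q_p$ injects into $H^j(G, M\otimes\Q_p)$ which vanishes in the relevant degrees, each such $H^j(G,M)$ (for $j$ in range) is a \emph{torsion} finitely generated $\mc{O}$-module, hence killed by a single power $\varpi^{n_j}$. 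Setting $n$ to be the max of the finitely many $n_j$ appearing ($j = i$ and $j = i-1$ for $i>0$, or just $j=0,1$ in the $H^1$-only case) gives that $H^i(G, M/\varpi^m M)$ is killed by $\varpi^{n}$ (perhaps $\varpi^{2n}$ from the extension, but one can absorb the factor $2$ into $n$) \emph{uniformly in $m$}. This is exactly the asserted uniformity.

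The main obstacle I expect is the careful invocation of Lazard: the clean isomorphism $H^*(G,V)\cong H^*(\fg,V)$ requires $G$ to be, in Lazard's terminology, $p$-saturated (or ``$p$-valuable of finite rank''), which a general compact $p$-adic Lie group need not be. I would handle this by choosing an open normal uniform (pro-$p$) subgroup $G_1 \trianglelefteq G$, applying Lazard's isomorphism to $G_1$, and then using the Hochschild--Serre spectral sequence $H^a(G/G_1, H^b(G_1, M/\varpi^m M)) \Rightarrow H^{a+b}(G, M/\varpi^m M)$ together with the fact that $|G/G_1|$ is finite (so $H^a(G/G_1, -)$ for $a>0$ is killed by $|G/G_1|$, a constant) to propagate the uniform bound from $G_1$ to $G$. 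One must check that the Lie algebra of $G_1$ is still $\fg$ (true, since $G_1$ is open) and that $M\otimes\Q_p$ still has no trivial $\fg$-summand (unchanged). A secondary point to be careful about is that the bound $n$ produced this way depends on $G$ and $M$ but genuinely \emph{not} on $m$ — this is automatic from the argument above since the finitely generated torsion modules $H^j(G,M)$ are fixed before $m$ enters.
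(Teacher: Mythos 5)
Your proposal follows essentially the paper's route: Lazard's comparison of continuous cohomology with Lie algebra cohomology, Whitehead's lemmas for semisimple $\fg$, finiteness of $H^j(G,M)$ over $\mc{O}$ (coming from a finite free resolution of $\Z_p$ over the completed group algebra), and reduction to a $p$-valued open normal subgroup via Hochschild--Serre. The only real difference is cosmetic: the paper packages the mod-$\varpi^m$ step as an application of the universal coefficient theorem to a fixed bounded complex of finitely generated $\mc{O}$-modules computing $H^*(G,-)$, rather than as the long exact sequence.

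There is, however, an indexing slip in your long exact sequence step that would actually break the $i=1$ case as written. The long exact sequence attached to $0 \to M \xrightarrow{\varpi^m} M \to M/\varpi^m M \to 0$ gives
\[
0 \to H^i(G,M)/\varpi^m \to H^i(G, M/\varpi^m M) \to H^{i+1}(G,M)[\varpi^m] \to 0,
\]
so the integral cohomology groups that must be torsion are $H^i(G,M)$ and $H^{i+1}(G,M)$, not $H^{i-1}(G,M)$ and $H^i(G,M)$ as you wrote. This matters: in the case where $M\otimes\Q_p$ contains the trivial summand and $i=1$, the corrected indexing requires $H^1(G,M)$ and $H^2(G,M)$ to be torsion, which is exactly what Whitehead's first and second lemmas supply; your indexing would instead drag in $H^0(G,M)=M^G$, which can be a nonzero free $\mc{O}$-module precisely when the trivial representation occurs, and then $H^0(G,M)/\varpi^m$ is \emph{not} bounded uniformly in $m$. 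With the indexing fixed, the uniform bound is $\varpi^{n_i+n_{i+1}}$ where $\varpi^{n_j}$ kills $H^j(G,M)$, and the rest of your argument (including the Hochschild--Serre propagation from the $p$-valued open normal subgroup to all of $G$) goes through as you describe.
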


\begin{proof}
  We first assume that $G$ is a pro $p$-group which is $p$-valued
  (\cite{lazard}, III \S 2). By
  (\cite{lazard},V, (2.2.3.1)), there exists a ring $A$
  (the completed group algebra of $G$ over $\Z_p$) such that
\[
H^i(G,M) = \mr{Ext}^i_A(\Z_p,M) .
\]
Moreover, by (\cite{lazard}, V, (2.2.2.3) ), $\Z_p$ has a
finite resolution by free $A$-modules of finite rank. It follows that
$H^*(G,M)$ is the cohomology of a bounded complex of $\mc{O}$-modules, each
term of which is a finite direct sum of copies of $M$. Furthermore,
$H^*(G,M/\varpi^mM)$ is computed by tensoring this complex with
$\mc{O}/\varpi^m\mc{O}$ and taking cohomology.

By (\cite{lazard}, V, Theorem (2.4.10)), the
semisimplicity of $\fg$, and Theorems 21.1 and 24.1 of
\cite{chevalley-eilenberg}, it follows that
$H^i(G,M)\otimes_{\Z_p} \Q_p = 0$ for all $i>0$ if $M$ does not
contain the trivial representation and for $i=1,2$ otherwise. Since
$M$ is assumed to be finitely generated, the previous paragraph shows
that $H^i(G,M)$ is also finitely generated, so $H^i(G,M)$ is killed by
$\varpi^n$ for some $n \geq 0$, for all $i>0$ if $M$ does not contain the
trivial representation and for $i=1,2$ otherwise. The statement for
$M/\varpi^mM$ follows from this by applying the universal coefficient
theorem to the complex computing the cohomology:
\[
  H^i(G,M/\varpi^mM) \cong (H^i(G,M) \otimes_{\mc{O}} \mc{O}/\varpi^n\mc{O})
  \oplus \mr{Tor}_1^{\mc{O}}(H^{i+1}(G,M), \mc{O}/ \varpi^n\mc{O}) .
\]

For a general compact analytic $G$, by (3.1.3) and (3.1.7.4) of
(\cite{lazard}, III) there exists a normal subgroup of
finite index which is a $p$-valued pro $p$-group. The result then
follows from the above special case by using the Hochschild--Serre
spectral sequence.
\end{proof}

We owe the deduction of the following corollary to D. Prasad.

\begin{cor}\label{cor:lazard}
  Keep the assumptions of Lemma \ref{lem:laz} and also assume that
  $H^0(G,M/\varpi M) = 0$. Then for any $m\geq 0$ there exists an
  integer $N(m)$ so that the map
\[
  H^1(G, M/\varpi^{N}M) \to H^1(G,M/\varpi^m M)
\]
is the zero map if $N \geq N(m)$.
\end{cor}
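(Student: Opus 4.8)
\textbf{Proof plan for Corollary \ref{cor:lazard}.}

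The plan is to exploit the finiteness provided by Lemma \ref{lem:laz} together with a standard ``stabilization'' argument for the tower of maps $H^1(G, M/\varpi^N M) \to H^1(G, M/\varpi^m M)$ as $N$ grows. First I would fix $m$ and observe that, by Lemma \ref{lem:laz} (applied with $i=1$ in every case, so the hypothesis that $M \otimes \Q_p$ avoids the trivial representation is not even needed), there is an integer $n \geq 0$ independent of the modulus such that $\varpi^n$ annihilates $H^1(G, M/\varpi^s M)$ for all $s \geq 0$. The long exact sequence in cohomology attached to
\[
0 \to M/\varpi^{N-m} M \xrightarrow{\ \varpi^m\ } M/\varpi^N M \to M/\varpi^m M \to 0
\]
gives, using $H^0(G, M/\varpi^m M) = 0$ (which follows from $H^0(G, M/\varpi M)=0$ since any invariant vector would reduce to an invariant vector mod $\varpi$), an exact sequence
\[
0 \to H^1(G, M/\varpi^{N-m} M) \xrightarrow{\ \varpi^m\ } H^1(G, M/\varpi^N M) \to H^1(G, M/\varpi^m M).
\]
Thus the image of $H^1(G, M/\varpi^N M) \to H^1(G, M/\varpi^m M)$ is precisely the cokernel of multiplication by $\varpi^m$ on $H^1(G, M/\varpi^N M)$. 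Hence it suffices to show that for $N$ large, $H^1(G, M/\varpi^N M)$ is $\varpi^m$-divisible, i.e.\ multiplication by $\varpi^m$ is surjective on it.

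The second step is to produce that divisibility. Writing $C^\bullet$ for the bounded complex of finite free $\mc{O}$-modules (each a finite direct sum of copies of $M$) that computes $H^*(G, M)$, as in the proof of Lemma \ref{lem:laz}, the universal coefficient formula gives
\[
H^1(G, M/\varpi^N M) \cong \bigl(H^1(G,M) \otimes_{\mc{O}} \mc{O}/\varpi^N\bigr) \oplus \mr{Tor}_1^{\mc{O}}\bigl(H^2(G,M), \mc{O}/\varpi^N\bigr).
\]
Both $H^1(G,M)$ and $H^2(G,M)$ are finitely generated torsion $\mc{O}$-modules (killed by $\varpi^n$, enlarging $n$ if necessary to handle $H^2$ as well, which Lemma \ref{lem:laz} permits since $i=2$ is covered whether or not $M$ contains the trivial representation), so each is a finite direct sum of cyclic modules $\mc{O}/\varpi^{a}$ with $a \leq n$. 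For such a summand, $(\mc{O}/\varpi^a) \otimes \mc{O}/\varpi^N \cong \mc{O}/\varpi^{\min(a,N)} = \mc{O}/\varpi^a$ once $N \geq n$, and similarly $\mr{Tor}_1 \cong \mc{O}/\varpi^a$; and on $\mc{O}/\varpi^a$ with $a \leq n$, multiplication by $\varpi^m$ is surjective as soon as $m \geq n$. For $m < n$ this last point fails, so instead I would argue directly: the cokernel of $\varpi^m$ on $H^1(G, M/\varpi^N M)$ is, summand by summand, $\mc{O}/\varpi^{\min(a,m)}$, which is \emph{independent of $N$} once $N \geq n$. So the map $H^1(G, M/\varpi^N M) \to H^1(G, M/\varpi^m M)$ has image of constant size for $N \geq n$.

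The final step is to upgrade ``constant image size'' to ``zero image,'' which is where the hypothesis $H^0(G, M/\varpi M) = 0$ does real work. For $N \geq n+m$ the identification above shows the map $H^1(G, M/\varpi^N M) \to H^1(G, M/\varpi^m M)$ factors through $H^1(G, M/\varpi^{N'} M) \to H^1(G, M/\varpi^m M)$ for every $n \leq N' \leq N$, and the transition maps $H^1(G, M/\varpi^{N} M) \to H^1(G, M/\varpi^{N-1} M)$ are, on each cyclic summand $\mc{O}/\varpi^a$ (with $a\le n$), eventually the reduction $\mc{O}/\varpi^{\min(a,N)} \to \mc{O}/\varpi^{\min(a,N-1)}$, hence eventually \emph{isomorphisms}; so the tower $\{H^1(G, M/\varpi^N M)\}_N$ is eventually constant, the projective limit being $H^1(G, M) = H^1_{\mathrm{cont}}(G, M)$ (finite, killed by $\varpi^n$), and the stable image in $H^1(G, M/\varpi^m M)$ equals the image of $H^1(G,M)$. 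But $H^1(G,M)$ is $\varpi^n$-torsion while, by the exact sequence $0 \to H^1(G, M/\varpi^{N-m}M) \xrightarrow{\varpi^m} H^1(G, M/\varpi^N M)$, the group $H^1(G, M/\varpi^N M)$ is $\varpi^m$-divisible for $N \geq n+m$; a finitely generated torsion $\mc{O}$-module that is also $\varpi^m$-divisible with $m$ arbitrary large relative to its exponent must vanish — concretely, taking $N - m \geq n$ forces the injection $\varpi^m \colon H^1(G, M/\varpi^{N-m}M) \hookrightarrow H^1(G, M/\varpi^N M)$ to land in $\varpi^m H^1(G, M/\varpi^N M)$, and iterating shows every class in $H^1(G, M/\varpi^m M)$ pulled back from a high enough level is infinitely $\varpi$-divisible inside a finite group, hence $0$. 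Therefore setting $N(m) := n + m$ (with $n$ the common annihilator exponent from Lemma \ref{lem:laz} for $H^1$ and $H^2$) makes the map in the statement zero for all $N \geq N(m)$.

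\textbf{Main obstacle.} The delicate point is the bookkeeping in the last step: ``constant image'' is automatic and cheap, but concluding the image is actually \emph{zero} requires combining the $\varpi$-divisibility coming from the short exact sequence (which needs $H^0(G, M/\varpi^m M) = 0$, in turn needing only $H^0(G, M/\varpi M)=0$) with the uniform annihilation from Lazard's finiteness. I expect the cleanest route is to avoid the universal-coefficient decomposition entirely and run the argument purely via the two short exact sequences — the one displayed above giving divisibility, and the reduction sequence $0 \to H^0(M/\varpi^{N-m}) \to \cdots$ propagating the vanishing of $H^0$ to all moduli — since that keeps everything inside finitely generated $\mc{O}$-modules annihilated by a fixed $\varpi^n$, where ``$\varpi^m$-divisible for $m$ exceeding the exponent $\Rightarrow$ zero'' is transparent.
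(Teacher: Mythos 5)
The paper's proof is two sentences: multiplication by $\varpi^n$ on $M/\varpi^N M$ induces zero on $H^1$ (by Lemma~\ref{lem:laz}); and since $H^0(G,M/\varpi^r M)=0$ for all $r$, the maps $H^1(G,M/\varpi^m M)\hookrightarrow H^1(G,M/\varpi^N M)$ induced by the inclusions $M/\varpi^m M\hookrightarrow M/\varpi^N M$ are injective. The Corollary then follows because the composite $M/\varpi^N M\to M/\varpi^m M\hookrightarrow M/\varpi^N M$ is multiplication by $\varpi^{N-m}$, so on $H^1$ the reduction map followed by an injection is $\varpi^{N-m}=\varpi^{N-m-n}\cdot\varpi^n=0$ once $N\geq n+m$.

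You identify both of the paper's key inputs — the uniform $\varpi^n$-annihilation of $H^1$ and the injectivity coming from $H^0(G,M/\varpi^r M)=0$ — and you even land on the correct bound $N(m)=n+m$. But the argument you build out of those pieces, via the universal-coefficient decomposition and an inverse-limit analysis, does not go through, and it misses the simple composition argument above. The concrete gaps are the following.

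First, the claim that ``on $\mc{O}/\varpi^a$ with $a\leq n$, multiplication by $\varpi^m$ is surjective as soon as $m\geq n$'' is false: multiplication by $\varpi^m$ is never surjective on a nonzero cyclic $\mc{O}$-torsion module, regardless of $m$ (for $m\geq a$ it is the zero map). So the intended ``divisibility'' step collapses, and you notice this and switch tactics — but the replacement isn't sound either.

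Second, the claim that the transition maps $H^1(G,M/\varpi^N M)\to H^1(G,M/\varpi^{N-1}M)$ are eventually isomorphisms on each cyclic summand is wrong for the $\operatorname{Tor}_1(H^2(G,M),\mc{O}/\varpi^N)$ part of the universal-coefficient decomposition. There, the natural transition map is induced by the reduction $\mc{O}/\varpi^N\to\mc{O}/\varpi^{N-1}$, and a short computation with free resolutions shows it is multiplication by $\varpi$ on $H^2(G,M)[\varpi^N]$ — which is eventually the zero map, not an isomorphism. So the inverse-limit identification does not simplify as you assert.

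Third, the assertion that ``taking $N-m\geq n$ forces the injection $H^1(G,M/\varpi^{N-m}M)\hookrightarrow H^1(G,M/\varpi^N M)$ to land in $\varpi^m H^1(G,M/\varpi^N M)$'' is unsupported. The image of that injection \emph{contains} $\varpi^m H^1(G,M/\varpi^N M)$ (since multiplication by $\varpi^m$ on $M/\varpi^N M$ factors through $M/\varpi^{N-m}M$), but containment in $\varpi^m H^1$ would require factoring the inclusion through $\varpi^m\colon M/\varpi^N M\to M/\varpi^N M$, i.e., a $G$-equivariant section of the reduction $M/\varpi^N M\to M/\varpi^{N-m}M$, which need not exist.

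The missing idea is to compose in the other order. You already have both ingredients: the inclusion-induced map $H^1(G,M/\varpi^m M)\to H^1(G,M/\varpi^N M)$ is injective, and the composite reduction-then-inclusion is multiplication by $\varpi^{N-m}$, which vanishes once $N-m\geq n$. That instantly forces the reduction to be zero and avoids the universal-coefficient and inverse-limit machinery entirely. Your closing remark that one should ``run the argument purely via the two short exact sequences'' points exactly in this direction; the fix is to replace ``giving divisibility'' with ``giving a factorization of $\varpi^{N-m}$ through the injective inclusion.''
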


\begin{proof}
  The maps $M/\varpi^N M \to M/\varpi^N M$ given by multiplication by
  $\varpi^n$ induce the zero map on $H^1$ by Lemma \ref{lem:laz}. Also,
  since $H^0(G, M/\varpi M ) = 0$, $H^0(G, M/\varpi^rM) = 0$ for any
  $r \geq 0$, which implies that all the maps
  $H^1(G,M/\varpi^sM) \to H^1(G,M/\varpi^NM)$ (induced by inclusions of
  $M/\varpi^sM$ in $M/\varpi^NM$) are injective. The lemma follows
  immediately from these two statements.

\end{proof}

\bibliographystyle{amsalpha}
\bibliography{biblio.bib}

\newcommand{\etalchar}[1]{$^{#1}$}
\def\cprime{$'$}
\providecommand{\bysame}{\leavevmode\hbox to3em{\hrulefill}\thinspace}
\providecommand{\MR}{\relax\ifhmode\unskip\space\fi MR }
\providecommand{\MRhref}[2]{%
  \href{http://www.ams.org/mathscinet-getitem?mr=#1}{#2}
}
\providecommand{\href}[2]{#2}
\begin{thebibliography}{BLGGT14}

\bibitem[Bal12]{balaji}
Sundeep Balaji, \emph{G-valued potentially semi-stable deformation rings},
  ProQuest LLC, Ann Arbor, MI, 2012, Thesis (Ph.D.)--The University of Chicago.
  \MR{3152673}

\bibitem[BCE{\etalchar{+}}19]{bcelmpp}
George Boxer, Frank Calegari, Matthew Emerton, Brandon Levin, Keerthi
  Madapusi~Pera, and Stefan Patrikis, \emph{Compatible systems of {G}alois
  representations associated to the exceptional group {$E_6$}}, Forum Math.
  Sigma \textbf{7} (2019), Paper No. e4, 29. \MR{3910457}

\bibitem[BG19]{Bellovin-Gee-G}
Rebecca Bellovin and Toby Gee, \emph{{$G$}-valued local deformation rings and
  global lifts}, Algebra Number Theory \textbf{13} (2019), no.~2, 333--378.
  \MR{3927049}

\bibitem[BGR84]{BGR}
S.~Bosch, U.~G{\"u}ntzer, and R.~Remmert, \emph{Non-{A}rchimedean analysis: A
  systematic approach to rigid analytic geometry}, Springer-Verlag, Berlin,
  1984. \MR{86b:32031}

\bibitem[BHKT19]{bhkt:fnfieldpotaut}
Gebhard B\"{o}ckle, Michael Harris, Chandrashekhar Khare, and Jack~A. Thorne,
  \emph{{$\hat G$}-local systems on smooth projective curves are potentially
  automorphic}, Acta Math. \textbf{223} (2019), no.~1, 1--111. \MR{4018263}

\bibitem[BLGGT14]{blggt:potaut}
Thomas Barnet-Lamb, Toby Gee, David Geraghty, and Richard Taylor,
  \emph{Potential automorphy and change of weight}, Ann. of Math. (2)
  \textbf{179} (2014), no.~2, 501--609. \MR{3152941}

\bibitem[B{\"o}c03]{bockle:modp2}
Gebhard B{\"o}ckle, \emph{Lifting mod {$p$} representations to characteristics
  {$p^2$}}, J. Number Theory \textbf{101} (2003), no.~2, 310--337. \MR{1989890}

\bibitem[Boo19]{booher:minimal}
Jeremy Booher, \emph{Minimally ramified deformations when {$\ell \neq p$}},
  Compos. Math. \textbf{155} (2019), no.~1, 1--37. \MR{3875451}

\bibitem[Bor91]{borel:linalg}
Armand Borel, \emph{Linear algebraic groups}, second ed., Graduate Texts in
  Mathematics, vol. 126, Springer-Verlag, New York, 1991. \MR{1102012}

\bibitem[Bou68]{bourbaki:lie456}
N.~Bourbaki, \emph{\'{E}l\'ements de math\'ematique. {F}asc. {XXXIV}. {G}roupes
  et alg\`ebres de {L}ie. {C}hapitre {IV}: {G}roupes de {C}oxeter et syst\`emes
  de {T}its. {C}hapitre {V}: {G}roupes engendr\'es par des r\'eflexions.
  {C}hapitre {VI}: syst\`emes de racines}, Actualit\'es Scientifiques et
  Industrielles, No. 1337, Hermann, Paris, 1968. \MR{0240238 (39 \#1590)}

\bibitem[BT65]{borel-tits:reductive}
Armand Borel and Jacques Tits, \emph{Groupes r\'eductifs}, Inst. Hautes
  \'Etudes Sci. Publ. Math. (1965), no.~27, 55--150. \MR{0207712}

\bibitem[Cal12]{calegari:even2}
Frank Calegari, \emph{Even {G}alois representations and the {F}ontaine--{M}azur
  conjecture. {II}}, J. Amer. Math. Soc. \textbf{25} (2012), no.~2, 533--554.
  \MR{2869026}

\bibitem[Car85]{carter:finitelie}
Roger~W. Carter, \emph{Finite groups of {L}ie type}, Pure and Applied
  Mathematics (New York), John Wiley \& Sons, Inc., New York, 1985, Conjugacy
  classes and complex characters, A Wiley-Interscience Publication. \MR{794307
  (87d:20060)}

\bibitem[CCN{\etalchar{+}}85]{ATLAS}
J.~H. Conway, R.~T. Curtis, S.~P. Norton, R.~A. Parker, and R.~A. Wilson,
  \emph{Atlas of finite groups}, Oxford University Press, Eynsham, 1985,
  Maximal subgroups and ordinary characters for simple groups, With
  computational assistance from J. G. Thackray. \MR{827219}

\bibitem[CE48]{chevalley-eilenberg}
Claude Chevalley and Samuel Eilenberg, \emph{Cohomology theory of {L}ie groups
  and {L}ie algebras}, Trans. Amer. Math. Soc. \textbf{63} (1948), 85--124.
  \MR{0024908}

\bibitem[CEG18]{calegari-emerton-gee}
Frank {Calegari}, Matthew {Emerton}, and Toby {Gee}, \emph{{Globally realizable
  components of local deformation rings}}, arXiv e-prints (2018),
  arXiv:1807.03529.

\bibitem[CHT08]{clozel-harris-taylor}
Laurent Clozel, Michael Harris, and Richard Taylor, \emph{Automorphy for some
  {$l$}-adic lifts of automorphic mod {$l$} {G}alois representations}, Publ.
  Math. Inst. Hautes \'Etudes Sci. (2008), no.~108, 1--181, With Appendix A,
  summarizing unpublished work of Russ Mann, and Appendix B by Marie-France
  Vign{\'e}ras. \MR{2470687}

\bibitem[Col08]{collins:modularjordan}
Michael~J. Collins, \emph{Modular analogues of {J}ordan's theorem for finite
  linear groups}, J. Reine Angew. Math. \textbf{624} (2008), 143--171.
  \MR{2456628}

\bibitem[Con14]{conrad:luminy}
Brian Conrad, \emph{Reductive group schemes}, Autour des sch\'{e}mas en
  groupes, Panoramas et Synth\`eses [Panoramas and Syntheses], vol. 42-43,
  Soci\'et\'e Math\'ematique de France, Paris, 2014, p.~458.

\bibitem[CW97]{cohen-wales}
Arjeh~M. Cohen and David~B. Wales, \emph{Finite subgroups of {$F_4({\bf C})$}
  and {$E_6({\bf C})$}}, Proc. London Math. Soc. (3) \textbf{74} (1997), no.~1,
  105--150. \MR{1416728}

\bibitem[DDT94]{DDT2}
Henri Darmon, Fred Diamond, and Richard Taylor, \emph{Fermat's last theorem},
  Current developments in mathematics, 1995 (Cambridge, MA), Internat. Press,
  Cambridge, MA, 1994, pp.~1--154. \MR{MR1474977 (99d:11067a)}

\bibitem[dJ95]{dejong:crystalline}
A.~J. de~Jong, \emph{Crystalline {D}ieudonn\'{e} module theory via formal and
  rigid geometry}, Inst. Hautes \'{E}tudes Sci. Publ. Math. (1995), no.~82,
  5--96 (1996). \MR{1383213}

\bibitem[DV00]{dieulefait-vila}
Luis Dieulefait and N\'uria Vila, \emph{Projective linear groups as {G}alois
  groups over {${\bf Q}$} via modular representations}, J. Symbolic Comput.
  \textbf{30} (2000), no.~6, 799--810, Algorithmic methods in Galois theory.
  \MR{1800679}

\bibitem[EG19]{emerton-gee:moduli}
Matthew {Emerton} and Toby {Gee}, \emph{{Moduli stacks of {\'e}tale
  $(\phi,\Gamma)$-modules and the existence of crystalline lifts}},
  \url{https://arxiv.org/abs/1908.07185}, 2019.

\bibitem[FKP18]{fkp:arxivold}
Najmuddin {Fakhruddin}, Chandrashekhar {Khare}, and Stefan {Patrikis},
  \emph{{Lifting irreducible Galois representations}}, arXiv e-prints (2018).

\bibitem[FKP20]{fkp:reduciblearXiv}
Najmuddin Fakhruddin, Chandrashekhar Khare, and Stefan Patrikis, \emph{Lifting
  and modularity of reducible mod p {G}alois representations}, arXiv e-prints
  (2020), arXiv:2008.12593.

\bibitem[GR01]{griess-ryba}
Robert~L. Griess, Jr. and A.~J.~E. Ryba, \emph{Embeddings of {${\rm
  PSL}(2,41)$} and {${\rm PSL}(2,49)$} in {$E_8(\bold C)$}}, J. Symbolic
  Comput. \textbf{31} (2001), no.~1-2, 211--227, Computational algebra and
  number theory (Milwaukee, WI, 1996). \MR{1806217}

\bibitem[Gro]{gross:odd}
Benedict Gross, \emph{Odd {G}alois representations}, preprint, available at
  http://www.math.harvard.edu/~gross/preprints/.

\bibitem[Gro97]{gross:principalsl2}
Benedict~H. Gross, \emph{On the motive of {$G$} and the principal homomorphism
  {${\rm SL}_2\to\widehat G$}}, Asian J. Math. \textbf{1} (1997), no.~1,
  208--213. \MR{1480995 (99d:20077)}

\bibitem[Gur99]{guralnick:CR}
Robert~M. Guralnick, \emph{Small representations are completely reducible}, J.
  Algebra \textbf{220} (1999), no.~2, 531--541. \MR{1717357}

\bibitem[HR08]{ramakrishna-hamblen}
Spencer Hamblen and Ravi Ramakrishna, \emph{Deformations of certain reducible
  {G}alois representations. {II}}, Amer. J. Math. \textbf{130} (2008), no.~4,
  913--944. \MR{2427004}

\bibitem[Hum95]{humphreys:conjugacy}
James~E. Humphreys, \emph{Conjugacy classes in semisimple algebraic groups},
  Mathematical Surveys and Monographs, vol.~43, American Mathematical Society,
  Providence, RI, 1995. \MR{1343976}

\bibitem[Jan04]{jantzen:nilporbits}
Jens~Carsten Jantzen, \emph{Nilpotent orbits in representation theory}, Lie
  theory, Progr. Math., vol. 228, Birkh\"auser Boston, Boston, MA, 2004,
  pp.~1--211. \MR{2042689}

\bibitem[Kha04]{khare:approx}
Chandrashekhar Khare, \emph{Modularity of {$p$}-adic {G}alois representations
  via {$p$}-adic approximations}, J. Th\'{e}or. Nombres Bordeaux \textbf{16}
  (2004), no.~1, 179--185. \MR{2145579}

\bibitem[Kha06]{khare:serrelevel1}
\bysame, \emph{Serre's modularity conjecture: the level one case}, Duke Math.
  J. \textbf{134} (2006), no.~3, 557--589. \MR{2254626}

\bibitem[Kis04]{kisin:geometricdef}
Mark Kisin, \emph{Geometric deformations of modular {G}alois representations},
  Invent. Math. \textbf{157} (2004), no.~2, 275--328. \MR{2076924}

\bibitem[Kis08]{kisin:pst}
\bysame, \emph{Potentially semi-stable deformation rings}, J. Amer. Math. Soc.
  \textbf{21} (2008), no.~2, 513--546. \MR{2373358}

\bibitem[KLR05]{klr}
Chandrashekhar Khare, Michael Larsen, and Ravi Ramakrishna, \emph{Constructing
  semisimple {$p$}-adic {G}alois representations with prescribed properties},
  Amer. J. Math. \textbf{127} (2005), no.~4, 709--734. \MR{2154368}

\bibitem[KR15]{khare-ramakrishna:liftingtorsion}
Chandrashekhar Khare and Ravi Ramakrishna, \emph{Lifting torsion {G}alois
  representations}, Forum Math. Sigma \textbf{3} (2015), e14, 37. \MR{3482264}

\bibitem[KW09a]{khare-wintenberger:serre0}
Chandrashekhar Khare and Jean-Pierre Wintenberger, \emph{On {S}erre's
  conjecture for 2-dimensional mod {$p$} representations of {${\rm
  Gal}(\overline{\Bbb Q}/\Bbb Q)$}}, Ann. of Math. (2) \textbf{169} (2009),
  no.~1, 229--253. \MR{2480604}

\bibitem[KW09b]{khare-wintenberger:serre1}
\bysame, \emph{Serre's modularity conjecture. {I}}, Invent. Math. \textbf{178}
  (2009), no.~3, 485--504. \MR{2551763 (2010k:11087)}

\bibitem[KW09c]{khare-wintenberger:serre2}
\bysame, \emph{Serre's modularity conjecture. {II}}, Invent. Math. \textbf{178}
  (2009), no.~3, 505--586. \MR{2551764 (2010k:11088)}

\bibitem[Laz65]{lazard}
Michel Lazard, \emph{Groupes analytiques {$p$}-adiques}, Inst. Hautes
  \'{E}tudes Sci. Publ. Math. (1965), no.~26, 389--603. \MR{0209286}

\bibitem[LP11]{larsen-pink:finite}
Michael~J. Larsen and Richard Pink, \emph{Finite subgroups of algebraic
  groups}, J. Amer. Math. Soc. \textbf{24} (2011), no.~4, 1105--1158.
  \MR{2813339}

\bibitem[Mul13]{muller:thesis}
{A}. Muller, \emph{Rel{\`{e}}vements cristallins de repr{\'{e}}sentations
  galoisiennes}, https://tel.archives-ouvertes.fr/tel-00873407 (2013).

\bibitem[Pat16]{stp:exceptional}
Stefan Patrikis, \emph{Deformations of {G}alois representations and exceptional
  monodromy}, Inventiones mathematicae \textbf{205} (2016), no.~2, 269--336.

\bibitem[Pat17]{stp:exceptional2}
\bysame, \emph{Deformations of {G}alois representations and exceptional
  monodromy, {II}: raising the level}, Mathematische Annalen \textbf{368}
  (2017), no.~3, 1465--1491.

\bibitem[Ram99]{ramakrishna:lifting}
Ravi Ramakrishna, \emph{Lifting {G}alois representations}, Invent. Math.
  \textbf{138} (1999), no.~3, 537--562. \MR{1 719 819}

\bibitem[Ram02]{ramakrishna02}
\bysame, \emph{Deforming {G}alois representations and the conjectures of
  {S}erre and {F}ontaine-{M}azur}, Ann. of Math. (2) \textbf{156} (2002),
  no.~1, 115--154. \MR{MR1935843 (2003k:11092)}

\bibitem[Sel67]{seligman:modular}
G.~B. Seligman, \emph{Modular {L}ie algebras}, Ergebnisse der Mathematik und
  ihrer Grenzgebiete, Band 40, Springer-Verlag New York, Inc., New York, 1967.
  \MR{0245627}

\bibitem[Ser81]{serre:cebotarev}
Jean-Pierre Serre, \emph{Quelques applications du th\'eor\`eme de densit\'e de
  {C}hebotarev}, Inst. Hautes \'Etudes Sci. Publ. Math. (1981), no.~54,
  323--401. \MR{644559 (83k:12011)}

\bibitem[Ser87]{serre:conjectures}
J-P. Serre, \emph{Sur les repr\'esentations modulaires de degr\'e \protect{$2$}
  de \protect{${\rm{G}al}(\overline{\bf {Q}}/{\bf {Q}})$}}, Duke Math. J.
  \textbf{54} (1987), no.~1, 179--230.

\bibitem[Ser96]{serre:principalsl2}
Jean-Pierre Serre, \emph{Exemples de plongements des groupes {${\rm PSL}_2({\bf
  F}_p)$} dans des groupes de {L}ie simples}, Invent. Math. \textbf{124}
  (1996), no.~1-3, 525--562. \MR{1369427 (97d:20056)}

\bibitem[Ser05]{serre:CR}
\bysame, \emph{Compl\`ete r\'eductibilit\'e}, Ast\'erisque (2005), no.~299,
  Exp. No. 932, viii, 195--217, S\'eminaire Bourbaki. Vol. 2003/2004.
  \MR{2167207}

\bibitem[Ser08]{serre:topics}
\bysame, \emph{Topics in {G}alois theory}, second ed., Research Notes in
  Mathematics, vol.~1, A K Peters, Ltd., Wellesley, MA, 2008, With notes by
  Henri Darmon. \MR{2363329}

\bibitem[ST15]{serre-tate:correspondance}
J.P. Serre and J.~Tate, \emph{Correspondance {S}erre-{T}ate. {V}ol. {I}},
  Documents Math\'{e}matiques (Paris) [Mathematical Documents (Paris)],
  vol.~13, Soci\'{e}t\'{e} Math\'{e}matique de France, Paris, 2015, Edited, and
  with notes and commentaries by Pierre Colmez and Jean-Pierre Serre.
  \MR{3379329}

\bibitem[Ste75]{steinberg:torsion}
Robert Steinberg, \emph{Torsion in reductive groups}, Advances in Math.
  \textbf{15} (1975), 63--92. \MR{0354892}

\bibitem[Tan19]{tang:thesisANT}
Shiang Tang, \emph{Algebraic monodromy groups of {$l$}-adic representations of
  {${\rm Gal}(\overline{\Bbb Q}/\Bbb Q)$}}, Algebra Number Theory \textbf{13}
  (2019), no.~6, 1353--1394. \MR{3994568}

\bibitem[Tay03]{taylor:icos2}
Richard Taylor, \emph{On icosahedral {A}rtin representations. {II}}, Amer. J.
  Math. \textbf{125} (2003), no.~3, 549--566. \MR{1981033}

\bibitem[TW95]{taylor-wiles:fermat}
R.~Taylor and A.\thinspace{}J. Wiles, \emph{Ring-theoretic properties of
  certain \protect{H}ecke algebras}, Ann. of Math. (2) \textbf{141} (1995),
  no.~3, 553--572.

\bibitem[Wil95]{wiles:fermat}
A.\thinspace{}J. Wiles, \emph{Modular elliptic curves and \protect{F}ermat's
  last theorem}, Ann. of Math. (2) \textbf{141} (1995), no.~3, 443--551.

\bibitem[Yun14]{yun:exceptional}
Zhiwei Yun, \emph{Motives with exceptional {G}alois groups and the inverse
  {G}alois problem}, Invent. Math. \textbf{196} (2014), no.~2, 267--337.

\end{thebibliography}

\printindex[terms]

\end{document}